\newtheorem{theo}{Theorem}[section]
\newtheorem{lemm}[theo]{Lemma}
\newtheorem{prop}[theo]{Proposition}
\newtheorem{rema}[theo]{Remark}
\newtheorem{property}[theo]{Property}
\numberwithin{equation}{section}
\begin{document}
\large
\title[spatially quasi-periodic solutions]{Local Existence and Uniqueness of Spatially Quasi-Periodic Solutions to the Generalized KdV Equation}

\author{David Damanik}
\address{\scriptsize (D. Damanik)~Department of Mathematics, Rice University, 6100 S. Main Street, Houston, Texas
77005-1892}
\email{damanik@rice.edu}
\thanks{The first author (D. Damanik) was supported by Simons Fellowship $\# 669836$ and NSF grants DMS--1700131 and DMS--2054752}

\author{Yong Li}
\address{\scriptsize  (Y. Li)~Institute of Mathematics, Jilin University, Changchun 130012, P.R. China. School of Mathematics and Statistics, Center for Mathematics and Interdisciplinary Sciences, Northeast Normal University, Changchun, Jilin 130024, P.R.China.}
\email{liyong@jlu.edu.cn}
\thanks{The second author (Y. Li) was supported in part by NSFC grant: 120701132, 11171132,  11571065 and National Research Program of China Grant 2013CB834100, and Natural Science Foundation of Jilin Province (20200201253JC)}

\author{Fei Xu}
\address{\scriptsize (F. Xu)~Institute of Mathematics, Jilin University, Changchun 130012, P.R. China.}
\email{stuxuf@outlook.com}
\thanks{The third author (F. Xu) was supported by Graduate Innovation Fund of Jilin University (101832018C162). }

\subjclass[2000]{Primary 35B15; 
Secondary 35D35
}

\keywords{Deift conjecture; Quasi-periodic initial data; Generalized KdV equation; Classical solutions.}

\begin{abstract}
In this paper, we study the existence and uniqueness of spatially quasi-periodic solutions to the generalized KdV equation (gKdV for short) on the real line with quasi-periodic initial data whose Fourier coefficients are exponentially decaying. In order to solve for the Fourier coefficients of the solution, we first reduce the nonlinear dispersive partial differential equation to a nonlinear infinite system of coupled ordinary differential equations, and then construct the Picard sequence to approximate them. However, we meet, and have to deal with, the difficulty of studying {\bf the higher dimensional discrete convolution operation for several functions}:
\[\underbrace{c\times\cdots\times c}_{\mathfrak p~\text{times}}~(\text{total distance}):=\sum_{\substack{\clubsuit_1,\cdots,\clubsuit_{\mathfrak p}\in\mathbb Z^\nu\\ \clubsuit_1+\cdots+\clubsuit_{\mathfrak p}=~\text{total distance}}}\prod_{j=1}^{\mathfrak p}c(\clubsuit_j).\]
In order to overcome it, we apply a combinatorial method to reformulate the Picard sequence as a tree. Based on this form, we prove that the Picard sequence is exponentially decaying and fundamental ({\color{red}i.e., a} Cauchy sequence). We first give a detailed discussion of the proof of the existence and uniqueness result in the case $\mathfrak p=3$. Next, we prove existence and uniqueness in the general case $\mathfrak p\geq 2$, which then covers the remaining cases $\mathfrak p\geq 4$. As a byproduct, we recover the local result from \cite{damanik16}. We exhibit the most important combinatorial index $\sigma$ and obtain a relationship with other indices, which is essential to our proofs in the case of general $\mathfrak p$.
\end{abstract}

\maketitle

\tableofcontents

\part{Introduction and Statement of the Main Results}

In this paper we consider the Cauchy problem given by the generalized KdV equation
\begin{align}\label{ppkdv}
\partial_tu+\partial_x^3u+u^{\mathfrak p-1}\partial_xu=0
\end{align}
with quasi-periodic initial data
\begin{align}\label{iei}
u(0,x) = u_0(x)=\sum_{n\in\mathbb Z^\nu}c(n)e^{{\rm i}(n\cdot\omega)x},
\end{align}
where the spatial wave vector $(\omega_1,\cdots,\omega_\nu)=\omega\in\mathbb R^\nu$ is rationally independent (mathematics) or non-resonant (physics), that is, $n\cdot\omega=0$ implies that $n=0\in\mathbb Z^\nu$. We are interested in studying the existence and uniqueness of spatially quasi-periodic solutions
\begin{align}
u(t,x)=\sum_{n\in\mathbb Z^\nu}c(t,n)e^{{\rm i}(n\cdot\omega)x}
\end{align}
to the Cauchy problem \eqref{ppkdv}--\eqref{iei}. For simplicity we refer to \eqref{ppkdv} as $\mathfrak p$-gKdV.

When $\mathfrak p=2$, $2$-gKdV \eqref{ppkdv} is exactly the classical KdV equation; when $\mathfrak p=3$, $3$-gKdV \eqref{ppkdv} is the Gardner equation \cite{M68, MGK68}.

In the case of the KdV equation, there has been a significant amount of recent activity centered around this particular Cauchy problem; see, for example, \cite{BDGL, damanik16, EVY19}. These works were motivated by the so-called \emph{Deift conjecture}, which states that the KdV equation with almost periodic initial data admits global solutions that are almost periodic in both space and time; compare \cite{D08, D17}. It is now known that the Deift conjecture is correct for a class of almost periodic initial conditions, namely those for which the associated Schr\"odinger operator is reflectionless and has a spectrum that has a sufficiently tame gap structure; see \cite{BDGL, EVY19} for detailed statements. It is also known how to satisfy these assumptions within the class of quasi-periodic initial data \eqref{iei}; see \cite{BDGL, damanik16}.

It should be stressed that these works use many features of the KdV equation that are not present in the case of general $\mathfrak p$ and for which no suitable substitutes are known as of yet. We therefore begin with the more modest goal of studying the local existence and uniqueness problem for general values of $\mathfrak p$. Nevertheless, the recent advances in the KdV theory do suggest a way to approach this problem.

In fact, the three papers \cite{BDGL, damanik16, EVY19} employ three different ways to establish the existence of solutions. The paper \cite{damanik16} stays in the setting of the actual functions one studies, both as initial data and as solutions at times different from zero. In the case at hand this would be the class of quasi-periodic functions (along with some decay condition on the Fourier coefficient $c(\cdot)$ we will impose). The papers \cite{BDGL, EVY19} make use of the associated operators. That is, one associates a Schr\"odinger operator with the given function by using the latter as the potential of the former. Within the class of function one studies, one imposes on the associated operators a uniform spectrum along with the so-called reflectionlessness property. The resulting class of potentials/operators can then be associated with a torus of dimension given by the number of the gaps of the spectrum in two different ways. One can either associate Dirichlet data or pass to the dual group of the fundamental group of the complement of the spectrum. The KdV flow can then be related to flows on either of these tori, and one can establish existence of solutions there and then pull them back. Now, \cite{BDGL} establishes solutions on the first torus and \cite{EVY19} on the second. While this approach produces results that are  very general in terms of the spectral assumptions one needs to impose for the machinery to work, it has the downside that the spectral interpretation (and in particular the crucial concept of reflectionlessness of the associated operators; see \cite{C89, GY06, SY95, SY97} for more information) needs to be available.

We also want to mention the paper \cite{K18}, which is related to the papers mentioned above (perhaps most closely to \cite{EVY19}) and uses reflectionlessness (on more restricted sets serving as spectra) as a key notion as well.

As we unfortunately do not have a known analog of the reflectionlessness concept in the case of general $\mathfrak p$, based on the discussion above it is therefore most natural to explore whether the approach developed in \cite{damanik16} can be extended to our setting.

Examining the results and proofs in \cite{damanik16}, we notice that for suitable quasi-periodic initial data, the authors prove global existence and uniqueness. This is in turn obtained by establishing a local result first and then using the deep fact that one can iterate the local argument in \emph{uniform time steps} because the size of the step will be uniform across all quasi-periodic functions that can arise by solving across the time interval in question. This deep fact is again a result of a spectral analysis of the associated Schr\"odinger operators. Indeed, in the earlier paper \cite{damanik14}, the same authors managed to show that there is a bidirectional correspondence between the exponential decay rate of the Fourier coefficients and the exponential decay rate of the lengths of the gaps of the spectrum. Since the KdV evolution preserves the spectrum, the gaps are preserved, hence the decay rate of their lengths is preserved, and hence one can deduce a \emph{uniform} upper bound on the decay rate of the Fourier coefficients of the solution. Since this in turn determines the length of the time interval on which one can prove local existence and uniqueness, an iteration immediately yields a global result. Again, since for general $\mathfrak p$, we cannot make use of the spectral theory aspects of this approach, it is at this point unclear how to pass from a local result to a global result in the general setting.

Given the discussion above, it is now clear what the goal of the present paper is. We wish to show a local existence and uniqueness result for the Cauchy problem associated with the $\mathfrak p$-gKdV equation \eqref{ppkdv} and quasi-periodic initial data \eqref{iei} by generalizing the techniques of \cite{damanik16}. As we have explained, this goal is very natural relative to the existing work and the absence of the spectral theory input for general values of $\mathfrak p$. The main results are Theorem~\ref{eethm} (existence), Theorem~\ref{euthm} (uniqueness), and Theorem~\ref{cccc} (convergence).

As discussed in \cite{damanik16}, the approach in that paper is inspired by and uses ideas from Christ \cite{christ07} and Kenig-Ponce-Vega \cite{KPV91}.

Let us first state the local existence theorem:

\begin{theo}[Existence]\label{eethm}
If the Fourier coefficients $\{c(n)\}$ of $u_0$ are exponentially decaying, that is, there exist $\mathcal A>0$ and $0<\kappa\leq1$ such that
$$
|c(n)|\leq\mathcal A^{\frac{1}{\mathfrak p-1}}e^{-\kappa|n|},\quad \forall n\in\mathbb Z^\nu,
$$
and the wave vector $\omega\in\mathbb R^\nu$ is rationally independent or non-resonant, then
there exists a positive number
$$
t_0 = \min\left\{\frac{\kappa^{(\mathfrak p-1)\nu+1}}{2^{\mathfrak p+1}6^{(\mathfrak p-1)\nu+1}\mathcal A|\omega|},\frac{\kappa^{(\mathfrak p-1)\nu+1}}{2(\mathfrak p-1)e\Box^{\mathfrak p-1}12^{(\mathfrak p-1)\nu+1}|\omega|}\right\} > 0
$$
such that there exists a spatially $\omega$-quasi-periodic function
$$
u^\dag(t,x)=\sum_{n\in\mathbb Z^\nu}c^\dag(t,n)e^{{\rm i}(n\cdot\omega)x},\quad 0\leq t<t_0,
x\in\mathbb R
$$
that solves $\mathfrak p$-gKdV \eqref{ppkdv}, satisfies the initial condition \eqref{iei}, and has exponentially decaying Fourier coefficients $\{c^\dag(t,n)\}$ as well, that is,
$$
|c^\dag(t,n)|\leq\Box e^{-\frac{\kappa}{2}|n|},
$$
where $\Box=2(6\kappa^{-1})^{\nu}\mathcal A^{\frac{1}{\mathfrak p-1}}$.
\end{theo}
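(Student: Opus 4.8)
The plan is to move everything to the Fourier side, turn the PDE into an infinite system of coupled ODEs for the coefficients $c(t,n)$, and solve that system by a Picard iteration inside a space of exponentially decaying sequences. Substituting $u(t,x)=\sum_n c(t,n)e^{\mathrm i(n\cdot\omega)x}$ into \eqref{ppkdv}, writing the nonlinearity as $u^{\mathfrak p-1}\partial_x u=\frac1{\mathfrak p}\partial_x(u^{\mathfrak p})$, and matching Fourier modes (here rational independence of $\omega$ makes the frequencies $n\cdot\omega$ distinct, so the identification is unambiguous), I obtain for each $n$
\begin{equation*}
\dot c(t,n)=\mathrm i(n\cdot\omega)^3c(t,n)-\frac{\mathrm i(n\cdot\omega)}{\mathfrak p}\sum_{\substack{n_1+\cdots+n_{\mathfrak p}=n}}\prod_{j=1}^{\mathfrak p}c(t,n_j).
\end{equation*}
Removing the linear term by Duhamel's formula gives the integral equation
\begin{equation*}
c(t,n)=e^{\mathrm i(n\cdot\omega)^3t}c(n)-\frac{\mathrm i(n\cdot\omega)}{\mathfrak p}\int_0^t e^{\mathrm i(n\cdot\omega)^3(t-s)}\sum_{\substack{n_1+\cdots+n_{\mathfrak p}=n}}\prod_{j=1}^{\mathfrak p}c(s,n_j)\,ds,
\end{equation*}
whose fixed point I seek as the limit of the Picard iterates $c^{(0)}(t,n)=e^{\mathrm i(n\cdot\omega)^3t}c(n)$, with $c^{(k+1)}$ defined by inserting $c^{(k)}$ into the right-hand side.

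The analytic core is one convolution estimate. Since $|n_1|+\cdots+|n_{\mathfrak p}|\ge|n|$ on the constraint set, for any $0<\beta<\alpha$,
\begin{equation*}
\sum_{\substack{n_1+\cdots+n_{\mathfrak p}=n}}\prod_{j=1}^{\mathfrak p}e^{-\alpha|n_j|}\le e^{-\beta|n|}\Big(\sum_{m\in\mathbb Z^\nu}e^{-(\alpha-\beta)|m|}\Big)^{\mathfrak p-1},
\end{equation*}
and the elementary bound $\sum_{m\in\mathbb Z^\nu}e^{-\gamma|m|}\le(6/\gamma)^{\nu}$ for $0<\gamma\le1$ then produces exactly the factor $(6/\kappa)^{(\mathfrak p-1)\nu}$ hidden inside $\Box$ and $t_0$. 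The difficulty is that each convolution costs decay, so iterating at a single fixed rate would bleed away the exponent; to prevent this I would follow the combinatorial route indicated in the abstract and fully expand $c^{(k)}$ into a sum over rooted $\mathfrak p$-ary trees whose leaves are \emph{always} the initial coefficients $c(\cdot)$ and therefore always carry the full rate $\kappa$, while each internal node $v$ carries a factor $\tfrac{\mathrm i(n_v\cdot\omega)}{\mathfrak p}$ and a time integration, with $n_v$ the sum of the leaf frequencies lying beneath it.

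With this representation I would establish the uniform bound $|c^{(k)}(t,n)|\le\Box e^{-(\kappa/2)|n|}$ for all $k$ and all $0\le t<t_0$ by induction over the expansion. For a tree with leaf frequencies $m_1,\dots,m_L$ summing to $n$, the product of leaf coefficients is at most $\mathcal A^{L/(\mathfrak p-1)}e^{-\kappa\sum_i|m_i|}$, and the key move is to split $e^{-\kappa\sum_i|m_i|}=e^{-(\kappa/2)|n|}\,e^{-(\kappa/2)\sum_i|m_i|}$: the first factor yields the target output rate, while the second is spent simultaneously on making the $L-1$ free leaf summations converge (each contributing $(6/\kappa)^{\nu}$) and on dominating the product of node-frequency factors $\prod_v|n_v|$ through $\sup_{r\ge0}re^{-\delta r}=1/(e\delta)$. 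Controlling $\prod_v|n_v|$ uniformly in the tree shape is precisely where the combinatorial index $\sigma$ enters: it records how the total frequency is apportioned among the nodes, turning the formally divergent higher-dimensional convolution into a convergent series with an explicit constant. Together with the factorial gain $t^{\#\text{nodes}}/(\#\text{nodes})!$ from the nested time integrals, this renders the tree series summable for $t<t_0$, and the first entry in the minimum defining $t_0$ is exactly the smallness that caps the sum at $\Box$.

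Finally I would show $\{c^{(k)}\}$ is Cauchy. Because the fully expanded tree series converges absolutely for $0\le t<t_0$, consecutive iterates differ only by the trees involving the newest level, so $c^{(k+1)}-c^{(k)}$ is a tail of a convergent series; quantitatively, expanding the difference of the two $\mathfrak p$-fold products as a telescoping sum isolates one slot carrying $c^{(k)}-c^{(k-1)}$ and leaves $\mathfrak p-1$ slots bounded by $\Box e^{-(\kappa/2)|\cdot|}$, which is the origin of the Lipschitz weight $(\mathfrak p-1)\Box^{\mathfrak p-1}$ and, after absorbing the $|n|$ from $|n\cdot\omega|$ via $\sup_{r\ge0}re^{-\delta r}$, of the extra $e$ and $\kappa$ powers in the second entry of the minimum for $t_0$; choosing $t_0$ below that entry forces a contraction ratio strictly below $1$. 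The limit $c^\dag$ then inherits $|c^\dag(t,n)|\le\Box e^{-(\kappa/2)|n|}$, satisfies the integral equation and hence the ODE system, so $u^\dag(t,x)=\sum_n c^\dag(t,n)e^{\mathrm i(n\cdot\omega)x}$ is a spatially quasi-periodic solution of \eqref{ppkdv} meeting \eqref{iei}. I expect the main obstacle to be exactly the sharp, tree-uniform estimate of $\prod_v|n_v|$ through $\sigma$, since everything else is bookkeeping once the higher-dimensional convolution has been tamed in this combinatorial form.
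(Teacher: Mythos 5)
Your overall architecture --- Fourier reduction, Duhamel, Picard iteration, a tree expansion whose leaves always carry the initial coefficients at the full rate $\kappa$, splitting $e^{-\kappa|\cdot|}$ into an output factor and a working factor, and a telescoping argument for the Cauchy property --- is exactly the paper's. The gap lies in the two quantitative claims at the analytic core. First, the gain from the nested time integrals over a tree branch $\gamma$ is \emph{not} $t^{\ell(\gamma)}/\ell(\gamma)!$ as you assert: integrating recursively node by node yields $t^{\ell(\gamma)}/\mathfrak D(\gamma)$, where $\mathfrak D(\gamma)=\ell(\gamma)\prod_j\mathfrak D(\gamma_j)$ is a \emph{tree factorial} (Lemma \ref{lsdfg}), and for bushy trees this is vastly smaller than $\ell(\gamma)!$ --- e.g.\ for $\mathfrak p=3$ and the full tree of depth $3$ one has $\ell=13$ but $\mathfrak D=13\cdot 4^3=832\ll 13!$. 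So the factorial reserve your argument counts on is not there. Second, your mechanism for taming $\prod_v|n_v|$ --- one bound $\sup_{r\ge0}re^{-\delta r}=1/(e\delta)$ per node --- requires splitting each leaf's decay budget among \emph{all} nodes above that leaf; a leaf can sit under as many as $k$ nodes, so the natural uniform split $\delta=\kappa/(2k)$ costs $(Ck/\kappa)^{\ell(\gamma)}$ per branch. Since the integral gain is only $1/\mathfrak D(\gamma)$, and $\mathfrak D(\gamma)^{1/\ell(\gamma)}$ stays bounded (for the full tree it tends to $\mathfrak p^{\mathfrak p/(\mathfrak p-1)}$), the resulting series over $\gamma\in\mathfrak T^{(k)}$ blows up for any fixed $t>0$ once $k$ is large; it is summable only for $t\lesssim 1/k$, which destroys the uniform time interval $[0,t_0)$ the theorem asserts. (A non-uniform apportioning weighted by subtree size can repair this on the full tree, but then you must prove a $(C/\kappa)^{\ell(\gamma)}$ bound uniformly over all tree shapes by an induction you have not supplied.)

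The paper avoids both problems with different bookkeeping: it expands $\prod_v|n_v|\le\sum_{\alpha\in\mathfrak R^{(k,\gamma)}}\prod_i|(n^{(k)})_i|^{\alpha_i}$ with $|\alpha|=\ell(\gamma)$, pays a factorial $\prod_i\alpha_i!$ via $z^{\alpha}\le\alpha!\,(2\kappa^{-1})^{\alpha}e^{\kappa z/2}$ (Lemmas \ref{2}--\ref{4}), and then proves the genuinely delicate, $k$-uniform summability statement $\sum_{\gamma\in\mathfrak T^{(k)}}\frac{{\bf t}^{\ell(\gamma)}}{\mathfrak D(\gamma)}\sum_{\alpha\in\mathfrak R^{(k,\gamma)}}\prod_i\alpha_i!\le 2$ for ${\bf t}\le 2^{-(\mathfrak p+1)}$ by induction on $k$ (Lemma \ref{lmm}); this is what produces the first entry of $t_0$ and the bound $\Box$ in Theorem \ref{expe}. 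The Cauchy step needs the separate factorial-sum estimate $\sum_{\alpha\in\mathbb B^{(k)}}\prod_j\alpha_j!\le\bigl(2[(\mathfrak p-1)k+1]\bigr)^k$, proved with the auxiliary map $\phi$ (Property \ref{phi} and Lemma \ref{kk}); there the $1/k!$ is genuine because the difference estimates produce a chain rather than a general tree, which is also why your $\sup_r re^{-\delta r}$ device is closer to workable in that step. Nothing in your proposal substitutes for these two lemmas, which are the heart of the proof. A smaller point: $\sigma$ does not record how the frequency is apportioned among nodes (that is the role of $\alpha\in\mathfrak R^{(k,\gamma)}$); $\sigma(\gamma)$ counts leaves, tracks the power of $\mathcal A$, and its only job is the identity $\sigma(\gamma)=\ell(\gamma)+\frac{1}{\mathfrak p-1}$, which merges $\mathcal A^{\sigma(\gamma)}$ with $(|\omega|t)^{\ell(\gamma)}$ into $(\mathcal A|\omega|t)^{\ell(\gamma)}$ times a harmless constant.
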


We emphasize that the exponential decay assumption for the Fourier coefficients $c(\cdot)$ is crucial, but also that there is no smallness assumption on the coefficients and no Diophantine assumption on $\omega$, aside from rational independence. Moreover, the time of local existence $t_0$, is explicit in terms of the decay parameters of the Fourier coefficients and the size of the frequency vector $\omega$. On the other hand, the estimate for the exponential decay of the Fourier coefficients at positive times that follows from the proof is \emph{weaker} than the one that is assumed about the initial condition. This means that one cannot readily iterate the argument to cover arbitrarily long time intervals. This is precisely the deficit that is addressed, in the KdV case $\mathfrak p = 2$, via the spectral correspondence worked out in \cite{damanik14}.

Next we state our local uniqueness result:

\begin{theo}[Uniqueness]\label{euthm}
Consider the following two spatially $\omega$-quasi-periodic functions
\begin{align*}
u_1(t,x)=\sum_{n\in\mathbb Z^\nu}c(t,n)e^{{\rm i}(n\cdot\omega)x}, \quad 0\leq t<t_1\leq t_0, x\in\mathbb R;\\
u_2(t,x)=\sum_{n\in\mathbb Z^\nu}b(t,n)e^{{\rm i}(n\cdot\omega)x}, \quad 0\leq t<t_2\leq t_0, x\in\mathbb R
\end{align*}
with the following hypotheses:
\begin{itemize}

\item $u_1$ and $u_2$ satisfy $\mathfrak p$-gKdV \eqref{ppkdv}, that is,
\begin{align*}
c(t,n)&=e^{{\rm i}(n\cdot\omega)^3t}c(0,n)-\frac{{\rm i}n\cdot\omega}{\mathfrak p}\int_0^te^{{\rm i}(n\cdot\omega)^3(t-\tau)}\sum_{n_1,\cdots,n_{\mathfrak p}\in\mathbb Z^\nu:n_1+\cdots+n_{\mathfrak p}=n}\prod_{j=1}^{\mathfrak p}c(\tau,n_j){\rm d}\tau;\\
b(t,n)&=e^{{\rm i}(n\cdot\omega)^3t}b(0,n)-\frac{{\rm i}n\cdot\omega}{\mathfrak p}\int_0^te^{{\rm i}(n\cdot\omega)^3(t-\tau)}\sum_{n_1,\cdots,n_{\mathfrak p}\in\mathbb Z^\nu:n_1+\cdots+n_{\mathfrak p}=n}\prod_{j=1}^{\mathfrak p}b(\tau,n_j){\rm d}\tau.
\end{align*}

\item $u_1$ and $u_2$ have the same spatially quasi-periodic initial data, that is $c(0,n)=b(0,n)$ for all $n\in\mathbb Z^\nu$;

\item $c(t,n)$ and $b(t,n)$ are exponentially decaying uniformly in $t$, that is, there exist $\Box>0$ and $0<\rho\leq1$ such that
$$
|c(t,n)|\leq\Box e^{-\rho|n|}, \quad |b(t,n)|\leq\Box e^{-\rho|n|}.
$$
\end{itemize}
If the Cauchy problem \eqref{ppkdv}--\eqref{iei} satisfies the above hypotheses, then
$$
u_1(t,x) \equiv u_2(t,x), \quad \forall 0 \leq t < \min \left\{ t_1, t_2, \frac{1}{2(\mathfrak p-1) e \Box^2(12\rho^{-1})^{(\mathfrak p-1)\nu+1}|\omega|} \right\}~\text{and}~x \in \mathbb R.
$$
\end{theo}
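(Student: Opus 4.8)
The plan is to study the difference $d(t,n):=c(t,n)-b(t,n)$ and to show that it vanishes identically on the claimed interval. Subtracting the two Duhamel identities in the hypotheses and using $c(0,n)=b(0,n)$, the free-evolution terms cancel and one is left with
\begin{align*}
d(t,n)=-\frac{{\rm i}\,n\cdot\omega}{\mathfrak p}\int_0^t e^{{\rm i}(n\cdot\omega)^3(t-\tau)}\sum_{n_1+\cdots+n_{\mathfrak p}=n}\Big(\prod_{j=1}^{\mathfrak p}c(\tau,n_j)-\prod_{j=1}^{\mathfrak p}b(\tau,n_j)\Big)\,{\rm d}\tau .
\end{align*}
The first step is to linearize the nonlinearity in $d$ through the telescoping identity $\prod_j c_j-\prod_j b_j=\sum_{k=1}^{\mathfrak p}\big(\prod_{j<k}c_j\big)(c_k-b_k)\big(\prod_{j>k}b_j\big)$, so that every summand carries exactly one factor $d(\tau,n_k)$ while all remaining factors are Fourier coefficients of $u_1$ or $u_2$, hence bounded by $\Box\,e^{-\rho|\cdot|}$.

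Second, I would introduce the scale of weighted norms $\|f\|_s:=\sup_{n\in\mathbb Z^\nu}|f(n)|e^{s|n|}$ for $0<s\le\rho$. The uniform decay hypotheses read $\|c(t)\|_\rho,\|b(t)\|_\rho\le\Box$, and since the weight is monotone in $s$ this gives $\|d(t)\|_s\le\|d(t)\|_\rho\le 2\Box$ for every $s\le\rho$. Applying the discrete convolution estimate — the central combinatorial tool of the paper, which controls the $\mathfrak p$-fold convolution of a rate-$s$ exponential by a rate-$s$ exponential with a constant of order $(C/s)^{(\mathfrak p-1)\nu}$ — to the telescoped sum, and then absorbing the lost spatial derivative $|n\cdot\omega|\le|\omega|\,|n|$ into the exponential gap via $\sup_{x\ge 0}x\,e^{-(b-a)x}=\tfrac{1}{e(b-a)}$, I expect to arrive, for all $0<a<b\le\rho$, at a Cauchy–Kovalevskaya type inequality
\begin{align*}
\|d(t)\|_a\le\frac{C_0}{b-a}\int_0^t\|d(\tau)\|_b\,{\rm d}\tau ,
\end{align*}
where $C_0$ collects $|\omega|$, the factor $\Box^{\mathfrak p-1}$ from the bounded slots, the convolution constant, and the count of telescoping terms.

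Third, I would close the argument by an Ovsyannikov iteration. Fixing a target rate $\rho_\ast\in(0,\rho)$ and subdividing $[\rho_\ast,\rho]$ into $m$ equal steps, iterating the displayed inequality $m$ times yields a factor $\big(C_0 m/(\rho-\rho_\ast)\big)^m$ times the simplex integral $t^m/m!$, and at the top level one inserts the a priori bound $\|d\|_\rho\le 2\Box$. Stirling's formula converts this into $\big(C_0 e\,t/(\rho-\rho_\ast)\big)^m$, which tends to $0$ as $m\to\infty$ provided $t<(\rho-\rho_\ast)/(C_0 e)$; hence $\|d(t)\|_{\rho_\ast}=0$, i.e. $d(t,\cdot)\equiv 0$, on that interval, and since $\rho_\ast>0$ is arbitrary this forces $u_1\equiv u_2$. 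Because the convolution constant inside $C_0$ grows like $(C/\rho_\ast)^{(\mathfrak p-1)\nu}$, the window $(\rho-\rho_\ast)/(C_0e)$ should be optimized over $\rho_\ast$, balancing $(\rho-\rho_\ast)$ against $\rho_\ast^{-(\mathfrak p-1)\nu}$; the maximizer $\rho_\ast\approx\tfrac{(\mathfrak p-1)\nu}{(\mathfrak p-1)\nu+1}\rho$ produces a threshold proportional to $\rho^{(\mathfrak p-1)\nu+1}$, which I expect to reproduce the stated time $\big(2(\mathfrak p-1)e\Box^{2}(12\rho^{-1})^{(\mathfrak p-1)\nu+1}|\omega|\big)^{-1}$.

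The main obstacle is precisely the loss of one spatial derivative: the prefactor $n\cdot\omega$ arising from $\partial_x$ means the nonlinear map does not preserve a single exponential rate, so a naive fixed-rate Gronwall estimate cannot close. The device that overcomes this — paying for the derivative with a small amount of exponential decay and then summing the resulting geometric loss across the continuum of rates — is the crux of the argument; keeping the convolution constant uniformly bounded over the rates actually used (which is what dictates the optimal choice of $\rho_\ast$ and hence the exponent $(\mathfrak p-1)\nu+1$) is the accompanying technical point, and it is here that the paper's control of the higher-dimensional discrete convolution is indispensable.
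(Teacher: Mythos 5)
Your telescoping decomposition and the a priori bound $\|d(t)\|_\rho\le 2\Box$ are fine, but the argument breaks at the central analytic inequality, and the failure is not repairable within the sup-norm scale. The convolution estimate you invoke does not exist: the $\mathfrak p$-fold discrete convolution of rate-$s$ exponentials is \emph{not} a rate-$s$ exponential times a constant. Indeed, for $n=(N,0,\dots,0)$ the tuples $n_j=(a_j,0,\dots,0)$ with $a_j\ge 0$, $\sum_j a_j=N$, all satisfy $\sum_j|n_j|=|n|$, so
\begin{align*}
\sum_{n_1+\cdots+n_{\mathfrak p}=n}\prod_{j=1}^{\mathfrak p}e^{-s|n_j|}\;\ge\;\binom{N+\mathfrak p-1}{\mathfrak p-1}\,e^{-sN}\;\ge\;\frac{N^{\mathfrak p-1}}{(\mathfrak p-1)!}\,e^{-sN},
\end{align*}
and a polynomial factor $|n|^{\mathfrak p-1}$ is unavoidable. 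Consequently, in the inequality you display, $C_0$ cannot be taken independent of the gap: testing the right-hand side of the Duhamel formula at $n=(N,0,\dots,0)$ and maximizing over $N$ forces $C_0\gtrsim (b-a)^{-(\mathfrak p-1)}$, so the true sup-norm estimate carries a total loss $(b-a)^{-q}$ with $q\ge\mathfrak p\ge 2$ (one power for the derivative, at least $\mathfrak p-1$ for the convolution); making the constant depend on $\rho_\ast$ instead, as you propose, is based on the false rate-preserving bound and does not occur. With superlinear loss the Ovsyannikov iteration no longer closes: for $m$ steps of sizes $\delta_1,\dots,\delta_m$ with $\sum_j\delta_j=\rho-\rho_\ast$, the scheme outputs the bound $2\Box\,(t^m/m!)\prod_j K\delta_j^{-q}$, which by AM--GM is smallest for uniform steps, and then, since $m!\le m^m$,
\begin{align*}
2\Box\,\frac{t^m}{m!}\Bigl(\frac{K\,m^{q}}{(\rho-\rho_\ast)^{q}}\Bigr)^{m}\;\ge\;2\Box\,\Bigl(\frac{K\,m^{q-1}\,t}{(\rho-\rho_\ast)^{q}}\Bigr)^{m}\;\longrightarrow\;+\infty
\end{align*}
for every $t>0$, because the single factor $1/m!$ compensates only one of the $q$ powers of $m^{m}$. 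The method therefore yields no conclusion at all, rather than uniqueness on a short interval.

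This compounding of convolution constants is precisely what the paper's proof is engineered to avoid: it never collapses to a norm between steps. Instead it iterates the Duhamel identity $k$ times keeping the full tensor structure --- $(\mathfrak p-1)k+1$ convolution variables and multi-indices $\alpha\in\mathbb B^{(k)}$ with $|\alpha|=k$ (Lemma \ref{k}) --- applies the convolution estimate of Lemma \ref{4} exactly \emph{once}, at the very end, and controls the resulting factorial sum via Lemma \ref{kk}, $\sum_{\alpha\in\mathbb B^{(k)}}\prod_j\alpha_j!\le\bigl(2[(\mathfrak p-1)k+1]\bigr)^k$; Stirling then converts $t^k/k!$ times these $k$-dependent constants into the geometric factor $\bigl(2(\mathfrak p-1)e\Box^{\mathfrak p-1}(12\rho^{-1})^{(\mathfrak p-1)\nu+1}|\omega|t\bigr)^k\to 0$, and uniqueness follows from the arbitrariness of $k$. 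If you wish to keep your scale-of-norms architecture, the correct repair is to replace the sup norm by the Wiener norm $\|f\|_s=\sum_{n}|f(n)|e^{s|n|}$: this norm is submultiplicative under discrete convolution with \emph{no} loss, the hypotheses embed as $\|c(t)\|_s\le\Box\,(3/(\rho-s))^{\nu}$ for $s<\rho$ (Lemma \ref{1}), the only remaining loss is the first-order derivative factor $1/(e(b-a))$, and the classical Cauchy--Kovalevskaya iteration on a fixed sub-range of rates (say from $\rho/2$ down to $\rho/4$) then closes and reproduces a threshold time of the same shape $\sim\rho^{(\mathfrak p-1)\nu+1}\bigl(\Box^{\mathfrak p-1}|\omega|\bigr)^{-1}$ as in the statement.
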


\begin{rema}
{\rm (a)} The case of $\mathfrak p=2$ is exactly the local result of \cite{damanik16}.
\\[1mm]
{\rm (b)} Recall that it is impossible to prove an unconditional uniqueness result, that is, uniqueness can only be shown to hold in a class of functions; compare \cite{CK89}. Then, the larger the class of functions in which a uniqueness statement can be shown, the stronger the result will be. Given this general observation, let us point out that here and in \cite{damanik16}, uniqueness is shown in a class of quasi-periodic functions with fixed frequency vector and fixed exponential decay rate. By contrast, the uniqueness result in \cite{BDGL} holds in a larger class of functions, but again, the tools employed in \cite{BDGL} are not available in the setting of this paper (i.e., for general $\mathfrak p$).
\end{rema}

In addition, we obtain the following convergence result.

\begin{theo}[Convergence]\label{cccc}
All the series involved converge absolutely and uniformly.
\end{theo}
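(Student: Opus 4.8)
The plan is to deduce convergence directly from the exponential decay estimate $|c^\dag(t,n)|\leq\Box e^{-\frac{\kappa}{2}|n|}$ furnished by Theorem~\ref{eethm}, together with the analogous uniform bounds that enter the Picard/tree construction, via the Weierstrass $M$-test. The key point is that once the relevant Fourier coefficients are dominated by an exponentially decaying sequence that is \emph{independent} of the continuous variables $t$ and $x$, absolute and uniform convergence is automatic; the genuinely hard work (controlling the higher-dimensional convolution and the growth of the tree expansion) has already been carried out in establishing the decay estimates.

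First I would treat the Fourier series of the solution $u^\dag(t,x)=\sum_{n\in\mathbb Z^\nu}c^\dag(t,n)e^{{\rm i}(n\cdot\omega)x}$. Since $|e^{{\rm i}(n\cdot\omega)x}|=1$ for every $n$, $t$, $x$, each summand is bounded by $|c^\dag(t,n)|\leq\Box e^{-\frac{\kappa}{2}|n|}$, a majorant depending on neither $t\in[0,t_0)$ nor $x\in\mathbb R$. It then suffices to verify $\sum_{n\in\mathbb Z^\nu}e^{-\frac{\kappa}{2}|n|}<\infty$; writing $|n|=\sum_{i=1}^\nu|n_i|$ (or comparing with this norm), the sum factors into $\nu$ copies of the convergent geometric series $\sum_{m\in\mathbb Z}e^{-\frac{\kappa}{2}|m|}=\frac{1+e^{-\kappa/2}}{1-e^{-\kappa/2}}$, so the dominating series converges. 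The Weierstrass $M$-test then yields absolute and uniform convergence on $[0,t_0)\times\mathbb R$.

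Next I would apply the same principle to the remaining series, namely the $\mathfrak p$-fold discrete convolution sums defining the nonlinearity and the tree-expansion series built from the Picard iterates. For the convolution, the exponential decay of each factor together with the convolution (submultiplicativity) estimates already used to prove the decay bound ensures that $\sum_{n_1+\cdots+n_{\mathfrak p}=n}\prod_{j=1}^{\mathfrak p}|c(\tau,n_j)|$ is finite and itself exponentially decaying in $n$, uniformly in $\tau$; summing over $n$ gives absolute convergence of the nonlinear term, and uniformity in $\tau$ follows because the bounding constants are $\tau$-independent. For the tree series, the termwise exponential bounds established during the construction of the Picard sequence supply a summable, $t$- and $x$-independent majorant, so the same $M$-test argument applies verbatim.

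I expect no genuine obstacle here: the essential difficulty has been resolved upstream, and what remains is bookkeeping. The only care required is to confirm in each case that the dominating series does not depend on the continuous variables $t$ and $x$, which is precisely what upgrades absolute convergence to uniform convergence; given the form of the bounds furnished by Theorem~\ref{eethm} and the preceding construction, this is immediate.
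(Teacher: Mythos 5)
There is a genuine gap: your proposal never addresses the series that the theorem is actually about, namely the \emph{termwise-differentiated} series
\[
\partial_x^{\sharp}u^\dag(t,x)=\sum_{n\in\mathbb Z^\nu}({\rm i}\,n\cdot\omega)^{\sharp}c^\dag(t,n)e^{{\rm i}(n\cdot\omega)x}\ (\sharp=1,2,3),
\qquad
\partial_tu^\dag(t,x)=\sum_{n\in\mathbb Z^\nu}\partial_tc^\dag(t,n)e^{{\rm i}(n\cdot\omega)x}.
\]
The purpose of the theorem (see the remark following it in the paper) is to justify that $u^\dag$ is a \emph{classical} solution, i.e., that the formal interchange of $\partial_t$ and $\partial_x$ with $\sum$ throughout the paper is legitimate; for that one needs absolute and uniform convergence of exactly these derivative series. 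Your argument handles the undifferentiated Fourier series of $u^\dag$ (which nobody doubts) and gestures at the tree expansion (whose convergence was already established in the earlier estimates), but the derivative series carry polynomial weights $|n|^{\sharp}$, and for $\partial_t u^\dag$ one must additionally use the ODE to replace $\partial_t c^\dag(t,n)$ by ${\rm i}(n\cdot\omega)^3c^\dag(t,n)-\frac{{\rm i}\,n\cdot\omega}{\mathfrak p}\sum_{n_1+\cdots+n_{\mathfrak p}=n}\prod_j c^\dag(t,n_j)$ before any bound can be applied. Neither step appears in your write-up: the majorant $\Box e^{-\frac{\kappa}{2}|n|}$ you invoke does not by itself dominate $|n|^3\,\Box e^{-\frac{\kappa}{2}|n|}$ summand-by-summand, and your treatment of the nonlinearity omits the prefactor $n\cdot\omega$ entirely.

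The repair is easy and entirely within your $M$-test framework, which is also what the paper does: split the exponential, $|n|^{\sharp}e^{-\frac{\kappa}{2}|n|}=\bigl(|n|^{\sharp}e^{-\frac{\kappa}{4}|n|}\bigr)e^{-\frac{\kappa}{4}|n|}\lesssim e^{-\frac{\kappa}{4}|n|}$, so that after absorbing the polynomial (and, for the nonlinear term, the factor $|n|$ together with the convolution estimate) one is left with a $t$- and $x$-independent summable majorant $\sum_{n}e^{-\frac{\kappa}{4}|n|}\leq(12\kappa^{-1})^{\nu}$. But as written, your proposal proves uniform convergence only of series whose convergence was never in question, and misses the ones for which the theorem exists.
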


\begin{proof}
On the one hand (for the part of spatial derivative), one has
\begin{align*}
\partial_x^{\sharp}u^\dag(t,n)=\sum_{n\in\mathbb Z^\nu}({\rm i}n\cdot\omega)^{\sharp}c^\dag(t,n)e^{{\rm i}(n\cdot\omega)x}, \quad \sharp=1,2,3.
\end{align*}
By the exponential decay property of $c^\dag(t,n)$, for all $\sharp=1,2,3$, we have
\begin{eqnarray*}
|\partial_x^\sharp u^\dag(t,n)|&\leq&\Box|\omega|^\sharp\sum_{n\in\mathbb Z^\nu}|n|^\sharp e^{-\frac{\kappa}{2}|n|}\\
&\leq&\Box|\omega|^\sharp\sum_{n\in\mathbb Z^\nu}|n|^\sharp e^{-\frac{\kappa}{4}|n|}\cdot e^{-\frac{\kappa}{4}|n|}\\
&\lesssim&\sum_{n\in\mathbb Z^\nu}e^{-\frac{\kappa}{4}|n|}\\
&\leq&(12\kappa^{-1})^\nu.
\end{eqnarray*}
On the other hand (for the part of time derivative), one has
\begin{eqnarray*}
\partial_tu^\dag(t,x)&=&\sum_{n\in\mathbb Z^\nu}\partial_tc^\dag(t,n)e^{{\rm i}(n\cdot\omega)x}\\
&=&\sum_{n\in\mathbb Z^\nu}\left\{{\rm i}(n\cdot\omega)^3c^\dag(t,n)-\frac{{\rm i}n\cdot\omega}{\mathfrak p}\sum_{n_1,\cdots,n_\mathfrak p\in\mathbb Z^\nu:\sum_{j=1}^{\mathfrak p}n_j=n}\prod_{j=1}^{\mathfrak p}c^\dag(t,n_j)\right\}
e^{{\rm i}(n\cdot\omega)x}.
\end{eqnarray*}
By the exponential decay of $c(t,n)$, we have
\begin{eqnarray*}
&&|\partial_tu^\dag(t,x)|\\
&\leq&|\omega|^3\sum_{n\in\mathbb Z^\nu}|n|^3|c^\dag(t,n)|+\frac{|\omega|}{\mathfrak p}\sum_{n\in\mathbb Z^\nu}|n|\sum_{n_1,\cdots,n_\mathfrak p\in\mathbb Z^\nu:\sum_{j=1}^{\mathfrak p}n_j=n}\prod_{j=1}^{\mathfrak p}|c^\dag(t,n_j)|\\
&\leq&\Box|\omega|^3\sum_{n\in\mathbb Z^\nu}|n^3 e^{-\frac{\kappa}{2}|n|}+\frac{\Box^{\mathfrak p}|\omega|}{\mathfrak p}\sum_{n\in\mathbb Z^\nu}|n|\sum_{n_1,\cdots,n_\mathfrak p\in\mathbb Z^\nu:\sum_{j=1}^{\mathfrak p}n_j=n}\prod_{j=1}^{\mathfrak p}e^{-\frac{\kappa}{2}|n_j|}\\
&\leq&\Box|\omega|^3\sum_{n\in\mathbb Z^\nu}|n|^3e^{-\frac{\kappa}{4}|n|}\cdot e^{-\frac{\kappa}{4}|n|}+\frac{\Box^{\mathfrak p}|\omega|}{\mathfrak p}\sum_{n\in\mathbb Z^\nu}|n|e^{-\frac{\kappa}{4}|n|}\sum_{n_1,\cdots,n_\mathfrak p\in\mathbb Z^\nu:\sum_{j=1}^{\mathfrak p}n_j=n}\prod_{j=1}^{\mathfrak p}e^{-\frac{\kappa}{4}|n_j|}\\
&\lesssim&\sum_{n\in\mathbb Z^\nu}(|n|+|n|^3)e^{-\frac{\kappa}{4}|n|}\cdot e^{-\frac{\kappa}{4}|n|}\\
&\lesssim&(12\kappa^{-1})^\nu.
\end{eqnarray*}
Hence all the series involved converge absolutely and uniformly (due to the exponential decay of Fourier coefficients). This completes the proof of Theorem \ref{cccc}.
\end{proof}

\begin{rema}
As we have just seen, the infinite series converge absolutely and uniformly since the Fourier coefficients are exponentially decaying. Hence $\partial_tu$ and $\partial_x^{\#} u~(\#=1,2,3)$ are continuous for $0\leq t<t_0, x\in\mathbb R$, that is to say the solution we construct is a solution in the classical sense. Hence the formal derivations throughout this paper are reasonable.
\end{rema}

{\bf (Innovation)}~~We partly generalize the result of \cite{damanik16} from $\mathfrak p=2$ to $\mathfrak p\geq3$. In the process of generalization, we find that it is not a simple and straightforward extension. From the perspective of problems, what we study is a gKdV equation and it has more generality and complexity. From the perspective of results, our result includes and extends their local result.
From the perspective of techniques, we make some modifications to complete the proof.  More precisely, we list them as follows:

${\mathbf 1^{\circ}}$~~In our proof, we modify the power of the constant before the exponential factor assumed for the Fourier coefficients of spatially quasi-periodic initial data, and the definition of $\sigma$. We obtain a similar relation $$\sigma=\ell+\frac{1}{\mathfrak p-1}$$ between $\sigma$ and $\ell$. With the help of this relation, we can successfully complete all the estimates needed. Especially we can combine one sample term $\ast^{\sigma(\gamma)}$ and another sample term $\star^{\ell(\gamma)}$ into the final sample term $\blacksquare^{\ell(\gamma)}$ with a constant $\ast^{\frac{1}{\mathfrak p-1}}$, and complete the estimates for $\sum_{\gamma}\ast^{\sigma(\gamma)}\star^{\ell(\gamma)}$. Otherwise, it may be very difficult to complete these estimates due to the coupling of the indices $\sigma$ and $\ell$. To illustrate the importance of this relation, consider several nonlinear terms such as including $u^2$ and $u^3$. It is impossible to get a similar equality in this case. Obviously the relation for $k=1$ is unique. However, for $k\geq2$, since there are several nonlinear terms, the relation is not the same as for $k=1$. More precisely, we can make only one of the nonlinear terms be the same as the initial relation. Meanwhile the relations for the rest of the nonlinear terms are not the same as the initial relation. This makes the estimates impossible. Luckily, for a single nonlinear term, we can always modify some constant to obtain the relation. Furthermore, we have the exponential decay property and Cauchy property of the Picard sequence, and then complete the proof.

${\mathbf 2^{\circ}}$~~In the process of generalization from the classical convective term $u\partial_xu$ ($\mathfrak p=2$) to the general one, $u^{\mathfrak p-1}\partial_xu$ ($\mathfrak p\geq2$), a main obvious variation is the Descartes product from $2$ to $\mathfrak p$ and it directly leads to the generalization of the definition in \cite{damanik16} of
$$
\mathfrak T^{(k)},\mathfrak N^{(k,\gamma)},\mathfrak F^{(k,\gamma)},\mathfrak I^{(k,\gamma)},
\mathfrak C^{(k,\gamma)},\mathfrak B(n^{(k)}),\mathfrak R^{(k,\gamma)},e^{(k,\gamma)},\mathfrak g^{(k)},\mathbb B^{(k)}
$$
and so forth. It should be emphasized that this is very delicate and many technical difficulties secretly hide behind it. Some definitions of them are obtained only after a great deal of calculation. In addition, on the one hand, for the case of $\mathfrak p=3$, there will be more terms in a sample term
\[
(|n_1|+|n_2|+|n_3|)\cdot(|n_4|+|n_5|+|n_6|)\cdot(|n_7|+|n_8|+|n_9|),
\]
which is associated with the index set $\mathfrak R^{(2,(1,1,1))}$, or more complicated for the case of $\mathfrak p\geq 4$. On the other hand, in the process of estimating $c_k(t,n)-c_{k-1}(t,n)$, we need to interpolate more and more terms as $\mathfrak p$ increases.

{\bf (Skeleton of proof)}~~Before going into the proof of our main results, we give a brief introduction to it. For the sake of simplicity, we divide it into the case of $\mathfrak p=3$ and the general case $\mathfrak p\geq 2$. Generally speaking, we follow the steps in the following diagram to complete the proof of our main results in this paper.
\[
\small
\xymatrix{
\boxed{\text{reduction of a nonlinear PDE to a nonlinear system of infinite coupled ODEs}}\ar[d]^{\text{feedback of nonlinearity}}\\
\boxed{\text{Picard iteration}}\ar[d]\ar[r]^{\text{discrete convolution}}&\boxed{\text{combinatorial tree}}\ar[d]\\
\boxed{\text{Cauchy sequence}}\ar[d]&\boxed{\text{exponential decay}}\ar[l]_{\text{interpolation}}\ar[d]\\
\boxed{\text{local existence}}\ar[d]&\boxed{\text{uniqueness}}\\
\boxed{\text{global problem}}
}
\]

${\mathbf 1^{\circ}}$~~(reduction of a PDE to a system of ODEs)~We formally expand the spatially quasi-periodic solution as a Fourier series and translate $\mathfrak p$-gKdV into an infinite system of nonlinear coupled ODEs \eqref{ded} under the non-resonance condition on the wave vector $\omega$. Hence the PDE can be seen as an infinite-dimensional ODE in the Fourier space.

${\mathbf 2^{\circ}}$~~(Picard iteration)~Due to the feedback of nonlinearity, we construct the Picard sequence \eqref{ppp} to solve the infinite system of coupled nonlinear ODEs.

${\mathbf 3^{\circ}}$~~(combinatorial tree)~Let $N_0=1$ and $N_k=1+N_{k-1}^{\mathfrak p}$, $k\geq1$. From the  formula of the Picard iteration, we know that $N_k$ corresponds to the number of the terms appearing in $c_k$, and consequently $N_k$ will be complicated due to the nonlinearity. This is caused by the operation of higher dimensional discrete convolution. In order to overcome it, we rewrite the Picard sequence \eqref{ppp} as the following combinatorial tree,
      \[c_{k}(t,n)=\sum_{\gamma\in\mathfrak T^{(k)}}\sum_{\substack{n^{(k)}\in\mathfrak N^{(k,\gamma)}\\\mu(n^{(k)})=n}}\mathfrak F^{(k,\gamma)}(n^{(k)})\mathfrak I^{(k,\gamma)}(t,n^{(k)})\mathfrak C^{(k,\gamma)}(n^{(k)}).\]

${\mathbf 4^{\circ}}$~~(exponentially decay)~With the help of combinatorial techniques, we prove that the Picard sequence is exponentially decaying.
  \begin{enumerate}
    \item We first estimate $\mathfrak F^{(k,\gamma)}(n^{(k)}), \mathfrak I^{(k,\gamma)}(t,n^{(k)})$ and $\mathfrak C^{(k,\gamma)}(n^{(k)})$ independently, and obtain the following results:
    \begin{align*}
    |\mathfrak F^{(k,\gamma)}(n^{(k)})|&\leq|\omega|^{\ell(\gamma)}\mathfrak B(n^{(k)}),\\
    |\mathfrak I^{(k,\gamma)}(t,n^{(k)})|&\leq\frac{t^{\ell(\gamma)}}{\mathfrak D(\gamma)},\\
    |\mathfrak C^{(k,\gamma)}(n^{(k)})|&\leq\mathcal A^{\sigma(\gamma)}e^{-\kappa|n|}.
    \end{align*}
    \item Using some combinatorial techniques and introducing a new variable $\alpha$ (an index of combination) to estimate $\mathfrak B(n^{k})$, we find that it can be controlled by the components of $n^{(k)}$ and $\alpha$, that is,
        \[\mathfrak B(n^{(k)})\leq \sum_{\alpha=(\alpha_i)_{1\leq i\leq(\mathfrak p-1)\sigma(\gamma)}\in\mathfrak R^{(k,\gamma)}}\prod_{i}\left|(n^{(k)})_{i}\right|^{\alpha_i}.\]
    \item We will encounter terms of the form
    \[\sum_{{n^{(k)}\in\mathfrak N^{(k,\gamma)}:\mu(n^{(k)})=n}}\prod_i|(n^{(k)})_i|^{\alpha_i}e^{-\kappa|(n^{(k)})_i|}.\]
    To further estimate these terms, we divide $e^{-\kappa|(n^{(k)})_i|}$ into two equivalent parts $e^{-\frac{\kappa}{2}|(n^{(k)})_i|}$. One is used to balance $|(n^{(k)})_i|^{\alpha}$, and consequently the above term can be bounded by \[\sum_{\alpha=(\alpha_i)_{1\leq i\leq(\mathfrak p-1)\sigma(\gamma)}\in\mathfrak R^{(k,\gamma)}}\prod_j\alpha_j!,\] and another is used to control $c_k(t,n)$.
    \item We will  prove that
    \[\sum_{\gamma\in\mathfrak T^{(k)}}\frac{{\bf t}^{\ell(\gamma)}}{\mathfrak D(\gamma)}\sum_{
\alpha=(\alpha_i)_{1\leq i\leq(\mathfrak p-1)\sigma(\gamma)}\in\mathfrak R^{(k,\gamma)}
}\prod_j\alpha_j!\]
is bounded if ${\bf t}$ lies in a suitable interval.
  \end{enumerate}

${\mathbf 5^{\circ}}$~~(Cauchy sequence)~Once we obtain the exponential decay property of the Picard sequence, we use the interpolation method to prove that it is fundamental. Most of the inequalities we use to this end are similar to those used in the estimates leading to exponential decay. One particularly challenging term to estimate is
\[
\sum_{\alpha\in\mathbb R^{(k)}}\prod_j\alpha_j!.
\]
This can be solved by introducing the non-injective mapping $\phi$.


\part{$3$-gKdV}

In this part we prove Theorems~\ref{eethm} and \ref{euthm} in the special case $\mathfrak p = 3$. On the one hand, this is a case of interest in its own right, as it corresponds to the Gardner equation. On the other hand, this is the simplest case beyond the known case $\mathfrak p = 2$, and hence the complexities of the proofs and intricacies in the notation will be significantly reduced, which we hope will allow the reader to follow the individual steps more easily. Once they have been understood, the extension to the case of general $\mathfrak p$, which will be carried out in the next part, will be easier to process.

\section{Preliminaries}

\subsection{Reduction of a PDE to a System of ODEs}

For a given spatially quasi-periodic function $v$ with wave vector $\omega\in\mathbb R^\nu$, consider the following linear problem 
\begin{align}\label{ae}
\partial_tu+\partial_x^3u-v=0.
\end{align}
Assume that $u$ and $v$ have the following Fourier expansion,
\begin{align}
\label{pw} u(t,x)&=\sum_{n\in\mathbb Z^\nu}c(t,n)e^{{\rm i}(n\cdot\omega)x},\\
\label{ve} v(t,x)&=\sum_{n\in\mathbb Z^\nu}g(t,n)e^{{\rm i}(n\cdot\omega)x}.
\end{align}
If $\partial_t$ and $\partial_x$ commute with $\sum_{}$ in the case at hand, then
\begin{align}
\label{tbe}(\partial_tu)(t,x)&=\sum_{n\in\mathbb Z^\nu}\partial_tc(t,n)e^{{\rm i}(n\cdot\omega)x}, \\
\label{xe}(\partial_x^3u)(t,x)&=\sum_{n\in\mathbb Z^\nu}-{\rm i}(n\cdot\omega)^3c(t,n)e^{{\rm i}(n\cdot\omega)x}.
\end{align}
Inserting \eqref{ve}--\eqref{xe} into \eqref{ae}, we find
\begin{align*}
\sum_{n\in\mathbb Z^\nu}\left(\partial_{t}c(t,n)-{\rm i}(n\cdot\omega)^3c(t,n)-g(t,n)\right)e^{{\rm i}n\cdot\omega x}=0.
\end{align*}
If the wave vector $\omega$ is non-resonant, then we can derive the system of ODEs
\begin{align}\label{3ode}
\frac{{\rm d}}{{\rm d}t}c(t,n)-{\rm i}(n\cdot\omega)^3c(t,n)-g(t,n)=0, \quad \forall n\in\mathbb Z^\nu.
\end{align}
Here we replace ``$\partial_t$" by ``$\frac{{\rm d}}{{\rm d}t}$" in order to emphasize that \eqref{3ode} is an ODE for any given $n\in\mathbb Z^\nu$. Hence $3$-gKdV can be regarded as an infinite system of coupled nonlinear ODEs for the Fourier coefficients $c(t,n)$. Obviously, if $g(t,0)=0$, then $c(t,0)=c(0)$. Hence we solve $c(t,n)$ only for $n \in \mathbb Z^\nu\backslash\{0\}$. By the non-resonance condition on the wave vector $\omega$ and the variation of constants formula, the Fourier coefficients $c(t,n)$ for the spatially quasi-periodic solution of  \eqref{3ode} are determined by
\begin{align}\label{pe}
c(t,n)=e^{{\rm i}(n\cdot\omega)^3t}c(n)+\int_{0}^{t}e^{{\rm i}(n\cdot\omega)^3(t-\tau)}g(\tau,n){\rm d}\tau, \quad\forall n\in\mathbb Z^\nu\backslash\{0\}.
\end{align}

\subsection{Fourier Coefficients $g(t,n)$ of the Nonlinear Term $-u^2\partial_xu$}

It is easy to see that
\[
v=-u^2\partial_xu=\partial_x\left(-\frac{u^3}{3}\right).
\]
By the Cauchy product of infinite series (i.e., discrete convolution), we have
\[
u^3(t,x)=\sum_{n\in\mathbb Z^\nu}\sum_{p,q,r\in\mathbb Z^\nu:~p+q+r=n}c(t,p)c(t,q)c(t,r)e^{{\rm i}(n\cdot\omega)x}.
\]
Analogously, assuming the commutativity of $\partial_x$ and $\sum_{}$, one can derive that
\[
\partial_x\left(-\frac{u^3}{3}\right)=\sum_{n\in\mathbb Z^\nu}-\frac{{\rm i}n\cdot\omega}{3}\sum_{p,q,r\in\mathbb Z^\nu:~p+q+r=n}c(t,p)c(t,q)c(t,r)e^{{\rm i}(n\cdot\omega)x},
\]
that is,
\begin{align}\label{ge}
g(t,n)=-\frac{{\rm i}n\cdot\omega}{3}\sum_{p,q,r\in\mathbb Z^\nu:~p+q+r=n}c(t,p)c(t,q)c(t,r).
\end{align}

\subsection{Picard Iteration for $c(t,n)$}

Inserting \eqref{ge} into \eqref{pe}, one can see that the Fourier coefficients $c(t,n), n\in\mathbb  Z^\nu\backslash\{0\}$, of \eqref{3ode} are determined by the following integral equation,
\[
c(t,n)=e^{{\rm i}(n\cdot\omega)^3t}c(n)-\frac{{\rm i}n\cdot\omega}{3}\int_{0}^{t}e^{{\rm i}(n\cdot\omega)^3(t-\tau)}\sum_{p,q,r\in\mathbb Z^\nu:~ p+q+r=n}c(\tau,p)c(\tau,q)c(\tau,r){\rm d}\tau, \quad\forall n\in\mathbb Z^\nu\backslash\{0\}.
\]
Define the following Picard iteration,
\begin{align}
&c_k(t,n):=\nonumber\\
&
\begin{cases}
e^{{\rm i}(n\cdot\omega)^3t}c(n),&k=0;\\
\label{ppp}c_0(t,n)-\frac{{\rm i}n\cdot\omega}{3}\int_{0}^{t}e^{{\rm i}(n\cdot\omega)^3(t-\tau)}\sum_{p,q,r\in\mathbb Z^\nu:~ p+q+r=n}c_{k-1}(\tau,p)c_{k-1}(\tau,q)c_{k-1}(\tau,r){\rm d}\tau,&k\geq1.
\end{cases}
\end{align}
If $\{c_k(t,n)\}$ is a Cauchy sequence, then we can define its limit as the Fourier coefficient $c(t,n)$. Furthermore, under some condition of $c(t,n)$, e.g.\ exponential decay in $n$ uniformly with respect to $t$, $\partial_t$ and $\partial_x$ do indeed commute with $\sum_{}$, see Theorem~\ref{cccc}, and hence the formal derivation above can be justified in this way.

\section{Estimates for $c_k(t,n)$}

In this section, we will use some combinatorial estimates to prove that $c_k(t,n)$ is exponentially decaying as well, assuming that $c(n)$ of the spatially quasi-periodic initial
datum is exponentially decaying.

\begin{prop}\label{thm}
If $c(n)$ is exponentially decaying in the sense that there exist two constants $\mathcal A>0$ and $0<\kappa\leq1$ such that
$$
|c(n)|\leq \mathcal A^{1/2}e^{-\kappa|n|}, \quad\forall n\in\mathbb Z^\nu,
$$
then for any $k=0,1,\cdots$, we have
\[
|c_{k}(t,n)|\leq\Box e^{-\frac{\kappa}{2}|n|}, \quad \text{\rm for all}\quad 0<t\leq\frac{\kappa^{2\nu+1}}{16\mathcal A|\omega|6^{2\nu+1}},
\]
where $\Box=2(6\kappa^{-1})^\nu\mathcal A^{\frac{1}{2}}.$
\end{prop}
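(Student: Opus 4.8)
The plan is to prove the bound by induction on $k$, but to recognize at the outset that the naive one-step induction at rate $\tfrac{\kappa}{2}$ cannot close. Indeed, if one only knows $|c_{k-1}(\tau,m)|\le\Box e^{-\frac{\kappa}{2}|m|}$, then the cubic term in \eqref{ppp} is controlled by $\Box^3\sum_{p+q+r=n}e^{-\frac{\kappa}{2}(|p|+|q|+|r|)}$, and since the minimum $|p|+|q|+|r|=|n|$ is actually attained (take $p=n$, $q=r=0$), there is no exponential slack left to absorb the polynomial factor $|n\cdot\omega|$ or to make the free sum converge. The remedy, which is what I would carry out, is to unfold the Picard iteration \emph{all the way down to the initial data}, where the full rate $\kappa$ is available at every leaf, i.e.\ to use the combinatorial tree representation
\[
c_{k}(t,n)=\sum_{\gamma\in\mathfrak T^{(k)}}\sum_{\substack{n^{(k)}\in\mathfrak N^{(k,\gamma)}\\\mu(n^{(k)})=n}}\mathfrak F^{(k,\gamma)}(n^{(k)})\,\mathfrak I^{(k,\gamma)}(t,n^{(k)})\,\mathfrak C^{(k,\gamma)}(n^{(k)}),
\]
in which the entries of $n^{(k)}$ are the leaf frequencies and each leaf carries one factor of the initial coefficient.

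Next I would estimate the three factors separately. The frequency factor satisfies $|\mathfrak F^{(k,\gamma)}(n^{(k)})|\le|\omega|^{\ell(\gamma)}\mathfrak B(n^{(k)})$, gathering the $\ell(\gamma)$ copies of $n\cdot\omega$ created by the $\ell(\gamma)$ internal nodes; the nested time integral satisfies $|\mathfrak I^{(k,\gamma)}(t,n^{(k)})|\le t^{\ell(\gamma)}/\mathfrak D(\gamma)$ with $\mathfrak D(\gamma)$ the factorial-type denominator produced by the nesting; and the product of leaf coefficients satisfies $|\mathfrak C^{(k,\gamma)}(n^{(k)})|\le\mathcal A^{\sigma(\gamma)}e^{-\kappa\sum_{\text{leaves}}|n_{\text{leaf}}|}\le\mathcal A^{\sigma(\gamma)}e^{-\kappa|n|}$, the last inequality using $\mu(n^{(k)})=n$ and $\sum_{\text{leaves}}|n_{\text{leaf}}|\ge|n|$. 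I would then split $e^{-\kappa|n_{\text{leaf}}|}=e^{-\frac{\kappa}{2}|n_{\text{leaf}}|}e^{-\frac{\kappa}{2}|n_{\text{leaf}}|}$ at each leaf, using one half (via $\sum_{\text{leaves}}|n_{\text{leaf}}|\ge|n|$) to produce the target factor $e^{-\frac{\kappa}{2}|n|}$ and keeping the other half $e^{-\frac{\kappa}{2}\sum_{\text{leaves}}|n_{\text{leaf}}|}$ to spend on the summation.

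The heart of the argument is then a summation lemma. Writing $\mathfrak B(n^{(k)})\le\sum_{\alpha\in\mathfrak R^{(k,\gamma)}}\prod_i|(n^{(k)})_i|^{\alpha_i}$ as a sum of monomials in the leaf frequencies (here $\alpha$ has $(\mathfrak p-1)\sigma(\gamma)=2\sigma(\gamma)$ components, one per leaf), I would estimate the remaining frequency sum coordinatewise: splitting each reserved $e^{-\frac{\kappa}{2}|(n^{(k)})_i|}$ once more, the elementary bound $|m|^{\alpha}e^{-\frac{\kappa}{4}|m|}\lesssim\alpha!\,\kappa^{-\alpha}$ lets one quarter of the decay dominate the polynomial weight and produce a factor $\prod_j\alpha_j!$ together with powers of $\kappa^{-1}$, while the final quarter makes each free leaf-frequency sum converge, contributing $\sum_{m\in\mathbb Z^\nu}e^{-\frac{\kappa}{4}|m|}\le(12\kappa^{-1})^\nu$. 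What survives is $e^{-\frac{\kappa}{2}|n|}$ times powers of $\kappa^{-1}$ times $\sum_{\alpha\in\mathfrak R^{(k,\gamma)}}\prod_j\alpha_j!$.

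Assembling everything, the proof reduces to showing that the tree sum
\[
\sum_{\gamma\in\mathfrak T^{(k)}}|\omega|^{\ell(\gamma)}\frac{t^{\ell(\gamma)}}{\mathfrak D(\gamma)}\,\mathcal A^{\sigma(\gamma)}\sum_{\alpha\in\mathfrak R^{(k,\gamma)}}\prod_j\alpha_j!
\]
is bounded uniformly in $k$; this is the step I expect to be the main obstacle, since the factorial numerators $\prod_j\alpha_j!$ and the rapidly growing number of trees (recall $N_k=1+N_{k-1}^{3}$) must be defeated by the factorial denominators $\mathfrak D(\gamma)$ and by the smallness of $t$. The relation $\sigma(\gamma)=\ell(\gamma)+\tfrac12$ (the $\mathfrak p=3$ instance of $\sigma=\ell+\tfrac{1}{\mathfrak p-1}$) is what makes this work: it lets one repackage $\mathcal A^{\sigma(\gamma)}=\mathcal A^{1/2}\mathcal A^{\ell(\gamma)}$ so that each node carries one fixed power of a quantity proportional to $\mathcal A|\omega|t\,\kappa^{-(2\nu+1)}$, and the threshold $t\le\kappa^{2\nu+1}/(16\mathcal A|\omega|6^{2\nu+1})$ is tuned precisely so that this per-node factor is at most $\tfrac12$, collapsing the tree sum into a convergent geometric-type series of total size at most $2$. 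Combined with the $k=0$ contribution $\mathcal A^{1/2}e^{-\frac{\kappa}{2}|n|}$ and the prefactor $(6\kappa^{-1})^\nu\mathcal A^{1/2}$ coming from $\sum_{m\in\mathbb Z}e^{-\frac{\kappa}{2}|m|}\le6\kappa^{-1}$, this yields exactly $|c_k(t,n)|\le\Box e^{-\frac{\kappa}{2}|n|}$ with $\Box=2(6\kappa^{-1})^\nu\mathcal A^{1/2}$. Everything preceding this final resummation is careful but routine bookkeeping.
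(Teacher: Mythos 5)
Your proposal follows the paper's own proof essentially lemma-for-lemma: the tree expansion (Lemma \ref{le2}), the three separate bounds on $\mathfrak F$, $\mathfrak I$, $\mathfrak C$ (Lemmas \ref{fe}, \ref{iighe}, \ref{ee}), the half-splitting of the exponential weights to extract $e^{-\frac{\kappa}{2}|n|}$ (Lemma \ref{4}), the monomial expansion of $\mathfrak B$ with multi-indices and factorials (Lemma \ref{iii}, Property \ref{pro}), the relation $\sigma(\gamma)=\ell(\gamma)+\tfrac12$ (Property \ref{re}), and your final ``geometric collapse'' of the weighted tree sum to a bound of $2$ for ${\bf t}\le\tfrac{1}{16}$, which is precisely Lemma \ref{lmmg}. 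The one caveat is constant bookkeeping: your half-then-quarter splitting yields $(12\kappa^{-1})$-type factors, which is in fact what the paper's own proof of Lemma \ref{4} produces even though its statement (and hence the constants $6^{2\nu+1}$ and $\Box$ in Proposition \ref{thm}) records $(6\kappa^{-1})$, so this discrepancy is inherited from the paper rather than being a structural flaw in your plan.
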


Before proving Proposition~\ref{thm}, we discuss the following combinatorial technique and some useful lemmas; compare \cite{christ07, damanik16}.

\subsection{Combinatorial Tree for $c_{k}(t,n)$}

In order to show that the Picard sequence $c_k(t,n)$ is fundamental (i.e., a Cauchy sequence), we need to estimate some quantities, such as $|c_{k}(t,n)-c_{k-1}(t,n)|$, $|c_{k+\bigstar}(t,n) - c_k(t,n)|$ and $|c_k(t,n)|$. Notice that the right-hand side of the Picard iteration includes the operation of higher-dimensional discrete convolution, $c\ast c\ast c$~$($total distance$)$, and hence the resulting expressions will be very complicated after several iterations and we cannot present the iteration progress clearly.  Applying the combinatorial technique from \cite{christ07,damanik16}, we can reformulate $c_k(t,n)$ as a tree to overcome this difficulty, because in this representation the iteration progress can be seen more clearly.

Throughout of this paper, for a multi-index $\alpha=(\alpha_1,\cdots,\alpha_d)$, where $\alpha_j\in\mathbb N$, denote the length of $\alpha$ by $|\alpha|$, that is, $|\alpha| = \sum_{j=1}^{d} \alpha_j$. For a vector $n = (\clubsuit_1, \clubsuit_2) \in \copyright_1 \times \copyright_2$, set $|n| = |\clubsuit_1| + |\clubsuit_2|$ (we use the same symbol $|\cdot|$ for the length of a multi-index and the size of a vector; this should not lead to any confusion).

Define the function $\mu : (\mathbb R^\nu)^r \rightarrow \mathbb R^\nu$ by
$$
\mu(n^{(1)},\cdots,n^{(r)}) := \sum_{j=1}^{r} n^{(j)}
$$
and set
\begin{align*}
\mathfrak T^{(1)}&:=\{0,1\}, \quad \mathfrak T^{(k)}:=\{0\}\cup\mathfrak T^{(k-1)}\times\mathfrak T^{(k-1)}\times\mathfrak T^{(k-1)}, k\geq2, \\
\mathfrak N^{(k,\gamma)}&:=
\begin{cases}
\mathbb Z^\nu, &\gamma=0\in\mathfrak T^{(k)}, k\geq 1;\\
\mathbb Z^\nu\times\mathbb Z^\nu\times\mathbb Z^\nu, &\gamma=1\in\mathfrak T^{(1)};\\
\prod_{j=1}^3\mathfrak N^{(k-1,\gamma_j)}, &\gamma=(\gamma_j)_{1\leq j\leq3}\in\prod_{j=1}^3\mathfrak T^{(k-1)}, k\geq 2,
\end{cases}\\
\mathfrak F^{(k,\gamma)}(n^{(k)})&:=
\begin{cases}
1, &\gamma=0\in\mathfrak T^{(k)}, n^{(k)}\in\mathfrak N^{(k,0)}, k\geq 1;\\
-\frac{{\rm i}\mu(n^{(1)})\cdot\omega}{3}, &\gamma=1\in\mathfrak T^{(1)}, n^{(1)}\in\mathfrak N^{(1,1)};\\
-\frac{{\rm i}\mu(n^{(k)})\cdot\omega}{3}\prod_{j=1}^{3}\mathfrak F^{(k-1,\gamma_j)}(n_j), &\gamma=(\gamma_j)_{1\leq j\leq 3}\in\prod_{j=1}^3\mathfrak T^{(k-1)},\\
&n^{(k)}=(n_j)_{1\leq j\leq 3}\in\mathfrak N^{(k,\gamma)}, k\geq 2,
\end{cases}\\
\mathfrak I^{(k,\gamma)}(t,n^{(k)})&:=
\begin{cases}
e^{{\rm i}\left(\mu(n^{(k)})\cdot\omega\right)^3t}, &\gamma=0\in\mathfrak T^{(k)}, n^{(k)}\in\mathfrak N^{(k,0)},k\geq 1;\\
\int_0^te^{{\rm i}\left(\mu(n^{(1)})\cdot\omega\right)^3(t-\tau)}\prod_{j=1}^3e^{{\rm i}(n_j\cdot\omega)^3\tau}{\rm d}\tau, &\gamma=1\in\mathfrak T^{(1)},\\
&n^{(1)}=(n_j)_{1\leq j\leq3}\in\mathfrak N^{(1,1)};\\
\int_0^te^{{\rm i}\left(\mu(n^{(k)}\cdot\omega\right)^3(t-\tau)}\prod_{j=1}^3\mathfrak I^{(k-1,\gamma_j)}(\tau,n_j){\rm d}\tau, &\gamma=(\gamma_j)_{1\leq j\leq 3}\in\prod_{j=1}^3\mathfrak T^{(k-1)},\\
&n^{(k)}=(n_j)_{1\leq j\leq3}\in\mathfrak N^{(k,\gamma)}, k\geq 2,
\end{cases}\\
\mathfrak C^{(k,\gamma)}(t,n^{(k)})&:=
\begin{cases}
c(n^{(k)}), &\gamma=0\in\mathfrak T^{(k)}, n^{(k)}\in\mathfrak N^{(k,0)},k\geq 1;\\
\prod_{j=1}^3c(n_j), &\gamma=1\in\mathfrak T^{(1)}, n^{(1)}=(n_j)_{1\leq j\leq3}\in\mathfrak N^{(1,1)};\\
\prod_{j=1}^3\mathfrak C^{(k-1,\gamma_j)}(n_j), &\gamma=(\gamma_j)_{1\leq j\leq 3}\in\prod_{j=1}^3\mathfrak T^{(k-1)},\\
&n^{(k)}=(n_j)_{1\leq j\leq3}\in\mathfrak N^{(k,\gamma)}, k\geq 2.
\end{cases}
\end{align*}
With the help of these abstract symbols, the Picard sequence $c_k(t,n)$ can be represented as a tree. Roughly speaking, every term in $c_k(t,n)$ can be regarded as a branch $\gamma$ of the tree $\mathfrak T^{(k)}$, and hence $c_k(t,n)$ can be viewed as a sum in which the summation index is the branch $\gamma$, which runs over the tree $\mathfrak T^{(k)}$. Each branch $\gamma\in\mathfrak T^{(k)}$ can be split over the following restriction:
total distance $=n$.  We make this explicit in the following lemma.

\begin{lemm}\label{le2}
For the Picard sequence $c_k(t,n)$, we have
\begin{align}\label{lke}
c_k(t,n)=\sum_{\gamma\in\mathfrak T^{(k)}}\sum_{n^{(k)}\in\mathfrak N^{(k,\gamma)}:~\mu(n^{(k)})=n}\mathfrak F^{(k,\gamma)}(n^{(k)})\mathfrak I^{(k,\gamma)}(t,n^{(k)})\mathfrak C^{(k,\gamma)}(n^{(k)}),\quad \forall k\geq1.
\end{align}
\end{lemm}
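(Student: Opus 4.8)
The plan is to prove \eqref{lke} by induction on $k$, matching the recursive definitions of $\mathfrak T^{(k)},\mathfrak N^{(k,\gamma)},\mathfrak F^{(k,\gamma)},\mathfrak I^{(k,\gamma)},\mathfrak C^{(k,\gamma)}$ against the two-part structure of the Picard iteration \eqref{ppp}. For the base case $k=1$, I would split the right-hand side of \eqref{lke} over $\mathfrak T^{(1)}=\{0,1\}$. The branch $\gamma=0$ forces $n^{(1)}=n$ and contributes $\mathfrak F^{(1,0)}\mathfrak I^{(1,0)}\mathfrak C^{(1,0)}=e^{{\rm i}(n\cdot\omega)^3t}c(n)=c_0(t,n)$. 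The branch $\gamma=1$, after imposing $\mu(n^{(1)})=n$ (i.e. $n_1+n_2+n_3=n$), reproduces exactly the integral term of $c_1$: inserting $c_0(\tau,n_j)=e^{{\rm i}(n_j\cdot\omega)^3\tau}c(n_j)$ into \eqref{ppp}, the prefactor $-\tfrac{{\rm i}n\cdot\omega}{3}$ and the product $\prod_j c(n_j)$ become $\mathfrak F^{(1,1)}$ and $\mathfrak C^{(1,1)}$, while the kernel $e^{{\rm i}(n\cdot\omega)^3(t-\tau)}$ together with the exponentials $\prod_j e^{{\rm i}(n_j\cdot\omega)^3\tau}$ assembles into $\mathfrak I^{(1,1)}$.

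For the inductive step, assume \eqref{lke} at level $k-1$ and substitute it for each of the three factors $c_{k-1}(\tau,p),c_{k-1}(\tau,q),c_{k-1}(\tau,r)$ in the convolution sum of \eqref{ppp}. The $\gamma=0$ summand of $\mathfrak T^{(k)}$ again reproduces $c_0(t,n)$ exactly as in the base case (here $\mu(n^{(k)})=n$ forces $n^{(k)}=n$, and $\mathfrak F^{(k,0)}\mathfrak I^{(k,0)}\mathfrak C^{(k,0)}=e^{{\rm i}(n\cdot\omega)^3t}c(n)$), so it remains to match the nonlinear term. Writing $p,q,r$ as $\mu(m^{(1)}),\mu(m^{(2)}),\mu(m^{(3)})$ with $m^{(j)}\in\mathfrak N^{(k-1,\gamma_j)}$, I would merge the outer constraint $p+q+r=n$ with the three inner constraints $\mu(m^{(j)})=p,q,r$ into the single constraint $\mu(n^{(k)})=n$ for $n^{(k)}:=(m^{(1)},m^{(2)},m^{(3)})\in\mathfrak N^{(k,\gamma)}$, where $\gamma=(\gamma_1,\gamma_2,\gamma_3)$; this uses the additivity $\mu(n^{(k)})=\sum_{j}\mu(m^{(j)})$ built into the definition of $\mu$.

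Grouping the resulting product of three tree-expansions by its three structural pieces then realizes the recursion verbatim: the frequency prefactor $-\tfrac{{\rm i}n\cdot\omega}{3}=-\tfrac{{\rm i}\mu(n^{(k)})\cdot\omega}{3}$ times $\prod_j\mathfrak F^{(k-1,\gamma_j)}(m^{(j)})$ is $\mathfrak F^{(k,\gamma)}(n^{(k)})$; the time integral $\int_0^t e^{{\rm i}(\mu(n^{(k)})\cdot\omega)^3(t-\tau)}\prod_j\mathfrak I^{(k-1,\gamma_j)}(\tau,m^{(j)})\,{\rm d}\tau$ is $\mathfrak I^{(k,\gamma)}(t,n^{(k)})$; and $\prod_j\mathfrak C^{(k-1,\gamma_j)}(m^{(j)})$ is $\mathfrak C^{(k,\gamma)}(n^{(k)})$. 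Summing over $(\gamma_1,\gamma_2,\gamma_3)\in(\mathfrak T^{(k-1)})^3=\mathfrak T^{(k)}\setminus\{0\}$ and adjoining the $\gamma=0$ term yields \eqref{lke} at level $k$.

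The main obstacle I anticipate is bookkeeping rather than analysis: one must verify that the prefactor $-\tfrac{{\rm i}n\cdot\omega}{3}$ and the kernel $e^{{\rm i}(n\cdot\omega)^3(t-\tau)}$, which are attached to the \emph{root} $n=\mu(n^{(k)})$ and not to the sub-distances, are correctly absorbed into $\mathfrak F^{(k,\gamma)}$ and the outermost layer of $\mathfrak I^{(k,\gamma)}$---this is precisely why those two objects are defined through $\mu(n^{(k)})$ rather than through the leaves. A secondary point is the interchange of the finite product of the three (a priori infinite) inner sums; this is legitimate because the exponential decay of $c(\cdot)$ makes every series here absolutely convergent, as confirmed a posteriori by Theorem~\ref{cccc} and the estimates of the following section, so at this formal stage the rearrangement is harmless.
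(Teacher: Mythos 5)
Your proposal is correct and takes essentially the same route as the paper's own proof: the same induction on $k$, the same base-case matching of the two branches of $\mathfrak T^{(1)}$ to the two terms of $c_1$, the same substitution of the level-$(k-1)$ tree expansion into the three convolution factors, the same merging of the outer constraint $p+q+r=n$ with the inner constraints $\mu(m^{(j)})=p,q,r$ into the single constraint $\mu(n^{(k)})=n$, and the same regrouping of the prefactor, kernel, and coefficient products into $\mathfrak F^{(k,\gamma)}$, $\mathfrak I^{(k,\gamma)}$, $\mathfrak C^{(k,\gamma)}$. Your closing remark on absolute convergence justifying the rearrangement is a small addition the paper leaves implicit, but it changes nothing in the argument.
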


\begin{proof}
For $k=1$, we have
\begin{eqnarray*}
c_1(t,n)&=&c_0(t,n)-\frac{{\rm i}n\cdot\omega}{3}\int_0^te^{{\rm i}(n\cdot\omega)^3(t-\tau)}\sum_{n_1,n_2,n_3\in\mathbb Z^\nu:~n_1+n_2+n_3=n}\prod_{j=1}^3c_0(n_j){\rm d}\tau\\
&=&\sum_{n^{(1)}\in\mathfrak N^{(1,0)}:~\mu(n^{(1)})=n}1\cdot e^{{\rm i}(\mu(n^{(1)})\cdot\omega)^3t}\cdot\mathfrak C^{(1,0)}(n^{(1)})\\
&+&\sum_{n^{(1)}\in\mathfrak N^{(1,1)}:~\mu(n^{(1)})=n}-\frac{{\rm i}\mu(n^{(1)})\cdot\omega}{3}\cdot\int_0^te^{{\rm i}(\mu(n^{(1)})\cdot\omega)^3(t-\tau)}{\rm d}\tau\cdot\mathfrak C^{(1,1)}(n^{(1)})\\
&=&\sum_{\gamma\in\mathfrak T^{(1)}}\sum_{n^{(1)}\in\mathfrak N^{(1,\gamma)}:~\mu(n^{(1)})=n}\mathfrak F^{(1,\gamma)}(n^{(1)})\mathfrak I^{(1,\gamma)}(t,n^{(1)})\mathfrak C^{(1,\gamma)}(n^{(1)}).
\end{eqnarray*}
This shows that \eqref{lke} holds for $k=1$. Let $k\geq2$. Assume that it holds for all $1\leq k^\prime\leq k-1$. For $k$, we have
\begin{align}\label{lhe}
c_k(t,n)=c_0(t,n)-\frac{{\rm i}n\cdot\omega}{3}\int_0^te^{{\rm i}(n\cdot\omega)^3(t-\tau)}\sum_{p,q,r\in\mathbb Z^\nu:~ p+q+r=n}c_{k-1}(\tau,p)c_{k-1}(\tau,q)c_{k-1}(\tau,r){\rm d}\tau.
\end{align}
After some calculation, we obtain
\begin{eqnarray}
\nonumber&&c_{k-1}(\tau,p)c_{k-1}(\tau,q)c_{k-1}(\tau,r)\\
\nonumber&=&\sum_{\gamma_1\in\mathfrak T^{(k-1)}}\sum_{\clubsuit\in\mathfrak N^{(k-1,\gamma_1)}:~\mu(\clubsuit)=p}\mathfrak F^{(k-1,\gamma_1)}(\clubsuit)\mathfrak I^{(k-1,\gamma_1)}(\tau,\clubsuit)\mathfrak C^{(k-1,\gamma_1)}(\clubsuit)\\
\nonumber&\times&\sum_{\gamma_2\in\mathfrak T^{(k-1)}}\sum_{\spadesuit\in\mathfrak N^{(k-1,\gamma_2)}:~\mu(\spadesuit)=q}\mathfrak F^{(k-1,\gamma_2)}(\spadesuit)\mathfrak I^{(k-1,\gamma_2)}(\tau,\spadesuit)\mathfrak C^{(k-1,\gamma_2)}(\spadesuit)\\
\nonumber&\times&\sum_{\gamma_3\in\mathfrak T^{(k-1)}}\sum_{\heartsuit\in\mathfrak N^{(k-1,\gamma_3)}:~\mu(\heartsuit)=r}\mathfrak F^{(k-1,\gamma_3)}(\heartsuit)\mathfrak I^{(k-1,\gamma_3)}(\tau,\heartsuit)\mathfrak C^{(k-1,\gamma_3)}(\heartsuit)\\
\nonumber&=&\sum_{\substack{\{(\gamma_1,\gamma_2,\gamma_3)\in\mathfrak T^{(k-1)}\\\times\mathfrak T^{(k-1)}\times\mathfrak T^{(k-1)}\}}}\sum_{\substack{\{(\clubsuit,\spadesuit,\heartsuit)\in\mathfrak N^{(k-1,\gamma_1)}\times\\\mathfrak N^{(k-1,\gamma_2)}\times\mathfrak N^{(k-1,\gamma_3)}:\\\mu(\clubsuit,\spadesuit,\heartsuit)=p+q+r\}}}\mathfrak F^{(k-1,\gamma_1)}(\clubsuit)\mathfrak F^{(k-1,\gamma_2)}(\spadesuit)\mathfrak F^{(k-1,\gamma_3)}(\heartsuit)\\
&\times&\mathfrak I^{(k-1,\gamma_1)}(\tau,\clubsuit)\mathfrak I^{(k-1,\gamma_2)}(\tau,\spadesuit)\mathfrak I^{(k-1,\gamma_3)}(\tau,\heartsuit)\nonumber\\
\label{be}&\times&\mathfrak C^{(k-1,\gamma_1)}(\clubsuit)\mathfrak C^{(k-1,\gamma_2)}(\spadesuit)\mathfrak C^{(k-1,\gamma_3)}(\heartsuit).
\end{eqnarray}
Inserting \eqref{be} into \eqref{lhe}, we find
\begin{eqnarray*}
&&c_k(t,n)\\
&=&c_0(t,n)+\sum_{\substack{\{(\gamma_1,\gamma_2,\gamma_3)\in\mathfrak T^{(k-1)}\\\times\mathfrak T^{(k-1)}\times\mathfrak T^{(k-1)}\}}}\sum_{\substack{\{p,q,r\in\mathbb Z^\nu:\\p+q+r=n\}}}\sum_{\substack{\{(\clubsuit,\spadesuit,\heartsuit)\in\mathfrak N^{(k-1,\gamma_1)}\times\\\mathfrak N^{(k-1,\gamma_2)}\times\mathfrak N^{(k-1,\gamma_3)}:\\\mu(\clubsuit,\spadesuit,\heartsuit)=p+q+r\}}}\mathfrak C^{(k-1,\gamma_1)}(\clubsuit)\mathfrak C^{(k-1,\gamma_2)}(\spadesuit)\mathfrak C^{(k-1,\gamma_3)}(\heartsuit)\\
&\times&\int_0^te^{{\rm i}(\mu(\clubsuit,\spadesuit,\heartsuit)\cdot\omega)^3(t-\tau)}\mathfrak I^{(k-1,\gamma_1)}(\tau,\clubsuit)\mathfrak I^{(k-1,\gamma_2)}(\tau,\spadesuit)\mathfrak I^{(k-1,\gamma_3)}(\tau,\heartsuit){\rm d}\tau\\
&\times&\left\{-\frac{{\rm i}\mu(\clubsuit,\spadesuit,\heartsuit)\cdot\omega}{3}\right\}\mathfrak F^{(k-1,\gamma_1)}(\clubsuit)\mathfrak F^{(k-1,\gamma_2)}(\spadesuit)\mathfrak F^{(k-1,\gamma_3)}(\heartsuit)\\
&=&\sum_{n^{(k)}\in\mathfrak N^{(k,0)}:~\mu(n^{(k)})=n}\mathfrak F^{(k,0)}(n^{(k)})\mathfrak I^{(k,0)}(t,n^{(k)})\mathfrak C^{(k,0)}(n^{(k)})\\
&+&\sum_{n^{(k)}\in\mathfrak N^{(k,\gamma)},\gamma\in\mathfrak T^{(k)}\backslash\{0\}:~\mu(n^{(k)})=n}\mathfrak F^{(k,\gamma)}(n^{(k)})\mathfrak I^{(k,\gamma)}(t,n^{(k)})\mathfrak C^{(k,\gamma)}(n^{(k)})\\
&=&\sum_{\gamma\in\mathfrak T^{(k)}}\sum_{n^{(k)}\in\mathfrak N^{(k,\gamma)}:~\mu(n^{(k)})=n}\mathfrak F^{(k,\gamma)}(n^{(k)})\mathfrak I^{(k,\gamma)}(t,n^{(k)})\mathfrak C^{(k,\gamma)}(n^{(k)}).
\end{eqnarray*}
This completes the proof of Lemma \ref{le2}.
\end{proof}

\begin{rema}
By the definition of $\mathfrak C$, $\mathfrak F$, and $\mathfrak I$, it is clear that $\mathfrak C$ is closely connected with the Fourier coefficients $c$ of the quasi-periodic initial data, and $\mathfrak F\cdot\mathfrak I$ is the rest of the Picard sequence. Moreover, $\mathfrak F$ is independent of the time variable, while $\mathfrak I$ does depend on the time variable. This separation property is one of the advantages provided by the combinatorial technique we employ to estimate the formal power series at hand.
\end{rema}

\subsection{Estimates of $\mathfrak F$, $\mathfrak I$ and $\mathfrak C$}

\begin{lemm}[Estimates of $\mathfrak F$]\label{fe}
We have
\begin{align}\label{dcse}
|\mathfrak F^{(k,\gamma)}(n^{(k)})|\leq|\omega|^{\ell(\gamma)}\mathfrak B(n^{(k)}),\quad\forall k\geq1,
\end{align}
where
$$
\ell(\gamma) :=
\begin{cases}
0&\gamma=0\in\mathfrak T^{(k)},k\geq1;\\
1&\gamma=1\in\mathfrak T^{(1)};\\
\ell(\gamma_1)+\ell(\gamma_2)+\ell(\gamma_3)+1,&\gamma=(\gamma_1,\gamma_2,\gamma_3)\in\mathfrak T^{(k-1)}\times\mathfrak T^{(k-1)}\times\mathfrak T^{(k-1)}, k\geq2,
\end{cases}
$$
and
$$
\mathfrak B(n^{(k)}) :=
\begin{cases}
1&\gamma=0\in\mathfrak T^{(k)},k\geq1;\\
|\mu(n^{(1)})|&\gamma=1\in\mathfrak T^{(1)};\\
|\mu(n^{(k)})|\mathfrak B(n_1)\mathfrak B(n_2)\mathfrak B(n_3),&\gamma=(\gamma_j)_{1\leq j\leq 3}\in\prod_{j=1}^3\mathfrak T^{(k-1)},\\
&n^{(k)}=(n_j)_{1\leq j\leq3}\in\mathfrak N^{(k,\gamma)}, k\geq 2.
\end{cases}
$$
\end{lemm}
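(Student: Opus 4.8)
The plan is to argue by induction on $k$, exploiting that the recursions defining $\ell(\gamma)$ and $\mathfrak B(n^{(k)})$ were set up precisely to mirror the recursion defining $\mathfrak F^{(k,\gamma)}$. The only genuinely analytic ingredient is the elementary Cauchy--Schwarz-type bound $|\mu(n^{(k)})\cdot\omega|\leq|\mu(n^{(k)})|\,|\omega|$ for the inner product of two vectors in $\mathbb R^\nu$, combined with the harmless observation that the prefactor $\tfrac{1}{3}$ satisfies $\tfrac{1}{3}\leq 1$; everything else is bookkeeping of exponents and products.

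For the base case $k=1$ there are two branches. When $\gamma=0$ we have $\mathfrak F^{(1,0)}(n^{(1)})=1$, while $\ell(0)=0$ and $\mathfrak B(n^{(1)})=1$, so the claimed inequality reads $1\leq|\omega|^{0}\cdot 1$, which holds with equality. When $\gamma=1$ we have $\mathfrak F^{(1,1)}(n^{(1)})=-\tfrac{\mathrm i\,\mu(n^{(1)})\cdot\omega}{3}$, so
$$
|\mathfrak F^{(1,1)}(n^{(1)})|=\frac{|\mu(n^{(1)})\cdot\omega|}{3}\leq\frac{|\mu(n^{(1)})|\,|\omega|}{3}\leq|\omega|^{1}|\mu(n^{(1)})|=|\omega|^{\ell(1)}\mathfrak B(n^{(1)}),
$$
which is exactly the asserted estimate for $k=1$.

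For the inductive step, fix $k\geq 2$ and assume \eqref{dcse} holds at level $k-1$ for every branch. The branch $\gamma=0\in\mathfrak T^{(k)}$ is handled exactly as above, since $\mathfrak F^{(k,0)}=1=|\omega|^{0}\mathfrak B(n^{(k)})$. For a branch $\gamma=(\gamma_1,\gamma_2,\gamma_3)\in\prod_{j=1}^{3}\mathfrak T^{(k-1)}$ with $n^{(k)}=(n_1,n_2,n_3)$, I would use the recursive definition of $\mathfrak F$ to write
$$
|\mathfrak F^{(k,\gamma)}(n^{(k)})|=\frac{|\mu(n^{(k)})\cdot\omega|}{3}\prod_{j=1}^{3}|\mathfrak F^{(k-1,\gamma_j)}(n_j)|,
$$
then bound the first factor by $|\mu(n^{(k)})|\,|\omega|$ (again discarding the $\tfrac{1}{3}$) and apply the induction hypothesis to each of the three factors $|\mathfrak F^{(k-1,\gamma_j)}(n_j)|\leq|\omega|^{\ell(\gamma_j)}\mathfrak B(n_j)$. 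Collecting the powers of $|\omega|$ produces the exponent $1+\ell(\gamma_1)+\ell(\gamma_2)+\ell(\gamma_3)$, which is exactly $\ell(\gamma)$ by definition, and collecting the remaining factors gives $|\mu(n^{(k)})|\,\mathfrak B(n_1)\mathfrak B(n_2)\mathfrak B(n_3)=\mathfrak B(n^{(k)})$, again by definition. Hence $|\mathfrak F^{(k,\gamma)}(n^{(k)})|\leq|\omega|^{\ell(\gamma)}\mathfrak B(n^{(k)})$, completing the induction.

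There is no substantial obstacle here: the content of the lemma is that the two auxiliary recursions $\ell$ and $\mathfrak B$ have been engineered to track, respectively, the accumulated powers of $|\omega|$ and the accumulated products of the vertex sums $|\mu(\cdot)|$ generated at each node of the tree. The only points requiring any care are the inner-product estimate quoted above and the consistent matching of indices between the three defining recursions, so that the $+1$ in the recursion for $\ell$ and the extra factor $|\mu(n^{(k)})|$ in the recursion for $\mathfrak B$ are exactly the contributions of the single new node introduced at level $k$.
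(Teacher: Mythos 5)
Your proposal is correct and follows essentially the same route as the paper's own proof: induction on $k$ mirroring the recursive definitions, with the Cauchy--Schwarz bound $|\mu(n^{(k)})\cdot\omega|\leq|\mu(n^{(k)})|\,|\omega|$, the discarded factor $\tfrac13\leq 1$, and the identification of the collected exponents and products with $\ell(\gamma)$ and $\mathfrak B(n^{(k)})$. The only cosmetic difference is that you assume the hypothesis only at level $k-1$ rather than for all $1\leq k'\leq k-1$, which suffices since the recursion at level $k$ refers only to level $k-1$.
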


\begin{proof}
For all $k\geq1$, $\gamma = 0 \in \mathfrak T^{(k)}$ and $n^{(k)} \in \mathfrak N^{(k,0)} = \mathbb Z^\nu$, we have $|\mathfrak F^{(k,0)}(n^{(k)})| = 1 = |\omega|^{\ell(0)}| \mathfrak B(n^{(k)})|$.

For $k=1$ and $\gamma = 1 \in \mathfrak T^{(k)}$, we have
\begin{eqnarray*}
|\mathfrak F^{(1,1)}(n^{(1)})|&=&\left|-\frac{i\mu(n^{(1)})\cdot\omega}{3}\right|\\
&\leq&|\omega||\mu(n^{(1)})|\\
&=&|\omega|^{\ell(1)}|\mu(n^{(1)})|.
\end{eqnarray*}

Let $k\geq2$. Assume that \eqref{dcse} holds for all $1\leq k^\prime\leq k-1$. For $k$, $\gamma=(\gamma_1,\gamma_2,\gamma_3)\in\mathfrak T^{(k-1)}\times\mathfrak T^{(k-1)}\times\mathfrak T^{(k-1)}$, and $n^{(k)}\in\mathfrak N^{(k-1,\gamma_1)}\times\mathfrak N^{(k-1,\gamma_2)}\times\mathfrak N^{(k-1,\gamma_3)}$, we have
\begin{eqnarray*}
&&|\mathfrak F^{(k,\gamma)}(t,n^{(k)})|\\
&=&\left|-\frac{{\rm i}\mu(n^{(k)})\cdot\omega}{3}\mathfrak F^{(k-1,\gamma_1)}(t,n_1)\mathfrak F^{(k-1,\gamma_2)}(t,n_2)\mathfrak F^{(k-1,\gamma_3)}(t,n_3)\right|\\
&\leq&\left|-\frac{{\rm i}\mu(n^{(k)})\cdot\omega}{3}\right||\mathfrak F^{(k-1,\gamma_1)}(t,n_1)||\mathfrak F^{(k-1,\gamma_2)}(t,n_2)||\mathfrak F^{(k-1,\gamma_3)}(t,n_3)|\\
&\leq&|\omega||\mu(n^{(k)})|\cdot|\omega|^{\ell(\gamma_1)}\mathfrak B(n_1)\cdot|\omega|^{\ell(\gamma_2)}\mathfrak B(n_2)\cdot|\omega|^{\ell(\gamma_3)}\mathfrak B(n_3)\\
&=&|\omega|^{\ell(\gamma_1)+\ell(\gamma_2)+\ell(\gamma_3)+1}|\mu(n^{(k)})|\mathfrak B(n_1)\mathfrak B(n_2)\mathfrak B(n_3)\\
&=&|\omega|^{\ell(\gamma)}\mathfrak B(n^{(k)}).
\end{eqnarray*}
This completes the proof of Lemma \ref{fe}.
\end{proof}

\begin{lemm}[Estimates of $\mathfrak I$]\label{iighe}
We have
\begin{align}\label{gfcse}
|\mathfrak I^{(k,\gamma)}(t,n^{(k)})|\leq\frac{t^{\ell(\gamma)}}{\mathfrak D(\gamma)},\quad \forall k\geq1,
\end{align}
where
\begin{align*}
\mathfrak D(\gamma)&:=
\begin{cases}
1,&\gamma=0\in\mathfrak T^{(k)},k\geq1;\\
1,&\gamma=1\in\mathfrak T^{(1)};\\
\ell(\gamma)\mathfrak D(\gamma_1)\mathfrak D(\gamma_2)\mathcal D(\gamma_3),&\gamma=(\gamma_j)_{1\leq j\leq 3}\in\prod_{j=1}^3\mathfrak T^{(k-1)}, k\geq 2.
\end{cases}
\end{align*}
\end{lemm}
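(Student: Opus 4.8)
The plan is to prove \eqref{gfcse} by induction on $k$, mirroring exactly the structure of the proof of Lemma~\ref{fe}, since $\mathfrak I^{(k,\gamma)}$ is defined by the same recursive tree as $\mathfrak F^{(k,\gamma)}$ but with the ``multiply by $-\tfrac{\mathrm i\mu\cdot\omega}{3}$'' node replaced by a ``$\int_0^t\,\mathrm d\tau$'' node. The two key structural observations are that every exponential factor appearing in $\mathfrak I^{(k,\gamma)}$ is a pure phase of modulus one, so it can be discarded upon taking absolute values, and that the single parameter $\ell(\gamma)$ simultaneously tracks the number of integration nodes along the branch $\gamma$ and hence the power of $t$ produced by iterated time integration.

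First I would dispatch the two base cases. For $\gamma=0\in\mathfrak T^{(k)}$ one has $\mathfrak I^{(k,0)}(t,n^{(k)})=e^{\mathrm i(\mu(n^{(k)})\cdot\omega)^3 t}$, whence $|\mathfrak I^{(k,0)}|=1=t^{0}/1=t^{\ell(0)}/\mathfrak D(0)$. For $\gamma=1\in\mathfrak T^{(1)}$, the integrand of $\int_0^t e^{\mathrm i(\mu(n^{(1)})\cdot\omega)^3(t-\tau)}\prod_{j=1}^3 e^{\mathrm i(n_j\cdot\omega)^3\tau}\,\mathrm d\tau$ is a product of phases, so its modulus is bounded by $\int_0^t 1\,\mathrm d\tau=t=t^{\ell(1)}/\mathfrak D(1)$, using $\ell(1)=\mathfrak D(1)=1$.

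For the inductive step, fix $k\geq2$ and assume \eqref{gfcse} holds through level $k-1$. For $\gamma=(\gamma_1,\gamma_2,\gamma_3)$ and $n^{(k)}=(n_1,n_2,n_3)$, I would take absolute values inside the defining integral, bound the outer phase $e^{\mathrm i(\mu(n^{(k)})\cdot\omega)^3(t-\tau)}$ by $1$, and insert the three inductive estimates $|\mathfrak I^{(k-1,\gamma_j)}(\tau,n_j)|\leq \tau^{\ell(\gamma_j)}/\mathfrak D(\gamma_j)$, giving
\begin{align*}
|\mathfrak I^{(k,\gamma)}(t,n^{(k)})|
\leq \frac{1}{\mathfrak D(\gamma_1)\mathfrak D(\gamma_2)\mathfrak D(\gamma_3)}\int_0^t \tau^{\ell(\gamma_1)+\ell(\gamma_2)+\ell(\gamma_3)}\,\mathrm d\tau.
\end{align*}
The crux of the argument is the bookkeeping of exponents here: by the recursion $\ell(\gamma)=\ell(\gamma_1)+\ell(\gamma_2)+\ell(\gamma_3)+1$, the integrand is $\tau^{\ell(\gamma)-1}$, so $\int_0^t\tau^{\ell(\gamma)-1}\,\mathrm d\tau=t^{\ell(\gamma)}/\ell(\gamma)$. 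Substituting this back yields
\begin{align*}
|\mathfrak I^{(k,\gamma)}(t,n^{(k)})|
\leq \frac{t^{\ell(\gamma)}}{\ell(\gamma)\,\mathfrak D(\gamma_1)\mathfrak D(\gamma_2)\mathfrak D(\gamma_3)}
=\frac{t^{\ell(\gamma)}}{\mathfrak D(\gamma)},
\end{align*}
which is precisely the asserted bound once one reads off the recursive definition $\mathfrak D(\gamma)=\ell(\gamma)\mathfrak D(\gamma_1)\mathfrak D(\gamma_2)\mathfrak D(\gamma_3)$.

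I do not expect a genuine obstacle: the estimate is elementary once the phases are discarded. The only subtle point worth flagging is conceptual rather than technical, namely that the extra factor $\ell(\gamma)$ in the denominator of $\mathfrak D(\gamma)$ is \emph{exactly} what the time integration manufactures, $\int_0^t\tau^{\ell(\gamma)-1}\,\mathrm d\tau=t^{\ell(\gamma)}/\ell(\gamma)$; this is the reason the definition of $\mathfrak D$ was chosen with that prefactor in the first place, and it is what will ultimately supply the factorial decay needed to sum over the tree $\mathfrak T^{(k)}$ in the proof of Proposition~\ref{thm}.
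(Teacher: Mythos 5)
Your proof is correct and follows essentially the same route as the paper's: the same induction on $k$, the same treatment of the two base cases by bounding the pure phases by $1$, and the same inductive step in which the integral $\int_0^t\tau^{\ell(\gamma_1)+\ell(\gamma_2)+\ell(\gamma_3)}\,\mathrm d\tau = t^{\ell(\gamma)}/\ell(\gamma)$ produces exactly the factor $\ell(\gamma)$ appearing in the recursion for $\mathfrak D(\gamma)$. No gaps; your closing remark about this factor being the source of the factorial decay over the tree matches the role it plays later in the paper.
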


\begin{proof}
For all $k\geq1$,  $\gamma = 0 \in \mathfrak T^{(k)}$ and $n^{(k)} \in \mathfrak N^{(k,0)} = \mathbb Z^\nu$, we have $|\mathfrak I^{(k,0)}(n^{(1)})| = |e^{{\rm i}(n^{(1)}\cdot\omega)^3t}| = 1 = |\omega|^{\ell(0)}| \mathfrak B(n^{(k)})|$.

For $k=1$, $\gamma = 1 \in \mathfrak T^{(k)}$, and $n^{(1)} = (n_1,n_2,n_3) \in \mathfrak N^{(1,1)} = \mathbb Z^\nu \times \mathbb Z^\nu \times \mathbb Z^\nu$, we have
\begin{eqnarray*}
|\mathfrak I^{(1,1)}(t,n^{(1)})| & = & \left| \int_0^te^{{\rm i}(\mu(n^{(1)})\cdot\omega)^3 (t-\tau)} \prod_{j=1}^3 e^{{\rm i}(n_j\cdot\omega)^3\tau}{\rm d}\tau \right| \\
& \leq & t \\
& = & \frac{t^{\ell(1)}}{\mathfrak D(1)}.
\end{eqnarray*}

Let $k\geq2$. Assume that \eqref{gfcse} holds for all $1\leq k^\prime\leq k-1$. Then for $k, \gamma = (\gamma_1,\gamma_2,\gamma_3) \in \mathfrak T^{(k-1)} \times \mathfrak T^{(k-1)} \times \mathfrak T^{(k-1)}$ and $n^{(k)} = (n_1,n_2,n_3) \in \mathfrak N^{(k-1,\gamma_1)} \times \mathfrak N^{(k-1,\gamma_2)} \times \mathfrak N^{(k-1,\gamma_3)}$, we have
\begin{eqnarray*}
|\mathfrak I^{(k,\gamma)}(t,n^{(k)})|&=&\left|\int_0^te^{{\rm i}(\mu(n^{(k)}) \cdot \omega)^3(t-\tau)}\prod_{j=1}^3\mathfrak I^{(k-1,\gamma_j)}(\tau,n_j){\rm d}\tau\right|\\
&\leq&\int_0^t\left|e^{{\rm i}(\mu(n^{(k)}) \cdot \omega)^3(t-\tau)}\right|\prod_{j=1}^3\left|\mathfrak I^{(k-1,\gamma_j)}(\tau,n_j)\right|{\rm d}\tau\\
&\leq&\int_0^t\prod_{j=1}^3\frac{\tau^{\ell(\gamma_j)}}{\mathfrak D(\gamma_j)}{\rm d}\tau\\
&=&\frac{1}{\prod_{j=1}^3\mathfrak D(\gamma_j)}\int_0^t\tau^{\sum_{j=1}^3\ell(\gamma_j)}{\rm d}\tau\\
&=&\frac{t^{\sum_{j=1}^3\ell(\gamma_j)+1}}{\left(\sum_{j=1}^3\ell(\gamma_j)+1\right)\mathfrak D(\gamma_1)\mathfrak D(\gamma_2)\mathfrak D(\gamma_3)}\\
&=&\frac{t^{\ell(\gamma)}}{\mathfrak D(\gamma)}.
\end{eqnarray*}
This completes the proof of Lemma \ref{iighe}.
\end{proof}

\begin{lemm}[Estimates of $\mathfrak C$]\label{ee}
If
\[
|c(n)| \leq \mathcal A^{\frac{1}{2}}e^{-\kappa|n|},
\]
then
\begin{align}\label{ceg}
|\mathfrak C^{(k,\gamma)}(n^{(k)})|\leq\mathcal A^{\sigma(\gamma)}e^{-\kappa|n^{(k)}|},\quad \forall k \geq 1,
\end{align}
where
\begin{align*}
\sigma(\gamma):=
\begin{cases}
\frac{1}{2},&\gamma=0\in\mathfrak T^{(k)},k\geq1;\\
\frac{3}{2},&\gamma=1\in\mathfrak T^{(1)};\\
\sigma(\gamma_1)+\sigma(\gamma_2)+\sigma(\gamma_3),&\gamma=(\gamma_1,\gamma_2,\gamma_3)\in\mathfrak T^{(k-1)}\times\mathfrak T^{(k-1)}\times\mathfrak T^{(k-1)}, k\geq2.
\end{cases}
\end{align*}
\end{lemm}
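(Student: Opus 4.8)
The plan is to prove the bound by induction on $k$, following exactly the scheme used for Lemmas~\ref{fe} and~\ref{iighe}. The essential structural fact is that $\mathfrak C^{(k,\gamma)}$ is \emph{multiplicative} along the branches of the tree, while the exponent $\sigma(\gamma)$ is \emph{additive} and the size $|n^{(k)}|$ of a product vector is the sum of the sizes of its components. These three features are compatible, so the claimed estimate propagates through the recursion without any loss, and no frequency factors or integrals intervene.

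For the base cases, when $\gamma=0\in\mathfrak T^{(k)}$ we simply have $\mathfrak C^{(k,0)}(n^{(k)})=c(n^{(k)})$, so the hypothesis $|c(n)|\leq\mathcal A^{1/2}e^{-\kappa|n|}$ gives the bound immediately with $\sigma(0)=\tfrac{1}{2}$. When $\gamma=1\in\mathfrak T^{(1)}$ and $n^{(1)}=(n_1,n_2,n_3)$, I would write $\mathfrak C^{(1,1)}(n^{(1)})=\prod_{j=1}^3 c(n_j)$, apply the hypothesis to each of the three factors, and collect the powers of $\mathcal A$ and the exponentials. Since $\tfrac12+\tfrac12+\tfrac12=\tfrac32=\sigma(1)$ and $|n_1|+|n_2|+|n_3|=|n^{(1)}|$, this yields $|\mathfrak C^{(1,1)}(n^{(1)})|\leq\mathcal A^{3/2}e^{-\kappa|n^{(1)}|}$.

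For the inductive step with $k\geq2$, take $\gamma=(\gamma_1,\gamma_2,\gamma_3)$ and $n^{(k)}=(n_1,n_2,n_3)$. Using the recursive definition $\mathfrak C^{(k,\gamma)}(n^{(k)})=\prod_{j=1}^3\mathfrak C^{(k-1,\gamma_j)}(n_j)$, the induction hypothesis applied to each factor gives $|\mathfrak C^{(k-1,\gamma_j)}(n_j)|\leq\mathcal A^{\sigma(\gamma_j)}e^{-\kappa|n_j|}$. Multiplying the three bounds and combining exponents via the additivity $\sigma(\gamma)=\sigma(\gamma_1)+\sigma(\gamma_2)+\sigma(\gamma_3)$ together with $|n_1|+|n_2|+|n_3|=|n^{(k)}|$ produces $\mathcal A^{\sigma(\gamma)}e^{-\kappa|n^{(k)}|}$, closing the induction.

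There is no genuine obstacle here: this is the most elementary of the three estimate lemmas, precisely because $\mathfrak C$ involves neither an integral to bound nor frequency factors $|\omega|$, only products of the initial-data Fourier coefficients. The only point worth emphasizing is that the definition of $\sigma$ is reverse-engineered so that the bookkeeping of the powers of $\mathcal A$ is exact; concretely, $\sigma(\gamma)$ equals one half of the total number of initial-data factors $c(\cdot)$ occurring in $\mathfrak C^{(k,\gamma)}$, with the value $\tfrac32$ at the $\gamma=1$ node reflecting the three factors produced by the cubic nonlinearity.
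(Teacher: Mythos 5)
Your proof is correct and follows essentially the same route as the paper: induction on $k$, with the base cases ($\gamma=0$ and $\gamma=1\in\mathfrak T^{(1)}$) read off directly from the hypothesis on $c(\cdot)$, and the inductive step closed by combining the multiplicativity of $\mathfrak C^{(k,\gamma)}$ with the additivity of $\sigma(\gamma)$ and of $|n^{(k)}|$. Your closing remark correctly identifies why the bookkeeping is exact, namely that $\sigma(\gamma)$ counts half the number of initial-data factors in $\mathfrak C^{(k,\gamma)}$.
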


\begin{proof}
For all $k\geq1$,  $\gamma = 0 \in \mathfrak T^{(k)}$ and $n^{(k)} \in \mathfrak N^{(k,0)}$, we have
\[
|\mathfrak C^{(k,0)}(n^{(k)})| = |c(n^{(k)})| \leq \mathcal A^{\frac{1}{2}}e^{-\kappa|n^{(k)}|} = \mathcal A^{\sigma(0)} e^{-\kappa|n^{(k)}|}.
\]

For $k=1$, $\gamma = 1 \in \mathfrak T^{(1)}$, and $n^{(1)} = (n_1,n_2,n_3) \in \mathfrak N^{(1,1)} = \mathbb Z^\nu \times \mathbb Z^\nu \times \mathbb Z^\nu$, we have
\begin{eqnarray*}
|\mathfrak C^{(1,1)}(n^{(1)})|&=&|c(n_1)c(n_2)c(n_3)|\\
&\leq&|c(n_1)|\cdot|c(n_2)|\cdot|c(n_3)|\\
&\leq& \mathcal A^{\frac{1}{2}}e^{-\kappa|n_1|}\cdot\mathcal A^{\frac{1}{2}}e^{-\kappa|n_2|}\cdot\mathcal A^{\frac{1}{2}}e^{-\kappa|n_3|}\\
&=&\mathcal A^{\frac{3}{2}}e^{-\kappa(|n_1|+|n_2|+|n_3|)}\\
&=&\mathcal A^{\sigma(1)}e^{-\kappa|n^{(1)}|}.
\end{eqnarray*}

Let $k\geq2$. Assume that \eqref{ceg} is true for all $1\leq k^\prime\leq k-1$. Then, for $k, \gamma = (\gamma_1,\gamma_2,\gamma_3) \in \mathfrak T^{(k-1)} \times \mathfrak T^{(k-1)} \times \mathfrak T^{(k-1)}$ and $n^{(k)} = (n_1,n_2,n_3) \in \mathfrak N^{(k-1,\gamma_1)} \times \mathfrak N^{(k-1,\gamma_2)} \times \mathfrak N^{(k-1,\gamma_3)}$, we have
\begin{eqnarray*}
|\mathfrak C^{(k,\gamma)}(n^{(k)})|&=&|\mathfrak C^{(k-1,\gamma_1)}(n_1)|\cdot|\mathfrak C^{(k-1,\gamma_2)}(n_2)|\cdot|\mathfrak C^{k-1,\gamma_3}(n_3)|\\
&\leq&|\mathfrak C^{k-1,\gamma_1}(n_1)|\cdot|\mathfrak C^{(k-1,\gamma_2)}(n_2)|\cdot|\mathfrak C^{(k-1,\gamma_3)}(n_3)|\\
&\leq&\mathcal A^{\sigma(\gamma_1)}e^{-\kappa|n_1|}\cdot \mathcal A^{\sigma(\gamma_2)}e^{-\kappa|n_2|}\cdot \mathcal A^{\sigma(\gamma_3)}e^{-\kappa|n_3|}\\
&=&\mathcal A^{\sigma(\gamma_1)+\sigma(\gamma_2)+\sigma(\gamma_3)}e^{-\kappa(|n_1|+|n_2|+|n_3|)}\\
&=&\mathcal A^{\sigma(\gamma)}e^{-\kappa|n^{(k)}|}.
\end{eqnarray*}
This completes the proof of Lemma \ref{ee}.
\end{proof}

\begin{property}\label{re}
We have the following relation between $\sigma$ and $\ell$:
\[\sigma(\gamma)=\ell(\gamma)+\frac{1}{2},\quad \gamma\in\mathfrak T^{(k)}, \quad \forall k \geq 1.
\]
\begin{proof}
Obviously, this is true for $\gamma = 0 \in \mathfrak T^{(k)}, k\geq1,$ and $\gamma \in \mathfrak T^{(1)}$.

Let $k\geq2$. Assume it holds for all $1\leq k^\prime\leq k-1$. Then, for $k, \gamma = (\gamma_1, \gamma_2, \gamma_3) \in \mathfrak T^{(k-1)} \times \mathfrak T^{(k-1)} \times \mathfrak T^{(k-1)}$, we have
\begin{eqnarray*}
\sigma(\gamma)&=&\sigma(\gamma_1)+\sigma(\gamma_2)+\sigma(\gamma_3)\\
&=&\ell(\gamma_1)+\frac{1}{2}+\ell(\gamma_2)+\frac{1}{2}+\ell(\gamma_3)+\frac{1}{2}\\
&=&\ell(\gamma_1)+\ell(\gamma_2)+\ell(\gamma_3)+1+\frac{1}{2}\\
&=&\ell(\gamma)+\frac{1}{2}.
\end{eqnarray*}
This completes the proof of Property \ref{re}.
\end{proof}
\end{property}

\begin{rema}
The definition of $\sigma$ is not a direct generalization of \cite{damanik16}.
It and the relation between $\sigma$ and $\ell$ are different from the discussion in \cite{damanik16}. Using the original definition of $\sigma$, it may be impossible to obtain the exponential decay and Cauchy property of the Picard sequence. In order to complete the proof, it is crucial for us to modify the definition of $\sigma$ and obtain a suitable estimate for the Picard sequence.
\end{rema}

\begin{lemm}[Estimates of $|c_k(t,n)|$]\label{mth1}
For all $k\geq1$, we have
\begin{align}\label{ke}
|c_{k}(t,n)|\leq\mathcal A^{\frac{1}{2}}\sum_{\gamma\in\mathfrak T^{(k)}}\frac{(\mathcal A|\omega|t)^{\ell(\gamma)}}{\mathfrak D(\gamma)}
\sum_{n^{(k)}\in\mathfrak N^{(k,\gamma)}:~\mu(n^{(k)})=n}\mathfrak B(n^{(k)})e^{-\kappa|n^{(k)}|}.
\end{align}
\end{lemm}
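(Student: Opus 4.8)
\textbf{Proof plan for Lemma~\ref{mth1}.}
The plan is to combine the tree representation of $c_k(t,n)$ from Lemma~\ref{le2} with the three independent estimates for $\mathfrak F$, $\mathfrak I$, and $\mathfrak C$ established in Lemmas~\ref{fe}, \ref{iighe}, and \ref{ee}, and then to use the key relation $\sigma(\gamma)=\ell(\gamma)+\tfrac12$ from Property~\ref{re} to collapse the two powers of $\mathcal A$ into a single clean exponent. First I would start from the exact identity
\[
c_k(t,n)=\sum_{\gamma\in\mathfrak T^{(k)}}\sum_{n^{(k)}\in\mathfrak N^{(k,\gamma)}:~\mu(n^{(k)})=n}\mathfrak F^{(k,\gamma)}(n^{(k)})\mathfrak I^{(k,\gamma)}(t,n^{(k)})\mathfrak C^{(k,\gamma)}(n^{(k)})
\]
and apply the triangle inequality, bringing the absolute value inside both sums so that the summand becomes a product of three nonnegative factors $|\mathfrak F^{(k,\gamma)}|\,|\mathfrak I^{(k,\gamma)}|\,|\mathfrak C^{(k,\gamma)}|$.

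Next I would substitute the three bounds termwise: $|\mathfrak F^{(k,\gamma)}(n^{(k)})|\leq|\omega|^{\ell(\gamma)}\mathfrak B(n^{(k)})$, $|\mathfrak I^{(k,\gamma)}(t,n^{(k)})|\leq t^{\ell(\gamma)}/\mathfrak D(\gamma)$, and $|\mathfrak C^{(k,\gamma)}(n^{(k)})|\leq\mathcal A^{\sigma(\gamma)}e^{-\kappa|n^{(k)}|}$. The factors $|\omega|^{\ell(\gamma)}$, $t^{\ell(\gamma)}$, and $1/\mathfrak D(\gamma)$ depend only on $\gamma$ and not on $n^{(k)}$, so they factor out of the inner sum over $n^{(k)}$, leaving $\mathfrak B(n^{(k)})e^{-\kappa|n^{(k)}|}$ inside. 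This produces
\[
|c_k(t,n)|\leq\sum_{\gamma\in\mathfrak T^{(k)}}\frac{(|\omega|t)^{\ell(\gamma)}}{\mathfrak D(\gamma)}\,\mathcal A^{\sigma(\gamma)}\sum_{n^{(k)}\in\mathfrak N^{(k,\gamma)}:~\mu(n^{(k)})=n}\mathfrak B(n^{(k)})e^{-\kappa|n^{(k)}|}.
\]

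The crucial algebraic step is then to rewrite the power of $\mathcal A$. Using Property~\ref{re}, $\mathcal A^{\sigma(\gamma)}=\mathcal A^{\ell(\gamma)+1/2}=\mathcal A^{1/2}\,\mathcal A^{\ell(\gamma)}$; pulling the common factor $\mathcal A^{1/2}$ out of the sum over $\gamma$ and merging the remaining $\mathcal A^{\ell(\gamma)}$ with $(|\omega|t)^{\ell(\gamma)}$ gives the single combined factor $(\mathcal A|\omega|t)^{\ell(\gamma)}$. This is exactly the maneuver flagged in the innovation discussion, where the sample term $\ast^{\sigma(\gamma)}$ and the term $\star^{\ell(\gamma)}$ are amalgamated into $\blacksquare^{\ell(\gamma)}$ with a leftover constant $\ast^{1/2}$. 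After this substitution the right-hand side is precisely
\[
\mathcal A^{\frac12}\sum_{\gamma\in\mathfrak T^{(k)}}\frac{(\mathcal A|\omega|t)^{\ell(\gamma)}}{\mathfrak D(\gamma)}\sum_{n^{(k)}\in\mathfrak N^{(k,\gamma)}:~\mu(n^{(k)})=n}\mathfrak B(n^{(k)})e^{-\kappa|n^{(k)}|},
\]
which is the asserted bound \eqref{ke}.

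This lemma is essentially a bookkeeping assembly of the preceding estimates, so I do not expect any genuine obstacle; the only point requiring care is the legitimacy of interchanging the absolute value with the double sum, which is justified because all three factor-bounds are already phrased as bounds on absolute values and the series converge absolutely under the standing exponential-decay hypothesis (Theorem~\ref{cccc}). The real difficulty is deferred to the \emph{next} stage, namely bounding the resulting sum $\sum_{\gamma}\frac{(\mathcal A|\omega|t)^{\ell(\gamma)}}{\mathfrak D(\gamma)}\sum_{n^{(k)}}\mathfrak B(n^{(k)})e^{-\kappa|n^{(k)}|}$ uniformly in $k$, which requires the combinatorial analysis of $\mathfrak B$ via the index set $\mathfrak R^{(k,\gamma)}$ and the factorial estimates; that is where the genuine work lies, not in the clean factorization carried out here.
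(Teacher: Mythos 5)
Your proposal is correct and follows essentially the same route as the paper's own proof: start from the tree representation in Lemma~\ref{le2}, apply the triangle inequality, substitute the termwise bounds of Lemmas~\ref{fe}, \ref{iighe}, and \ref{ee}, and then use Property~\ref{re} to split $\mathcal A^{\sigma(\gamma)}=\mathcal A^{1/2}\mathcal A^{\ell(\gamma)}$ and absorb $\mathcal A^{\ell(\gamma)}$ into $(\mathcal A|\omega|t)^{\ell(\gamma)}$. Your closing remarks correctly identify this lemma as bookkeeping, with the real work deferred to the subsequent combinatorial estimates.
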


\begin{proof}
We have
\begin{eqnarray}
\nonumber|c_{k}(t,n)|&=&\left|\sum_{\gamma\in\mathfrak T^{(k)}}\sum_{n^{(k)}\in\mathfrak N^{(k,\gamma)}:~\mu(n^{(k)})=n}\mathfrak F^{(k,\gamma)}(n^{(k)})\mathfrak I^{(k,\gamma)}(t,n^{(k)})\mathfrak C^{(k,\gamma)}(n^{(k)})\right|\\
\nonumber&\leq&\sum_{\gamma\in\mathfrak T^{(k)}}\sum_{n^{(k)}\in\mathfrak N^{(k,\gamma)}:~\mu(n^{(k)})=n}|\mathfrak F^{(k,\gamma)}(n^{(k)})||\mathfrak I^{(k,\gamma)}(t,n^{(k)})||\mathfrak C^{(k,\gamma)}(n^{(k)})|\\
\nonumber&\stackrel{(\text{\scriptsize Lemma \ref{fe}-\ref{ee}})}{\leq}& \sum_{\gamma\in\mathfrak T^{(k)}}\sum_{n^{(k)}\in\mathfrak N^{(k,\gamma)}:~\mu(n^{(k)})=n}|\omega|^{\ell(\gamma)}\mathfrak B(n^{(k)})\cdot\frac{t^{\ell(\gamma)}}{\mathfrak D(\gamma)}\cdot\mathcal A^{\sigma(\gamma)}e^{-\kappa|n^{(k)}|}~\\
\nonumber&\stackrel{(\text{\scriptsize Property \ref{re}})}{=}&\mathcal A^{\frac{1}{2}}\sum_{\gamma\in\mathfrak T^{(k)}}\frac{(\mathcal A|\omega|t)^{\ell(\gamma)}}{\mathfrak D(\gamma)}
\sum_{n^{(k)}\in\mathfrak N^{(k,\gamma)}:\mu(n^{(k)})=n}\mathfrak B(n^{(k)})e^{-\kappa|n^{(k)}|}~.
\end{eqnarray}
This completes the proof of Lemma \ref{mth1}.
\end{proof}

\subsection{Estimates of $\mathfrak B(n^{(k)})$ by the New Variable $\alpha_j$}

\begin{lemm}\label{iii}
We have
\begin{align}\label{bbe}
\mathfrak B(n^{(k)}) \leq \sum_{\alpha=(\alpha_i)_{1\leq i\leq2\sigma(\gamma)}\in\mathfrak R^{(k,\gamma)}} \prod_{i}\left|(n^{(k)})_{i}\right|^{\alpha_i}, \quad \forall k\geq1,
\end{align}
where
\begin{align*}
\mathfrak R^{(k,\gamma)}:=
\begin{cases}
\{0\in\mathbb Z\}, &\gamma=0\in\mathfrak T^{(k)},k\geq1;\\
\{(1,0,0),(0,1,0),(0,0,1)\}, &\gamma=1\in\mathfrak T^{(1)};\\
\prod_{j=1}^3\mathfrak R^{(k-1,\gamma_j)}+e^{(k,\gamma)}&\gamma=(\gamma_j)_{1\leq j\leq3}\in\prod_{j=1}^3\mathfrak T^{(k-1)}, k\geq2,
\end{cases}
\end{align*}
and
\[e^{(k,\gamma)}:=\{\alpha\in\mathbb Z^{2\sigma(\gamma)}:|\alpha|=1,\alpha_j\geq0\}.\]
\end{lemm}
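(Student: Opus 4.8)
The plan is to prove \eqref{bbe} by induction on $k$, matching the recursive definition of $\mathfrak B(n^{(k)})$ against that of $\mathfrak R^{(k,\gamma)}$ factor by factor. The single engine of the whole argument is the triangle inequality
\[
|\mu(n^{(k)})| = \Bigl|\textstyle\sum_i (n^{(k)})_i\Bigr| \leq \sum_i |(n^{(k)})_i|,
\]
where $i$ runs over the $2\sigma(\gamma)$ leaf components of $n^{(k)}$ (a quick bookkeeping check confirms this count is consistent: $2\sigma(0)=1$, $2\sigma(1)=3$, and $2\sigma(\gamma)=\sum_{j}2\sigma(\gamma_j)$ in the recursive case, so the exponent vectors $\alpha$ indeed live in $\mathbb Z^{2\sigma(\gamma)}$). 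Each summand $|(n^{(k)})_{i'}|$ on the right is recorded combinatorially by the basis vector $e_{i'}\in e^{(k,\gamma)}$, and this is precisely why the Minkowski shift $+\,e^{(k,\gamma)}$ appears in the definition of $\mathfrak R^{(k,\gamma)}$.

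I would first dispose of the base cases. For $\gamma=0$ we have $\mathfrak B=1$ and $\mathfrak R^{(k,0)}=\{0\}$, so both sides of \eqref{bbe} equal $1$. For $\gamma=1\in\mathfrak T^{(1)}$, the triangle inequality gives $|\mu(n^{(1)})|\leq |n_1|+|n_2|+|n_3|$, which is exactly $\sum_{\alpha\in\{(1,0,0),(0,1,0),(0,0,1)\}}\prod_i |(n^{(1)})_i|^{\alpha_i}$, matching $\mathfrak R^{(1,1)}$. For the inductive step with $k\geq 2$ and $\gamma=(\gamma_1,\gamma_2,\gamma_3)$, write $n^{(k)}=(n_1,n_2,n_3)$, so that the leaves of $n^{(k)}$ are the concatenation of the leaves of $n_1,n_2,n_3$. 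Applying the induction hypothesis to each factor and multiplying gives
\[
\mathfrak B(n_1)\mathfrak B(n_2)\mathfrak B(n_3)\leq \sum_{\beta\in\prod_{j=1}^3\mathfrak R^{(k-1,\gamma_j)}}\prod_i |(n^{(k)})_i|^{\beta_i},
\]
where $\beta=(\beta^{(1)},\beta^{(2)},\beta^{(3)})$ is the concatenated exponent vector. Multiplying this by the bound $|\mu(n^{(k)})|\leq\sum_{i'}|(n^{(k)})_{i'}|$ and distributing, each product $|(n^{(k)})_{i'}|\cdot\prod_i|(n^{(k)})_i|^{\beta_i}$ equals $\prod_i|(n^{(k)})_i|^{(\beta+e_{i'})_i}$, so that the total bound reindexes to $\sum_{\alpha\in\prod_{j}\mathfrak R^{(k-1,\gamma_j)}+e^{(k,\gamma)}}\prod_i|(n^{(k)})_i|^{\alpha_i}$, which is exactly the right-hand side of \eqref{bbe} for $\mathfrak R^{(k,\gamma)}$. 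This closes the induction.

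The one point requiring care — and the step I expect to be the genuine subtlety rather than the routine triangle-inequality bookkeeping — is the multiplicity in the Minkowski sum. The expansion above produces one term for each pair $(\beta,i')$, but distinct pairs can yield the same shifted multi-index $\alpha=\beta+e_{i'}$, so the natural upper bound is a sum with multiplicities. I would therefore read $\mathfrak R^{(k,\gamma)}$ (and hence the sum in \eqref{bbe}) as a \emph{multiset}, recording each pair $(\beta,e)$ as a separate copy of $\alpha$; with this convention the inequality holds verbatim, and collapsing to distinct indices would only strengthen it. This convention is also the one consistent with the later appearance of $\sum_{\alpha}\prod_j\alpha_j!$ and the non-injective map $\phi$ used to control it, so adopting it here keeps the notation coherent across the whole argument.
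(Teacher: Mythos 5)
Your proof is correct and follows essentially the same route as the paper's: induction on $k$, the base cases $\gamma=0$ and $\gamma=1\in\mathfrak T^{(1)}$ via the triangle inequality, and the inductive step obtained by applying the hypothesis to the three factors $\mathfrak B(n_1)\mathfrak B(n_2)\mathfrak B(n_3)$, multiplying by $|\mu(n^{(k)})|\leq\sum_{i'}|(n^{(k)})_{i'}|$, and distributing; the paper merely inserts a long hand-worked $k=2$ case analysis before the same inductive step, which is expository rather than logically necessary, so omitting it costs you nothing. The one substantive point is your multiset caveat, and there you are actually \emph{more} careful than the paper, with one correction needed to your wording: reading $\mathfrak R^{(k,\gamma)}$ with multiplicities is not merely a safe convention whose removal "would only strengthen" the claim --- the strengthened (distinct-index) version is simply false. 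For example, take $k=2$, $\gamma=(1,0,0)$, $\nu=1$, and all five leaf components equal to $1$: then $\mathfrak B(n^{(2)})=|5|\cdot|3|=15$, while the Minkowski sum $\prod_j\mathfrak R^{(1,\gamma_j)}+e^{(2,\gamma)}$ has only $12$ distinct elements (each of $(1,1,0,0,0)$, $(1,0,1,0,0)$, $(0,1,1,0,0)$ arises from two pairs $(\beta,e_{i'})$), so the sum over the set is $12<15$ and \eqref{bbe} fails under the set reading. The paper's displayed equalities between the expanded products and $\sum_{\alpha\in\mathfrak R^{(k,\gamma)}}\prod_i|(n^{(k)})_i|^{\alpha_i}$ are therefore only valid when the sum counts each pair with multiplicity, and the later estimates (e.g.\ the recursion in Lemma \ref{lmmg}, which sums over tuples $(\alpha^{(1)},\alpha^{(2)},\alpha^{(3)},\beta)$) implicitly do exactly that; so your convention is the one under which the lemma and all of its subsequent uses are coherent.
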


\begin{proof}
For all $k\geq1$, $\gamma = 0 \in \mathfrak T^{(k)}$ and $n^{(k)} \in \mathfrak N^{(k,0)} = \mathbb Z^\nu$, we have $\mathfrak B(n^{(k)}) = 1$. Since $2 \sigma(0) = 1$ and $\mathfrak R^{(k,0)}=\{0\in\mathbb Z\}$, it follows that
$$
\sum_{\alpha=(\alpha_i)_{1\leq i\leq 2\sigma(0)} \in \mathfrak R^{(k,0)}} \prod_i |(n^{(k)})_i|^{\alpha_i} = 1.
$$

For $k=1$, $\gamma = 1 \in \mathfrak T^{(1)}$, and $n^{(1)} = (n_1,n_2,n_3) \in \mathfrak N^{(1,1,1)} = \mathbb Z^{\nu} \times \mathbb Z^{\nu} \times \mathbb Z^\nu$, we have $2 \sigma(1) = 2 \times 3/2 = 3$ and $\mathfrak R^{(1,1)}=\{(1,0,0),(0,1,0),(0,0,1)\}$. In this case we have
\begin{eqnarray*}
\mathfrak B(n^{(1)})&=&|\mu(n^{(1)})|\\
&=&|n_1+n_2+n_3|\\
&\leq& |n_1|+|n_2|+|n_3|\\
&=&\sum_{\alpha=(\alpha_i)_{1\leq i\leq2\sigma(1)}\in\mathfrak R^{(1,1)}}\prod_{i}|(n^{(1)})_i|^{\alpha_i}.
\end{eqnarray*}
This shows that \eqref{bbe} holds true for $\gamma = 0 \in \mathfrak T^{(k)}$, $k\geq1$, and $\gamma = 1 \in \mathfrak T^{(1)}$. In the proof of this case, we see that
every term in $\mathfrak B(n^{(1)})$ is associated with an index in $\mathfrak R^{(1,\gamma)}$, where $\gamma\in\mathfrak T^{(1)}$. This suggests to us how to deal with the case $k=2$ in a similar way.

Next, we will give a detailed but somewhat involved discussion for $k=2$. It should be emphasized that the following calculations are obtained by hand rather than a machine, because it will make clear how to find the appropriate way to assign each term in $\mathfrak B(n^{(2)})$ to an index in $\mathfrak R^{(2,\gamma)}$).
For the sake of readability, we do not give all the terms, and use only the index to represent them.

Notice that
\begin{align*}
\gamma = (\gamma_1,\gamma_2,\gamma_3) \in \prod_{j=1}^3 \mathfrak T^{(1)} = \left\{ (0,0,0); (1,0,0), (0,1,0), (0,0,1); (1,1,0), (1,0,1), (0,1,1); (1,1,1) \right\}.
\end{align*}
According to the length of these indices, it ie easy to see that there are four different kinds of indices: Case 0, Case 1 (including Case 1.1, Case 1.2 and Case 1.3), Case 2 (including Case 2.1, Case 2.2 and Case 2.3), and Case 3. Let us consider these cases.

Case 0. $\gamma=(0,0,0)$:
  $
  2\sigma(0,0,0)=3,
  $
  $n^{(2)}=(n_1;n_2;n_3)\in\mathfrak N^{(1,0)}\times\mathfrak N^{(1,0)}\times\mathfrak N^{(1,0)}=\mathbb Z^\nu\times\mathbb Z^\nu\times\mathbb Z^\nu$, and
 we have
\begin{eqnarray*}
\mathfrak B(n^{(2)})&=&|\mu(n^{(2)})|\mathfrak B(n_1)\mathfrak B(n_2)\mathfrak B(n_3)\\
&\leq&|n_1+n_2+n_3|\cdot 1\cdot1\cdot 1\\
&\leq&|n_1|+|n_2|+|n_3|\\
&=&\sum_{\alpha=(\alpha_i)_{1\leq i\leq2\sigma(0,0,0)}\in\mathfrak R^{(2,(0,0,0))}}\prod_i|(n^{(2)})_i|^{\alpha_i}.
\end{eqnarray*}

Case 1.1. $\gamma=(1,0,0)$:
  $
  2\sigma(1,0,0)
  =5,
  $ $n^{(2)}=(n_1,n_2,n_3;n_4;n_5)\in\mathfrak N^{(1,1)}\times\mathfrak N^{(1,0)}\times\mathfrak N^{(1,0)}=(\mathbb Z^\nu\times\mathbb Z^\nu\times\mathbb Z^\nu)\times\mathbb Z^\nu\times\mathbb Z^\nu$, and
one can derive that
\begin{eqnarray*}
\mathfrak B(n^{(2)})&=&|\mu(n^{(2)})|\mathfrak B(n_1,n_2,n_3)\mathfrak B(n_4)\mathfrak B(n_5)\\
&\leq&|\mu((n_1,n_2,n_3;n_4;n_5))|\cdot|\mu(n_1,n_2,n_3)|\cdot 1\cdot1\\
&\leq&(|n_1|+|n_2|+|n_3|+|n_4|+|n_5|)\cdot(|n_1|+|n_2|+|n_3|)\\
&=&\sum_{\alpha=(\alpha_i)_{1\leq i\leq2\sigma(1,0,0)}\in\mathfrak R^{(2,(1,0,0))}}\prod_i|(n^{(2)})_i|^{\alpha_i}.
\end{eqnarray*}

Case 1.2. $\gamma=(0,1,0)$:
   $
  2\sigma(0,1,0)
  =5,
  $ $n^{(2)}=(n_1;n_2,n_3,n_4;n_5)\in\mathfrak N^{(1,0)}\times\mathfrak N^{(1,1)}\times\mathfrak N^{(1,0)}=\mathbb Z^\nu\times(\mathbb Z^\nu\times\mathbb Z^\nu\times\mathbb Z^\nu)\times\mathbb Z^\nu$, and
one has
\begin{eqnarray*}
\mathfrak B(n^{(2)})&=&|\mu(n^{(2)})|\mathfrak B(n_1)\mathfrak B(n_2,n_3,n_4)\mathfrak B(n_5)\\
&\leq&|\mu((n_1;n_2,n_3,n_4;n_5))|\cdot1\cdot|\mu(n_2,n_3,n_4)|\cdot 1\\
&\leq&(|n_1|+|n_2|+|n_3|+|n_4|+|n_5|)\cdot(|n_2|+|n_3|+|n_4|)\\
&=&\sum_{\alpha=(\alpha_i)_{1\leq i\leq2\sigma(0,1,0)}\in\mathfrak R^{(2,(0,1,0))}}\prod_i|(n^{(2)})_i|^{\alpha_i}.
\end{eqnarray*}

Case 1.3. $\gamma=(0,0,1)$:
   $
  2\sigma(0,0,1)
  =5,
  $ $n^{(2)}=(n_1;n_2;n_3,n_4,n_5)\in\mathfrak N^{(1,0)}\times\mathfrak N^{(1,0)}\times\mathfrak N^{(1,1)}=\mathbb Z^\nu\times\mathbb Z^\nu\times(\mathbb Z^\nu\times\mathbb Z^\nu\times\mathbb Z^\nu)$, and we have
\begin{eqnarray*}
\mathfrak B(n^{(2)})&=&|\mu(n^{(2)})|\mathfrak B(n_1)\mathfrak B(n_2)\mathfrak B(n_3,n_4,n_5)\\
&\leq&|\mu((n_1;n_2;n_3,n_4,n_5))|\cdot 1\cdot1\cdot|\mu(n_3,n_4,n_5)|\\
&\leq&(|n_1|+|n_2|+|n_3|+|n_4|+|n_5|)\cdot(|n_3|+|n_4|+|n_5|)\\
&=&\sum_{\alpha=(\alpha_i)_{1\leq i\leq2\sigma(0,0,1)}\in\mathfrak R^{(2,(0,0,1))}}\prod_i|(n^{(2)})_i|^{\alpha_i}.
\end{eqnarray*}
\begin{rema}
Once one has handled Case 1.1, Cases 1.2 and 1.3 can simply be obtained by a permutation of group $\mathbb Z_5$.
\end{rema}

Case 2.1. $\gamma=(1,1,0)$:
   $
  2\sigma(1,1,0)=2(\sigma(1)+\sigma(1)+\sigma(0))
  =2(3/2+3/2+1/2)
  =7,
  $ $n^{(2)}=(n_1,n_2,n_3;n_4,n_5,n_6;n_7)\in\mathfrak N^{(1,1)}\times\mathfrak N^{(1,1)}\times\mathfrak N^{(1,0)}=(\mathbb Z^\nu\times\mathbb Z^\nu\times\mathbb Z^\nu)\times (\mathbb Z^\nu\times\mathbb Z^\nu\times\mathbb Z^\nu)\times\mathbb Z^\nu$, and we have
\begin{eqnarray*}
&&\mathfrak B(n^{(2)})\\
&=&|\mu(n^{(2)})|\mathfrak B(n_1,n_2,n_3)\mathfrak B(n_4,n_5,n_6)\mathfrak B(n_7)\\
&\leq&|\mu(n_1,n_2,n_3;n_4,n_5,n_6;n_7)|\cdot|\mu(n_1,n_2,n_3)|\cdot|\mu(n_4,n_5,n_6)|\cdot1\\
&\leq&(|n_1|+|n_2|+|n_3|+|n_4|+|n_5|+|n_6|+|n_7|)\cdot(|n_1|+|n_2|+|n_3|)\cdot(|n_4|+|n_5|+|n_6|)\\
&=&\sum_{\alpha=(\alpha_i)_{1\leq i\leq2\sigma(1,1,0)}\in\mathfrak R^{(2,(1,1,0))}}\prod_i|(n^{(2)})_i|^{\alpha_i}.
\end{eqnarray*}

Case 2.2. $\gamma=(1,0,1)$:
  $
  2\sigma(1,0,1)
  =7,
  $
$n^{(2)}=(n_1,n_2,n_3;n_4;n_5,n_6,n_7)\in\mathfrak N^{(1,1)}\times\mathfrak N^{(1,0)}\times\mathfrak N^{(1,1)}=(\mathbb Z^\nu\times\mathbb Z^\nu\times\mathbb Z^\nu)\times\mathbb Z^\nu\times(\mathbb Z^\nu\times\mathbb Z^\nu\times\mathbb Z^\nu)$, and
\begin{eqnarray*}
&&\mathfrak B(n^{(2)})\\
&=&|\mu(n^{(2)})|\mathfrak B(n_1,n_2,n_3)\mathfrak B(n_4)\mathfrak B(n_5,n_6,n_7)\\
&\leq&|\mu(n_1,n_2,n_3;n_4,n_5,n_6;n_7)|\cdot|\mu(n_1,n_2,n_3)|\cdot1\cdot|\mu(n_5,n_6,n_7)|\\
&\leq&(|n_1|+|n_2|+|n_3|+|n_4|+|n_5|+|n_6|+|n_7|)\cdot(|n_1|+|n_2|+|n_3|)\cdot(|n_5|+|n_6|+|n_7|)\\
&=&\sum_{\alpha=(\alpha_i)_{1\leq i\leq2\sigma(0,1,1)}\in\mathfrak R^{(2,(0,1,1))}}\prod_i|(n^{(2)})_i|^{\alpha_i}.
\end{eqnarray*}

Case 2.3. $\gamma=(0,1,1)$:
  $
  2\sigma(0,1,1)
  =7,
  $
$n^{(2)}=(n_1;n_2,n_3,n_4;n_5,n_6,n_7)\in\mathfrak N^{(1,1)}\times\mathfrak N^{(1,0)}\times\mathfrak N^{(1,1)}=(\mathbb Z^\nu\times\mathbb Z^\nu\times\mathbb Z^\nu)\times\mathbb Z^\nu\times(\mathbb Z^\nu\times\mathbb Z^\nu\times\mathbb Z^\nu)$, and
\begin{eqnarray*}
&&\mathfrak B(n^{(2)})\\
&=&|\mu(n^{(2)})|\mathfrak B(n_1)\mathfrak B(n_2,n_3,n_4)\mathfrak B(n_5,n_6,n_7)\\
&\leq&|\mu(n_1,n_2,n_3;n_4,n_5,n_6;n_7)|\cdot1\cdot|\mu(n_2,n_3,n_4)|\cdot|\mu(n_5,n_6,n_7)|\\
&\leq&(|n_1|+|n_2|+|n_3|+|n_4|+|n_5|+|n_6|+|n_7|)\cdot(|n_2|+|n_3|+|n_4|)\cdot(|n_5|+|n_6|+|n_7|)\\
&=&\sum_{\alpha=(\alpha_i)_{1\leq i\leq2\sigma(0,1,1)}\in\mathfrak R^{(2,(0,1,1))}}\prod_i|(n^{(2)})_i|^{\alpha_i}.
\end{eqnarray*}
\begin{rema}
Once one has handled Case 2.1, Cases 2.2 and 2.3 can simply be obtained by a permutation of group $\mathbb Z_7$.
\end{rema}

Case 3. $\gamma=(1,1,1)$:
  $
  2\sigma(1,1,1)=2(\sigma(1)+\sigma(1)+\sigma(1))
  =2(3/2+3/2+3/2)
  =9$,
$n^{(2)}=(n_1,n_2,n_3;n_4,n_5,n_6;n_7,n_8,n_9)\in\mathfrak N^{(1,1)}\times\mathfrak N^{(1,1)}\times\mathfrak N^{(1,1)}=(\mathbb Z^\nu\times\mathbb Z^\nu\times\mathbb Z^\nu)\times(\mathbb Z^\nu\times\mathbb Z^\nu\times\mathbb Z^\nu)\times(\mathbb Z^\nu\times\mathbb Z^\nu\times\mathbb Z^\nu),$ and one can derive that
\begin{eqnarray*}
&&\mathfrak B(n^{(2)})\\
&=&|\mu(n^{(2)})|\mathfrak B(n_1,n_2,n_3)\mathfrak B(n_4,n_5,n_6)\mathfrak B(n_7,n_8,n_9)\\
&\leq&|\mu(n_1,n_2,n_3;n_4,n_5,n_6;n_7,n_8,n_9)|\cdot\mu(n_1,n_2,n_3)|\cdot|\mu(n_4,n_5,n_6)|\cdot|\mu(n_7,n_8,n_9)|\\
&\leq&\sum_{j=1}^{9}|n_j|\cdot(|n_1|+|n_2|+|n_3|)\cdot(|n_4|+|n_5|+|n_6|)\cdot(|n_7|+|n_8|+|n_9|)\\
&=&\sum_{\alpha=(\alpha_i)_{1\leq i\leq2\sigma(1,1,1)}\in\mathfrak R^{(2,(1,1,1))}}\prod_i|(n^{(2)})_i|^{\alpha_i}.
\end{eqnarray*}
This shows that \eqref{bbe} holds true for $\gamma \in \mathfrak T^{(1)} \times \mathfrak T^{(1)}\times\mathfrak T^{(1)}$.

Let $k\geq2$. Suppose now that \eqref{bbe} holds for all $1\leq k^\prime\leq k-1$. Let us consider the case of $k$. Let $\gamma = (\gamma_1, \gamma_2, \gamma_3) \in \mathfrak T^{(k-1)} \times \mathfrak T^{(k-1)}\mathfrak T^{(k-1)}, n^{(k)}=(\clubsuit,\spadesuit,\heartsuit) \in \mathfrak N^{(k-1,\gamma_1)}\times\mathfrak N^{(k-1,\gamma_2)}\times\mathfrak N^{(k-1,\gamma_3)} \subset \prod_{j=1}^{2\sigma(\gamma_1)} \mathbb Z^\nu \times \prod_{j=1}^{2\sigma(\gamma_1)} \mathbb Z^\nu \times \prod_{j=1}^{2\sigma(\gamma_3)} \mathbb Z^\nu \simeq \prod_{j=1}^{2\sigma(\gamma)} \mathbb Z^\nu$,
where
$\clubsuit=(n_1,\cdots,n_{2\sigma(\gamma_1)})\in\mathfrak N^{(k-1,\gamma_1)},\spadesuit=(n_{2\sigma(\gamma_1)+1},,\cdots,n_{2\sigma(\gamma_1)+2\sigma(\gamma_2)})\in\mathfrak N^{(k-1,\gamma_2)}$
and
$\heartsuit=(n_{2\sigma(\gamma_1)+2\sigma(\gamma_2)+1},\cdots,n_{2\sigma(\gamma_1)+2\sigma(\gamma_2)
+2\sigma(\gamma_3)}=n_{2\sigma(\gamma)})\in\mathfrak N^{(k-1,\gamma_3)}$, that is,
\begin{align*}
(n^{(k)})_i=n_i=
\begin{cases}
\clubsuit_i,&1\leq i\leq2\sigma(\gamma_1);\\
\spadesuit_{i-2\sigma(\gamma_1)},&2\sigma(\gamma_1)+1\leq i\leq2\sigma(\gamma_1)+2\sigma(\gamma_2);\\
\heartsuit_{i-2\sigma(\gamma_1)-2\sigma(\gamma_2)},&2\sigma(\gamma_1)+2\sigma(\gamma_2)+1\leq i\leq2\sigma(\gamma_1)+2\sigma(\gamma_2)+2\sigma(\gamma_3).
\end{cases}
\end{align*}
Then,
\begin{eqnarray*}
\mathfrak B(n^{(k)})&=&|\mu(n^{(k)})|\mathfrak B(\clubsuit)\mathfrak B(\spadesuit)\mathfrak B(\heartsuit)\\
&\leq&\sum_{i=1}^{2\sigma(\gamma)}|(n^{(k)})_i|\times\sum_{\alpha^{(1)}=(\alpha^{(1)}_i)_{1\leq i\leq2\sigma(\gamma_1)}\in\mathfrak R^{(k-1,\gamma_1)}}\prod_i|\clubsuit_i|^{\alpha^{(1)}_i}\\
&&\hspace{22.6mm}\times\sum_{\alpha^{(2)}=(\alpha^{(2)}_i)_{1\leq i\leq2\sigma(\gamma_2)}\in\mathfrak R^{(k-1,\gamma_2)}}\prod_i|\spadesuit_i|^{\alpha^{(2)}_i}\\
&&\hspace{22.6mm}\times\sum_{\alpha^{(3)}=(\alpha^{(3)}_i)_{1\leq i\leq2\sigma(\gamma_3)}\in\mathfrak R^{(k-1,\gamma_3)}}\prod_i|\heartsuit_i|^{\alpha^{(3)}_i}\\
&=&\sum_{i=1}^{2\sigma(\gamma)}|(n^{(k)})_i|\times\sum_{\alpha=(\alpha_i)_{1\leq i\leq 2\sigma(\gamma)}\in\mathfrak R^{(k-1,\gamma_1)}\times\mathfrak R^{(k-1,\gamma_2)}\times\mathfrak R^{(k-1,\gamma_3)}}\prod_i|(n^{(k)})_i|^{\alpha_i}\\
&=&\sum_{\alpha=(\alpha_i)_{1\leq i\leq 2\sigma(\gamma)}\in\mathfrak R^{(k,\gamma)}}\prod_i|(n^{(k)})_i|^{\alpha_i}.
\end{eqnarray*}
This completes the proof of Lemma \ref{iii}.
\end{proof}

\begin{property}\label{pro}
$\alpha \in \mathfrak R^{(k,\gamma)} \Rightarrow |\alpha| = \ell(\gamma)$.
\end{property}

\begin{proof}
Let $\alpha\in\mathfrak R^{(k,\gamma)}$. If $\gamma=0\in\mathfrak T^{(k)}$, $k\geq1$, then $\ell(0)=0$ and $\mathfrak R^{(k,0)}=\{0\in\mathbb Z\}$, so that $|\alpha|=0$. Hence $|\alpha|=\ell(\gamma)$ for $\gamma = 0 \in \mathfrak T^{(k)}$.

If $\gamma = 1 \in \mathfrak T^{(1)}$, then $\ell(\gamma)=1$ and $\mathfrak R^{(1,1)}=\{(1,0,0),(0,1,0),(0,0,1)\}$, and hence $|\alpha|=1$. Thus, $|\alpha| = \ell(\gamma)$ for $\gamma = 1 \in \mathfrak T^{(1)}$.

Let $k\geq2$. Assume that $|\alpha| = \ell(\gamma)$ holds for all $1\leq k^\prime\leq k-1$. For $k$, $\gamma=(\gamma_1,\gamma_2,\gamma_3)\in\mathfrak T^{(k-1)} \times \mathfrak T^{(k-1)} \times \mathfrak T^{(k-1)},\alpha=(\alpha^{(1)},\alpha^{(2)},\alpha^{(3)})+\beta$, where $\alpha^{(j)}\in\mathfrak R^{(k-1,\gamma_j)},j=1,2,3,\beta\in e^{(k,\gamma)}$, it is easy to see that
\begin{eqnarray*}
|\alpha|&=&|\alpha^{(1)}|+|\alpha^{(2)}|+|\alpha^{(3)}|+|\beta|\\
&=&\ell(\gamma_1)+\ell(\gamma_2)+\ell(\gamma_3)+1\\
&=&\ell(\gamma).
\end{eqnarray*}
This completes the proof of Property \ref{pro}.
\end{proof}

\begin{lemm}[Estimates of $|c_k(t,n)|$ in terms of the new variable $\alpha_j$]\label{mth2}
For all $k\geq1$, we have
\begin{align*}
|c_{k}(t,n)|\leq\mathcal A^{\frac{1}{2}}\sum_{\gamma\in\mathfrak T^{(k)}}\frac{(\mathcal A|\omega|t)^{\ell(\gamma)}}{\mathfrak D(\gamma)}\sum_{
\substack{\alpha=(\alpha_i)_{1\leq i\leq2\sigma(\gamma)}\in\mathfrak R^{(k,\gamma)}}
}\sum_{\substack{n^{(k)}\in\mathfrak N^{(k,\gamma)}\\\mu(n^{(k)})=n}}\prod_i|(n^{(k)})_i|^{\alpha_i}e^{-\kappa|(n^{(k)})_i|}.
\end{align*}
\end{lemm}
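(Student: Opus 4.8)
The plan is to combine the two preceding lemmas directly, with the only genuine input being the multiplicativity of the exponential weight across the coordinates of the tuple $n^{(k)}$.

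First I would start from the bound furnished by Lemma~\ref{mth1},
\[
|c_{k}(t,n)|\leq\mathcal A^{\frac{1}{2}}\sum_{\gamma\in\mathfrak T^{(k)}}\frac{(\mathcal A|\omega|t)^{\ell(\gamma)}}{\mathfrak D(\gamma)}\sum_{n^{(k)}\in\mathfrak N^{(k,\gamma)}:~\mu(n^{(k)})=n}\mathfrak B(n^{(k)})e^{-\kappa|n^{(k)}|},
\]
and then replace $\mathfrak B(n^{(k)})$ by the estimate from Lemma~\ref{iii}.

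The key step is to observe that the exponential weight factors across coordinates. By the convention $|n^{(k)}|=\sum_i|(n^{(k)})_i|$ for the size of a tuple (extended inductively from the rule $|n|=|\clubsuit_1|+|\clubsuit_2|$ fixed in the preliminaries), we have $e^{-\kappa|n^{(k)}|}=\prod_i e^{-\kappa|(n^{(k)})_i|}$. Combining this with the bound from Lemma~\ref{iii} gives
\[
\mathfrak B(n^{(k)})e^{-\kappa|n^{(k)}|}\leq\sum_{\alpha\in\mathfrak R^{(k,\gamma)}}\prod_i|(n^{(k)})_i|^{\alpha_i}\prod_i e^{-\kappa|(n^{(k)})_i|}=\sum_{\alpha\in\mathfrak R^{(k,\gamma)}}\prod_i|(n^{(k)})_i|^{\alpha_i}e^{-\kappa|(n^{(k)})_i|},
\]
where the last equality distributes the exponential factor into the product so that each $|(n^{(k)})_i|^{\alpha_i}$ is paired with the corresponding $e^{-\kappa|(n^{(k)})_i|}$.

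Finally I would insert this into the bound from Lemma~\ref{mth1} and interchange the sum over $\alpha\in\mathfrak R^{(k,\gamma)}$ with the sum over $n^{(k)}$. For each fixed $\gamma$ the index set $\mathfrak R^{(k,\gamma)}$ is finite and all summands are nonnegative, so no convergence or reordering issues arise. The only point requiring attention—and the main (mild) obstacle—is that the multi-index $\alpha=(\alpha_i)_{1\leq i\leq 2\sigma(\gamma)}$ and the tuple $n^{(k)}\in\mathfrak N^{(k,\gamma)}$ are both indexed by $1\leq i\leq 2\sigma(\gamma)$, so that the two products over $i$ share the same index range and the factors pair up correctly as $|(n^{(k)})_i|^{\alpha_i}e^{-\kappa|(n^{(k)})_i|}$; this indexing compatibility is exactly what the construction of $\mathfrak R^{(k,\gamma)}$ in Lemma~\ref{iii} is designed to guarantee. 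This yields the claimed inequality and completes the proof.
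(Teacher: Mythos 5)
Your proposal is correct and follows exactly the paper's own argument: apply Lemma~\ref{mth1}, bound $\mathfrak B(n^{(k)})$ via Lemma~\ref{iii}, factor $e^{-\kappa|n^{(k)}|}=\prod_i e^{-\kappa|(n^{(k)})_i|}$ into the product, and interchange the (finite, nonnegative) sums over $\alpha$ and $n^{(k)}$. Your explicit remarks on the indexing compatibility and the legitimacy of the interchange are details the paper leaves implicit, but the route is the same.
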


\begin{proof}
Note that
\begin{eqnarray*}
&&|c_{k}(t,n)|\\
&\stackrel{(\text{\scriptsize Lemma \ref{mth1}})}{\leq}&\mathcal A^{\frac{1}{2}}\sum_{\gamma\in\mathfrak T^{(k)}}\frac{(\mathcal A|\omega|t)^{\ell(\gamma)}}
{\mathfrak D(\gamma)}\sum_{n^{(k)}\in\mathfrak N^{(k,\gamma)}:~\mu(n^{(k)})=n}\mathfrak B(n^{(k)})e^{-\kappa|n^{(k)}|}
~\\
\nonumber&\stackrel{(\text{\scriptsize Lemma \ref{iii}})}{\leq}&\mathcal A^{\frac{1}{2}}\sum_{\gamma\in\mathfrak T^{(k)}}\frac{(\mathcal A|\omega|t)^{\ell(\gamma)}}{\mathfrak D(\gamma)}
\sum_{\substack{n^{(k)}\in\mathfrak N^{(k,\gamma)}\\\mu(n^{(k)})=n}}\sum_{
\substack{\alpha=(\alpha_i)_{1\leq i\leq2\sigma(\gamma)}\in\mathfrak R^{(k,\gamma)}}
}\prod_i|(n^{(k)})_i|^{\alpha_i}\prod_ie^{-\kappa|(n^{(k)})_i|}~\\
\nonumber&=&\mathcal A^{\frac{1}{2}}\sum_{\gamma\in\mathfrak T^{(k)}}\frac{(\mathcal A|\omega|t)^{\ell(\gamma)}}{\mathfrak D(\gamma)}\sum_{
\substack{\alpha=(\alpha_i)_{1\leq i\leq2\sigma(\gamma)}\in\mathfrak R^{(k,\gamma)}}
}\sum_{\substack{n^{(k)}\in\mathfrak N^{(k,\gamma)}:~\mu(n^{(k)})=n}}\prod_i|(n^{(k)})_i|^{\alpha_i}e^{-\kappa|(n^{(k)})_i|}.
\end{eqnarray*}
This completes the proof of Lemma \ref{mth2}.
\end{proof}

\begin{lemm}[Continuation of estimates of $|c_k(t,n)|$ in terms of the new variable $\alpha_j$]\label{again}
For all $k\geq1$, we have
\[
|c_{k}(t,n)|\leq(6\kappa^{-1})^\nu\mathcal A^{\frac{1}{2}}e^{-\frac{\kappa}{2}|n|}\sum_{\gamma\in\mathfrak T^{(k)}}\frac{\left(\mathcal A|\omega|(6\kappa^{-1})^{2\nu+1}t\right)^{\ell(\gamma)}}{\mathfrak D(\gamma)}\sum_{
\alpha=(\alpha_i)_{1\leq i\leq2\sigma(\gamma)}\in\mathfrak R^{(k,\gamma)}
}\prod_j\alpha_j!.
\]
\end{lemm}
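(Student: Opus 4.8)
I plan to start from the bound in Lemma~\ref{mth2} and estimate, for each fixed branch $\gamma\in\mathfrak T^{(k)}$ and each multi-index $\alpha\in\mathfrak R^{(k,\gamma)}$, the inner sum
\[
S(\gamma,\alpha,n):=\sum_{\substack{n^{(k)}\in\mathfrak N^{(k,\gamma)}\\\mu(n^{(k)})=n}}\prod_{i=1}^{2\sigma(\gamma)}\left|(n^{(k)})_i\right|^{\alpha_i}e^{-\kappa|(n^{(k)})_i|}.
\]
First I would split each exponential factor into two equal halves, $e^{-\kappa|(n^{(k)})_i|}=e^{-\frac{\kappa}{2}|(n^{(k)})_i|}\cdot e^{-\frac{\kappa}{2}|(n^{(k)})_i|}$. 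One half is spent on producing the overall decay in $n$: since $\mu(n^{(k)})=\sum_i(n^{(k)})_i=n$, the triangle inequality gives $\sum_i|(n^{(k)})_i|\geq|n|$, hence $\prod_i e^{-\frac{\kappa}{2}|(n^{(k)})_i|}=e^{-\frac{\kappa}{2}\sum_i|(n^{(k)})_i|}\leq e^{-\frac{\kappa}{2}|n|}$. Pulling this factor out and discarding the constraint $\mu(n^{(k)})=n$ (legitimate since every summand is nonnegative) decouples the remaining sum into a product over the $2\sigma(\gamma)$ coordinate blocks:
\[
S(\gamma,\alpha,n)\leq e^{-\frac{\kappa}{2}|n|}\prod_{i=1}^{2\sigma(\gamma)}\left(\sum_{m\in\mathbb Z^\nu}|m|^{\alpha_i}e^{-\frac{\kappa}{2}|m|}\right).
\]

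Next I would bound each one-block factor by an elementary estimate. For the blocks with $\alpha_i=0$ this is just the geometric sum $\sum_{m\in\mathbb Z^\nu}e^{-\frac{\kappa}{2}|m|}=\bigl(\sum_{j\in\mathbb Z}e^{-\frac{\kappa}{2}|j|}\bigr)^\nu\leq(6\kappa^{-1})^\nu$, using $\frac{1+e^{-\kappa/2}}{1-e^{-\kappa/2}}\leq\frac{16}{3\kappa}\leq6\kappa^{-1}$ for $0<\kappa\leq1$. For the blocks with $\alpha_i\geq1$ I would combine this geometric summation with the monomial bound coming from the exponential Taylor series, namely $r^{\alpha}\leq\alpha!\,\theta^{-\alpha}e^{\theta r}$ for $r\geq0$, which converts the monomial into a factorial at the cost of a power of $\kappa^{-1}$ while pushing the polynomial weight into the exponential. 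Each such block is then controlled by $(6\kappa^{-1})^{\nu}(6\kappa^{-1})^{\alpha_i}\alpha_i!$-type quantities.

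Finally I would collect the powers. The number of blocks is $2\sigma(\gamma)=2\ell(\gamma)+1$ by Property~\ref{re}, and the exponents satisfy $\sum_i\alpha_i=|\alpha|=\ell(\gamma)$ by Property~\ref{pro}. Thus the product of the summability factors contributes $(6\kappa^{-1})^{\nu\cdot 2\sigma(\gamma)}$ and the product of the balancing factors contributes $(6\kappa^{-1})^{\ell(\gamma)}\prod_j\alpha_j!$; regrouping $\nu\cdot 2\sigma(\gamma)+\ell(\gamma)=\nu+(2\nu+1)\ell(\gamma)$ reproduces exactly the prefactor $(6\kappa^{-1})^\nu$ together with the per-branch factor $(6\kappa^{-1})^{(2\nu+1)\ell(\gamma)}$ that is built into $(\mathcal A|\omega|(6\kappa^{-1})^{2\nu+1}t)^{\ell(\gamma)}$. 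Substituting $S(\gamma,\alpha,n)\leq(6\kappa^{-1})^{\nu}(6\kappa^{-1})^{(2\nu+1)\ell(\gamma)}e^{-\frac{\kappa}{2}|n|}\prod_j\alpha_j!$ into Lemma~\ref{mth2} then yields the asserted inequality.

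The step I expect to be the main obstacle is the precise constant bookkeeping in the middle step. The naive one-block bound $\sum_{m\in\mathbb Z^\nu}|m|^{\alpha}e^{-\frac{\kappa}{2}|m|}\leq(6\kappa^{-1})^{\nu+\alpha}\alpha!$ is too optimistic once $\nu\geq2$, because expanding $|m|^{\alpha}=(\sum_j|m_j|)^{\alpha}$ multinomially creates an extra combinatorial factor of size $\binom{\alpha+\nu-1}{\nu-1}$. The key is that this excess must be absorbed by the slack in the summability estimate: the true value $\tfrac{16}{3}$ lies strictly below the budgeted $6$, so each block carries a multiplicative reserve $(\tfrac{9}{8})^\nu$, and one has to verify that, after taking the product over all $2\ell(\gamma)+1$ blocks (of which at most $\ell(\gamma)$ carry a nonzero $\alpha_i$), the accumulated reserve dominates the accumulated multinomial excess, so that the clean base-$6$, exponent-$(2\nu+1)$ bound survives uniformly in $\nu$, $\alpha$ and $\kappa\in(0,1]$. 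Sharpening these elementary inequalities is where the real work lies; the remainder is formal substitution.
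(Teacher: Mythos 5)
Your skeleton is the paper's own proof. The paper proves this lemma in three quoted steps: start from Lemma~\ref{mth2}, apply the appendix estimate (Lemma~\ref{4}) to the inner constrained sum, then use Property~\ref{pro} together with Property~\ref{re} to regroup $(6\kappa^{-1})^{|\alpha|+2\sigma(\gamma)\nu}$ as $(6\kappa^{-1})^{\nu}\bigl((6\kappa^{-1})^{2\nu+1}\bigr)^{\ell(\gamma)}$; your splitting of the exponentials, extraction of $e^{-\frac{\kappa}{2}|n|}$ via the triangle inequality, dropping of the constraint, and decoupling into blocks is precisely the proof of Lemma~\ref{4}, and your power bookkeeping is identical. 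The gap is in your final paragraph, where you leave the one-block estimate unproven and misdiagnose what it requires. No multinomial expansion of $|m|^{\alpha}=(\sum_l|m_l|)^{\alpha}$ is needed: apply the scalar inequality $z^{\alpha}\le\alpha!\,\theta^{-\alpha}e^{\theta z}$ (the paper's Lemma~\ref{2}) to $z=|m|$ as a whole. This removes the polynomial weight \emph{before} any coordinate decoupling, and the leftover purely exponential sum factorizes exactly, $\sum_{m\in\mathbb Z^{\nu}}e^{-c|m|}=\bigl(\sum_{j\in\mathbb Z}e^{-c|j|}\bigr)^{\nu}$, because $|m|$ is the $\ell^{1}$ norm and hence additive over coordinates. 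The factor $\binom{\alpha+\nu-1}{\nu-1}$ never appears, so there is no ``excess'' to absorb. Moreover, the reserve argument you outline would fail if carried out block by block: for $\alpha_i=\nu$ large, $\binom{\alpha_i+\nu-1}{\nu-1}=\tfrac12\binom{2\nu}{\nu}\sim 4^{\nu}$, while the reserve you budget is at most about $(9/8)^{\nu}(9/4)^{\alpha_i}=(81/32)^{\nu}\approx(2.53)^{\nu}$; only a pooled accounting over all $2\ell(\gamma)+1$ blocks could rescue it, which is a far more delicate verification than the lemma warrants — and entirely avoidable.

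The constant problem that genuinely exists is a different one, and you should be aware of it. After half of each exponential is spent on $e^{-\frac{\kappa}{2}|n|}$, each block carries only the weight $e^{-\frac{\kappa}{2}|m|}$, and the scalar method (say with $\theta=\kappa/4$) then gives the per-block bound $\alpha_i!\,(4\kappa^{-1})^{\alpha_i}(12\kappa^{-1})^{\nu}$, i.e.\ base $12$ on the lattice part, not base $6$. This is exactly the internal discrepancy of the paper itself: Lemma~\ref{4} is \emph{stated} with $(6\kappa^{-1})^{|\alpha|+\nu r}$, but its own proof ends with $(12\kappa^{-1})^{|\alpha|+\nu r}$, and the proof of Lemma~\ref{again} quotes the stated version. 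So do not fight for base $6$: run your own middle-paragraph argument (scalar bound on $|m|$, then geometric sums) and accept the bound with $(12\kappa^{-1})^{\nu}$ in front and $\bigl(\mathcal A|\omega|(12\kappa^{-1})^{2\nu+1}t\bigr)^{\ell(\gamma)}$ inside. That closes the proof immediately, and the change is harmless downstream, since this lemma only enters Proposition~\ref{thm} through Lemma~\ref{lmmg}, where replacing $6$ by $12$ merely shrinks the explicit admissible time interval by a constant factor.
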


\begin{proof}
We estimate $|c_{k}(t,n)|$ as follows:
\begin{eqnarray*}
\nonumber&&|c_{k}(t,n)|\\
&\stackrel{(\text{\scriptsize Lemma \ref{mth2}})}{\leq}&\mathcal A^{\frac{1}{2}}\sum_{\gamma\in\mathfrak T^{(k)}}\frac{(\mathcal A|\omega|t)^{\ell(\gamma)}}{\mathfrak D(\gamma)}\sum_{
\substack{(\alpha_i)_{1\leq i\leq2\sigma(\gamma)}\in\mathfrak R^{(k,\gamma)}}
}\sum_{\substack{n^{(k)}\in\mathfrak N^{(k,\gamma)}\\\mu(n^{(k)})=n}}\prod_i|(n^{(k)})_i|^{\alpha_i}e^{-\kappa|(n^{(k)})_i|}~~\\
\nonumber&\stackrel{(\text{\scriptsize Lemma \ref{4}})}{\leq}&\mathcal A^{\frac{1}{2}}e^{-\frac{\kappa}{2}|n|}\sum_{\gamma\in\mathfrak T^{(k)}}\frac{(\mathcal A|\omega|t)^{\ell(\gamma)}}{\mathfrak D(\gamma)}\sum_{
\alpha=(\alpha_i)_{1\leq i\leq2\sigma(\gamma)}\in\mathfrak R^{(k,\gamma)}
}(6\kappa^{-1})^{|\alpha|+2\sigma(\gamma)\nu}\prod_j\alpha_j!~\\
\nonumber&\stackrel{(\text{\scriptsize Property \ref{pro}})}{\leq}&\mathcal A^{\frac{1}{2}}e^{-\frac{\kappa}{2}|n|}\sum_{\gamma\in\mathfrak T^{(k)}}\frac{(\mathcal A|\omega|t)^{\ell(\gamma)}}{\mathfrak D(\gamma)}\sum_{
\alpha=(\alpha_i)_{1\leq i\leq2\sigma(\gamma)}\in\mathfrak R^{(k,\gamma)}
}(6\kappa^{-1})^{\ell(\gamma)+2\sigma(\gamma)\nu}\prod_j\alpha_j!~\\
\nonumber&=&(6\kappa^{-1})^\nu\mathcal A^{\frac{1}{2}}e^{-\frac{\kappa}{2}|n|}\sum_{\gamma\in\mathfrak T^{(k)}}\frac{\left(\mathcal A|\omega|(6\kappa^{-1})^{2\nu+1}t\right)^{\ell(\gamma)}}{\mathfrak D(\gamma)}\sum_{
\alpha=(\alpha_i)_{1\leq i\leq2\sigma(\gamma)}\in\mathfrak R^{(k,\gamma)}
}\prod_j\alpha_j!.
\end{eqnarray*}
This completes the proof of Lemma \ref{again}.
\end{proof}

\subsection{Estimates of $\sum_{\gamma\in\mathfrak T^{(k)}}\frac{{\bf t}^{\ell(\gamma)}}{\mathfrak D(\gamma)}\sum_{
\alpha=(\alpha_i)_{1\leq i\leq2\sigma(\gamma)}\in\mathfrak R^{(k,\gamma)}
}\prod_j\alpha_j!$}

\begin{lemm}\label{lmmg}
For $0\leq{\bf t}\leq\frac{1}{16}$, we have
\begin{align}\label{101}
\sum_{\gamma\in\mathfrak T^{(k)}}\frac{{\bf t}^{\ell(\gamma)}}{\mathfrak D(\gamma)}\sum_{
\alpha=(\alpha_i)_{1\leq i\leq2\sigma(\gamma)}\in\mathfrak R^{(k,\gamma)}
}\prod_i\alpha_i!\leq2, \quad\forall k\geq1.
\end{align}
\end{lemm}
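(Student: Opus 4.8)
The plan is to prove \eqref{101} by induction on $k$, exploiting the recursive structure of $\mathfrak T^{(k)}$, $\mathfrak D$, and $\mathfrak R^{(k,\gamma)}$. Write $S_k({\bf t})$ for the left-hand side of \eqref{101}. For the base case $k=1$ the tree $\mathfrak T^{(1)}=\{0,1\}$ has only two branches: the branch $\gamma=0$ contributes $\frac{{\bf t}^{0}}{\mathfrak D(0)}\cdot 0!=1$, while the branch $\gamma=1$ contributes $\frac{{\bf t}}{\mathfrak D(1)}\sum_{\alpha\in\mathfrak R^{(1,1)}}\prod_i\alpha_i!=3{\bf t}$, so $S_1({\bf t})=1+3{\bf t}\le 1+\tfrac{3}{16}<2$.

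For the inductive step I would split the sum over $\mathfrak T^{(k)}=\{0\}\cup(\mathfrak T^{(k-1)})^{3}$ into the leaf term (which again contributes $1$) and the sum over $\gamma=(\gamma_1,\gamma_2,\gamma_3)$. The key algebraic point is that the factor $\ell(\gamma)$ in $\mathfrak D(\gamma)=\ell(\gamma)\mathfrak D(\gamma_1)\mathfrak D(\gamma_2)\mathfrak D(\gamma_3)$ cancels against a factor $\ell(\gamma)$ produced when one bounds the factorial sum. Indeed, from $\mathfrak R^{(k,\gamma)}=\prod_{j=1}^{3}\mathfrak R^{(k-1,\gamma_j)}+e^{(k,\gamma)}$, together with Property~\ref{pro} (so $|\bar\alpha|=\ell(\gamma_1)+\ell(\gamma_2)+\ell(\gamma_3)=\ell(\gamma)-1$ for every $\bar\alpha$ in the product set) and $2\sigma(\gamma)=2\ell(\gamma)+1$, adding a unit vector at position $m$ multiplies $\prod_i\bar\alpha_i!$ by $\bar\alpha_m+1$, whence
\begin{align*}
\sum_{\beta\in e^{(k,\gamma)}}\prod_i(\bar\alpha+\beta)_i!=\Bigl(|\bar\alpha|+2\sigma(\gamma)\Bigr)\prod_i\bar\alpha_i!=3\ell(\gamma)\prod_i\bar\alpha_i!.
\end{align*}
Summing over $\bar\alpha$ factorizes the product over the three children, and after dividing by $\mathfrak D(\gamma)$ the two occurrences of $\ell(\gamma)$ cancel, leaving the recursion $S_k({\bf t})\le 1+3{\bf t}\,S_{k-1}({\bf t})^{3}$.

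The conclusion would then follow from an invariant-interval argument: if $3{\bf t}M^{3}-M+1\le 0$ for some $M\ge S_1$, then $x\mapsto 1+3{\bf t}x^{3}$ maps $[0,M]$ into itself and the induction yields $S_k\le M$ for all $k$, so it remains to certify $M=2$ on the stated range of ${\bf t}$.

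The hard part will be reconciling this constant with the threshold ${\bf t}\le\tfrac1{16}$. The estimate above treats $\mathfrak R^{(k,\gamma)}$ as the full Cartesian-product-plus-unit-vector set, i.e.\ it counts each multi-index with its natural multiplicity; but $\mathfrak R^{(k,\gamma)}$ is a genuine \emph{set}, and the Minkowski sum $\prod_j\mathfrak R^{(k-1,\gamma_j)}+e^{(k,\gamma)}$ produces many coincidences (for example $e_i+e_j$ arising from two distinct ordered choices), so the true value of $\sum_{\alpha\in\mathfrak R^{(k,\gamma)}}\prod_i\alpha_i!$ is strictly smaller than $3\ell(\gamma)\prod_j(\cdots)$. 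With the crude constant $3$ the cubic $3{\bf t}M^{3}-M+1$ has no positive root once ${\bf t}>\tfrac{4}{81}$, and $\tfrac4{81}<\tfrac1{16}$; thus the naive recursion alone only closes the induction for ${\bf t}\le\tfrac4{81}$. To reach $\tfrac1{16}$ one must book-keep these collisions and replace $3\ell(\gamma)$ by a sharper, non-uniform count (effectively reducing the per-node growth from $3$ toward $2$ for deep subtrees), and then verify that the collision-corrected generating function $\sum_{\gamma}{\bf t}^{\ell(\gamma)}\mathfrak D(\gamma)^{-1}\sum_{\alpha}\prod_i\alpha_i!$ remains below $2$ throughout $0\le{\bf t}\le\tfrac1{16}$. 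This combinatorial estimate of the set-theoretic factorial sum, rather than the recursion or the fixed-point analysis, is where I expect the real work to lie.
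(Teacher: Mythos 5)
Your strategy is the same as the paper's (induction on $k$ through the recursive structure of $\mathfrak T^{(k)}$, $\mathfrak D$, $\mathfrak R^{(k,\gamma)}$), your base case is right, and your multiplicity count is right: since $e^{(k,\gamma)}$ consists of all $2\sigma(\gamma)$ unit vectors of $\mathbb Z^{2\sigma(\gamma)}$, one has
\begin{align*}
\sum_{\beta\in e^{(k,\gamma)}}\prod_i(\bar\alpha+\beta)_i!
=\bigl(|\bar\alpha|+2\sigma(\gamma)\bigr)\prod_i\bar\alpha_i!
=3\ell(\gamma)\prod_i\bar\alpha_i!
\end{align*}
by Property \ref{pro} and $2\sigma(\gamma)=2\ell(\gamma)+1$ (Property \ref{re}), and after cancelling $\ell(\gamma)$ against $\mathfrak D(\gamma)$ this gives, writing $S_k$ for the left-hand side of \eqref{101}, the recursion $S_k\le 1+3{\bf t}S_{k-1}^3$. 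As you observe, this closes the induction with the constant $2$ only for ${\bf t}\le\tfrac1{24}$, and with the optimal invariant $M=\tfrac32$ only for ${\bf t}\le\tfrac4{81}$; beyond $\tfrac4{81}$ the map $x\mapsto 1+3{\bf t}x^3$ has no fixed point at all. So your proposal does \emph{not} prove the lemma on the full stated range ${\bf t}\le\tfrac1{16}$, and the repair you point to --- book-keeping the collisions in the Minkowski sum $\prod_j\mathfrak R^{(k-1,\gamma_j)}+e^{(k,\gamma)}$ --- is only sketched. The collisions are real and do help: for instance $(1,0,0;1,0,0;0)+e_2=(0,1,0;1,0,0;0)+e_1$ inside $\mathfrak R^{(2,(1,1,0))}$, and a collision-exact computation gives $\sum_{\alpha\in\mathfrak R^{(2,(1,1,0))}}\prod_i\alpha_i!=63$ against the with-multiplicity count $3\ell(\gamma)\cdot(3\cdot3\cdot1)=81$, while $S_2(1/16)\approx 1.29$. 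But turning this into a proof for all $k$ requires controlling iterated Minkowski sums (two, three, \dots{} extra unit vectors landing in one block), and neither you nor your sketch does that; this is a genuine gap relative to the statement.

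You should know, however, that the paper's own proof reaches $\tfrac1{16}$ only through a miscount, not through the collision analysis you call for. In the displayed induction step the paper evaluates the sum over positions of the added unit vector in block $j_0$ as $\sum_{i_0}\left((\alpha^{(j_0)})_{i_0}+1\right)=\ell(\gamma_{j_0})+\sigma(\gamma_{j_0})$, i.e.\ it counts $\sigma(\gamma_{j_0})$ positions, whereas block $j_0$ has $2\sigma(\gamma_{j_0})$ coordinates (indeed $\sigma(\gamma_{j_0})$ is a half-integer, so it cannot be a number of positions); the correct value is $\ell(\gamma_{j_0})+2\sigma(\gamma_{j_0})=3\ell(\gamma_{j_0})+1$, whose sum over $j_0$ is exactly your $3\ell(\gamma)$, not the paper's $2\ell(\gamma)-\tfrac12$. (The paper also writes equalities where the set-valued Minkowski sum only yields ``$\le$'', but that slip is at least in the favorable direction.) Thus the honest version of the paper's argument is precisely your recursion and proves the lemma only for ${\bf t}\le\tfrac1{24}$; your analysis has in effect uncovered an error in the paper rather than missed an idea it contains. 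The practical resolution is to weaken the threshold in Lemma \ref{lmmg} to ${\bf t}\le\tfrac1{24}$ (and correspondingly $\tfrac1{2^{\mathfrak p+1}}$ to $\tfrac1{\mathfrak p\,2^{\mathfrak p}}$ in Lemma \ref{lmm}), which your proof then establishes completely and which only changes the numerical constant in $t_0$; proving the bound at $\tfrac1{16}$ as stated would require the collision estimate that remains open.
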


\begin{proof}
For $k=1$,
\begin{eqnarray*}
&&\sum_{\gamma\in\mathfrak T^{(1)}}\frac{{\bf t}^{\ell(\gamma)}}{\mathfrak D(\gamma)}\sum_{
\alpha=(\alpha_i)_{1\leq i\leq2\sigma(\gamma)}\in\mathfrak R^{(1,\gamma)}
}\prod_i\alpha_i!\\
&=&\frac{{\bf t}^{\ell(0)}}{\mathfrak D(0)}\sum_{\alpha=(\alpha_i)_{1\leq i\leq2\sigma(0)}\in\mathfrak R^{(1,0)}}\prod_i\alpha_i!+\frac{{\bf t}^{\ell(1)}}{\mathfrak D(1)}\sum_{
\alpha=(\alpha_i)_{1\leq i\leq2\sigma(1)}\in\mathfrak R^{(1,1)}
}\prod_i\alpha_i!\\
&=&1+{\bf t}(1+1+1)\\
&=&1+3{\bf t}\\
&\leq&2
\end{eqnarray*}
provided that $0\leq {\bf t}\leq1/16<1/3$.

Let $k\geq2$. Assume that \eqref{101} is true for all $1\leq k^\prime\leq k-1$. For $k$, $\gamma=0\in\mathfrak T^{(k)}$, we can use the same argument as above. For $\gamma=(\gamma_1,\gamma_2,\gamma_2) \in \mathfrak T^{(k-1)} \times \mathfrak T^{(k-1)} \times \mathfrak T^{(k-1)}$, $\alpha=(\alpha^{(1)},\alpha^{(2)},\alpha^{(3)})+\beta$, where $\alpha^{(j)}\in\mathfrak R^{(k-1,\gamma_j)},\beta\in e^{(k,\gamma)}$, we can proceed as follows:
\begin{eqnarray*}
&&\sum_{\gamma\in\mathfrak T^{(k)}\backslash\{0\}}\frac{{\bf t}^{\ell(\gamma)}}{\mathfrak D(\gamma)}\sum_{
\alpha=(\alpha_i)_{1\leq i\leq2\sigma(\gamma)}\in\mathfrak R^{(k,\gamma)}
}\prod_i\alpha_i!\\
&=&\sum_{\substack{\gamma_j\in\mathfrak T^{(k-1)}\\j=1,2,3}}\frac{{\bf t}}{\sum_{j=1}^3\ell(\gamma_j)+1}\prod_{j=1}^3\frac{{\bf t}^{\ell(\gamma_j)}}{\mathfrak D(\gamma_j)}\\
&\times&\Big\{\sum_{i_0=1}^{\sigma(\gamma_1)}\prod_{i=1}^{\sigma(\gamma_1)}
\left((\alpha^{(1)})_i+\delta_{i,i_0}\right)!\prod_{i=1}^{\sigma(\gamma_2)}(\alpha^{(2)})_i!
\prod_{i=1}^{\sigma(\gamma_3)}(\alpha^{(3)})_i!\\
&+&\sum_{i_0=1}^{\sigma(\gamma_2)}\prod_{i=1}^{\sigma(\gamma_2)}
(\alpha^{(1)})_i!\prod_{i=1}^{\sigma(\gamma_2)}\left((\alpha^{(2)})_i+\delta_{i,i_0}\right)!
\prod_{i=1}^{\sigma(\gamma_3)}(\alpha^{(3)})_i!\\
&+&\sum_{i_0=1}^{\sigma(\gamma_3)}\prod_{i=1}^{\sigma(\gamma_3)}
(\alpha^{(1)})_i!\prod_{i=1}^{\sigma(\gamma_2)}(\alpha^{(2)})_i!
\prod_{i=1}^{\sigma(\gamma_3)}\left(\alpha^{(3)})_i+\delta_{i,i_0}\right)!\Big\}\\
&=&\sum_{\substack{\gamma_j\in\mathfrak T^{(k-1)}\\j=1,2,3}}\frac{{\bf t}}{\sum_{j=1}^3\ell(\gamma_j)+1}\prod_{j=1}^3\frac{{\bf t}^{\ell(\gamma_j)}}{\mathfrak D(\gamma_j)}\sum_{j_0=1}^{3}\sum_{i_0=1}^{\sigma(\gamma_{j_0})}\prod_{j=1}^3
\prod_{j=1}^{\sigma(\gamma_j)
}\left((\alpha^{(j)})_i+\delta_{i,i_0}\delta_{j,j_0}\right)!\\
&=&\sum_{\substack{\gamma_j\in\mathfrak T^{(k-1)}\\j=1,2,3}}\frac{{\bf t}}{\sum_{j=1}^3\ell(\gamma_j)+1}\prod_{j=1}^3\frac{{\bf t}^{\ell(\gamma_j)}}{\mathfrak D(\gamma_j)}\sum_{j_0=1}^{3}\sum_{i_0=1}^{\sigma(\gamma_{j_0})}
\left((\alpha^{(j_0)})_{i_0}+1\right)\prod_{j=1}^{3}
\prod_{j=1}^{\sigma(\gamma_j)
}\left((\alpha^{(j)})_i\right)!\\
&=&\sum_{\substack{\gamma_j\in\mathfrak T^{(k-1)}\\j=1,2,3}}\frac{{\bf t}}{\sum_{j=1}^3\ell(\gamma_j)+1}\prod_{j=1}^3\frac{{\bf t}^{\ell(\gamma_j)}}{\mathfrak D(\gamma_j)}\sum_{j_0=1}^{3}\left(\ell(\gamma_{j_0})+\sigma(\gamma_{j_0})\right)
\prod_{j=1}^{3}
\prod_{j=1}^{\sigma(\gamma_j)
}\left((\alpha^{(j)})_i\right)!\\
&=&\sum_{\substack{\gamma_j\in\mathfrak T^{(k-1)}\\j=1,2,3}}\frac{{\bf t}}{\sum_{j=1}^3\ell(\gamma_j)+1}\prod_{j=1}^3\frac{{\bf t}^{\ell(\gamma_j)}}{\mathfrak D(\gamma_j)}\sum_{j_0=1}^{3}\left(2\ell(\gamma_{j_0})+\frac{1}{2}\right)\prod_{j=1}^3
\prod_{j=1}^{\sigma(\gamma_j)
}\left((\alpha^{(j)})_i\right)!\\
&=&{\bf t}\sum_{\substack{\gamma_j\in\mathfrak T^{(k-1)}\\j=1,2,3}}\frac{\sum_{j_0=1}^3\left(2\ell(\gamma_{j_0})+\frac{1}{2}\right)}
{\sum_{j=1}^3\ell(\gamma_j)+1}\sum_{\substack{\alpha^{(j)}\in\mathfrak R^{(k-1,\gamma_j)}\\j=1,2,3}}\prod_{j=1}^3\frac{{\bf t}^{\ell(\gamma_j)}}{\mathfrak D(\gamma_j)}\prod_{i=1}^{\sigma(\gamma_j)}
(\alpha^{(j)})_i!\\
&=&{\bf t}\sum_{\substack{\gamma_j\in\mathfrak T^{(k-1)}\\j=1,2,3}}\frac{2\left(\sum_{j_0=1}^3\ell(\gamma_{j_0})+1\right)-\frac{1}{2}}{\sum_{j=1}^3\ell(\gamma_j)+1}\sum_{\substack{\alpha^{(j)}\in\mathfrak R^{(k-1,\gamma_j)}\\j=1,2,3}}\prod_{j=1}^3\frac{{\bf t}^{\ell(\gamma_j)}}{\mathfrak D(\gamma_j)}\prod_{i=1}^{\sigma(\gamma_j)}
(\alpha^{(j)})_i!\\
&\leq&2{\bf t}\sum_{\substack{\gamma_j\in\mathfrak T^{(k-1)}\\j=1,2,3}}\sum_{\substack{\alpha^{(j)}\in\mathfrak R^{(k-1,\gamma_j)}\\j=1,2,3}}\prod_{j=1}^3\frac{{\bf t}^{\ell(\gamma_j)}}{\mathfrak D(\gamma_j)}\prod_{i=1}^{\sigma(\gamma_j)}
(\alpha^{(j)})_i!\\
&=&2{\bf t}\prod_{j=1}^3\sum_{\gamma_j\in\mathfrak T^{(k-1)}}\sum_{\alpha^{(j)}\in\mathfrak R^{(k-1,\gamma_j)}}\frac{{\bf t}^{\ell(\gamma_j)}}{\mathfrak D(\gamma_j)}\prod_{i=1}^{\sigma(\gamma_j)}(\alpha^{(j)})_i!\\
&\leq&2{\bf t}\times2\times2\times2\\
&=&16{\bf t}.
\end{eqnarray*}
Hence
\begin{eqnarray*}
&&\sum_{\gamma\in\mathfrak T^{(k)}}\frac{{\bf t}^{\ell(\gamma)}}{\mathfrak D(\gamma)}\sum_{
\alpha=(\alpha_i)_{1\leq i\leq2\sigma(\gamma)}\in\mathfrak R^{(k,\gamma)}
}\prod_j\alpha_j!\\
&=&\sum_{\gamma=0\in\mathfrak T^{(k)}}\frac{{\bf t}^{\ell(\gamma)}}{\mathfrak D(\gamma)}\sum_{
\alpha=(\alpha_i)_{1\leq i\leq2\sigma(\gamma)}\in\mathfrak R^{(k,\gamma)}
}\prod_j\alpha_j!+\sum_{\gamma\in\mathfrak T^{(k)}\backslash\{0\}}\frac{{\bf t}^{\ell(\gamma)}}{\mathfrak D(\gamma)}\sum_{
\alpha=(\alpha_i)_{1\leq i\leq2\sigma(\gamma)}\in\mathfrak R^{(k,\gamma)}
}\prod_j\alpha_j!\\
&\leq&1+16{\bf t}\\
&\leq&2
\end{eqnarray*}
provided that $0\leq{\bf t}\leq1/16$. In particular we see that if $0\leq{\bf t}\leq1/16$, then \eqref{101} holds true. This completes the proof of Lemma \ref{lmmg}.
\end{proof}

Now we are in a position to prove Proposition \ref{thm}.

\begin{proof}[Proof of Proposition \ref{thm}.]
By Lemma \ref{lmmg}, we have
\[\sum_{\gamma\in\mathfrak T^{(k)}}\frac{\left(\mathcal A|\omega|(6\kappa^{-1})^{2\nu+1}t\right)^{\ell(\gamma)}}{\mathfrak D(\gamma)}\sum_{
\alpha=(\alpha_i)_{1\leq i\leq2\sigma(\gamma)}\in\mathfrak R^{(k,\gamma)}
}\prod_j\alpha_j!\leq 2, \quad 0\leq t\leq\frac{\kappa^{2\nu+1}}{16\mathcal A|\omega|6^{2\nu+1}}.\]
Hence
\begin{eqnarray*}
&&|c_k(t,n)|\\
&\stackrel{(\text{\scriptsize Lemma \ref{again}})}{\leq}&(6\kappa^{-1})^\nu\mathcal A^{\frac{1}{2}}e^{-\frac{\kappa}{2}|n|}\sum_{\gamma\in\mathfrak T^{(k)}}\frac{\left(\mathcal A|\omega|(6\kappa^{-1})^{2\nu+1}t\right)^{\ell(\gamma)}}{\mathfrak D(\gamma)}\sum_{
\alpha=(\alpha_i)_{1\leq i\leq2\sigma(\gamma)}\in\mathfrak R^{(k,\gamma)}
}\prod_j\alpha_j!~~\\
&\leq& 2(6\kappa^{-1})^\nu\mathcal A^{\frac{1}{2}}e^{-\frac{\kappa}{2}|n|}\\
&\triangleq&\Box e^{-\frac{\kappa}{2}|n|},
\end{eqnarray*}
where $\Box=2(6\kappa^{-1})^\nu\mathcal A^{\frac{1}{2}}$.
This completes the proof of Proposition \ref{thm}.
\end{proof}

\section{Estimates of $|c_{k}(t,n)-c_{k-1}(t,n)|$ and $|c_{k+\bigstar}(t,n)-c_{k}(t,n)|$}

\subsection{Estimates of $|c_{k}(t,n)-c_{k-1}(t,n)|$}

\begin{lemm}[Estimates of $|c_{k}(t,n)-c_{k-1}(t,n)|$]\label{de}
For all $k\geq1$, we have
\begin{align}\label{pp}
|c_k(t,n)-c_{k-1}(t,n)|
\leq\frac{\Box^{2k+1}(|\omega|t)^k}{k!}\sum_{\substack{(n_1,\cdots,n_{2k},n_{2k+1})\in\mathbb Z^{(2k+1)\nu}:~\sum_{j=1}^{2k+1}n_j=n}}\sum_{\alpha\in\mathbb B^{(k)}}\prod_{j=1}^{2k+1}|n_j|^{\alpha_j}e^{-\frac{\kappa}{2}|n_j|},
\end{align}
where
\begin{align*}
\mathbb B^{(k)}:=
\begin{cases}
\{(1,0,0),(0,1,0),(0,0,1)\},&k=1;\\
\mathbb B^{(k-1)}\times\{0\in\mathbb Z\}\times\{0\in\mathbb Z\}+\mathfrak g^{(k)},&k\geq2,
\end{cases}
\end{align*}
and
\[\mathfrak g^{(k)}:=\left\{\alpha\in\mathbb Z^{2k+1}: \sum_{j}\alpha_j=1,\alpha_j\geq0\right\}.\]
\end{lemm}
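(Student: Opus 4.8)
The plan is to argue by induction on $k$, exploiting the recursive structure of both the Picard iterates \eqref{ppp} and the index sets $\mathbb B^{(k)}$.

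For the base case $k=1$, I would read $c_1(t,n)-c_0(t,n)$ directly off \eqref{ppp}: it equals $-\frac{{\rm i}n\cdot\omega}{3}\int_0^t e^{{\rm i}(n\cdot\omega)^3(t-\tau)}\sum_{n_1+n_2+n_3=n}\prod_{j=1}^3 c_0(\tau,n_j)\,{\rm d}\tau$. Taking absolute values, using $|e^{{\rm i}(\cdots)}|=1$ and $|c_0(\tau,n_j)|=|c(n_j)|\le\Box e^{-\frac\kappa2|n_j|}$ (which is Proposition~\ref{thm} at $k=0$, since $\mathcal A^{1/2}\le\Box$ and $e^{-\kappa|n_j|}\le e^{-\frac\kappa2|n_j|}$), and estimating $|n\cdot\omega|\le|\omega|(|n_1|+|n_2|+|n_3|)$, the factor $\tfrac13(|n_1|+|n_2|+|n_3|)\,e^{-\frac\kappa2(|n_1|+|n_2|+|n_3|)}$ is dominated by $\sum_{\alpha\in\mathbb B^{(1)}}\prod_j|n_j|^{\alpha_j}e^{-\frac\kappa2|n_j|}$; the trivial $\tau$-integral produces the prefactor $\Box^3|\omega|t$, which is exactly \eqref{pp} for $k=1$.

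For the inductive step ($k\ge2$), assume \eqref{pp} at level $k-1$. From \eqref{ppp}, $c_k-c_{k-1}=-\frac{{\rm i}n\cdot\omega}{3}\int_0^t e^{{\rm i}(n\cdot\omega)^3(t-\tau)}\sum_{p+q+r=n}\big(\prod c_{k-1}-\prod c_{k-2}\big)\,{\rm d}\tau$. The key algebraic step is the telescoping identity $abc-a'b'c'=(a-a')bc+a'(b-b')c+a'b'(c-c')$, applied with $a=c_{k-1}(\tau,p),\ a'=c_{k-2}(\tau,p)$, and so on. In each of the three resulting summands exactly one factor is a consecutive difference, to which I apply the induction hypothesis at level $k-1$, while the remaining two factors are bounded by $\Box e^{-\frac\kappa2|\cdot|}$ via Proposition~\ref{thm} (both $c_{k-1}$ and $c_{k-2}$ obey this bound). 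The difference factor contributes $\Box^{2(k-1)+1}=\Box^{2k-1}$ and the two bounded factors contribute $\Box^2$, yielding the required $\Box^{2k+1}$; simultaneously the $2k-1$ summation indices from the hypothesis, together with the two remaining single indices, give $2k+1$ indices summing to $n$.

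The last paragraph is the bookkeeping. After summing over $p+q+r=n$ and relabeling, the three telescoped branches become a single representative sum, so the three-fold collapse cancels the prefactor $\tfrac13$. Appending two zeros to a multi-index $\alpha\in\mathbb B^{(k-1)}$ (for the two bounded single indices) realizes precisely $\mathbb B^{(k-1)}\times\{0\}\times\{0\}$. The leftover prefactor $|n\cdot\omega|\le|\omega|\sum_{j=1}^{2k+1}|n_j|$ is absorbed by raising one exponent by one unit; summing over the choice $j_0=1,\dots,2k+1$ of which index to raise is exactly translation by $\mathfrak g^{(k)}=\{\beta:\sum_j\beta_j=1,\ \beta_j\ge0\}$, so that $\mathbb B^{(k-1)}\times\{0\}\times\{0\}+\mathfrak g^{(k)}=\mathbb B^{(k)}$ is reproduced. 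Finally $\int_0^t(|\omega|\tau)^{k-1}\,{\rm d}\tau=|\omega|^{k-1}t^k/k$ turns $\tfrac1{(k-1)!}$ into $\tfrac1{k!}$ and supplies $(|\omega|t)^k$, giving \eqref{pp}. I expect the main obstacle to be precisely this combinatorial accounting: verifying that the augment-then-translate operation reproduces the recursion for $\mathbb B^{(k)}$ with the correct multiplicities (so the sum over $\mathbb B^{(k)}$, read with the multiplicities induced by the recursion, matches the double sum over $\mathbb B^{(k-1)}\times\{0\}\times\{0\}$ and $\mathfrak g^{(k)}$), and confirming that the three telescoped branches genuinely coincide under relabeling so that the factor $\tfrac13$ is annihilated.
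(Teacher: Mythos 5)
Your proposal is correct and follows essentially the same route as the paper's own proof: induction on $k$, the telescoping identity $abc-a'b'c'=(a-a')bc+a'(b-b')c+a'b'(c-c')$ applied so that exactly one factor is a consecutive difference (estimated by the induction hypothesis) while the other two are bounded by $\Box e^{-\frac{\kappa}{2}|\cdot|}$ via Proposition~\ref{thm}, the integration $\int_0^t\tau^{k-1}{\rm d}\tau=t^k/k$ converting $1/(k-1)!$ into $1/k!$, and the absorption of $|n|\leq\sum_j|n_j|$ into $\mathfrak g^{(k)}$ so that $\mathbb B^{(k-1)}\times\{0\}\times\{0\}+\mathfrak g^{(k)}=\mathbb B^{(k)}$, with the three telescoped branches each carrying the factor $\tfrac13$. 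The two caveats you flag are real but harmless and are handled the same way in the paper: the sum over $\mathbb B^{(k)}$ is to be read with the multiplicities induced by the recursion (the paper's displayed equalities are exactly this), and the bounds for the second and third branches follow from the first by relabeling the $2k+1$ summation variables, since the constraint $\sum_j n_j=n$ is permutation-symmetric.
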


\begin{proof}
For the case $k=1$, we note that
\begin{eqnarray*}
&&|c_1(t,n)-c_0(t,n)|\\
&=&\left|-\frac{{\rm i}n\cdot\omega}{3}\int_0^te^{{\rm i}(n\cdot\omega)^3(t-\tau)}\sum_{p,q,r\in\mathbb Z^\nu:~p+q+r=n}c_{0}(\tau,p)c_{0}(\tau,q)c_{0}(\tau,r){\rm d}\tau\right|\\
&\leq&|n||\omega|\int_0^t\sum_{p,q,r\in\mathbb Z^\nu:~p+q+r=n}\Box e^{-\frac{\kappa}{2}|p|}\cdot\Box e^{-\frac{\kappa}{2}|q|}\cdot\Box e^{-\frac{\kappa}{2}|r|}{\rm d}\tau\\
&\leq&\Box^3|\omega|t\sum_{m=(m_1,m_2,m_3)\in\mathbb Z^{3\nu}:~\mu(m)=n}(|m_1|+|m_2|+|m_3|)\prod_{j=1}^3e^{-\frac{\kappa}{2}|m_j|}\\
&=&\Box^3|\omega|t\sum_{m=(m_1,m_2,m_3)\in\mathbb Z^{3\nu}:~\mu(m)=n}\sum_{\alpha\in\mathbb B^{(1)}}\prod_{j=1}^3|m_j|^{\alpha_j}e^{-\frac{\kappa}{2}|m_j|}.
\end{eqnarray*}

Let $k\geq2$. Assume that \eqref{pp} is true for $1\leq k^\prime\leq k-1$. For $k$, we have
\begin{eqnarray*}
&&|c_k(t,n)-c_{k-1}(t,n)|\\
&\leq&\frac{|n||\omega|}{3}\int_0^t\sum_{p,q,r\in\mathbb Z^\nu: p+q+r=n}\left|c_{k-1}(\tau,p)c_{k-1}(\tau,q)c_{k-1}(\tau,r)-c_{k-2}(\tau,p)c_{k-2}(\tau,q)c_{k-2}(\tau,r)\right|{\rm d}\tau\\
&\leq&\frac{|n||\omega|}{3}\int_0^t\sum_{p,q,r\in\mathbb Z^\nu:~ p+q+r=n}|c_{k-1}(\tau,p)-c_{k-2}(\tau,p)||c_{k-1}(\tau,q)||c_{k-1}(\tau,r)|{\rm d}\tau\\
&+&\frac{|n||\omega|}{3}\int_0^t\sum_{p,q,r\in\mathbb Z^\nu:~ p+q+r=n}|c_{k-1}(\tau,q)-c_{k-2}(\tau,q)||c_{k-2}(\tau,p)||c_{k-1}(\tau,r)|{\rm d}\tau\\
&+&\frac{|n||\omega|}{3}\int_0^t\sum_{p,q,r\in\mathbb Z^\nu:~ p+q+r=n}|c_{k-1}(\tau,r)-c_{k-2}(\tau,r)||c_{k-2}(\tau,p)||c_{k-2}(\tau,q)|{\rm d}\tau\\
&\triangleq&\flat^1+\flat^2+\flat^3,
\end{eqnarray*}
where
\begin{eqnarray*}
\flat^1&=&\frac{|n||\omega|}{3}\int_0^t\sum_{p,q,r\in\mathbb Z^\nu:~ p+q+r=n}|c_{k-1}(\tau,p)-c_{k-2}(\tau,p)||c_{k-1}(\tau,q)||c_{k-1}(\tau,r)|{\rm d}\tau,\\
\flat^2&=&\frac{|n||\omega|}{3}\int_0^t\sum_{p,q,r\in\mathbb Z^\nu:~ p+q+r=n}|c_{k-1}(\tau,q)-c_{k-2}(\tau,q)||c_{k-2}(\tau,p)||c_{k-1}(\tau,r)|{\rm d}\tau,\\
\flat^3&=&\frac{|n||\omega|}{3}\int_0^t\sum_{p,q,r\in\mathbb Z^\nu:~ p+q+r=n}|c_{k-1}(\tau,r)-c_{k-2}(\tau,r)||c_{k-2}(\tau,p)||c_{k-2}(\tau,q)|{\rm d}\tau.
\end{eqnarray*}
For $\flat^1$ we have
\begin{eqnarray*}
&&\flat^1\\
&\leq&\frac{|n||\omega|}{3}\int_0^t\sum_{\substack{p,q,r\in\mathbb Z^\nu\\p+q+r=n}}\frac{\Box^{2k-1}(|\omega|\tau)^{k-1}}{(k-1)!}\sum_{\substack{n^\prime\in\mathbb Z^{(2k-1)\nu}\\\sum_{j=1}^{2k-1}n_j^\prime=p}}\sum_{\alpha\in\mathbb B^{(k-1)}}\prod_{j=1}^{2k-1}|n_j^\prime|^{\alpha_j}e^{-\frac{\kappa}{2}|n_j^\prime|}\cdot \Box e^{-\frac{\kappa}{2}|q|}\cdot \Box e^{-\frac{\kappa}{2}|r|}{\rm d}\tau\\
&=&\frac{\Box^{2k+1}(|\omega|t)^k|n|}{3\cdot k!}\sum_{\substack{p,q,r\in\mathbb Z^\nu\\p+q+r=n}}\sum_{\substack{n^\prime\in\mathbb Z^{(2k-1)\nu}\\\sum_jn_j^\prime=p}}\sum_{\alpha\in\mathbb B^{(k-1)}}\prod_{j=1}^{2k-1}|n_j^\prime|^{\alpha_j}e^{-\frac{\kappa}{2}|n_j^\prime|}\cdot e^{-\frac{\kappa}{2}|q|}\cdot e^{-\frac{\kappa}{2}|r|}\\
&\leq&\frac{\Box^{2k+1}(|\omega|t)^k}{3\cdot k!}\sum_{\substack{(n^{\prime},n_{2k},n_{2k+1})\in\mathbb Z^{(2k+1)\nu}\\n^\prime+n_{2k}+n_{2k+1}=n}}\left(\sum_{j=1}^{2k-1}|n_j^\prime|+|n_{2k}|+|n_{2k+1}|\right)\\
&\times&\sum_{\alpha\in\mathbb B^{(k-1)}}\prod_{j=1}^{2k-1}|n_j^\prime|^{\alpha_j}e^{-\frac{\kappa}{2}|n_j^\prime|}\cdot e^{-\frac{\kappa}{2}|n_{2k}|}\cdot e^{-\frac{\kappa}{2}|n_{2k+1}|}\\
&=&\frac{\Box^{2k+1}(|\omega|t)^k}{3\cdot k!}\sum_{\substack{(n_1,\cdots,n_{2k},n_{2k+1})\in\mathbb Z^{(2k+1)\nu}:~\sum_{j=1}^{2k+1}n_j=n}}\sum_{j=1}^{2k+1}|n_j|\sum_{\alpha\in\mathbb B^{(k-1)}}\prod_{j=1}^{2k-1}|n_j|^{\alpha_j}\prod_{j=1}^{2k+1}e^{-\frac{\kappa}{2}|n_j|}\\
&=&\frac{\Box^{2k+1}(|\omega|t)^k}{3\cdot k!}\sum_{\substack{(n_1,\cdots,n_{2k},n_{2k+1})\in\mathbb Z^{(2k+1)\nu}\\\sum_{j=1}^{2k+1}n_j=n}}
\sum_{\alpha\in\mathfrak g^{(k)}}\prod_{j=1}^{2k+1}|n_j|^{\alpha_j}\sum_{\alpha\in\mathbb B^{(k-1)}\times\{0\}\times\{0\}}\prod_{j=1}^{2k+1}|n_j|^{\alpha_j}
\prod_{j=1}^{2k+1}e^{-\frac{\kappa}{2}|n_j|}\\
&=&\frac{\Box^{2k+1}(|\omega|t)^k}{3\cdot k!}\sum_{\substack{(n_1,\cdots,n_{2k},n_{2k+1})\in\mathbb Z^{(2k+1)\nu}:~\sum_{j=1}^{2k+1}n_j=n}}\sum_{\alpha\in\mathbb B^{(k)}}\prod_{j=1}^{2k+1}|n_j|^{\alpha_j}e^{-\frac{\kappa}{2}|n_j|}.
\end{eqnarray*}
Similarly we have
\begin{eqnarray*}
\flat^2&\leq&\frac{\Box^{2k+1}(|\omega|t)^k}{3\cdot k!}\sum_{\substack{(n_1,\cdots,n_{2k},n_{2k+1})\in\mathbb Z^{(2k+1)\nu}:~\sum_{j=1}^{2k+1}n_j=n}}\sum_{\alpha\in\mathbb B^{(k)}}\prod_{j=1}^{2k+1}|n_j|^{\alpha_j}e^{-\frac{\kappa}{2}|n_j|},\\
\flat^3&\leq&\frac{\Box^{2k+1}(|\omega|t)^k}{3\cdot k!}\sum_{\substack{(n_1,\cdots,n_{2k},n_{2k+1})\in\mathbb Z^{(2k+1)\nu}:~\sum_{j=1}^{2k+1}n_j=n}}\sum_{\alpha\in\mathbb B^{(k)}}\prod_{j=1}^{2k+1}|n_j|^{\alpha_j}e^{-\frac{\kappa}{2}|n_j|}.
\end{eqnarray*}
Hence
\begin{align*}
&|c_k(t,n)-c_{k-1}(t,n)|\\
\leq&\flat^1+\flat^2+\flat^3\\
\leq&3\cdot\frac{\Box^{2k+1}(|\omega|t)^k}{3\cdot k!}\sum_{\substack{(n_1,\cdots,n_{2k},n_{2k+1})\in\mathbb Z^{(2k+1)\nu}:~\sum_{j=1}^{2k+1}n_j=n}}\sum_{\alpha\in\mathbb B^{(k)}}\prod_{j=1}^{2k+1}|n_j|^{\alpha_j}e^{-\frac{\kappa}{2}|n_j|}\\
=&\frac{\Box^{2k+1}(|\omega|t)^k}{k!}\sum_{\substack{(n_1,\cdots,n_{2k},n_{2k+1})\in\mathbb Z^{(2k+1)\nu}:~\sum_{j=1}^{2k+1}n_j=n}}\sum_{\alpha\in\mathbb B^{(k)}}\prod_{j=1}^{2k+1}|n_j|^{\alpha_j}e^{-\frac{\kappa}{2}|n_j|}.
\end{align*}
This completes the proof of Lemma \ref{pp}.
\end{proof}

\begin{property}\label{ggg}
$\alpha \in \mathbb B^{(k)} \Rightarrow \alpha \in \mathbb R^{2k+1}$ and $|\alpha| = k$.
\end{property}

\begin{proof}
Obviously, for any $\alpha\in\mathbb B^{(k)}$, $\alpha\in\mathbb R^{2k+1}$. Also, for $k=1$, $|\alpha|=1$.

Let $k\geq2$. Assume the property holds for $1\leq k^\prime\leq k-1$. For $k$, there exist $\alpha^\prime\in\mathbb B^{(k-1)}$ and $\mu\in\mathfrak g^{(k)}$ such that $\alpha=\alpha^\prime+\mu$. Hence
\begin{align*}
|\alpha|=|\alpha^\prime|+|\mu|=(k-1)+1=k.
\end{align*}
This completes the proof of Property \ref{ggg}.
\end{proof}

\begin{lemm}[Continuation of the Estimates of $|c_{k}(t,n)-c_{k-1}(t,n)|$]\label{dd}
For all $k\geq1$, we have
\begin{align}
\nonumber&|c_k(t,n)-c_{k-1}(t,n)|
\leq\Theta e^{-\frac{\kappa}{4}|n|}\frac{\left(\Box^{2}(12\kappa^{-1})^{2\nu+1}|\omega|t\right)^k}{k!}\sum_{\alpha\in\mathbb B^{(k)}}\prod_j\alpha_j!,
\quad \Theta=(12\kappa^{-1})^{\nu}\Box.
\end{align}
\end{lemm}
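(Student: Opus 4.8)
The plan is to feed the pointwise bound from Lemma~\ref{de} into the same constrained-summation estimate that was used to pass from Lemma~\ref{mth2} to Lemma~\ref{again} (the step labelled ``Lemma~\ref{4}''), now applied with the decay weight $e^{-\frac{\kappa}{2}|\cdot|}$ in place of $e^{-\kappa|\cdot|}$. By Lemma~\ref{de} it suffices to control, for each fixed $\alpha\in\mathbb B^{(k)}$, the inner sum
\[
\sum_{\substack{(n_1,\dots,n_{2k+1})\in\mathbb Z^{(2k+1)\nu}\\ \sum_{j=1}^{2k+1}n_j=n}}\prod_{j=1}^{2k+1}|n_j|^{\alpha_j}e^{-\frac{\kappa}{2}|n_j|}.
\]
First I would split each weight as $e^{-\frac{\kappa}{2}|n_j|}=e^{-\frac{\kappa}{4}|n_j|}\cdot e^{-\frac{\kappa}{4}|n_j|}$ and use $\sum_j|n_j|\geq|\sum_j n_j|=|n|$, so that one product of halves assembles into $\prod_j e^{-\frac{\kappa}{4}|n_j|}\leq e^{-\frac{\kappa}{4}|n|}$ and is pulled out front, while the remaining halves are used to dominate the monomials via the elementary bound $\sup_{x\geq0}x^{\alpha}e^{-\beta x}=(\alpha/\beta e)^\alpha\le\beta^{-\alpha}\alpha!$. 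Dropping the convolution constraint (all summands being nonnegative) then factorizes what is left over the $2k+1$ variables into single-variable lattice sums $\sum_{m\in\mathbb Z^\nu}|m|^{\alpha_j}e^{-\frac{\kappa}{4}|m|}$, each controlled by $(12\kappa^{-1})^{\alpha_j+\nu}\alpha_j!$.

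This is precisely Lemma~\ref{4} with $\kappa$ replaced by $\kappa/2$; since that estimate is scale-covariant in the decay rate, the only change is the substitution $6\kappa^{-1}\mapsto 6(\kappa/2)^{-1}=12\kappa^{-1}$. Invoking Property~\ref{ggg}, which records that every $\alpha\in\mathbb B^{(k)}$ has $2k+1$ components with $|\alpha|=k$, the inner sum is bounded by
\[
e^{-\frac{\kappa}{4}|n|}\,(12\kappa^{-1})^{|\alpha|+(2k+1)\nu}\prod_j\alpha_j!
=e^{-\frac{\kappa}{4}|n|}\,(12\kappa^{-1})^{k+(2k+1)\nu}\prod_j\alpha_j!.
\]

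It then remains to reassemble the constants. Substituting this back into Lemma~\ref{de} and factoring the exponent through $k+(2k+1)\nu=\nu+k(2\nu+1)$ gives $(12\kappa^{-1})^{k+(2k+1)\nu}=(12\kappa^{-1})^{\nu}\big((12\kappa^{-1})^{2\nu+1}\big)^{k}$. Combining this with $\Box^{2k+1}=\Box\cdot(\Box^2)^k$ and the factor $(|\omega|t)^k$, the prefactor splits cleanly as $\big[(12\kappa^{-1})^{\nu}\Box\big]\cdot\big(\Box^2(12\kappa^{-1})^{2\nu+1}|\omega|t\big)^k/k!$, with the bracketed quantity equal to $\Theta$, which is exactly the claimed inequality. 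I expect the only genuine care to lie in this bookkeeping of constants — verifying the $\kappa\mapsto\kappa/2$ rescaling and checking that the power of $12\kappa^{-1}$ factors exactly into the $\Theta$ term and the $k$-th power — since every analytic ingredient (the monomial bound, the factorization over the lattice variables, and the extraction of $e^{-\frac{\kappa}{4}|n|}$) has already appeared in the proof of Lemma~\ref{again}.
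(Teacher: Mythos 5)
Your proposal is correct and follows essentially the same route as the paper's own proof: start from Lemma \ref{de}, exchange the sum over $\alpha\in\mathbb B^{(k)}$ with the constrained lattice sum, apply Lemma \ref{4} with $\kappa$ replaced by $\kappa/2$ (producing the factor $(12\kappa^{-1})^{|\alpha|+(2k+1)\nu}$ and the decay $e^{-\frac{\kappa}{4}|n|}$), use Property \ref{ggg} to set $|\alpha|=k$, and finish with the same regrouping
$\Box^{2k+1}(12\kappa^{-1})^{k+(2k+1)\nu}(|\omega|t)^{k}=\Theta\bigl(\Box^{2}(12\kappa^{-1})^{2\nu+1}|\omega|t\bigr)^{k}$ with $\Theta=(12\kappa^{-1})^{\nu}\Box$. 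The only caveat, inherited from the paper rather than introduced by you, is that you invoke the \emph{stated} constant $6\kappa^{-1}$ of Lemma \ref{4} (rescaled to $12\kappa^{-1}$), whereas the appendix's own proof of that lemma only delivers $12\kappa^{-1}$, so a genuinely from-scratch execution of your sketched internals (splitting $e^{-\frac{\kappa}{4}|m|}$ once more for the monomial bound and for summability) would yield $24\kappa^{-1}$ in place of $12\kappa^{-1}$ --- harmless for the structure of the argument, but worth noting if one tracks the constants literally.
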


\begin{proof}
Note that
\begin{eqnarray*}
\nonumber&&|c_k(t,n)-c_{k-1}(t,n)|\\
&\stackrel{(\text{\scriptsize Lemma \ref{de}})}{\leq}&\frac{\Box^{2k+1}(|\omega|t)^k}{k!}\sum_{\substack{(n_1,\cdots,n_{2k},n_{2k+1})\in\mathbb Z^{(2k+1)\nu}\\\sum_{j=1}^{2k+1}n_j=n}}
\sum_{\alpha\in\mathbb B^{(k)}}\prod_{j=1}^{2k+1}|n_j|^{\alpha_j}e^{-\frac{\kappa}{2}|n_j|}~\\
&=&\frac{\Box^{2k+1}(|\omega|t)^k}{k!}\sum_{\alpha\in\mathbb B^{(k)}}\sum_{\substack{(n_1,\cdots,n_{2k},n_{2k+1})\in\mathbb Z^{(2k+1)\nu}\\\sum_{j=1}^{2k+1}n_j=n}}\prod_{j=1}^{2k+1}|n_j|^{\alpha_j}e^{-\frac{\kappa}{2}|n_j|}\\
&\stackrel{(\text{\scriptsize Lemma \ref{4}})}{\leq}&e^{-\frac{\kappa}{4}|n|}\frac{\Box^{2k+1}(|\omega|t)^k}{k!}\sum_{\alpha\in\mathbb B^{(k)}}(12\kappa^{-1})^{|\alpha|+(2k+1)\nu}\prod_j\alpha_j!
~\\
&\stackrel{(\text{\scriptsize Property \ref{ggg}})}{=}&e^{-\frac{\kappa}{4}|n|}\frac{\Box^{2k+1}(|\omega|t)^k}{k!}\sum_{\alpha\in\mathbb B^{(k)}}(12\kappa^{-1})^{k+(2k+1)\nu}\prod_j\alpha_j!
~\\
&=&\Theta e^{-\frac{\kappa}{4}|n|}\frac{\left(\Box^{2}(12\kappa^{-1})^{2\nu+1}|\omega|t\right)^k}{k!}\sum_{\alpha\in\mathbb B^{(k)}}\prod_j\alpha_j!.
\end{eqnarray*}
where $\Theta=(12\kappa^{-1})^\nu\Box$. This completes the proof of Lemma \ref{dd}.
\end{proof}

\subsection{Estimates of $\sum_{\alpha\in\mathbb B^{(k)}} \prod_{j}\alpha_j!$}

Set
$$
\mathcal H(N;d):=\left\{\alpha\in\mathbb Z^{N}: \sum_{j}\alpha_j=d,\alpha_j\geq0\right\}.
$$
For any $\alpha \in \mathcal H(N;d)$, set
\[
+(\alpha) := \{ i : \alpha_i>0 \}, \quad j_\ast(\alpha) := \min \left\{ j : \alpha_j \triangleq \min_{i\in+(\alpha)} \alpha_i \right\},
\]
and define $\phi(\alpha):=(\phi_1(\alpha),\cdots,\phi_N(\alpha))$, where
\begin{align*}
\phi_j(\alpha):=
\begin{cases}
\alpha_j,&j\neq j_\ast(\alpha);\\
\alpha_j-1,&j= j_\ast(\alpha),\\
\end{cases}
\quad j=1,2,\cdots,N.
\end{align*}

\begin{property}\label{phi}
For the mapping $\phi$, we have
\begin{enumerate}
  \item $\phi$ maps $\mathcal H(N;d)$ into $\mathcal H(N;d-1)$.
  \item $\phi_{j_\ast(\alpha)}(\alpha)<\min_{\{j:\phi_j(\alpha)>0,j\neq j_\ast(\alpha)\}}\phi_j(\alpha)$.
  \item If $\phi(\alpha)=\phi(\alpha^\prime)$, then $\alpha_j=\alpha_j^\prime$ for all $j\notin\{j_\ast(\alpha),j_\ast(\alpha^\prime)\}$.
  \item If $\phi(\alpha)=\phi(\alpha^\prime)$ and $j_\ast(\alpha)=j_\ast(\alpha^\prime)$, then $\alpha_j=\alpha_j^\prime$ for all $j=1,2,\cdots,N$.
  \item If $\phi(\alpha)=\phi(\alpha^\prime)$, $\alpha_{j_\ast(\alpha)}>1$ and $\alpha^{\prime}_{j_\ast(\alpha^\prime)}>1$, then $\alpha=\alpha^\prime$.
\end{enumerate}
\end{property}

\begin{proof}
\begin{enumerate}
  \item For any $\alpha\in\mathcal H(N;d)$, $\phi(\alpha)=(\phi_1(\alpha),\cdots,\phi_{N}(\alpha))\in\mathbb Z^N$. Also,
  \begin{eqnarray*}
  |\phi(\alpha)|&=&\sum_{j=1}^{N}\phi_j(\alpha)\\
  &=&\sum_{j=1,j\neq j_\ast(\alpha)}^{N}\phi_j(\alpha)+\phi_{j_\ast}(\alpha)\\
  &=&\sum_{j=1,j\neq j_\ast(\alpha)}^{N}\alpha_j+(\alpha_{j_\ast}-1)\\
  &=&\sum_{j=1}^{N}\alpha_j-1\\
  &=&d-1.
  \end{eqnarray*}
  Hence $\phi(\alpha)\in\mathcal H(N;d-1)$.
  \item This follows from the definition of $j_\ast(\alpha)$.
  \item If $\phi(\alpha)=\phi(\alpha^\prime)$, that is $\phi_j(\alpha)=\phi_j(\alpha^\prime)$ for all $j=1,2,\cdots,N$. By the definition of $\phi_j$, one has $\alpha_j=\alpha_j^\prime$ for all $j\notin\{j_\ast(\alpha),j_\ast(\alpha^\prime)\}$.
  \item By (3), we have $\alpha_j=\alpha_j^\prime$ for all $j\notin\{j_\ast(\alpha),j_\ast(\alpha^\prime)\}$. Since $j_\ast(\alpha)=j_\ast(\alpha^\prime)$, we have $\alpha_{j_\ast(\alpha)}-1=\alpha^{\prime}_{j_\ast(\alpha^{\prime})}-1$, and hence $\alpha_{j_\ast(\alpha)}=\alpha^{\prime}_{j_\ast(\alpha^\prime)}$. Thus, $\alpha = \alpha^\prime$.
  \item By (3), we have $\alpha_j=\alpha_j^\prime$ for all $j\notin\{j_\ast(\alpha),j_\ast(\alpha^\prime)\}$. Hence we discuss only $j\in\{j_\ast(\alpha),j_\ast(\alpha^\prime)\}$. If $\phi(\alpha)=\phi(\alpha^\prime), \alpha_{j_\ast(\alpha)}>1$ and $\alpha^\prime_{j_\ast(\alpha^\prime)}>1$, then $j_\ast(\alpha)=j_\ast(\alpha^\prime)$. If not, that is $j_\ast(\alpha)\neq j_\ast(\alpha^\prime)$. It follows from $\phi(\alpha)=\phi(\alpha^\prime)$ that $\phi_{j_\ast(\alpha^\prime)}(\alpha)=\phi_{j_\ast(\alpha^\prime)}(\alpha^\prime)=\alpha_{j_\ast(\alpha^\prime)}^\prime-1>0$.
       Hence $j_\ast(\alpha^\prime)\in\{j:\phi_j(\alpha)>0,j\neq j_\ast(\alpha)\}.$ By (2), one has
      \begin{align*}
      \phi_{j_\ast(\alpha)}(\alpha)<\min_{\{j:\phi_j(\alpha)>0,j\neq j_\ast(\alpha)\}}\phi_j(\alpha)\leq\phi_{j_\ast(\alpha^\prime)}(\alpha).
      \end{align*}
  Analogously, $j_\ast(\alpha)\in\{j:\phi_j(\alpha^\prime)>0,j\neq j_\ast(\alpha^\prime)\}$  and
  \begin{align*}
      \phi_{j_\ast(\alpha^\prime)}(\alpha^\prime)<\min_{\{j:\phi_j(\alpha^\prime)>0,j\neq j_\ast(\alpha^\prime)\}}\phi_j(\alpha^\prime)\leq\phi_{j_\ast(\alpha)}(\alpha^\prime).
      \end{align*}
  Since $\phi(\alpha)=\phi(\alpha^\prime)$, then $\phi_{j_\ast(\alpha^\prime)}(\alpha)=\phi_{j_\ast(\alpha^\prime)}(\alpha^\prime)$. Hence $\phi_{j_\ast(\alpha)}(\alpha)<\phi_{j_\ast(\alpha)}(\alpha^\prime)=\phi_{j_\ast(\alpha)}(\alpha)$. This leads to a contradiction. Hence $j\in\{j_\ast(\alpha),j_\ast(\alpha^\prime)\}$. By (4), one has $\alpha=\alpha^\prime$.
\end{enumerate}
\end{proof}

\begin{rema}
Set
\[\mathcal H^\prime(N;d):=\{\alpha\in\mathcal H(N;d);\alpha_{j_\ast(\alpha)}>1\}, \quad \mathcal H^{\prime\prime}(N;d):=\mathcal H(N;d)\backslash\mathcal H^\prime(N;d).\]
It follows from Property \ref{phi} that $\phi$ is injective if it is restricted on $\mathcal H^{\prime}(N;d)$.
\end{rema}

\begin{lemm}\label{kk}
For $d\leq N$, we have
\begin{align*}
\sum_{\alpha\in\mathcal H(N;d)}\prod_j\alpha_j!\leq(d+N)\sum_{\alpha\in\mathcal H(N;d-1)}\prod_j\alpha_j!\leq\cdots<(2N)^d.
\end{align*}
\end{lemm}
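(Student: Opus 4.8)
The plan is to induct on $d$ using the non-injective map $\phi$, exploiting that $\phi$ lowers the total degree by one (Property \ref{phi}(1)) together with the factorization it induces on the product of factorials. The starting point is the elementary identity
$$\prod_{j}\alpha_j! = \alpha_{j_\ast(\alpha)}\prod_{j}\phi_j(\alpha)!,\qquad \alpha\in\mathcal H(N;d),$$
which holds because $\phi$ only lowers the single entry indexed by $j_\ast(\alpha)$ by one, so that $\alpha_{j_\ast(\alpha)}!=\alpha_{j_\ast(\alpha)}\cdot(\alpha_{j_\ast(\alpha)}-1)!=\alpha_{j_\ast(\alpha)}\cdot\phi_{j_\ast(\alpha)}(\alpha)!$ while all other factors are untouched. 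Since $\sum_j\alpha_j=d$, one has the crude bound $\alpha_{j_\ast(\alpha)}\le d$.

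Next I would split the sum according to the two sets introduced in the remark preceding the lemma,
$$\sum_{\alpha\in\mathcal H(N;d)}\prod_j\alpha_j! = \sum_{\alpha\in\mathcal H^\prime(N;d)}\prod_j\alpha_j! + \sum_{\alpha\in\mathcal H^{\prime\prime}(N;d)}\prod_j\alpha_j!,$$
and treat the two pieces with different tools. On $\mathcal H^\prime(N;d)$ the map $\phi$ is injective (Property \ref{phi}(5), as recorded in the remark), so $\alpha\mapsto\phi(\alpha)$ is a bijection onto a subset of $\mathcal H(N;d-1)$; combining the factorization identity with $\alpha_{j_\ast(\alpha)}\le d$ gives
$$\sum_{\alpha\in\mathcal H^\prime(N;d)}\prod_j\alpha_j! \le d\sum_{\alpha\in\mathcal H^\prime(N;d)}\prod_j\phi_j(\alpha)! \le d\sum_{\beta\in\mathcal H(N;d-1)}\prod_j\beta_j!.$$
On $\mathcal H^{\prime\prime}(N;d)$ one has $\alpha_{j_\ast(\alpha)}=1$, so the identity degenerates to $\prod_j\alpha_j!=\prod_j\phi_j(\alpha)!$; here $\phi$ need not be injective, but every $\alpha\in\mathcal H^{\prime\prime}(N;d)$ satisfies $\beta_{j_\ast(\alpha)}=0$ for $\beta=\phi(\alpha)$, i.e.\ $\alpha$ is obtained from $\beta$ by raising a \emph{zero} coordinate to $1$. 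Consequently the fiber $\phi^{-1}(\beta)\cap\mathcal H^{\prime\prime}(N;d)$ has at most $N$ elements (at most one per coordinate that can be raised), which yields
$$\sum_{\alpha\in\mathcal H^{\prime\prime}(N;d)}\prod_j\alpha_j! \le N\sum_{\beta\in\mathcal H(N;d-1)}\prod_j\beta_j!.$$
Adding the two estimates proves the one-step bound $\sum_{\mathcal H(N;d)}\prod_j\alpha_j!\le(d+N)\sum_{\mathcal H(N;d-1)}\prod_j\alpha_j!$.

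Iterating this one-step bound down to $d=0$, where $\mathcal H(N;0)=\{0\}$ contributes the single term $\prod_j 0!=1$, gives
$$\sum_{\alpha\in\mathcal H(N;d)}\prod_j\alpha_j! \le \prod_{l=1}^{d}(l+N).$$
Since $d\le N$, each factor satisfies $l+N\le N+N=2N$ (with strict inequality for $l<N$), so the product is bounded by $(2N)^d$, which is the desired conclusion. I expect the \textbf{main obstacle} to be the bookkeeping for the non-injective piece: one must verify precisely that the only ambiguity in inverting $\phi$ arises from raising zero coordinates, so that the multiplicity is controlled by the \emph{number} of coordinates $N$ rather than by the possibly large \emph{values} of the entries, and that the factorial product is genuinely unchanged there because the raised entry equals $1$. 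The clean separation into $\mathcal H^\prime$ and $\mathcal H^{\prime\prime}$—injectivity handling the large-entry contributions of size up to $d$, and a coordinate count handling the unit-entry contributions of size $N$—is exactly what produces the additive constant $d+N$.
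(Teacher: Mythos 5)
Your proposal is correct and follows essentially the same route as the paper's own proof: the same factorization $\prod_j\alpha_j!=\alpha_{j_\ast(\alpha)}\prod_j\phi_j(\alpha)!$, the same split into $\mathcal H^{\prime}(N;d)$ and $\mathcal H^{\prime\prime}(N;d)$ with injectivity of $\phi$ on the former and the fiber bound $\mathrm{card}\,\phi^{-1}(\beta)\leq N$ on the latter, and the same iteration down to $d=0$ using $d\leq N$ to bound each factor $l+N$ by $2N$. Your justification of the fiber bound (each preimage arises by raising a zero coordinate of $\beta$ to $1$) is in fact slightly more explicit than the paper's, which simply asserts $\mathrm{card}\,\phi^{-1}(\beta)\leq N$.
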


\begin{proof}
We first have
\begin{eqnarray*}
&&\sum_{\alpha\in\mathcal H(N;d)}\prod_j\alpha_j!\\
&=&\sum_{\alpha\in\mathcal H(N;d)}\alpha_{j_{\ast}(\alpha)}!\prod_{j\neq j_{\ast}(\alpha)}\alpha_j!\\
&\stackrel{(\text{\rm def})}{=}&\sum_{\alpha\in\mathcal H(N;d)}\alpha_{j_{\ast}(\alpha)}\left(\alpha_{j_{\ast}(\alpha)}-1\right)!\prod_{j\neq j_{\ast}(\alpha)}\alpha_j\\
&\stackrel{(\text{\rm def})}{=}&\sum_{\alpha\in\mathcal H(N;d)}\alpha_{j_{\ast}(\alpha)}\phi_{j_{\ast}(\alpha)}(\alpha)!\prod_{j\neq j_{\ast}(\alpha)}\phi_j(\alpha)!\\
&\stackrel{}{=}&\sum_{\alpha\in\mathcal H(N;d)}\alpha_{j_{\ast}(\alpha)}\prod_{j}\phi_j(\alpha)!\\
&\stackrel{(\text{\rm def})}{=}&\sum_{\alpha\in\mathcal H^{\prime}(N;d)}\alpha_{j_{\ast}(\alpha)}\prod_{j}\phi_j(\alpha)!+\sum_{\alpha\in\mathcal H^{\prime\prime}(N;d)}\alpha_{j_{\ast}(\alpha)}\prod_{j}\phi_j(\alpha)!.
\end{eqnarray*}
On the one hand,
\begin{eqnarray*}
&&\sum_{\alpha\in\mathcal H^{\prime}(N;d)}\alpha_{j_{\ast}(\alpha)}\prod_{j}\phi_j(\alpha)!\\
&\stackrel{(\alpha_{j_{\ast}(\alpha)}\leq d~~\text{\rm in}~~\mathcal H^{\prime}(N;d))}{\leq}&d\sum_{\alpha\in\mathcal H^{\prime}(N;d)}\prod_{j}\phi_j(\alpha)!\\
&\stackrel{}{=}&d\sum_{\beta\in\phi(\mathcal H^{\prime}(N;d))}\prod_j\beta_j!\\
&\stackrel{(\phi:~\mathcal H^{\prime}(N;d)\rightarrow\mathcal H(N;d-1))}{\leq}&d\sum_{\beta\in\mathcal H(N;d-1)}\prod_j\beta_j!\\
&\stackrel{}{=}&d\sum_{\alpha\in\mathcal H(N;d-1)}\prod_j\alpha_j!.
\end{eqnarray*}
On the other hand,
\begin{eqnarray*}
&&\sum_{\alpha\in\mathcal H^{\prime\prime}(N;d)}\alpha_{j_{\ast}(\alpha)}\prod_{j}\phi_j(\alpha)!\\
&\stackrel{(\alpha_{j_{\ast}(\alpha)}=1~~\text{\rm in}~~\mathcal H^{\prime\prime}(N;d))}{=}&\sum_{\alpha\in\mathcal H^{\prime}(N;d)}\prod_{j}\phi_j(\alpha)!\\
&\stackrel{}{=}&\sum_{\beta\in\phi(\mathcal H^{\prime\prime}(N;d))}\sum_{\alpha\in\phi^{-1}(\beta)}\prod_{j}\phi_j(\alpha)!\\
&\stackrel{(\text{\rm card}~\phi^{-1}(\beta)\leq N)}{\leq}&N\sum_{\beta\in\phi(\mathcal H^{\prime\prime}(N;d))}\beta_j(\alpha)!\\
&\stackrel{(\phi:~\mathcal H^{\prime\prime}(N;d)\rightarrow\mathcal H(N;d-1))}{\leq}&N\sum_{\beta\in\mathcal H(N;d-1)}\prod_j\beta_j!\\
&\stackrel{}{=}&N\sum_{\alpha\in\mathcal H(N;d-1)}\prod_j\alpha_j!.
\end{eqnarray*}
Hence
\begin{eqnarray*}
&&\sum_{\alpha\in\mathcal H(N;d)}\prod_j\alpha_j!\\
&=&\sum_{\alpha\in\mathcal H^{\prime}(N;d)}\alpha_{j_{\ast}(\alpha)}\prod_{j}\phi_j(\alpha)!+\sum_{\alpha\in\mathcal H^{\prime\prime}(N;d)}\alpha_{j_{\ast}(\alpha)}\prod_{j}\phi_j(\alpha)!\\
&\leq&d\sum_{\alpha\in\mathcal H(N;d-1)}\prod_j\alpha_j!+N\sum_{\alpha\in\mathcal H(N;d-1)}\prod_j\alpha_j!\\
&=&(d+N)\sum_{\alpha\in\mathcal H(N;d-1)}\prod_j\alpha_j!\\
&\stackrel{(\text{\rm successive iteration})}{\leq}&(d+N)(d-1+N)\sum_{\alpha\in\mathcal H(N;d-2)}\prod_{j=1}\alpha_j!\\
&\vdots&\\
&\stackrel{(\text{\rm successive iteration})}{\leq}&(d+N)(d-1+N)\cdots(1+N)\sum_{\alpha\in\mathcal H(N;0)}\prod_{j=1}\alpha_j!\\
&=&(d+N)(d-1+N)\cdots(2+N)(1+N)\cdot1\\
&\stackrel{(d\leq N)}{\leq}&\underbrace{(2N)\cdot(2N)\cdot\cdots\cdot(2N)}_{d}\\
&=&(2N)^d.
\end{eqnarray*}
This completes the proof Lemma \ref{kk}.
\end{proof}

\subsection{Continuation of Estimates of $|c_{k}(t,n)-c_{k-1}(t,n)|$}

\begin{lemm}[Continuation of estimates of $|c_{k}(t,n)-c_{k-1}(t,n)|$]\label{1e}
For all $k\geq1$, we have
\[
|c_{k}(t,n)-c_{k-1}(t,n)|\leq\Theta e^{-\frac{\kappa}{4}|n|+\frac{1}{2}}\left(4e\Box^{2}(12\kappa^{-1})^{2\nu+1}|\omega|t\right)^k.
\]
\end{lemm}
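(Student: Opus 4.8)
The plan is to start from the bound already furnished by Lemma~\ref{dd},
\[
|c_k(t,n)-c_{k-1}(t,n)|\leq\Theta e^{-\frac{\kappa}{4}|n|}\frac{\left(\Box^{2}(12\kappa^{-1})^{2\nu+1}|\omega|t\right)^k}{k!}\sum_{\alpha\in\mathbb B^{(k)}}\prod_j\alpha_j!,
\]
and to estimate the remaining combinatorial sum $\sum_{\alpha\in\mathbb B^{(k)}}\prod_j\alpha_j!$ together with the factor $1/k!$. The key observation is that, by Property~\ref{ggg}, every $\alpha\in\mathbb B^{(k)}$ is a nonnegative integer vector in $\mathbb Z^{2k+1}$ with $|\alpha|=k$, whence $\mathbb B^{(k)}\subseteq\mathcal H(2k+1;k)$. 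Since all summands $\prod_j\alpha_j!$ are nonnegative, enlarging the index set gives
\[
\sum_{\alpha\in\mathbb B^{(k)}}\prod_j\alpha_j!\leq\sum_{\alpha\in\mathcal H(2k+1;k)}\prod_j\alpha_j!.
\]

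First I would apply Lemma~\ref{kk} with $N=2k+1$ and $d=k$; the hypothesis $d\leq N$ holds since $k\leq 2k+1$, so the lemma yields
\[
\sum_{\alpha\in\mathcal H(2k+1;k)}\prod_j\alpha_j!\leq\bigl(2(2k+1)\bigr)^k=(4k+2)^k.
\]
It then remains to absorb this into the factorial $k!$ and extract the announced constant $4e$. For this I would use the elementary bound $k!\geq(k/e)^k$, which follows from $e^k\geq k^k/k!$, giving
\[
\frac{(4k+2)^k}{k!}\leq\left(\frac{e(4k+2)}{k}\right)^k=(4e)^k\left(1+\frac{1}{2k}\right)^k,
\]
where I have used the identity $4+\tfrac{2}{k}=4\bigl(1+\tfrac{1}{2k}\bigr)$, which is what makes the constant come out to exactly $4e$ rather than something larger.

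The last step is to note that $\left(1+\frac{1}{2k}\right)^k\leq e^{1/2}$ by the inequality $1+x\leq e^x$ applied with $x=\frac{1}{2k}$; this is precisely the source of the extra factor $e^{1/2}$ in the statement. Combining the displays gives $\frac{1}{k!}\sum_{\alpha\in\mathbb B^{(k)}}\prod_j\alpha_j!\leq e^{1/2}(4e)^k$, and substituting back into the bound of Lemma~\ref{dd} produces
\[
|c_k(t,n)-c_{k-1}(t,n)|\leq\Theta e^{-\frac{\kappa}{4}|n|+\frac{1}{2}}\left(4e\Box^{2}(12\kappa^{-1})^{2\nu+1}|\omega|t\right)^k,
\]
as claimed. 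I do not expect any serious obstacle: the whole argument is a controlled chaining of results established above. The only point deserving mild care is the passage from the crude bound $(2N)^d$ of Lemma~\ref{kk} to the sharp constant $4e$, where choosing the lower bound $k!\geq(k/e)^k$ (rather than a coarser one) is exactly what keeps the base of the geometric factor at $4e$, leaving only the harmless residual $e^{1/2}$.
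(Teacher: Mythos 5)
Your proposal is correct and follows essentially the same route as the paper's own proof: start from Lemma~\ref{dd}, bound $\sum_{\alpha\in\mathbb B^{(k)}}\prod_j\alpha_j!$ via Lemma~\ref{kk} with $N=2k+1$, $d=k$, then use $k!\geq (k/e)^k$ and $\left(1+\frac{1}{2k}\right)^k\leq e^{1/2}$ to arrive at the constant $4e$ and the residual factor $e^{1/2}$. Your only additions — making explicit the containment $\mathbb B^{(k)}\subseteq\mathcal H(2k+1;k)$ via Property~\ref{ggg} and justifying $k!\geq(k/e)^k$ elementarily rather than citing Stirling — are harmless refinements of the same argument.
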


\begin{proof}
By the Stirling formula,
\[
n!=\sqrt{2\pi n}\left(\frac{n}{e}\right)^n\left(1+o\left(1/n\right)\right)\gtrsim\left(\frac{n}{e}\right)^n,
\]
we find
\begin{eqnarray*}
&&|c_{k}(t,n)-c_{k-1}(t,n)|\\
&\stackrel{(\text{\scriptsize Lemma \ref{dd}})}{\leq}&\Theta e^{-\frac{\kappa}{4}|n|}\frac{\left(\Box^{2}(12\kappa^{-1})^{2\nu+1}|\omega|t\right)^k}{k!}\sum_{\alpha\in\mathbb B^{(k)}}\prod_j\alpha_j!~\\
&\stackrel{(\text{\scriptsize Lemma \ref{kk}})}{\leq}&\Theta e^{-\frac{\kappa}{4}|n|}\frac{\left(\Box^{2}(12\kappa^{-1})^{2\nu+1}|\omega|t\right)^k}{k!}\left(2N\right)^{k}~(N=2k+1)~~\\
&\leq&\Theta e^{-\frac{\kappa}{4}|n|}\left(\Box^{2}(12\kappa^{-1})^{2\nu+1}|\omega|t\right)^k\left(\frac{e}{k}\right)^k\left(4k+2\right)^{k}\\
&\leq&\Theta e^{-\frac{\kappa}{4}|n|}\left(\Box^{2}(12\kappa^{-1})^{2\nu+1}|\omega|t\right)^k(4e)^k\left(1+\frac{1}{2k}\right)^{k}\\
&\leq&\Theta e^{-\frac{\kappa}{4}|n|+\frac{1}{2}}\left(4e\Box^{2}(12\kappa^{-1})^{2\nu+1}|\omega|t\right)^k.
\end{eqnarray*}
This completes the proof of Lemma \ref{1e}.
\end{proof}

\subsection{Estimates of $|c_{k+\bigstar}(t,n)-c_{k}(t,n)|$}\label{cthm}

\begin{lemm}[Estimates of $|c_{k+\bigstar}(t,n)-c_{k}(t,n)|$]\label{gthm}
For
$$
0 < t < \min \left\{ \frac{\kappa^{2\nu+1}}{4e\Box^212^{2\nu+1}|\omega|}, \frac{\kappa^{2\nu+1}}{16\mathcal A|\omega|12^{2\nu+1}} \right\} \triangleq t_0 > 0,
$$
and all $k\geq1$, we have
\begin{align}\label{cce}
|c_{k+\bigstar}(t,n)-c_{k}(t,n)|\leq\frac{\Theta e^{-\frac{\kappa}{4}|n|+\frac{1}{2}}}{1-4e\Box^{2}(12\kappa^{-1})^{2\nu+1}|\omega|t}\left(4e\Box^{2}
(12\kappa^{-1})^{2\nu+1}|\omega|t\right)^{k+1}\quad(\text{uniformly for}~\bigstar).
\end{align}
\end{lemm}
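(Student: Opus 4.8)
The plan is to combine the per-step estimate of Lemma~\ref{1e} with a telescoping sum and a geometric series bound. To streamline the notation I would abbreviate $q := 4e\Box^{2}(12\kappa^{-1})^{2\nu+1}|\omega|t$, so that Lemma~\ref{1e} reads $|c_{j}(t,n)-c_{j-1}(t,n)|\leq\Theta e^{-\frac{\kappa}{4}|n|+\frac12}q^{\,j}$ for every $j\geq1$. The role of the hypothesis $t<t_0$ is twofold. First, the component $t<\frac{\kappa^{2\nu+1}}{4e\Box^{2}12^{2\nu+1}|\omega|}$ of $t_0$ is exactly the condition $q<1$, which is what makes the geometric series converge. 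Second, since $12^{2\nu+1}>6^{2\nu+1}$, the component $t<\frac{\kappa^{2\nu+1}}{16\mathcal A|\omega|12^{2\nu+1}}$ is more restrictive than the threshold $\frac{\kappa^{2\nu+1}}{16\mathcal A|\omega|6^{2\nu+1}}$ required in Proposition~\ref{thm}; hence for $t<t_0$ the uniform exponential decay bound $|c_k(t,n)|\leq\Box e^{-\frac{\kappa}{2}|n|}$ is in force, and this is precisely the input that justifies the chain of difference estimates in Lemmas~\ref{de}, \ref{dd}, and \ref{1e}.

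With these two facts secured, I would write the difference as a telescoping sum,
\[
c_{k+\bigstar}(t,n)-c_{k}(t,n)=\sum_{j=k+1}^{k+\bigstar}\bigl(c_{j}(t,n)-c_{j-1}(t,n)\bigr),
\]
apply the triangle inequality, and then invoke Lemma~\ref{1e} term by term. This yields
\[
|c_{k+\bigstar}(t,n)-c_{k}(t,n)|\leq\Theta e^{-\frac{\kappa}{4}|n|+\frac12}\sum_{j=k+1}^{k+\bigstar}q^{\,j}.
\]
Since $0<q<1$, the finite partial sum is dominated by its convergent tail, namely
$\sum_{j=k+1}^{k+\bigstar}q^{\,j}=q^{k+1}\frac{1-q^{\bigstar}}{1-q}\leq\frac{q^{k+1}}{1-q}$.
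Substituting back and recalling the definition of $q$ gives exactly the claimed bound \eqref{cce}.

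The crucial point to emphasize is that the upper bound $\frac{q^{k+1}}{1-q}$ does not involve $\bigstar$: bounding the finite geometric sum by the full series is what delivers the uniformity in $\bigstar$ asserted in the statement, and this is what will later let us conclude that $\{c_k(t,n)\}$ is a Cauchy sequence. There is no genuine analytic obstacle here; all of the combinatorial difficulty has already been absorbed into Lemma~\ref{1e} (via Lemmas~\ref{dd} and \ref{kk}). The only things requiring care are the bookkeeping that $t<t_0$ simultaneously forces $q<1$ and preserves the uniform decay estimate from Proposition~\ref{thm}, and the observation that the telescoping-plus-geometric-tail argument is valid for arbitrarily large $\bigstar$ with an $\bigstar$-independent constant.
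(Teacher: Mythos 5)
Your proposal is correct and follows essentially the same route as the paper's own proof: a telescoping sum over consecutive differences, the per-step bound from Lemma~\ref{1e}, and domination of the finite geometric sum by the full series $\sum_{j=1}^{\infty}q^{j}=\frac{q}{1-q}$, which yields the $\bigstar$-independent bound. Your additional bookkeeping --- that the first component of $t_0$ is exactly $q<1$ and the second component (being smaller than $\frac{\kappa^{2\nu+1}}{16\mathcal A|\omega|6^{2\nu+1}}$) keeps Proposition~\ref{thm} in force so that Lemmas~\ref{de}--\ref{1e} apply --- is accurate and is left implicit in the paper.
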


\begin{proof}
We have
\begin{eqnarray*}
&&|c_{k+\bigstar}(t,n)-c_{k}(t,n)|\\&\leq&\sum_{j=1}^{\bigstar}|c_{k+j}(t,n)-c_{k+j-1}(t,n)|\\
&\leq&\Theta e^{-\frac{\kappa}{4}|n|+\frac{1}{2}}\sum_{j=1}^{\bigstar}\left(4e\Box^{2}(12\kappa^{-1})^{2\nu+1}|\omega|t\right)^{k+j}\\
&\leq&\Theta e^{-\frac{\kappa}{4}|n|+\frac{1}{2}}\left(4e\Box^{2}(12\kappa^{-1})^{2\nu+1}|\omega|t\right)^{k}\sum_{j=1}^{\bigstar}
\left(4e\Box^{2}(12\kappa^{-1})^{2\nu+1}|\omega|t\right)^{j}\\
&\leq&\Theta e^{-\frac{\kappa}{4}|n|+\frac{1}{2}}\left(4e\Box^{2}(12\kappa^{-1})^{2\nu+1}|\omega|t\right)^{k}\sum_{j=1}^{+\infty}
\left(4e\Box^{2}(12\kappa^{-1})^{2\nu+1}|\omega|t\right)^{j}\\
&=&\frac{\Theta e^{-\frac{\kappa}{4}|n|+\frac{1}{2}}}{1-4e\Box^{2}(12\kappa^{-1})^{2\nu+1}|\omega|t}\left(4e\Box^{2}(12\kappa^{-1})^{2\nu+1}|\omega|t\right)^{k+1}
\end{eqnarray*}
provided that $0<t<t_0$.
\end{proof}

\section{Proof of the Main Results}

\subsection{Proof of Theorem \ref{eethm} in the Case $\mathfrak p=3$}

In this subsection, we prove the existence of a spatially quasi-periodic solution to $3$-gKdV with spatially quasi-periodic initial data \eqref{iei}, as stated in Theorem \ref{eethm}.

\begin{proof}[Proof of Theorem \ref{eethm}.]
By Lemma \ref{gthm}, it is easy to see that the Picard sequence $\{c_k(t,n)\}$ is a Cauchy sequence. Hence there exists a limit function
$$
c^\dag(t,n) := \lim_{k\rightarrow\infty}c_k(t,n), \quad0\leq t<t_0,  n\in\mathbb Z^\nu.
$$
Letting $\bigstar\rightarrow\infty$ in \eqref{cce}, we find
\[
|c^\dag(t,n)-c_k(t,n)|\leq\frac{\Theta e^{-\frac{\kappa}{4} |n| + \frac{1}{2}}}{1-4e\Box^{2}(12 \kappa^{-1})^{2\nu+1} |\omega|t} \left( 4e \Box^{2}
(12\kappa^{-1})^{2\nu+1} |\omega| t \right)^{k+1} \rightarrow 0 \quad \text{as} \quad k \rightarrow +\infty.
\]
By Proposition \ref{thm}, we have
\begin{align*}
|c^\dag(t,n)|\leq|c^\dag(t,n)-c_k(t,n)|+|c_k(t,n)|\leq\Box e^{-\frac{\kappa}{4}|n|}.
\end{align*}
Letting $k\rightarrow+\infty$ in \eqref{ppp}, one can obtain that
\[c^\dag(t,n)=c_0(t,n)-\frac{{\rm i}n\cdot\omega}{3}\int_{0}^{t}e^{{\rm i}(n\cdot\omega)^3(t-\tau)}
\sum_{p,q,r\in\mathbb Z^\nu:~ p+q+r=n}c^\dag(\tau,p)c^\dag(\tau,q)c^\dag(\tau,r){\rm d}\tau.\]
Define the spatially $\omega$-quasi-periodic function
\[u^\dag(t,x):=\sum_{n\in\mathbb Z^\nu}c^\dag(t,n)e^{{\rm i}n\cdot\omega x}, \quad 0\leq t<t_0, x\in\mathbb R.\]
Obviously, $u=u^\dag$ satisfies the following differential equation,
\[
\partial_tu+\partial_x^3u-v^\dag=0,
\]
where
\[
v^\dag(t,x)=\sum_{n\in\mathbb Z^\nu}g(t,n)e^{{\rm i}(n\cdot\omega)x}, \quad g^\dag(t,n)=-\frac{{\rm i}n\cdot\omega}{3}
\sum_{p,q,r\in\mathbb Z^\nu:~ p+q+r=n}c^\dag(\tau,p)c^\dag(\tau,q)c^\dag(\tau,r).
\]
Notice that
\[
-(u^\dag)^2\partial_xu^\dag=\partial_x\left(-\frac{(u^\dag)^3}{3}\right) = v^\dag.
\]
Hence $u^\dag$ satisfies 3-gKdV with spatially quasi-periodic initial data \eqref{iei}. This completes the proof of Theorem \ref{eethm} in the case $\mathfrak p=3$.
\end{proof}

\subsection{Proof of Theorem \ref{euthm} in the Case $\mathfrak p=3$}

In this subsection, we prove the uniqueness of the spatially quasi-periodic solution to $3$-gKdV with spatially quasi-periodic initial data \eqref{iei}, as formulated in Theorem \ref{euthm}.

\begin{lemm}\label{ueee}
For all $k\geq1$, we have
\begin{align}\label{ll}
|c(t,n)-b(t,n)|\leq\frac{\Box^{2k+1}(|\omega|t)^k}{k!}\sum_{\substack{(n_1,\cdots,n_{2k},n_{2k+1})\in\mathbb Z^{(2k+1)\nu}\\ \sum_jn_j=n}}\sum_{\alpha\in\mathbb B^{(k)}}
\prod_{j}|n_j|^{\alpha_j}e^{-\rho|n_j|}.
\end{align}
\end{lemm}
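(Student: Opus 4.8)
The plan is to prove \eqref{ll} by induction on $k$, following verbatim the structure of Lemma~\ref{de}, with the role of the Picard-iterate difference $c_k-c_{k-1}$ now played by the difference $d(t,n):=c(t,n)-b(t,n)$ of the two solutions, and with the decay rate $\kappa/2$ replaced by $\rho$. First I would subtract the two Duhamel identities satisfied by $c$ and $b$ (the first hypothesis of Theorem~\ref{euthm} specialized to $\mathfrak p=3$). Since $c(0,n)=b(0,n)$, the linear term $e^{{\rm i}(n\cdot\omega)^3t}c(0,n)$ cancels, leaving
\[
d(t,n)=-\frac{{\rm i}n\cdot\omega}{3}\int_0^t e^{{\rm i}(n\cdot\omega)^3(t-\tau)}\sum_{p+q+r=n}\big[c(\tau,p)c(\tau,q)c(\tau,r)-b(\tau,p)b(\tau,q)b(\tau,r)\big]\,{\rm d}\tau.
\]
This is the exact analogue of the Picard recursion \eqref{ppp}, so the combinatorial bookkeeping will be identical to that in Lemma~\ref{de}.

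For the base case $k=1$, I would avoid telescoping altogether and bound the integrand crudely by $|c(p)c(q)c(r)|+|b(p)b(q)b(r)|\leq 2\Box^3 e^{-\rho(|p|+|q|+|r|)}$, using the uniform-decay hypotheses $|c(\tau,\cdot)|,|b(\tau,\cdot)|\leq\Box e^{-\rho|\cdot|}$. After carrying out the time integral ($\int_0^t{\rm d}\tau=t$) and using $|n|\leq|p|+|q|+|r|$, the prefactor $\tfrac13$ produces $\tfrac23\Box^3|\omega|t\leq\Box^3|\omega|t$ times $\sum_{p+q+r=n}(|p|+|q|+|r|)e^{-\rho(|p|+|q|+|r|)}$, which is exactly \eqref{ll} at $k=1$, since $\sum_{\alpha\in\mathbb B^{(1)}}\prod_j|n_j|^{\alpha_j}=|n_1|+|n_2|+|n_3|$. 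The slack factor $\tfrac23\leq1$ is precisely what lets the crude bound suffice without accruing an unwanted constant.

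For the inductive step ($k\geq2$, assuming \eqref{ll} at level $k-1$), the one genuinely new ingredient relative to Lemma~\ref{de} is the telescoping identity
\[
c(p)c(q)c(r)-b(p)b(q)b(r)=d(p)c(q)c(r)+b(p)d(q)c(r)+b(p)b(q)d(r),
\]
which plays the role of the splitting of $c_{k-1}^3-c_{k-2}^3$ used there. This decomposes $|d(t,n)|$ into three pieces $\flat^1,\flat^2,\flat^3$; in each I would apply the inductive bound to the single difference factor and the uniform bound $\Box e^{-\rho|\cdot|}$ to the remaining two honest factors. The powers of $\Box$ then assemble as $\Box^{2(k-1)+1}\cdot\Box^2=\Box^{2k+1}$, the time integral $\int_0^t\tau^{k-1}\,{\rm d}\tau=t^k/k$ converts $1/(k-1)!$ into $1/k!$, and the extra factor $|n|\leq\sum_j|n_j|$ promotes the multi-index set from $\mathbb B^{(k-1)}$ to $\mathbb B^{(k)}$ precisely through the recursion $\mathbb B^{(k)}=\mathbb B^{(k-1)}\times\{0\}\times\{0\}+\mathfrak g^{(k)}$ — this is the identical computation already carried out for $\flat^1$ in Lemma~\ref{de}. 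Summing the three equal contributions cancels the $\tfrac13$ and yields \eqref{ll}.

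I expect the only subtle point to be conceptual rather than computational: the estimate is driven entirely by the hypothesis that $c$ and $b$ have Fourier coefficients decaying exponentially and uniformly in $t$, which is the sole a priori control on $d=c-b$ available to seed the base case and to close the inductive step. This is exactly why the uniqueness statement must be conditional on membership in this class (compare the Remark following Theorem~\ref{euthm}). Everything else is a transcription of the argument of Lemma~\ref{de} with $\kappa/2\to\rho$, and once \eqref{ll} is established the uniqueness follows by the analogues of Lemmas~\ref{dd}, \ref{1e}, and \ref{kk}, letting $k\to\infty$ for $t$ in the admissible range so that the right-hand side vanishes and $d\equiv0$.
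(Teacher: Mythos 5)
Your proposal is correct and follows essentially the same route as the paper's own proof: the base case via the crude bound $|ccc-bbb|\leq 2\Box^3 e^{-\rho(|p|+|q|+|r|)}$ (absorbing the factor $\tfrac{2}{3}\leq 1$), and the inductive step via the three-term telescoping of $ccc-bbb$, applying the inductive hypothesis to the single difference factor and the uniform decay to the remaining factors, with the identical bookkeeping ($\Box^{2k-1}\cdot\Box^{2}$, $\int_0^t\tau^{k-1}\,{\rm d}\tau=t^k/k$, and promotion of $\mathbb B^{(k-1)}$ to $\mathbb B^{(k)}$ through $\mathfrak g^{(k)}$) that cancels the prefactor $\tfrac13$.
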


\begin{proof}
Since $c(0,n)=b(0,n)$, we have
\begin{eqnarray*}
&&|c(t,n)-b(t,n)|\\
&\leq&\frac{|n||\omega|}{3}\int_0^t\sum_{p,q,r\in\mathbb Z^\nu:~p+q+r=n}|c(\tau,p)c(\tau,q)c(\tau,r)-b(\tau,p)b(\tau,q)b(\tau,r)|{\rm d}\tau\\
&\leq&|n||\omega|t\sum_{p,q,r\in\mathbb Z^\nu:~p+q+r=n}\Box^3e^{-\rho(|p|+|q|+|r|)}\\
&\leq&\Box^3|\omega|t\sum_{p,q,r\in\mathbb Z^\nu:~p+q+r=n}(|p|+|q|+|r|)e^{-\rho(|p|+|q|+|r|)}\\
&=&\Box^3|\omega|t\sum_{(n_1,n_2,n_3)\in\mathbb Z^{3\nu}:~ n_1+n_2+n_3=n}\sum_{\alpha\in\mathbb B^{(1)}}\prod_{j}|n_j|^{\alpha_j}e^{-\rho|n_j|}.
\end{eqnarray*}
This shows that \eqref{ll} holds for $k=1$.

Let $k\geq2$. Suppose that \eqref{ll} is true for all $1\leq k^\prime\leq k-1$. For $k$, we have
\begin{eqnarray*}
&&|c(t,n)-b(t,n)|\\
&\leq&\frac{|n||\omega|}{3}\int_0^t\sum_{p,q,r\in\mathbb Z^\nu:~p+q+r=n}|c(\tau,p)c(\tau,q)c(\tau,r)-b(\tau,p)b(\tau,q)b(\tau,r)|{\rm d}\tau\\
&\leq&\frac{|n||\omega|}{3}\int_0^t\sum_{p,q,r\in\mathbb Z^\nu:~p+q+r=n}|c(\tau,p)-b(\tau,p)||c(\tau,q)||c(\tau,r)|{\rm d}\tau\\
&+&\frac{|n||\omega|}{3}\int_0^t\sum_{p,q,r\in\mathbb Z^\nu:~p+q+r=n}|c(\tau,q)-b(\tau,q)||b(\tau,p)||c(\tau,r)|{\rm d}\tau\\
&+&\frac{|n||\omega|}{3}\int_0^t\sum_{p,q,r\in\mathbb Z^\nu:~p+q+r=n}|c(\tau,r)-b(\tau,r)||b(\tau,p)||b(\tau,q)|{\rm d}\tau\\
&\triangleq&\psi_1+\psi_2+\psi_3,
\end{eqnarray*}
where
\begin{eqnarray*}
\psi_1&=&\frac{|n||\omega|}{3}\int_0^t\sum_{p,q,r\in\mathbb Z^\nu:~p+q+r=n}|c(\tau,p)-b(\tau,p)||c(\tau,q)||c(\tau,r)|{\rm d}\tau,\\
\psi_2&=&\frac{|n||\omega|}{3}\int_0^t\sum_{p,q,r\in\mathbb Z^\nu:~p+q+r=n}|c(\tau,q)-b(\tau,q)||b(\tau,p)||c(\tau,r)|{\rm d}\tau,\\
\psi_3&=&\frac{|n||\omega|}{3}\int_0^t\sum_{p,q,r\in\mathbb Z^\nu:~p+q+r=n}|c(\tau,r)-b(\tau,r)||b(\tau,p)||b(\tau,q)|{\rm d}\tau.
\end{eqnarray*}
For $\psi_1$ one can derive that
\begin{eqnarray*}
&&\psi_1\\
&=&\frac{|n||\omega|}{3}\int_0^t\sum_{p,q,r\in\mathbb Z^\nu:~p+q+r=n}|c(\tau,p)-b(\tau,p)||c(\tau,q)||c(\tau,r)|{\rm d}\tau\\
&\leq&\frac{|n||\omega|}{3}\int_0^t\sum_{\substack{p,q,r\in\mathbb Z^\nu\\ p+q+r=n}}\frac{\Box^{(2k-1)}(|\omega|\tau)^{k-1}}{(k-1)!}\sum_{\substack{
n^\prime\in\mathbb Z^{(2k-1)\nu}\\ \mu(n^\prime)=p}}
\sum_{\alpha\in\mathbb B^{(k-1)}}\prod_{j}|n_j^\prime|^{\alpha_j}e^{-\rho|n_j^\prime|}\cdot\Box e^{-\rho|q|}\cdot\Box e^{-\rho|r|}{\rm d}\tau\\
&\leq&\frac{\Box^{2k+1}(|\omega|t)^k|n|}{3\cdot k!}\sum_{\substack{p,q,r\in\mathbb Z^\nu\\p+q+r=n}}\sum_{\substack{n^\prime\in\mathbb Z^{(2k-1)\nu}\\ \mu(n^\prime)=p}}
\sum_{\alpha\in\mathbb B^{(k-1)}}\prod_{j}|n_j^\prime|^{\alpha_j}e^{-\rho|n_j^\prime|}\cdot e^{-\rho|q|}\cdot e^{-\rho|r|}\\
&=&\frac{\Box^{2k+1}(|\omega|t)^k|n|}{3\cdot k!}\sum_{\substack{(n^\prime,n_{2k},n_{2k+1})\in\mathbb Z^{(2k+1)\nu}\\ \mu(n^\prime)+n_{2k}+n_{2k+1}=n}}\sum_{\alpha\in\mathbb B^{(k-1)}}
\prod_{j=1}^{2k-1}|n_j^\prime|^{\alpha_j}e^{-\rho|n_j^\prime|}\cdot e^{-\rho|n_{2k}|}\cdot e^{-\rho|n_{2k+1}|}\\
&\leq&\frac{\Box^{2k+1}(|\omega|t)^k}{3\cdot k!}\sum_{\substack{(n_1,\cdots,,n_{2k},n_{2k+1})\in\mathbb Z^{(2k+1)\nu}\\ \sum_jn_j=n}}\sum_{j=1}^{2k+1}|n_j|
\sum_{\alpha\in\mathbb B^{(k-1)}}\prod_{j=1}^{2k-1}|n_j|^{\alpha_j}\prod_{j=1}^{2k+1}e^{-\rho|n_j|}\\
&=&\frac{\Box^{2k+1}(|\omega|t)^k}{3\cdot k!}\sum_{\substack{(n_1,\cdots,,n_{2k},n_{2k+1})\in\mathbb Z^{(2k+1)\nu}\\ \sum_jn_j=n}}
\sum_{\alpha\in\mathfrak g^{(k)}}\prod_j|n_j|^{\alpha_j}\sum_{\alpha\in\mathbb B^{(k-1)}\times\{0\}\times\{0\}}\prod_j|n_j|^{\alpha_j}\prod_{j=1}^{2k+1}e^{-\rho|n_j|}\\
&=&\frac{\Box^{2k+1}(|\omega|t)^k}{3\cdot k!}\sum_{\substack{(n_1,\cdots,n_{2k},n_{2k+1})\in\mathbb Z^{(2k+1)\nu}\\ \sum_jn_j=n}}\sum_{\alpha\in\mathbb B^{(k)}}
\prod_{j}|n_j|^{\alpha_j}e^{-\rho|n_j|}.
\end{eqnarray*}
Analogously we have
\begin{eqnarray*}
\psi_2&\leq&
\frac{\Box^{2k+1}(|\omega|t)^k}{3\cdot k!}\sum_{\substack{(n_1,\cdots,n_{2k},n_{2k+1})\in\mathbb Z^{(2k+1)\nu}\\ \sum_jn_j=n}}\sum_{\alpha\in\mathbb B^{(k)}}
\prod_{j}|n_j|^{\alpha_j}e^{-\rho|n_j|},\\
\psi_2&\leq&\frac{\Box^{2k+1}(|\omega|t)^k}{3\cdot k!}\sum_{\substack{(n_1,\cdots,n_{2k},n_{2k+1})\in\mathbb Z^{(2k+1)\nu}\\ \sum_jn_j=n}}\sum_{\alpha\in\mathbb B^{(k)}}
\prod_{j}|n_j|^{\alpha_j}e^{-\rho|n_j|}.
\end{eqnarray*}
Hence
\begin{eqnarray*}
|c(t,n)-b(t,n)|&\leq&\psi_1+\psi_2+\psi_3\\
&\leq&\frac{\Box^{2k+1}(|\omega|t)^k}{k!}\sum_{\substack{(n_1,\cdots,n_{2k},n_{2k+1})\in\mathbb Z^{(2k+1)\nu}\\ \sum_jn_j=n}}\sum_{\alpha\in\mathbb B^{(k)}}
\prod_{j}|n_j|^{\alpha_j}e^{-\rho|n_j|}.
\end{eqnarray*}
This completes the proof of Lemma \ref{ueee}.
\end{proof}

\begin{lemm}\label{dh}
For all $k\geq1$, we have
\[
|c(t,n) - b(t,n)| \leq (12\rho^{-1})^{\nu} \Box e^{-\frac{\rho}{2}|n|} \left( 4e \Box^2(12\rho^{-1})^{2\nu+1} |\omega| t \right)^k.
\]
\end{lemm}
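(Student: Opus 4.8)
The plan is to combine the refined pointwise bound from Lemma~\ref{ueee} with the combinatorial factorial estimate from Lemma~\ref{kk}, exactly in the spirit of how Lemma~\ref{dd} and Lemma~\ref{1e} were derived for the Picard differences $|c_k - c_{k-1}|$. The starting point is the conclusion of Lemma~\ref{ueee},
\[
|c(t,n)-b(t,n)|\leq\frac{\Box^{2k+1}(|\omega|t)^k}{k!}\sum_{\substack{(n_1,\cdots,n_{2k+1})\in\mathbb Z^{(2k+1)\nu}\\ \sum_jn_j=n}}\sum_{\alpha\in\mathbb B^{(k)}}\prod_{j}|n_j|^{\alpha_j}e^{-\rho|n_j|}.
\]
First I would interchange the order of summation so that $\alpha\in\mathbb B^{(k)}$ is the outer sum, and then apply the key convolution-type estimate (the ``Lemma~\ref{4}'' invoked in the proof of Lemma~\ref{dd}) to the inner lattice sum, with $\kappa$ replaced throughout by $\rho$. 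This splits each $e^{-\rho|n_j|}$ into $e^{-\frac{\rho}{2}|n_j|}e^{-\frac{\rho}{2}|n_j|}$, using one half to absorb the polynomial weight $\prod_j|n_j|^{\alpha_j}$ and the other half to produce the overall decay factor $e^{-\frac{\rho}{2}|n|}$, yielding the bound
\[
\sum_{\substack{\sum_jn_j=n}}\prod_j|n_j|^{\alpha_j}e^{-\rho|n_j|}\leq e^{-\frac{\rho}{2}|n|}(12\rho^{-1})^{|\alpha|+(2k+1)\nu}\prod_j\alpha_j!.
\]

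Next I would invoke Property~\ref{ggg}, which gives $|\alpha|=k$ for every $\alpha\in\mathbb B^{(k)}$, so the factor $(12\rho^{-1})^{|\alpha|+(2k+1)\nu}$ becomes $(12\rho^{-1})^{k+(2k+1)\nu}=(12\rho^{-1})^{\nu}\big((12\rho^{-1})^{2\nu+1}\big)^{k}$. Collecting these powers into the running constants and pulling out the prefactor $(12\rho^{-1})^\nu\Box$, the estimate reduces to controlling $\sum_{\alpha\in\mathbb B^{(k)}}\prod_j\alpha_j!$. At this point the argument parallels Lemma~\ref{1e}: since $\mathbb B^{(k)}\subset\mathcal H(2k+1;k)$ by Property~\ref{ggg}, Lemma~\ref{kk} applies with $N=2k+1$ and $d=k$ (note $d\leq N$ holds), giving $\sum_{\alpha\in\mathbb B^{(k)}}\prod_j\alpha_j!\leq(2N)^k=(4k+2)^k$. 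Combining this with the $1/k!$ from Lemma~\ref{ueee} and the Stirling lower bound $k!\gtrsim(k/e)^k$ converts $(4k+2)^k/k!$ into $(4e)^k(1+\tfrac{1}{2k})^k\lesssim(4e)^k$, absorbing the bounded factor $e^{1/2}$ into the constant $\Box$ as the statement permits.

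Assembling all the pieces gives
\[
|c(t,n)-b(t,n)|\leq(12\rho^{-1})^\nu\Box\, e^{-\frac{\rho}{2}|n|}\left(4e\Box^2(12\rho^{-1})^{2\nu+1}|\omega|t\right)^k,
\]
which is exactly the asserted bound. The routine parts here are the lattice-sum manipulation and the Stirling step, both of which are carbon copies of the corresponding steps in Lemmas~\ref{dd} and~\ref{1e} with $\kappa\rightsquigarrow\rho$ and $\Theta$ replaced by $(12\rho^{-1})^\nu\Box$. The one point demanding genuine care—the main obstacle—is verifying that $\mathbb B^{(k)}$ really sits inside $\mathcal H(2k+1;k)$ so that Lemma~\ref{kk} is applicable: one must confirm both that every component $\alpha_j\geq0$ and that the total degree is exactly $k$ with $k\leq 2k+1$, which is where Property~\ref{ggg} does the essential work. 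Once that inclusion is secured, the chain of inequalities closes without further difficulty, and the proof is complete.
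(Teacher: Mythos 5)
Your proposal is correct and follows essentially the same route as the paper's own proof: it starts from Lemma~\ref{ueee}, splits the exponentials and applies Lemma~\ref{4} (with $\rho$ in place of $\kappa$), then uses Property~\ref{ggg} to set $|\alpha|=k$, Lemma~\ref{kk} with $N=2k+1$, $d=k$, and finally Stirling. The one loose point you flag --- the leftover factor $e^{1/2}$, which the stated bound omits --- is equally present in the paper's own proof (which ends with $e^{-\frac{\rho}{2}|n|+\frac{1}{2}}$), so your treatment is no weaker than the original.
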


\begin{proof}
We have
\begin{eqnarray*}
&&|c(t,n)-b(t,n)|\\
&\leq&\frac{\Box^{2k+1}(|\omega|t)^k}{k!}\sum_{\substack{(n_1,\cdots,n_{2k},n_{2k+1})\in\mathbb Z^{(2k+1)\nu}\\ \sum_jn_j=n}}\sum_{\alpha\in\mathbb B^{(k)}}
\prod_{j}|n_j|^{\alpha_j}e^{-\rho|n_j|}\\
&\leq&\frac{\Box^{2k+1}(|\omega|t)^k}{k!}\sum_{\substack{(n_1,\cdots,n_{2k},n_{2k+1})\in\mathbb Z^{(2k+1)\nu}\\ \sum_jn_j=n}}\sum_{\alpha\in\mathbb B^{(k)}}
\prod_{j}|n_j|^{\alpha_j}e^{-\frac{\rho}{2}|n_j|}\prod_{j}e^{-\frac{\rho}{2}|n_j|}\\
&\leq&\frac{\Box^{2k+1}(|\omega|t)^k}{k!}e^{-\frac{\rho}{2}|n|}\sum_{\substack{(n_1,\cdots,n_{2k},n_{2k+1})\in\mathbb Z^{(2k+1)\nu}\\ \sum_jn_j=n}}\sum_{\alpha\in\mathbb B^{(k)}}
\prod_{j}|n_j|^{\alpha_j}e^{-\frac{\rho}{2}|n_j|}\\
&=&\frac{\Box^{2k+1}(|\omega|t)^k}{k!}e^{-\frac{\rho}{2}|n|}\sum_{\alpha\in\mathbb B^{(k)}}\sum_{\substack{(n_1,\cdots,n_{2k},n_{2k+1})\in\mathbb Z^{(2k+1)\nu}\\ \sum_jn_j=n}}
\prod_{j}|n_j|^{\alpha_j}e^{-\frac{\rho}{2}|n_j|}\\
&\stackrel{(\text{\scriptsize Lemma \ref{4}})}{\leq}&\frac{\Box^{2k+1}(|\omega|t)^k}{k!}e^{-\frac{\rho}{2}|n|}\sum_{\alpha\in\mathbb B^{(k)}}(12\rho^{-1})^{|\alpha|+(2k+1)\nu}\prod_j\alpha_j!
~~\\
&\stackrel{(\text{\scriptsize Property \ref{ggg}})}{=}&(12\rho^{-1})^{\nu}\frac{\Box^{2k+1}\left((12\rho^{-1}\right)^{2\nu+1}|\omega|t)^k}{k!}e^{-\frac{\rho}{2}|n|}\sum_{\alpha\in\mathbb B^{(k)}}\prod_j\alpha_j!~~\\
&\leq&(12\rho^{-1})^{\nu}\frac{\Box^{2k+1}\left((12\rho^{-1}\right)^{2\nu+1}|\omega|t)^k}{k!}e^{-\frac{\rho}{2}|n|}(2N)^k~(N=2k+1)\\
&\leq&(12\rho^{-1})^{\nu}\Box^{2k+1}\left((12\rho^{-1}\right)^{2\nu+1}|\omega|t)^ke^{-\frac{\rho}{2}|n|+\frac{1}{2}}(4e)^k\\
&=&(12\rho^{-1})^{\nu}\Box e^{-\frac{\rho}{2}|n|+\frac{1}{2}}\left(4e\Box^2(12\rho^{-1})^{2\nu+1}|\omega|t\right)^k.
\end{eqnarray*}
This completes the proof of Lemma \ref{dh}.
\end{proof}

Now, we are able to prove Theorem \ref{euthm} in the case $\mathfrak p=3$:

\begin{proof}[Proof of Theorem \ref{euthm}.]
By Lemma \ref{dh} and due to the arbitrariness of $k$, one can derive that
$$
|c(t,n)-b(t,n)| \rightarrow 0, \quad k \rightarrow \infty
$$
if $0 < t < \frac{1}{4e\Box^2(12\rho^{-1})^{2\nu+1} |\omega|}$. Hence $c(t,n) \equiv b(t,n)$ for all
$$
0 \leq t < \min \left\{ t_0,\frac{1}{4e\Box^2(12\rho^{-1})^{2\nu+1}|\omega|}\right\}
$$
and $x\in\mathbb R$, that is,
\[
u_1(t,x) \equiv u_2(t,x), \quad \forall 0 \leq t < \min \left\{ t_0, \frac{1}{4e\Box^2(12\rho^{-1})^{2\nu+1} |\omega|} \right\}~\text{and}~x \in \mathbb R.
\]
This completes the proof of Theorem \ref{euthm} in the case $\mathfrak p=3$.
\end{proof}

\part{$\mathfrak p$-gKdV}

In this part we consider the case of general $\mathfrak p$ and prove Theorems~\ref{eethm} and \ref{euthm} in the formulation given in the introduction.

\section{Preliminaries}

Consider the $\mathfrak p$-gKdV \eqref{ppkdv} with spatially quasi-periodic initial data \eqref{iei}. The case $\mathfrak p=2$ was studied by Damanik and Goldstein in \cite{damanik16}, and the case of $\mathfrak p=3$ was discussed above.  Now, we give a similar local result for all $\mathfrak p \geq 2$, and in particular for $\mathfrak p \geq 4$.

If the wave vector $\omega$ is non-resonant, then $\mathfrak p$-gKdV \eqref{ppkdv} can be regarded as an infinite system of coupled nonlinear ordinary differential equations,
\begin{align}\label{ded}
\frac{{\rm d}}{{\rm d}t}c(t,n)-{\rm i}(n\cdot\omega)^3c(t,n)+
\frac{{\rm i}n\cdot\omega }{\mathfrak p}c^{\ast\mathfrak p}(t,n)=0, \quad \forall n\in\mathbb Z^\nu,
\end{align}
where $c^{\ast\mathfrak p}$ stands for the Cauchy product of infinite series, that is, the operation of discrete convolution, defined by letting
\[
c^{{\ast}{\mathfrak p}}(t,n)\triangleq\underbrace{c\ast c\cdots\ast c}_{\mathfrak p}~(t,n):=\sum_{\substack{n_1,\cdots,n_{\mathfrak p}\in\mathbb Z^\nu:~ \sum_{j=1}^{{\mathfrak p}}n_j=n}}\prod_{j=1}^{\mathfrak p}c(t,n_j).
\]
Obviously, $c(t,0)=c(0)$. Hence we need to solve $c(t,n)$ for all $n\in\mathbb Z^\nu\backslash\{0\}$. Under spatially quasi-periodic initial data \eqref{iei}, the ODE \eqref{ded} is equivalent to the following nonlinear integral equation,
\begin{align}
c(t,n)=e^{{\rm i}(n\cdot\omega)^3t}c(n)-\frac{{\rm i}n\cdot\omega}{\mathfrak p}\int_0^te^{{\rm i}(n\cdot\omega)^3(t-\tau)}c^{\ast\mathfrak p}(\tau,n){\rm d}\tau.
\end{align}
According to the feedback of nonlinearity, we define the following Picard iteration
\begin{align}
c_k(t,n):=
\begin{cases}
e^{{\rm i}(n\cdot\omega)^3t}c(n),&k=0;\\
\label{ppp}c_0(t,n)-\frac{{\rm i}n\cdot\omega}{\mathfrak p}\int_{0}^{t}e^{{\rm i}(n\cdot\omega)^3(t-\tau)}c_{k-1}^{\ast\mathfrak p}(\tau,n){\rm d}\tau,& k\in\mathring{\mathbb N}\triangleq\mathbb N\backslash\{0\}
\end{cases}
\end{align}
to solve for these Fourier coefficients.

Set
\begin{align*}
\mathfrak T^{(1)}&:=\{0,1\},\quad\mathfrak T^{(k)}:=\{0\}\cup\prod_{j=1}^{\mathfrak p}\mathfrak T^{(k-1)},\quad \text{for all}~~k\in\mathring{\mathbb N},\\
\mathfrak N^{(k,\gamma)}&:=
\begin{cases}
\mathbb Z^\nu, &\gamma=0\in\mathfrak T^{(k)}, k\geq1;\\
\prod_{j=1}^{\mathfrak p}\mathbb Z^\nu, &\gamma=1\in\mathfrak T^{(1)};\\
\prod_{j=1}^{\mathfrak p}\mathfrak N^{(k-1,\gamma_j)}, &\gamma=(\gamma_j)_{1\leq j\leq\mathfrak p}\in\prod_{j=1}^{\mathfrak p}\mathfrak T^{(k-1)}, k\geq2,
\end{cases}\\
\mathfrak F^{(k,\gamma)}(n^{(k)})&:=
\begin{cases}
1, &\gamma=0\in\mathfrak T^{(k)}, n^{(k)}\in\mathfrak N^{(k,0)}, k\geq 1;\\
-\frac{{\rm i}\mu(n^{(1)})\cdot\omega}{\mathfrak p}, &\gamma=1\in\mathfrak T^{(1)}, n^{(1)}\in\mathfrak N^{(1,1)};\\
-\frac{{\rm i}\mu(n^{(k)})\cdot\omega}{\mathfrak p}\prod_{j=1}^{\mathfrak p}\mathfrak F^{(k-1,\gamma_j)}(n_j), &\gamma=(\gamma_j)_{1\leq j\leq \mathfrak p}\in\prod_{j=1}^{\mathfrak p}\mathfrak T^{(k-1)},\\
&n^{(k)}=(n_j)_{1\leq j\leq \mathfrak p}\in\mathfrak N^{(k,\gamma)}, k\geq 2,
\end{cases}\\
\mathfrak I^{(k,\gamma)}(t,n^{(k)})&:=
\begin{cases}
e^{i\left(\mu(n^{(k)})\cdot\omega\right)^3t}, &\gamma=0\in\mathfrak T^{(k)}, n^{(k)}\in\mathfrak N^{(k,0)},\\
&k\geq 1;\\
\int_0^te^{{\rm i}\left(\mu(n^{(1)})\cdot\omega\right)^3(t-\tau)}\prod_{j=1}^{\mathfrak p}e^{{\rm i}(n_j\cdot\omega)^{3}\tau}{\rm d}\tau, &\gamma=1\in\mathfrak T^{(1)},\\
&n^{(1)}=(n_j)_{1\leq j\leq{\mathfrak p}}\in\mathfrak N^{(1,1)};\\
\int_0^te^{{\rm i}\left(\mu(n^{(k)})\cdot\omega\right)^{3}(t-\tau)}\prod_{j=1}^{\mathfrak p}\mathfrak I^{(k-1,\gamma_j)}(\tau,n_j){\rm d}\tau, &\gamma=(\gamma_j)_{1\leq j\leq{\mathfrak p}}\in\prod_{j=1}^{\mathfrak p}\mathfrak T^{(k-1)},\\
&n^{(k)}=(n_j)_{1\leq j\leq{\mathfrak p}}\in\mathfrak N^{(k,\gamma)}, \\
&k\geq 2,
\end{cases}\\
\mathfrak C^{(k,\gamma)}(t,n^{(k)})&:=
\begin{cases}
c(n^{(k)}), &\gamma=0\in\mathfrak T^{(k)}, n^{(k)}\in\mathfrak N^{(k,0)},k\geq 1;\\
\prod_{j=1}^{\mathfrak p}c(n_j), &\gamma=1\in\mathfrak T^{(1)}, n^{(1)}=(n_j)_{1\leq j\leq{\mathfrak p}}\in\mathfrak N^{(1,1)};\\
\prod_{j=1}^{\mathfrak p}\mathfrak C^{(k-1,\gamma_j)}(n_j), &\gamma=(\gamma_j)_{1\leq j\leq{\mathfrak p}}\in\prod_{j=1}^{\mathfrak p}\mathfrak T^{(k-1)},\\
&n^{(k)}=(n_j)_{1\leq j\leq{\mathfrak p}}\in\mathfrak N^{(k,\gamma)}, k\geq 2.
\end{cases}
\end{align*}
By induction we have
\begin{align}\label{te}
c_k(t,n)=\sum_{\gamma\in\mathfrak T^{(k)}}\sum_{n^{(k)}\in\mathfrak N^{(k,\gamma)}:~\mu(n^{(k)})=n}\mathfrak F^{(k,\gamma)}(n^{(k)})\mathfrak I^{(k)}(t,n^{(k)})\mathfrak C^{(k,\gamma)}(n^{(k)}).
\end{align}
In fact, it is obvious that \eqref{te} holds for $k=1$. Let $k\geq2$. Assume that it is true for $1\leq k^\prime\leq k-1$. For $k$, we have
\begin{align*}
c_{k}(t,n)=e^{{\rm i}(n\cdot\omega)^3t}c(n)-\frac{{\rm i}n\cdot\omega}{\mathfrak p}\int_0^te^{{\rm i}(n\cdot\omega)^3(t-\tau)}\sum_{n_1,\cdots,n_\mathfrak p\in\mathbb Z^\nu:~ \sum_{j=1}^{\mathfrak p}n_j=n}\prod_{j=1}^{\mathfrak p}c_{k-1}(\tau,n_j){\rm d}\tau,
\end{align*}
where
\begin{eqnarray*}
&&\prod_{j=1}^{\mathfrak p}c_{k-1}(\tau,n_j)\\
&=&\prod_{j=1}^{\mathfrak p}\sum_{\gamma_j\in\mathfrak T^{(k-1)}}\sum_{\substack{\clubsuit_j\in\mathfrak N^{(k-1,\gamma_j)}\\\mu(\clubsuit_j)=n_j}}\mathfrak F^{(k-1,\gamma_j)}(\clubsuit_j)\mathfrak I^{(k-1,\gamma_j)}(\tau,\clubsuit_j)\mathfrak C^{(k-1,\gamma_j)}(\clubsuit_j)\\
&=&\sum_{\substack{(\gamma_1,\cdots,\gamma_\mathfrak p)\in\\\prod_{j=1}^{\mathfrak p}\mathfrak T^{(k-1)}}}\sum_{\substack{n^{(k)}\triangleq(\clubsuit_1,\cdots,\clubsuit_\mathfrak p)\in\prod_{j=1}^{\mathfrak p}\mathfrak N^{(k-1,\gamma_j)}\\\mu(n^{(k)})=n_1+\cdots+n_\mathfrak p}}\prod_{j=1}^{\mathfrak p}\mathfrak F^{(k-1,\gamma_j)}(\clubsuit_j)\mathfrak I^{(k-1,\gamma_j)}(\tau,\clubsuit_j)\mathfrak C^{(k-1,\gamma_j)}(\clubsuit_j).
\end{eqnarray*}
Hence
\begin{eqnarray*}
c_k(t,n)&=&\sum_{n^{(k)}\in\mathfrak N^{(k,0)}:~\mu(n^{(k)})=n}\mathfrak F^{(k-1,0)}(n^{(k)})\mathfrak I^{(k-1,0)}(t,n^{(k)})\mathfrak C^{(k-1,0)}(n^{(k)})\\
&+&\sum_{\substack{(\gamma_1,\cdots,\gamma_\mathfrak p)\in\\\prod_{j=1}^{\mathfrak p}\mathfrak T^{(k-1)}}}\sum_{\substack{n^{(k)}\triangleq(\clubsuit_1,\cdots,\clubsuit_\mathfrak p)\in\prod_{j=1}^{\mathfrak p}\mathfrak N^{(k-1,\gamma_j)}\\\mu(n^{(k)})=n}}
\int_0^te^{{\rm i}\left(\mu(n^{(k)}\cdot\omega\right)^3(t-\tau)}\prod_{j=1}^{\mathfrak p}\mathfrak I^{(k-1,\gamma_j)}(t,\clubsuit_j){\rm d}\tau\\
&\times&\left\{-\frac{{\rm i}\mu(n^{(k)})\cdot\omega}{\mathfrak p}\right\}\prod_{j=1}^{\mathfrak p}\mathfrak F^{(k-1,\gamma_j)}(\clubsuit_j)\times \prod_{j=1}^{\mathfrak p}\mathfrak C^{(k-1,\gamma_j)}(\clubsuit_j)\\
&=&\sum_{\gamma\in\mathfrak T^{(k)}}\sum_{n^{(k)}\in\mathfrak N^{(k,\gamma)}:~\mu(n^{(k)})=n}\mathfrak F^{(k,\gamma)}(n^{(k)})\mathfrak I^{(k)}(t,n^{(k)})\mathfrak C^{(k,\gamma)}(n^{(k)}).
\end{eqnarray*}
This shows that $c_k(t,n)$ can be viewed as an infinite series
in which the summation index is the branch $\gamma \in \mathfrak T^{(k)}$ and runs over the tree $\mathfrak T^{(k)}$. On each branch $\gamma \in \mathfrak T^{(k)}$, it splits over the following restriction: total distance $=n$. With the help of this tree form, we can deal with the operation of discrete convolution $c^{\ast\mathfrak p}(n)$.

\section{Properties of the Picard Sequence}

In this section, we prove that the Picard sequence is exponentially decaying and fundamental (i.e., a Cauchy sequence).

\subsection{Exponential Decay}

\begin{lemm}\label{gfhj}
If
\[
|c(n)|\leq\mathcal A^{\frac{1}{\mathfrak p-1}}e^{-\kappa|n|},
\]
then
\begin{align}\label{ce}
|\mathfrak C^{(k,\gamma)}(n^{(k)})|\leq\mathcal A^{\sigma(\gamma)}e^{-\kappa|n^{(k)}|}, \quad\forall k\geq1,
\end{align}
where
\begin{align*}
\sigma(\gamma):=
\begin{cases}
\frac{1}{\mathfrak p-1},&\gamma=0\in\mathfrak T^{(k)},k\geq1;\\
\frac{\mathfrak p}{\mathfrak p-1},&\gamma=1\in\mathfrak T^{(1)};\\
\sum_{j=1}^{\mathfrak p}\sigma(\gamma_j),&\gamma=(\gamma_j)_{1\leq j\leq\mathfrak p}\in\prod_{j=1}^{\mathfrak p}\mathfrak T^{(k-1)}, k\geq2.
\end{cases}
\end{align*}
\end{lemm}
\begin{proof}
Obviously, \eqref{ce} is true for $k=1$.

Let $k\geq2$. Assume that \eqref{ce} holds for $1\leq k^\prime\leq k-1$. For $k$, it is clear that \eqref{ce} holds for $\gamma = 0 \in \mathfrak T^{(k)}$. For $\gamma=(\gamma_j)_{1\leq j\leq\mathfrak p}\in\prod_{j=1}^{\mathfrak p}\mathfrak T^{(k-1)}, n^{(k)}=(n_j)_{1\leq j\leq\mathfrak p}\in\prod_{j=1}^{\mathfrak p}\mathfrak T^{(k-1)}$, we have
\begin{eqnarray*}
|\mathfrak C^{(k,\gamma)}(n^{(k)})|&=&\prod_{j=1}^{\mathfrak p}|\mathfrak C^{(k-1,\gamma_j)}(n_j)|\\
&\leq&\prod_{j=1}^{\mathfrak p}\mathcal A^{\sigma(\gamma_j)}e^{-\kappa|n_j|}\\
&=&\mathcal A^{\sum_{j=1}^{\mathfrak p}\sigma(\gamma_j)}e^{-\kappa\sum_{j=1}^{\mathfrak p}|n_j|}\\
&=&\mathcal A^{\sigma(\gamma)}e^{-\kappa|n^{(k)}|}.
\end{eqnarray*}
The proof of Lemma \ref{gfhj} is completed.
\end{proof}

\begin{rema}
By the definition of $\mathfrak N^{(k,\gamma)}$, we have
\begin{align}\label{je}
n\in\mathfrak N^{(k,\gamma)}&\Rightarrow n\in(\mathbb Z^\nu)^{(\mathfrak p-1)\sigma(\gamma)}, \quad \forall k\geq1.
\end{align}

In fact, it is clear that \eqref{je} is true for $k=1$.

Let $k\geq2$. Assume that it is true for $1\leq k^\prime\leq k-1$. For $k$, the case of $\gamma = 0 \in \mathfrak T^{(k)}$ is trivial. For $\gamma = (\gamma_j)_{1 \leq j \leq \mathfrak p} \in \prod_{j=1}^{\mathfrak p} \mathfrak T^{(k-1)}$, $n^{(k)}=(n_j)_{1\leq j\leq\mathfrak p}\in\prod_{j=1}^{\mathfrak p}\mathfrak N^{(k-1,\gamma_j)}$, by assumption, we know that $n_j\in(\mathbb Z^\nu)^{(\mathfrak p-1)\sigma(\gamma_j)}$ for all $j=1,\cdots,\mathfrak p$. Hence $n^{(k)}\in(\mathbb Z^\nu)^{(\mathfrak p-1)\sum_{j=1}^{\mathfrak p}\sigma(\gamma_j)}\simeq(\mathbb Z^\nu)^{(\mathfrak p-1)\sigma(\gamma)}$.
\end{rema}

\begin{lemm}\label{lsdfg}
We have
\begin{align}\label{cse}
|\mathfrak I^{(k,\gamma)}(t,n^{(k)})|\leq\frac{t^{\ell(\gamma)}}{\mathfrak D(\gamma)}, \quad \forall k\geq1,
\end{align}
where
\begin{align*}
\ell(\gamma)&:=
\begin{cases}
0&\gamma=0\in\mathfrak T^{(k)},k\geq1;\\
1&\gamma=1\in\mathfrak T^{(1)};\\
\sum_{j=1}^{\mathfrak p}\ell(\gamma_j)+1,&\gamma=(\gamma_j)_{1\leq j\leq\mathfrak p}\in\prod_{j=1}^{\mathfrak p}\mathfrak T^{(k-1)}, k\geq2,
\end{cases}\\
\mathfrak D(\gamma)&:=
\begin{cases}
1,&\gamma=0\in\mathfrak T^{(k)},k\geq1;\\
1,&\gamma=1\in\mathfrak T^{(1)};\\
\ell(\gamma)\prod_{j=1}^{\mathfrak p}\mathfrak D(\gamma_j),&\gamma=(\gamma_j)_{1\leq j\leq\mathfrak p}\in\prod_{j=1}^{\mathfrak p}\mathfrak T^{(k-1)}, k\geq 2.
\end{cases}
\end{align*}
\end{lemm}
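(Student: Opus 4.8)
The plan is to prove Lemma~\ref{lsdfg} by induction on $k$, following exactly the same strategy that worked for the special case $\mathfrak p=3$ in Lemma~\ref{iighe}; the only change is that products now run over $\mathfrak p$ factors rather than three, and the recursions for $\ell$ and $\mathfrak D$ are tailored precisely so that the induction closes. The essential analytic inputs are trivial: every oscillatory exponential $e^{{\rm i}(\mu(n^{(k)})\cdot\omega)^3(t-\tau)}$ has modulus one, and $\int_0^t \tau^m\,{\rm d}\tau = t^{m+1}/(m+1)$.

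First I would dispose of the two base cases. For $\gamma=0\in\mathfrak T^{(k)}$ with any $k\geq1$, we have $\mathfrak I^{(k,0)}(t,n^{(k)})=e^{{\rm i}(\mu(n^{(k)})\cdot\omega)^3t}$, whose modulus is $1=t^{\ell(0)}/\mathfrak D(0)$ since $\ell(0)=0$ and $\mathfrak D(0)=1$. For $\gamma=1\in\mathfrak T^{(1)}$ with $n^{(1)}=(n_j)_{1\leq j\leq\mathfrak p}$, the integrand of
\[
\mathfrak I^{(1,1)}(t,n^{(1)})=\int_0^t e^{{\rm i}(\mu(n^{(1)})\cdot\omega)^3(t-\tau)}\prod_{j=1}^{\mathfrak p}e^{{\rm i}(n_j\cdot\omega)^3\tau}\,{\rm d}\tau
\]
is a product of unimodular factors, so $|\mathfrak I^{(1,1)}(t,n^{(1)})|\leq t=t^{\ell(1)}/\mathfrak D(1)$, using $\ell(1)=1$ and $\mathfrak D(1)=1$.

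For the inductive step I would fix $k\geq2$ and assume \eqref{cse} for all $1\leq k'\leq k-1$. Given $\gamma=(\gamma_j)_{1\leq j\leq\mathfrak p}\in\prod_{j=1}^{\mathfrak p}\mathfrak T^{(k-1)}$ and $n^{(k)}=(n_j)_{1\leq j\leq\mathfrak p}\in\prod_{j=1}^{\mathfrak p}\mathfrak N^{(k-1,\gamma_j)}$, I would estimate
\begin{align*}
|\mathfrak I^{(k,\gamma)}(t,n^{(k)})|
&=\left|\int_0^t e^{{\rm i}(\mu(n^{(k)})\cdot\omega)^3(t-\tau)}\prod_{j=1}^{\mathfrak p}\mathfrak I^{(k-1,\gamma_j)}(\tau,n_j)\,{\rm d}\tau\right|\\
&\leq\int_0^t\prod_{j=1}^{\mathfrak p}\left|\mathfrak I^{(k-1,\gamma_j)}(\tau,n_j)\right|{\rm d}\tau
\leq\int_0^t\prod_{j=1}^{\mathfrak p}\frac{\tau^{\ell(\gamma_j)}}{\mathfrak D(\gamma_j)}\,{\rm d}\tau\\
&=\frac{1}{\prod_{j=1}^{\mathfrak p}\mathfrak D(\gamma_j)}\int_0^t\tau^{\sum_{j=1}^{\mathfrak p}\ell(\gamma_j)}\,{\rm d}\tau
=\frac{t^{\sum_{j=1}^{\mathfrak p}\ell(\gamma_j)+1}}{\left(\sum_{j=1}^{\mathfrak p}\ell(\gamma_j)+1\right)\prod_{j=1}^{\mathfrak p}\mathfrak D(\gamma_j)}
=\frac{t^{\ell(\gamma)}}{\mathfrak D(\gamma)},
\end{align*}
where the last equality is exactly where the recursive definitions are invoked: $\ell(\gamma)=\sum_{j=1}^{\mathfrak p}\ell(\gamma_j)+1$ supplies the numerator exponent, and $\mathfrak D(\gamma)=\ell(\gamma)\prod_{j=1}^{\mathfrak p}\mathfrak D(\gamma_j)$ absorbs the factor $\sum_{j}\ell(\gamma_j)+1=\ell(\gamma)$ produced by the power-rule integration together with the product of the $\mathfrak D(\gamma_j)$.

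Honestly, there is no genuine obstacle here: the statement is a clean bookkeeping generalization, and the recursions were reverse-engineered to make the estimate balance. The only point requiring a little care is the structural fact that $n^{(k)}$ decomposes as $(n_1,\dots,n_{\mathfrak p})$ with $n_j\in\mathfrak N^{(k-1,\gamma_j)}$, so that the inductive hypothesis applies to each factor $\mathfrak I^{(k-1,\gamma_j)}(\tau,n_j)$; this is guaranteed by the definition of $\mathfrak N^{(k,\gamma)}$. Pulling the absolute value inside the integral and using $|e^{{\rm i}\theta}|=1$ are the only inequalities, and both are immediate.
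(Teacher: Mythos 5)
Your proof is correct and follows essentially the same route as the paper: induction on $k$, unimodularity of the oscillatory exponentials for the base cases, and the power-rule integration $\int_0^t\tau^{\sum_j\ell(\gamma_j)}\,{\rm d}\tau$ matched against the recursions for $\ell$ and $\mathfrak D$ in the inductive step. In fact your write-up is more careful than the paper's (which compresses the base cases to ``obviously'' and contains a small typographical slip in its first displayed inequality), so there is nothing to correct.
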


\begin{proof}
Obviously, \eqref{cse} is true for $k=1$.

Let $k\geq2$. Assume that \eqref{cse} holds for $1\leq k^\prime\leq k-1$. For $k$, it is clear that \eqref{ce} holds for $\gamma = 0 \in \mathfrak T^{(k)}$. For $\gamma = (\gamma_j)_{1 \leq j \leq \mathfrak p} \in \prod_{j=1}^{\mathfrak p} \mathfrak T^{(k-1)}, n^{(k)} = (n_j)_{1 \leq j \leq \mathfrak p} \in \prod_{j=1}^{\mathfrak p}\mathfrak T^{(k-1)}$, we have
\begin{eqnarray*}
|\mathfrak I^{(k,\gamma)}(t,n^{(k)})|&\leq&\prod_{j=1}^{\mathfrak p}|\mathfrak I^{(k-1,\gamma_j)}(\tau,n_j)|\\
&\leq&\int_0^t\prod_{j=1}^{\mathfrak p}\frac{\tau^{\ell(\gamma_j)}}{\mathfrak D(\gamma_j)}{\rm d}\tau\\
&=&\frac{t^{\ell(\gamma)}}{\mathfrak D(\gamma)}.
\end{eqnarray*}
This completes the proof of Lemma \ref{lsdfg}.
\end{proof}

\begin{rema}
By the definition of $\sigma$, $\ell$, and $\mathfrak N^{(k,\gamma)}$, we have
\begin{align}\label{see}
\sigma(\gamma)=\ell(\gamma)+\frac{1}{\mathfrak p-1}, \quad \gamma \in \mathfrak T^{(k)}, \quad\forall k\geq1.
\end{align}

Indeed, it is clear that \eqref{see} is true for $k=1$.

Let $k\geq2$. Assume that it is true for $1\leq k^\prime\leq k-1$. For $k$, the case of $\gamma = 0 \in \mathfrak T^{(k)}$ is trivial. For $\gamma=(\gamma_j)_{1\leq j\leq\mathfrak p}\in\prod_{j=1}^{\mathfrak p}\mathfrak T^{(k-1)}$,
\begin{eqnarray*}
\sigma(\gamma)&=&\sum_{j=1}^{\mathfrak p}\sigma(\gamma_j)\\
&=&\sum_{j=1}^{\mathfrak p}\left(\ell(\gamma_j)+\frac{1}{\mathfrak p-1}\right)\\
&=&\sum_{j=1}^{\mathfrak p}\ell(\gamma_j)+1+\frac{1}{\mathfrak p-1}\\
&=&\ell(\gamma)+\frac{1}{\mathfrak p-1}.
\end{eqnarray*}
\end{rema}

\begin{lemm}\label{245}
For all $k\geq1$, we have
\begin{eqnarray}
|\mathfrak F^{(k,\gamma)}(n^{(k)})|&\leq&|\omega|^{\ell(\gamma)}\mathfrak B(n^{(k)})\label{cseb}\\
&\leq&|\omega|^{\ell(\gamma)}\sum_{\alpha=(\alpha_j)_{1\leq j\leq(\mathfrak p-1)\sigma(\gamma)}\in\mathfrak R^{(k,\gamma)}}\prod_{j}|(n^{(k)})_j|^{\alpha_j},\label{csea}
\end{eqnarray}
where
\begin{align*}
\mathfrak B(n^{(k)})&:=
\begin{cases}
1&\gamma=0\in\mathfrak T^{(k)},k\geq1;\\
|\mu(n^{(1)})|&\gamma=1\in\mathfrak T^{(1)};\\
|\mu(n^{(k)})|\prod_{j=1}^{\mathfrak p}\mathfrak B(n_j),&\gamma=(\gamma_j)_{1\leq j\leq\mathfrak p}\in\prod_{j=1}^{\mathfrak p}\mathfrak T^{(k-1)}, n^{(k)}=(n_j)_{1\leq j\leq\mathfrak p}\in\mathfrak N^{(k,\gamma)}, \\
&k\geq 2.
\end{cases}\\
\mathfrak R^{(k,\gamma)}&:=
\begin{cases}
\{0\in\mathbb Z\}, &\gamma=0\in\mathfrak T^{(k)}, k\geq1;\\
\{(\underbrace{1,0,\cdots,0}_{\mathfrak p}),\cdots,(\underbrace{0,0,\cdots,1}_{\mathfrak p})\}, &\gamma=1\in\mathfrak T^{(1)};\\
\prod_{j=1}^{\mathfrak p}\mathfrak R^{(k-1,\gamma_j)}+e^{(k,\gamma)}, &\gamma=(\gamma_j)_{1\leq j\leq\mathfrak p}\in\prod_{j=1}^{\mathfrak p}\mathfrak N^{(k-1,\gamma_j)},
\end{cases}
\end{align*}
and
\[e^{(k,\gamma)}:=\{\alpha\in\mathbb Z^{(\mathfrak p-1)\sigma(\gamma)}: |\alpha|=1,\alpha_j\geq0\}.\]
\end{lemm}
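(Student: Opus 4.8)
The plan is to prove the chain of inequalities \eqref{cseb}--\eqref{csea} by a single induction on $k$, reading the first inequality as the $\mathfrak p$-fold analogue of Lemma~\ref{fe} and the second as that of Lemma~\ref{iii}. The only genuine change relative to the case $\mathfrak p=3$ is that every ternary product $\prod_{j=1}^{3}$ is replaced by $\prod_{j=1}^{\mathfrak p}$, so the recursive definitions of $\mathfrak F$, $\mathfrak B$, $\ell$, $\sigma$ and $\mathfrak R$ all keep the same shape; the bookkeeping of the indices is what requires care.

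For the first inequality I would argue as follows. In the base case $k=1$, the branch $\gamma=0$ gives $|\mathfrak F^{(1,0)}(n^{(1)})|=1=|\omega|^{\ell(0)}\mathfrak B(n^{(1)})$, while for $\gamma=1$ one bounds $\bigl|-\tfrac{{\rm i}\mu(n^{(1)})\cdot\omega}{\mathfrak p}\bigr|\le|\omega|\,|\mu(n^{(1)})|=|\omega|^{\ell(1)}\mathfrak B(n^{(1)})$. For the inductive step with $\gamma=(\gamma_j)_{1\le j\le\mathfrak p}$, I factor the recursive definition of $\mathfrak F^{(k,\gamma)}$, estimate the prefactor by $|\omega|\,|\mu(n^{(k)})|$, insert the inductive bound $|\mathfrak F^{(k-1,\gamma_j)}(n_j)|\le|\omega|^{\ell(\gamma_j)}\mathfrak B(n_j)$ into each factor, and collect the powers of $|\omega|$ using $\ell(\gamma)=1+\sum_{j}\ell(\gamma_j)$; the surviving product $|\mu(n^{(k)})|\prod_j\mathfrak B(n_j)$ is exactly $\mathfrak B(n^{(k)})$ by definition.

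For the second inequality I would again induct on $k$. The base case $k=1$ is essentially a dimension check: for $\gamma=0$ we have $\mathfrak B=1$, $(\mathfrak p-1)\sigma(0)=1$ and $\mathfrak R^{(1,0)}=\{0\}$, while for $\gamma=1$ we have $\mathfrak B(n^{(1)})=|\mu(n^{(1)})|\le\sum_{j=1}^{\mathfrak p}|n_j|$ and $(\mathfrak p-1)\sigma(1)=\mathfrak p$, so the $\mathfrak p$ standard basis vectors constituting $\mathfrak R^{(1,1)}$ reproduce exactly the right-hand side. In the inductive step I write $n^{(k)}=(\clubsuit_1,\dots,\clubsuit_{\mathfrak p})$ with $\clubsuit_j\in\mathfrak N^{(k-1,\gamma_j)}\subset(\mathbb Z^\nu)^{(\mathfrak p-1)\sigma(\gamma_j)}$ via \eqref{je}, apply the inductive bound to each $\mathfrak B(\clubsuit_j)$, estimate $|\mu(n^{(k)})|\le\sum_i|(n^{(k)})_i|$, and then recognize the product of the $\mathfrak p$ sums over $\mathfrak R^{(k-1,\gamma_j)}$ as a single sum over the Cartesian product $\prod_{j=1}^{\mathfrak p}\mathfrak R^{(k-1,\gamma_j)}$, with the extra linear factor encoded as the Minkowski translate by $e^{(k,\gamma)}$; this is precisely the recursive definition of $\mathfrak R^{(k,\gamma)}$.

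The main obstacle is the index flattening rather than any analytic estimate. The delicate point is to verify that concatenating the multi-indices $\alpha^{(1)},\dots,\alpha^{(\mathfrak p)}$ coming from the subtrees and then translating by a unit vector produces a vector whose total length is exactly $(\mathfrak p-1)\sigma(\gamma)=\sum_{j}(\mathfrak p-1)\sigma(\gamma_j)$, which forces the repeated use of the component count \eqref{je} together with the additivity $\sigma(\gamma)=\sum_j\sigma(\gamma_j)$. One must also confirm the elementary identity $\sum_{i}|(n^{(k)})_i|=\sum_{\beta\in e^{(k,\gamma)}}\prod_i|(n^{(k)})_i|^{\beta_i}$, valid because $e^{(k,\gamma)}$ consists precisely of the unit vectors of length $(\mathfrak p-1)\sigma(\gamma)$, so that the factor $|\mu(n^{(k)})|$ is correctly absorbed into the translate. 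Once the component count is matched, the passage from the $\mathfrak p=3$ computation to general $\mathfrak p$ is purely formal.
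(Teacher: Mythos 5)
Your proposal is correct and follows essentially the same route as the paper: a simultaneous induction on $k$, with the prefactor bounded by $|\omega|\,|\mu(n^{(k)})|$, the inductive bounds inserted factor-wise and the powers of $|\omega|$ collected via $\ell(\gamma)=1+\sum_j\ell(\gamma_j)$ for \eqref{cseb}, and then the flattening of $n^{(k)}=(\clubsuit_1,\dots,\clubsuit_{\mathfrak p})$ via \eqref{je}, the bound $|\mu(n^{(k)})|\le\sum_i|(n^{(k)})_i|$ rewritten as a sum over $e^{(k,\gamma)}$, and the identification of the product of sums with the Minkowski sum $\prod_j\mathfrak R^{(k-1,\gamma_j)}+e^{(k,\gamma)}$ for \eqref{csea}. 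In fact you are more explicit than the paper on the base case $k=1$, which the paper dismisses as clear.
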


\begin{proof}
Clearly, \eqref{cseb} and \eqref{csea} are true for $k=1$.

Let $k\geq2$. Assume that they hold for $1\leq k^\prime\leq k-1$. For $k$, $\gamma = (\gamma_j)_{1 \leq j \leq \mathfrak p} \in \prod_{j=1}^{\mathfrak p} \mathfrak T^{(k-1)}, n^{(k)} = (\clubsuit_j)_{1\leq j\leq\mathfrak p} \in
\prod_{j=1}^{\mathfrak p} \mathfrak N^{(k-1,\gamma_j)} \subset \prod_{j=1}^{(\mathfrak p-1) \sigma(\gamma_j)} \mathbb Z^\nu \simeq \prod_{j=1}^{(\mathfrak p-1)\sigma(\gamma)} \mathbb Z^\nu$, where $\clubsuit_1:=(n_1,\cdots,n_{(\mathfrak p-1)\sigma(\gamma_1)})$, and
$$
\clubsuit_j:=(n_{\sum_{i=1}^{j-1}(\mathfrak p-1)\sigma(\gamma_{i})+1},\cdots,n_{\sum_{i=1}^{j-1}(\mathfrak p-1)\sigma(\gamma_{i})+(\mathfrak p-1)\sigma(\gamma_j)}=n_{\sum_{i=1}^{j}(\mathfrak p-1)\sigma(\gamma_{i})})$$ for $j=2,3\cdots,\mathfrak p$, that is,
\begin{align*}
(n^{(k)})_i=
\begin{cases}
\clubsuit_i,&1\leq i\leq(\mathfrak p-1)\sigma(\gamma_1);\\
\spadesuit_{i-\sum_{i=1}^{j-1}(\mathfrak p-1)\sigma(\gamma_{i})},&\sum_{i=1}^{j-1}(\mathfrak p-1)\sigma(\gamma_{i})+1\leq i\leq\sum_{i=1}^{j}(\mathfrak p-1)\sigma(\gamma_{i}), ~j=2,\cdots,\mathfrak p.
\end{cases}
\end{align*}
Hence we have
\begin{eqnarray*}
|\mathfrak F^{(k,\gamma)}(n^{(k)})|&\leq&|\omega||\mu(n^{(k)})|\prod_{j=1}^{\mathfrak p}|\omega|^{\ell(\gamma_j)}\mathfrak B(\clubsuit_j)\\
&=&|\omega|^{\ell(\gamma)}|\mu(n^{(k)})|\prod_{j=1}^{\mathfrak p}\mathfrak B(\clubsuit_j)\\
&=&|\omega|^{\ell(\gamma)}\mathfrak B(n^{(k)}).
\end{eqnarray*}
Notice that
\begin{eqnarray*}
&&|\mu(n^{(k)})|\prod_{j=1}^{\mathfrak p}\mathfrak B(\clubsuit_j)\\
&\leq&\sum_{j=1}^{(\mathfrak p-1)\sigma(\gamma)}|n_j|\prod_{j=1}^{\mathfrak p}
\sum_{\alpha^{(j)}=(\alpha^{(j)}_i)_{1\leq i\leq(\mathfrak p-1)\sigma(\gamma_j)}\in\mathfrak R^{(k-1,\gamma_j)}}\prod_{i=1}^{(\mathfrak p-1)\sigma(\gamma_j)}|\clubsuit_i|^{\alpha^{(j)}_i}\\
&=&\sum_{\alpha=(\alpha_j)_{1\leq j\leq(\mathfrak p-1)\sigma(\gamma)}\in e^{(k,\gamma)}}\prod_{j=1}^{(\mathfrak p-1)\sigma(\gamma)}|(n^{(k)})_j|^{\alpha_j}
\sum_{
\alpha=(\alpha^{(j)})_{1\leq j\leq\mathfrak p}\in\prod_{j=1}^{\mathfrak p}\mathfrak R^{(k-1,\gamma_j)}
}\prod_{j=1}^{\mathfrak p}\prod_{i^{(j)}=1}^{(\mathfrak p-1)\sigma(\gamma_j)}||\clubsuit_{i^{(j)}}|^{\alpha_{i^{(j)}}^{(j)}}\\
&=&\sum_{\alpha=(\alpha_j)_{1\leq j\leq(\mathfrak p-1)\sigma(\gamma)}\in e^{(k,\gamma)}}\prod_{j}|(n^{(k)})_j|^{\alpha_j}
\sum_{
\alpha=(\alpha_j)_{1\leq j\leq(\mathfrak p-1)\sigma(\gamma)}\in\prod_{j=1}^{\mathfrak p}\mathfrak R^{(k-1,\gamma_j)}
}\prod_{j}|(n^{(k)})_j|^{\alpha_j}\\
&=&\sum_{\alpha=(\alpha_j)_{1\leq j\leq(\mathfrak p-1)\sigma(\gamma)}\in\prod_{j=1}^{\mathfrak p}\mathfrak R^{(k-1,\gamma_j)}+e^{(k,\gamma)}}|(n^{(k)})_j|^{\alpha_j}\\
&=&\sum_{\alpha=(\alpha_j)_{1\leq j\leq(\mathfrak p-1)\sigma(\gamma)}\in\mathfrak R^{(k-1,\gamma)}}|(n^{(k)})_j|^{\alpha_j}.
\end{eqnarray*}
Hence
\begin{align*}
|\mathfrak F^{(k,\gamma)}(n^{(k)})|\leq|\omega|^{\ell(\gamma)}\sum_{\alpha=(\alpha_j)_{1\leq j\leq(\mathfrak p-1)\sigma(\gamma)}\in\mathfrak R^{(k-1,\gamma)}}\prod_j|(n^{(k)})_j|^{\alpha_j}.
\end{align*}
The proof of Lemma \ref{245} is completed.
\end{proof}

\begin{rema}
By the definition of $\mathfrak R^{(k,\gamma)}$, we have
\begin{align}\label{ccc}
\alpha\in\mathfrak R^{(k,\gamma)}\Rightarrow |\alpha|=\ell(\gamma),\quad\forall k\geq1.
\end{align}

Indeed, it is clear that \eqref{ccc} is true for $k=1$.

Let $k\geq2$. Assume that it is true for $1\leq k^\prime\leq k-1$. For $k$, the case of $\gamma=0\in\mathfrak T^{(k)}$ is trivial. For $\gamma=(\gamma_j)_{1\leq j\leq\mathfrak p}\in\prod_{j=1}^{\mathfrak p}\mathfrak T^{(k-1)}$, there exist $(\alpha_j)\in\prod_{j=1}^{\mathfrak p}\mathfrak R^{(k-1,\gamma_j)}$ and $\beta\in e^{(k,\gamma)}$ such that $\alpha=(\alpha_j)_{1\leq j\leq\mathfrak p}+\beta$. Hence
\begin{eqnarray*}
|\alpha|&=&\sum_{j=1}^{\mathfrak p}|\alpha_j|+|\beta|\\
&=&\sum_{j=1}^{\mathfrak p}\ell(\gamma_j)+1\\
&=&\ell(\gamma).
\end{eqnarray*}
\end{rema}

\begin{lemm}\label{lmm}
For $0 \leq {\bf t} \leq \min \left\{ \frac{1}{\mathfrak p}, \frac{1}{2^{\mathfrak p+1}}\right\}=\frac{1}{2^{\mathfrak p+1}}$, we have
\begin{align}\label{10}
\sum_{\gamma\in\mathfrak T^{(k)}}\frac{{\bf t}^{\ell(\gamma)}}{\mathfrak D(\gamma)}\sum_{
\alpha=(\alpha_j)_{1\leq j\leq(\mathfrak p-1)\sigma(\gamma)}\in\mathfrak R^{(k,\gamma)}
}\prod_{j=1}^{(\mathfrak p-1)\sigma(\gamma)}\alpha_j!\leq2,\quad\forall k\geq1.
\end{align}
\end{lemm}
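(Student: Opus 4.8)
The plan is to prove \eqref{10} by induction on $k$, treating the branch $\gamma=0$ and the branches $\gamma\in\prod_{j=1}^{\mathfrak p}\mathfrak T^{(k-1)}$ separately, exactly as in the proof of Lemma \ref{lmmg}. Write $S_k$ for the left-hand side of \eqref{10}. For $k=1$ the branch $\gamma=0$ contributes $1$ (since $\ell(0)=0$, $\mathfrak D(0)=1$ and $\mathfrak R^{(1,0)}=\{0\}$), while the branch $\gamma=1$ contributes $\mathfrak p\,\mathbf t$, because $\mathfrak R^{(1,1)}$ consists of the $\mathfrak p=(\mathfrak p-1)\sigma(1)$ unit vectors, each with $\prod_j\alpha_j!=1$; thus $S_1=1+\mathfrak p\,\mathbf t$, which lies below $2$ on the stated range.

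For the inductive step I would insert the recursive definitions $\ell(\gamma)=\sum_{j=1}^{\mathfrak p}\ell(\gamma_j)+1$, $\mathfrak D(\gamma)=\ell(\gamma)\prod_{j=1}^{\mathfrak p}\mathfrak D(\gamma_j)$ and $\mathfrak R^{(k,\gamma)}=\prod_{j=1}^{\mathfrak p}\mathfrak R^{(k-1,\gamma_j)}+e^{(k,\gamma)}$ for $\gamma=(\gamma_1,\dots,\gamma_{\mathfrak p})$. The core computation is the evaluation of $\sum_{\alpha\in\mathfrak R^{(k,\gamma)}}\prod_j\alpha_j!$. Decomposing $\alpha=(\alpha^{(1)},\dots,\alpha^{(\mathfrak p)})+\beta$ with $\alpha^{(j)}\in\mathfrak R^{(k-1,\gamma_j)}$ and $\beta\in e^{(k,\gamma)}$ a single unit vector, adding $\beta$ raises exactly one coordinate by one and hence multiplies $\prod_j\alpha_j!$ by that coordinate plus one. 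Summing over the $(\mathfrak p-1)\sigma(\gamma)$ admissible positions of $\beta$ and using $|\alpha^{(j)}|=\ell(\gamma_j)$ from \eqref{ccc} together with $\sigma=\ell+\frac1{\mathfrak p-1}$ from \eqref{see}, the induced multiplicative factor collapses to
\[
\sum_{j=1}^{\mathfrak p}\bigl(\ell(\gamma_j)+(\mathfrak p-1)\sigma(\gamma_j)\bigr)=\sum_{j=1}^{\mathfrak p}\bigl(\mathfrak p\,\ell(\gamma_j)+1\bigr)=\mathfrak p\,\ell(\gamma),
\]
independently of the individual $\alpha^{(j)}$. Therefore $\sum_{\alpha\in\mathfrak R^{(k,\gamma)}}\prod_j\alpha_j!=\mathfrak p\,\ell(\gamma)\prod_{j=1}^{\mathfrak p}\sum_{\alpha^{(j)}\in\mathfrak R^{(k-1,\gamma_j)}}\prod_i(\alpha^{(j)})_i!$.

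The factor $\ell(\gamma)$ produced here is cancelled exactly by the $\ell(\gamma)$ sitting in $\mathfrak D(\gamma)$, and since $\mathbf t^{\ell(\gamma)}=\mathbf t\prod_j\mathbf t^{\ell(\gamma_j)}$, the entire $\gamma\neq0$ part factorizes as $\mathfrak p\,\mathbf t\prod_{j=1}^{\mathfrak p}\bigl(\sum_{\gamma_j\in\mathfrak T^{(k-1)}}\tfrac{\mathbf t^{\ell(\gamma_j)}}{\mathfrak D(\gamma_j)}\sum_{\alpha^{(j)}\in\mathfrak R^{(k-1,\gamma_j)}}\prod_i(\alpha^{(j)})_i!\bigr)$, i.e.\ one obtains the clean recursion $S_k=1+\mathfrak p\,\mathbf t\,S_{k-1}^{\mathfrak p}$.

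The step I expect to be the main obstacle is closing this recursion within the claimed interval for $\mathbf t$. The naive estimate $S_{k-1}\le2$ only yields $S_k\le1+\mathfrak p\,2^{\mathfrak p}\mathbf t$, which is $\le2$ merely for $\mathbf t\le(\mathfrak p\,2^{\mathfrak p})^{-1}$; for $\mathfrak p\ge3$ this is strictly smaller than the stated $2^{-(\mathfrak p+1)}$, so the plain hypothesis $S_{k-1}\le2$ is not self-reproducing there. The route I would take is to run the induction with the sharper hypothesis $S_k\le\frac{\mathfrak p}{\mathfrak p-1}$, which still gives $S_k\le2$ since $\frac{\mathfrak p}{\mathfrak p-1}\le2$: the map $\Phi(s)=1+\mathfrak p\,\mathbf t\,s^{\mathfrak p}$ has $\frac{\mathfrak p}{\mathfrak p-1}$ as a (tangent) fixed point precisely when $\mathbf t=\frac{(\mathfrak p-1)^{\mathfrak p-1}}{\mathfrak p^{\mathfrak p+1}}$, so the bound $S_k\le\frac{\mathfrak p}{\mathfrak p-1}$ propagates for all $\mathbf t\le\frac{(\mathfrak p-1)^{\mathfrak p-1}}{\mathfrak p^{\mathfrak p+1}}$. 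For $\mathfrak p=2$ this threshold equals $2^{-(\mathfrak p+1)}=\tfrac18$ and recovers the sharp constant of \cite{damanik16}; for $\mathfrak p\ge3$ it is smaller than $2^{-(\mathfrak p+1)}$, so securing the bound $2$ on the full stated interval would require a combinatorial input sharper than the factor $\mathfrak p\,\ell(\gamma)$ derived above. Verifying whether such a sharpening is available, or else tightening the admissible time window accordingly, is the crux of the argument, and I would resolve it before attempting to feed Lemma~\ref{lmm} into the subsequent decay and Cauchy estimates.
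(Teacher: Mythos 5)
Your strategy is the same as the paper's: induct on $k$, split off the branch $\gamma=0$, decompose $\alpha\in\mathfrak R^{(k,\gamma)}$ as $(\alpha^{(1)},\dots,\alpha^{(\mathfrak p)})+\beta$ with $\beta\in e^{(k,\gamma)}$, and use $|\alpha^{(j)}|=\ell(\gamma_j)$ together with $\sigma=\ell+\frac{1}{\mathfrak p-1}$. The difference is at the decisive counting step, and there you are right and the paper is not. In the paper's proof the position index $i_0$ of the added unit vector runs only up to $\sigma(\gamma_{j_0})$, producing the per-block factor $\ell(\gamma_{j_0})+\sigma(\gamma_{j_0})$ and hence the total factor $2\ell(\gamma)-\frac{\mathfrak p-2}{\mathfrak p-1}\leq2\ell(\gamma)$, which after cancellation against $\mathfrak D(\gamma)$ yields the paper's bound $2^{\mathfrak p+1}\mathbf t$ for the $\gamma\neq0$ part. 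But by the paper's own definitions $\alpha^{(j_0)}\in\mathfrak R^{(k-1,\gamma_{j_0})}\subset\mathbb Z^{(\mathfrak p-1)\sigma(\gamma_{j_0})}$ and $e^{(k,\gamma)}$ consists of all $(\mathfrak p-1)\sigma(\gamma)$ unit vectors, so the correct factor is $\sum_{j_0=1}^{\mathfrak p}\bigl(\ell(\gamma_{j_0})+(\mathfrak p-1)\sigma(\gamma_{j_0})\bigr)=\mathfrak p\,\ell(\gamma)$, exactly as you computed. A concrete check at $\mathfrak p=3$, $k=2$, $\gamma=(1,1,1)$: each decomposition concatenates to a $0$--$1$ vector in $\mathbb Z^9$ with three ones, and summing $\prod_i(\cdot+\beta)_i!$ over the nine unit vectors $\beta$ gives $3\cdot2!+6\cdot1!=12=\mathfrak p\,\ell(\gamma)$, whereas the paper's expression evaluates to $2\ell(\gamma)-\tfrac12=7.5$. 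So your recursion $S_k\leq1+\mathfrak p\,\mathbf t\,S_{k-1}^{\mathfrak p}$ is the correct consequence of this decomposition (with $\leq$ rather than your stated $=$, since the sumset $\prod_j\mathfrak R^{(k-1,\gamma_j)}+e^{(k,\gamma)}$ may contain elements with several representations), and the paper's bound does not follow for $\mathfrak p\geq3$.

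Your fixed-point repair is also sound, but your closing comparison contains an arithmetic slip that sells your own argument short. The tangency threshold $\mathbf t_c=\frac{(\mathfrak p-1)^{\mathfrak p-1}}{\mathfrak p^{\mathfrak p+1}}$ is smaller than $2^{-(\mathfrak p+1)}$ only for $\mathfrak p=3$ and $\mathfrak p=4$; for $\mathfrak p=2$ the two coincide, and for every $\mathfrak p\geq5$ the inequality reverses (e.g.\ $\mathfrak p=5$: $\mathbf t_c=256/15625>1/64$; asymptotically $\mathbf t_c\sim(e\mathfrak p^2)^{-1}$ decays only polynomially while $2^{-(\mathfrak p+1)}$ is exponentially small). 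Hence your induction with the hypothesis $S_k\leq\frac{\mathfrak p}{\mathfrak p-1}\leq2$ already proves the lemma on the full stated interval for $\mathfrak p=2$ and all $\mathfrak p\geq5$, not only for $\mathfrak p=2$.

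What remains is $\mathfrak p\in\{3,4\}$, and there the gap is genuine but it is the paper's gap rather than yours: with the corrected factor $\mathfrak p\,\ell(\gamma)$, the best threshold this recursion can deliver is $4/81$ (resp.\ $27/1024$), strictly below the stated $1/16$ (resp.\ $1/32$). On the remaining range neither your argument nor the paper's establishes \eqref{10}; one can either shrink the admissible interval for $\mathbf t$ (equivalently shrink $t_0$ in Theorems \ref{eethm} and \ref{euthm}, which changes only constants), or one must genuinely exploit the multiplicities in the sumset that both you and the paper discard when replacing the sum over the set $\mathfrak R^{(k,\gamma)}$ by the sum over decompositions. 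Your instinct to resolve this point before feeding the lemma into the subsequent decay and Cauchy estimates is the correct one.
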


\begin{proof}
For $k=1$,
\begin{eqnarray*}
&&\sum_{\gamma\in\mathfrak T^{(1)}}\frac{{\bf t}^{\ell(\gamma)}}{\mathfrak D(\gamma)}\sum_{
\alpha=(\alpha_i)_{1\leq i\leq(\mathfrak p-1)\sigma(\gamma)}\in\mathfrak R^{(1,\gamma)}
}\prod_i\alpha_i!\\
&=&\frac{{\bf t}^{\ell(0)}}{\mathfrak D(0)}\sum_{\alpha=(\alpha_i)_{1\leq i\leq(\mathfrak p-1)\sigma(0)}\in\mathfrak R^{(1,0)}}\prod_i\alpha_i!+\frac{{\bf t}^{\ell(1)}}{\mathfrak D(1)}\sum_{
\alpha=(\alpha_i)_{1\leq i\leq(\mathfrak p-1)\sigma(1)}\in\mathfrak R^{(1,1)}
}\prod_i\alpha_i!\\
&=&1+\mathfrak pt\\
&\leq&2,
\end{eqnarray*}
provided that $0 \leq t \leq \frac{1}{\mathfrak p}$.

Let $k\geq2$. Assume that \eqref{10} is true for all $1\leq k^\prime\leq k-1$. For $k$, $\gamma = 0 \in \mathfrak T^{(k)}$, it is the same as above. For $\gamma=(\gamma_j)_{1\leq j\leq\mathfrak p}\in\prod_{j=1}^{\mathfrak p}\mathfrak T^{(k-1)}$, $\alpha=(\alpha^{(j)})_{1\leq j\leq(\mathfrak p-1)\sigma(\gamma)}+\beta$, where $\alpha^{(j)}\in\mathfrak R^{(k-1,\gamma_j)},\beta\in e^{(k,\gamma)}$.
Hence, we can derive that
\begin{eqnarray*}
&&\sum_{\gamma\in\mathfrak T^{(k)}\backslash\{0\}}\frac{{\bf t}^{\ell(\gamma)}}{\mathfrak D(\gamma)}\sum_{
\alpha=(\alpha_i)_{1\leq i\leq2\sigma(\gamma)}\in\mathfrak R^{(k,\gamma)}
}\prod_i\alpha_i!\\
&=&\sum_{\substack{\gamma_j\in\mathfrak T^{(k-1)}\\j=1,\cdots,\mathfrak p}}\frac{{\bf t}}{\sum_{j=1}^\mathfrak p\ell(\gamma_j)+1}\prod_{j=1}^\mathfrak p\frac{{\bf t}^{\ell(\gamma_j)}}{\mathfrak D(\gamma_j)}\sum_{j_0=1}^{\mathfrak p}\sum_{i_0=1}^{\sigma(\gamma_{j_0})}
\left((\alpha^{(j_0)})_{i_0}+1\right)\prod_{j=1}^{\mathfrak p}
\prod_{j=1}^{\sigma(\gamma_j)
}\left((\alpha^{(j)})_i\right)!\\
&=&\sum_{\substack{\gamma_j\in\mathfrak T^{(k-1)}\\j=1,\cdots,\mathfrak p}}\frac{{\bf t}}{\sum_{j=1}^\mathfrak p\ell(\gamma_j)+1}\prod_{j=1}^\mathfrak p\frac{{\bf t}^{\ell(\gamma_j)}}{\mathfrak D(\gamma_j)}\sum_{j_0=1}^{\mathfrak p}\left(\ell(\gamma_{j_0})+\sigma(\gamma_{j_0})\right)
\prod_{j=1}^{\mathfrak p}
\prod_{j=1}^{\sigma(\gamma_j)
}\left((\alpha^{(j)})_i\right)!\\
&=&\sum_{\substack{\gamma_j\in\mathfrak T^{(k-1)}\\j=1,\cdots,\mathfrak p}}\frac{{\bf t}}{\sum_{j=1}^\mathfrak p\ell(\gamma_j)+1}\prod_{j=1}^\mathfrak p\frac{{\bf t}^{\ell(\gamma_j)}}{\mathfrak D(\gamma_j)}\sum_{j_0=1}^{\mathfrak p}\left(2\ell(\gamma_{j_0})+\frac{1}{\mathfrak p-1}\right)\prod_{j=1}^\mathfrak p
\prod_{j=1}^{\sigma(\gamma_j)
}\left((\alpha^{(j)})_i\right)!\\
&=&{\bf t}\sum_{\substack{\gamma_j\in\mathfrak T^{(k-1)}\\j=1,\cdots,\mathfrak p}}\frac{\sum_{j_0=1}^\mathfrak p\left(2\ell(\gamma_{j_0})+\frac{1}{\mathfrak p-1}\right)}
{\sum_{j=1}^3\ell(\gamma_j)+1}\sum_{\substack{\alpha^{(j)}\in\mathfrak R^{(k-1,\gamma_j)}\\j=1,\cdots,\mathfrak p}}\prod_{j=1}^\mathfrak p\frac{{\bf t}^{\ell(\gamma_j)}}{\mathfrak D(\gamma_j)}\prod_{i=1}^{\sigma(\gamma_j)}
(\alpha^{(j)})_i!\\
&=&{\bf t}\sum_{\substack{\gamma_j\in\mathfrak T^{(k-1)}\\j=1,\cdots,\mathfrak p}}\frac{2\left(\sum_{j_0=1}^\mathfrak p\ell(\gamma_{j_0})+1\right)-\frac{\mathfrak p-2}{\mathfrak p-1}}{\sum_{j=1}^\mathfrak p\ell(\gamma_j)+1}\sum_{\substack{\alpha^{(j)}\in\mathfrak R^{(k-1,\gamma_j)}\\j=1,\cdots,\mathfrak p}}\prod_{j=1}^\mathfrak p\frac{{\bf t}^{\ell(\gamma_j)}}{\mathfrak D(\gamma_j)}\prod_{i=1}^{\sigma(\gamma_j)}
(\alpha^{(j)})_i!\\
&\leq&2{\bf t}\sum_{\substack{\gamma_j\in\mathfrak T^{(k-1)}\\j=1,\cdots,\mathfrak p}}\sum_{\substack{\alpha^{(j)}\in\mathfrak R^{(k-1,\gamma_j)}\\j=1,\cdots,\mathfrak p}}\prod_{j=1}^\mathfrak p\frac{{\bf t}^{\ell(\gamma_j)}}{\mathfrak D(\gamma_j)}\prod_{i=1}^{\sigma(\gamma_j)}
(\alpha^{(j)})_i!\\
&=&2{\bf t}\prod_{j=1}^\mathfrak p\sum_{\gamma_j\in\mathfrak T^{(k-1)}}\sum_{\alpha^{(j)}\in\mathfrak R^{(k-1,\gamma_j)}}\frac{{\bf t}^{\ell(\gamma_j)}}{\mathfrak D(\gamma_j)}\prod_{i=1}^{\sigma(\gamma_j)}(\alpha^{(j)})_i!\\
&\leq&2{\bf t}\times2^\mathfrak p\\
&=&2^{\mathfrak p+1}{\bf t}.
\end{eqnarray*}
Hence
\begin{eqnarray*}
&&\sum_{\gamma\in\mathfrak T^{(k)}}\frac{{\bf t}^{\ell(\gamma)}}{\mathfrak D(\gamma)}\sum_{
\alpha=(\alpha_i)_{1\leq i\leq({\mathfrak p}-1)\sigma(\gamma)}\in\mathfrak R^{(k,\gamma)}
}\prod_i\alpha_i!\\
&=&\sum_{\gamma=0\in\mathfrak T^{(k)}}\frac{{\bf t}^{\ell(\gamma)}}{\mathfrak D(\gamma)}\sum_{
\alpha=(\alpha_i)_{1\leq i\leq({\mathfrak p}-1)\sigma(\gamma)\sigma(\gamma)}\in\mathfrak R^{(k,\gamma)}
}\prod_j\alpha_j!\\
&+&\sum_{\gamma\in\mathfrak T^{(k)}\backslash\{0\}}\frac{{\bf t}^{\ell(\gamma)}}{\mathfrak D(\gamma)}\sum_{
\alpha=(\alpha_i)_{1\leq i\leq({\mathfrak p}-1)\sigma(\gamma)}\in\mathfrak R^{(k,\gamma)}
}\prod_i\alpha_i!\\
&\leq&1+2^{\mathfrak p+1}{\bf t}\\
&\leq&2,
\end{eqnarray*}
provided that $0\leq{\bf t}\leq\frac{1}{2^{\mathfrak p+1}}$. Thus, we have seen that if $0\leq{\bf t}\leq\min\left\{\frac{1}{\mathfrak p},\frac{1}{2^{\mathfrak p+1}}\right\}=\frac{1}{2^{\mathfrak p+1}}$, then \eqref{10} holds true. This completes the proof of Lemma \ref{lmm}.
\end{proof}

\begin{theo}\label{expe}
If $0<t\leq \frac{\kappa^{(\mathfrak p-1)\nu+1}}{2^{\mathfrak p+1}6^{(\mathfrak p-1)\nu+1}\mathcal A|\omega|}$, then
\begin{align}
|c_k(t,n)|\leq \Box e^{-\frac{\kappa}{2}|n|}, \quad \forall k\geq1,
\end{align}
where $\Box\triangleq2(6\kappa^{-1})^{\nu}\mathcal A^{\frac{1}{\mathfrak p-1}}$.
\end{theo}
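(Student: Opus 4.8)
The plan is to follow the same assembly strategy as in the case $\mathfrak p=3$ (Proposition~\ref{thm}), now feeding in the general-$\mathfrak p$ versions of the constituent estimates. First I would start from the tree representation \eqref{te}, pass to absolute values, and invoke the three independent bounds for the factors: Lemma~\ref{245} for $|\mathfrak F^{(k,\gamma)}|\leq|\omega|^{\ell(\gamma)}\mathfrak B(n^{(k)})$, Lemma~\ref{lsdfg} for $|\mathfrak I^{(k,\gamma)}|\leq t^{\ell(\gamma)}/\mathfrak D(\gamma)$, and Lemma~\ref{gfhj} for $|\mathfrak C^{(k,\gamma)}|\leq\mathcal A^{\sigma(\gamma)}e^{-\kappa|n^{(k)}|}$. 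The crucial step is then to use the identity $\sigma(\gamma)=\ell(\gamma)+\frac{1}{\mathfrak p-1}$ from \eqref{see} to split $\mathcal A^{\sigma(\gamma)}=\mathcal A^{\frac{1}{\mathfrak p-1}}\mathcal A^{\ell(\gamma)}$, which lets me merge the three $\ell(\gamma)$-powers into a single base and pull the $\gamma$-independent prefactor $\mathcal A^{\frac{1}{\mathfrak p-1}}$ out of the whole sum.

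Next I would bound $\mathfrak B(n^{(k)})$ by the sum over the index set $\mathfrak R^{(k,\gamma)}$ using \eqref{csea}, reducing the $n^{(k)}$-summation to terms of the form $\sum_{\mu(n^{(k)})=n}\prod_i|(n^{(k)})_i|^{\alpha_i}e^{-\kappa|(n^{(k)})_i|}$. Splitting $e^{-\kappa}=e^{-\kappa/2}\cdot e^{-\kappa/2}$ and applying the combinatorial convolution estimate (Lemma~\ref{4}), each such term is bounded by $e^{-\frac{\kappa}{2}|n|}(6\kappa^{-1})^{|\alpha|+(\mathfrak p-1)\sigma(\gamma)\nu}\prod_j\alpha_j!$, where $(\mathfrak p-1)\sigma(\gamma)$ is the number of $\mathbb Z^\nu$-components of $n^{(k)}$ recorded in \eqref{je}. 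Here one half of the exponential absorbs the polynomial weight (producing the factorials and the power of $6\kappa^{-1}$) while the other half yields the target decay $e^{-\frac{\kappa}{2}|n|}$ via $\sum_i|(n^{(k)})_i|\geq|n|$.

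The remaining work is exponent bookkeeping, which is where the relation between $\sigma$ and $\ell$ pays off a second time. Using $|\alpha|=\ell(\gamma)$ from \eqref{ccc} together with $(\mathfrak p-1)\sigma(\gamma)=(\mathfrak p-1)\ell(\gamma)+1$, the exponent $|\alpha|+(\mathfrak p-1)\sigma(\gamma)\nu$ rearranges as $\ell(\gamma)\big((\mathfrak p-1)\nu+1\big)+\nu$, so that $(6\kappa^{-1})^{|\alpha|+(\mathfrak p-1)\sigma(\gamma)\nu}=(6\kappa^{-1})^{\nu}\big((6\kappa^{-1})^{(\mathfrak p-1)\nu+1}\big)^{\ell(\gamma)}$. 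Absorbing the $\ell(\gamma)$-power into the base gives
$$|c_k(t,n)|\leq(6\kappa^{-1})^{\nu}\mathcal A^{\frac{1}{\mathfrak p-1}}e^{-\frac{\kappa}{2}|n|}\sum_{\gamma\in\mathfrak T^{(k)}}\frac{\big(\mathcal A|\omega|(6\kappa^{-1})^{(\mathfrak p-1)\nu+1}t\big)^{\ell(\gamma)}}{\mathfrak D(\gamma)}\sum_{\alpha\in\mathfrak R^{(k,\gamma)}}\prod_j\alpha_j!.$$
Finally I would set ${\bf t}=\mathcal A|\omega|(6\kappa^{-1})^{(\mathfrak p-1)\nu+1}t$ and observe that the hypothesis $t\leq\kappa^{(\mathfrak p-1)\nu+1}/(2^{\mathfrak p+1}6^{(\mathfrak p-1)\nu+1}\mathcal A|\omega|)$ is exactly ${\bf t}\leq 2^{-(\mathfrak p+1)}$, the range in which Lemma~\ref{lmm} bounds the double sum by $2$; this yields $|c_k(t,n)|\leq 2(6\kappa^{-1})^{\nu}\mathcal A^{\frac{1}{\mathfrak p-1}}e^{-\frac{\kappa}{2}|n|}=\Box e^{-\frac{\kappa}{2}|n|}$.

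I expect the main obstacle to be not the assembly itself but making sure the exponent arithmetic is driven entirely by \eqref{see} and \eqref{ccc}: the decoupling of the $\ell(\gamma)$-dependence into a clean geometric base $(6\kappa^{-1})^{(\mathfrak p-1)\nu+1}$ times a fixed prefactor $(6\kappa^{-1})^{\nu}$ is precisely what forces the admissible time interval to have the stated form and what makes Lemma~\ref{lmm} applicable with the sharp threshold $2^{-(\mathfrak p+1)}$. Any mismatch between the power $\frac{1}{\mathfrak p-1}$ appearing in $\sigma$ and the number of lattice components $(\mathfrak p-1)\sigma(\gamma)$ would break the cancellation, so verifying that $(\mathfrak p-1)\sigma(\gamma)\nu$ contributes exactly one free factor of $(6\kappa^{-1})^{\nu}$ is the delicate point.
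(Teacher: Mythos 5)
Your proposal is correct and follows essentially the same route as the paper's proof: the tree representation \eqref{te}, the three factor estimates (Lemmas~\ref{gfhj}, \ref{lsdfg}, \ref{245}), the relation \eqref{see} to extract $\mathcal A^{\frac{1}{\mathfrak p-1}}$, Lemma~\ref{4} together with \eqref{je} and \eqref{ccc} for the exponent bookkeeping, and finally Lemma~\ref{lmm} with ${\bf t}=(6\kappa^{-1})^{(\mathfrak p-1)\nu+1}\mathcal A|\omega|t\leq 2^{-(\mathfrak p+1)}$ to bound the remaining sum by $2$. The paper's proof is just a terse chain of these same inequalities, so your more explicit write-up (including the rearrangement $|\alpha|+(\mathfrak p-1)\sigma(\gamma)\nu=\ell(\gamma)\bigl((\mathfrak p-1)\nu+1\bigr)+\nu$) matches it step for step.
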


\begin{proof}
We have
\begin{eqnarray*}
&&|c_k(t,n)|\\
&\stackrel{\eqref{te}}{\leq}&\sum_{\gamma\in\mathfrak T^{(k)}}\sum_{n^{(k)}\in\mathfrak N^{(k,\gamma)}:\mu(n^{(k)})=n}
|\mathfrak F^{(k,\gamma)}(n^{(k)})||\mathfrak I^{(k,\gamma)}(t,n^{(k)})||\mathfrak C^{(k,\gamma)}(n^{(k)})|\\
&\stackrel{\eqref{csea},\eqref{cse},\eqref{ce},\eqref{see}}{\leq}&\mathcal A^{\frac{1}{\mathfrak p-1}}
\sum_{\gamma\in\mathfrak T^{(k)}}\frac{(\mathcal A|\omega|t)^{\ell(\gamma)}}{\mathfrak D(\gamma)}
\sum_{\alpha=(\alpha_j)_{1\leq j\leq(\mathfrak p-1)\sigma(\gamma)}\in\mathfrak R^{(k,\gamma)}}\\
&&\sum_{\substack{n^{(k)}\in\mathfrak N^{(k,\gamma)}\\ \mu(n^{(k)})=n}}\prod_{j}|(n^{(k)})_j|^{\alpha_j}e^{-\kappa|n^{(k)}|}\\
&\stackrel{\text{Lemma \ref{4}},\eqref{see},\eqref{je},\eqref{ccc}, \text{Lemma \ref{lmm}}}{\leq}&(6\kappa^{-1})^{\nu}\mathcal A^{\frac{1}{\mathfrak p-1}}e^{-\frac{\kappa}{2}|n|}
\sum_{\gamma\in\mathfrak T^{(k)}}\frac{\left((6\kappa^{-1})^{(\mathfrak p-1)\nu+1}\mathcal A|\omega|t\right)^{\ell(\gamma)}}{\mathfrak D(\gamma)}\\
&&\sum_{\alpha=(\alpha_j)_{1\leq j\leq(\mathfrak p-1)\sigma(\gamma)}\in\mathfrak R^{(k,\gamma)}}
\prod_{j=1}^{(\mathfrak p-1)\sigma(\gamma)}\alpha_j!\\
&\leq&\Box e^{-\frac{\kappa}{2}|n|},
\end{eqnarray*}
provided that $0<t\leq\frac{\kappa^{(\mathfrak p-1)\nu+1}}{2^{\mathfrak p+1}6^{(\mathfrak p-1)\nu+1}\mathcal A|\omega|}$. This completes the proof of Theorem \ref{expe}.
\end{proof}

\subsection{Cauchy Sequence}

\begin{lemm}\label{149}
For all $k\geq1$, we have
\begin{eqnarray}\label{cau}
&&|c_k(t,n)-c_{k-1}(t,n)|\nonumber\\
&\leq&\frac{\Box^{(\mathfrak p-1)k+1}(|\omega|t)^k}{k!}\sum_{\substack{(n_j)_{1\leq j\leq(\mathfrak p-1)k+1}\in(\mathbb Z^\nu)^{(\mathfrak p-1)k+1}\\ \sum_{j=1}^{(\mathfrak p-1)k+1}n_j=n}}\sum_{\alpha\in\mathbb B^{(k)}}\prod_{j=1}^{(\mathfrak p-1)k+1}|n_j|^{\alpha_j}e^{-\frac{\kappa}{2}|n_j|}\\
\nonumber&\leq&\Theta e^{-\frac{\kappa}{4}|n|}\frac{\left(\Box^{\mathfrak p-1}(12\kappa^{-1})^{(\mathfrak p-1)\nu+1}|\omega|t\right)^{k}}{k!}\sum_{\alpha\in\mathbb B^{(k)}}\prod_{j}\alpha_j, \quad\text{\rm where}~~\Theta=(12\kappa^{-1})^\nu\Box\\
\nonumber&\leq&\Theta e^{-\frac{\kappa}{4}|n|+\frac{1}{\mathfrak p-1}}\left(2(\mathfrak p-1)e\Box^{\mathfrak p-1}(12\kappa^{-1})^{(\mathfrak p-1)\nu+1}|\omega|t\right)^{k},
\end{eqnarray}
where
\begin{align*}
\mathbb B^{(k)}:=
\begin{cases}
\{(\underbrace{1,0,\cdots,0}_{\mathfrak p}),\cdots,(\underbrace{0,\cdots,0,1}_{\mathfrak p})\},&k=1;\\
\mathbb B^{(k-1)}\times\prod_{j=1}^{\mathfrak p-1}\{0\in\mathbb Z\}+\mathfrak g^{(k)},&k\geq2,
\end{cases}
\end{align*}
and
\[\mathfrak g^{(k)}:=\left\{\alpha\in\mathbb Z^{(\mathfrak p-1)k+1}: \sum_{j}\alpha_j=1,\alpha_j\geq0\right\}.\]
\end{lemm}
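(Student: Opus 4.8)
The plan is to prove the three chained inequalities separately, following the template established in the case $\mathfrak p=3$ (Lemmas~\ref{de}, \ref{dd}, and \ref{1e}), and to bundle them into a single statement. The first inequality is the substantive one; the second and third are then essentially mechanical applications of estimates already available.

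For the first inequality I would argue by induction on $k$. The base case $k=1$ is a direct computation: since $c_1-c_0=-\frac{{\rm i}n\cdot\omega}{\mathfrak p}\int_0^te^{{\rm i}(n\cdot\omega)^3(t-\tau)}\sum_{n_1+\cdots+n_{\mathfrak p}=n}\prod_{j=1}^{\mathfrak p}c_0(\tau,n_j)\,{\rm d}\tau$, I would bound $|c_0(\tau,n_j)|\leq\Box e^{-\frac{\kappa}{2}|n_j|}$ via Theorem~\ref{expe}, use $|n|\leq\sum_j|n_j|$ to produce the degree-one index set $\mathbb B^{(1)}$, and integrate in $\tau$. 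For the inductive step I would expand the product difference $\prod_j c_{k-1}(\tau,n_j)-\prod_j c_{k-2}(\tau,n_j)$ as a telescoping sum of $\mathfrak p$ terms, each isolating one difference factor $c_{k-1}-c_{k-2}$ while estimating the remaining $\mathfrak p-1$ factors by the uniform decay bound; this generalizes the decomposition $\flat^1+\flat^2+\flat^3$ of the $\mathfrak p=3$ case. Applying the inductive hypothesis to the difference factor introduces $(\mathfrak p-1)(k-1)+1$ frequency variables, while the remaining $\mathfrak p-1$ factors contribute one variable each, for the total $(\mathfrak p-1)k+1$; the integral $\int_0^t\tau^{k-1}\,{\rm d}\tau$ converts $1/(k-1)!$ into $1/k!$, and the $\mathfrak p$ identical telescoping terms exactly cancel the prefactor $1/\mathfrak p$.

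The crucial bookkeeping step — which I expect to be the main obstacle — is verifying that the index sets close up correctly: the extra factor $|n|\leq\sum_j|n_j|$ raises the total degree by one, i.e.\ contributes an element of $\mathfrak g^{(k)}$, and this must combine with the inherited set $\mathbb B^{(k-1)}\times\prod_{j=1}^{\mathfrak p-1}\{0\}$ to produce precisely the Minkowski sum defining $\mathbb B^{(k)}$. One has to check that summing over $\mathfrak g^{(k)}$ distributes the single extra degree to each of the $(\mathfrak p-1)k+1$ coordinates in turn, in exact correspondence with the definition. Along the way I would record the generalization of Property~\ref{ggg}: every $\alpha\in\mathbb B^{(k)}$ lies in $\mathbb R^{(\mathfrak p-1)k+1}$ with $|\alpha|=k$, proved by the same one-line induction since $\mathbb B^{(1)}$ has degree one and $\mathfrak g^{(k)}$ adds one at each stage.

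The second inequality then follows by applying the balancing Lemma~\ref{4} to the inner sum $\sum_{\sum_j n_j=n}\prod_j|n_j|^{\alpha_j}e^{-\frac{\kappa}{2}|n_j|}$, which extracts $e^{-\frac{\kappa}{4}|n|}$ and bounds the remainder by $(12\kappa^{-1})^{|\alpha|+((\mathfrak p-1)k+1)\nu}\prod_j\alpha_j!$; substituting $|\alpha|=k$ from the generalized Property~\ref{ggg} and collecting the powers of $\Box$ and $12\kappa^{-1}$ yields exactly $\Theta=(12\kappa^{-1})^\nu\Box$ together with the geometric factor $\big(\Box^{\mathfrak p-1}(12\kappa^{-1})^{(\mathfrak p-1)\nu+1}|\omega|t\big)^k$. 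For the third inequality I would observe that $\mathbb B^{(k)}\subset\mathcal H((\mathfrak p-1)k+1;k)$ and invoke Lemma~\ref{kk} to obtain $\sum_{\alpha\in\mathbb B^{(k)}}\prod_j\alpha_j!\leq(2((\mathfrak p-1)k+1))^k$, then apply Stirling's bound $k!\gtrsim(k/e)^k$ together with $\big(1+\tfrac{1}{(\mathfrak p-1)k}\big)^k\leq e^{1/(\mathfrak p-1)}$ to absorb the polynomial factor into $(2(\mathfrak p-1)e)^k e^{1/(\mathfrak p-1)}$, producing the clean final form. The only genuinely new feature relative to $\mathfrak p=3$ is the $\mathfrak p$-fold telescoping and the correspondingly larger index sets, so essentially all the care is concentrated in the combinatorial indexing of the inductive step.
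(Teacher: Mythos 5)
Your proposal is correct and follows essentially the same route as the paper's proof: induction on $k$ with a direct base case, the $\mathfrak p$-fold telescoping of $\prod_j c_{k-1}-\prod_j c_{k-2}$ with the difference factor handled by the inductive hypothesis and the remaining $\mathfrak p-1$ factors by the exponential decay bound of Theorem~\ref{expe}, the Minkowski-sum bookkeeping $\mathbb B^{(k-1)}\times\prod_{j=1}^{\mathfrak p-1}\{0\}+\mathfrak g^{(k)}=\mathbb B^{(k)}$ together with the generalized Property~\ref{ggg}, and then Lemma~\ref{4}, Lemma~\ref{kk}, and Stirling for the second and third inequalities. The only cosmetic deviation is attributing the bound on $c_0$ to Theorem~\ref{expe} (which is stated for $k\geq1$; for $k=0$ it follows directly from the hypothesis on $c(n)$), which does not affect correctness.
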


\begin{rema}
By the definition of $\mathbb B^{(k)}$, we have
\begin{align}\label{bb}
\alpha\in\mathbb B^{(k)}\Rightarrow \alpha\in\mathbb R^{(\mathfrak p-1)k+1}~~\text{and}~~~|\alpha|=k.
\end{align}

Indeed, it is clear that \eqref{bb} is true for $k=1$.

Let $k\geq2$. Assume that it is true for $1\leq k^\prime\leq k-1$. For $k$, there exist $\alpha^{\prime}\in\mathbb B^{(k-1)}$ and $\beta\in\mathfrak g^{(k)}$ such that $\alpha=(\alpha^{\prime},\underbrace{0,\cdots,0}_{\mathfrak p-1})+\beta$. Hence $\alpha\in\in\mathbb R^{(\mathfrak p-1)k+1}$ and
\begin{eqnarray*}
|\alpha|&=&|\alpha^{\prime}|+\beta\\
&=&(k-1)+1\\
&=&k.
\end{eqnarray*}
\end{rema}

\begin{proof}[Proof of Lemma~\ref{149}.]
For $k=1$, we have
\begin{eqnarray*}
|c_1(t,n)-c_0(t,n)|&\leq&|n||\omega|\int_0^t\sum_{n_1,\cdots,n_{\mathfrak p}\in\mathbb Z^\nu: \sum_{j=1}^{\mathfrak p}n_j=n}\prod_{j=1}^{\mathfrak p}\Box e^{-\frac{\kappa}{2}|n_j|}{\rm d}\tau\\
&\leq&\Box^{\mathfrak p}|\omega|t\sum_{(n_j)_{1\leq j\leq\mathfrak p}\in\mathbb Z^{\mathfrak p\nu}:~ \sum_{j=1}^{\mathfrak p}n_j=n}\sum_{j=1}^{\mathfrak p}|n_j|\prod_{j=1}^{\mathfrak p}e^{-\frac{\kappa}{2}|n_j|}\\
&=&\Box^{\mathfrak p}|\omega|t\sum_{(n_j)_{1\leq j\leq\mathfrak p}\in\mathbb Z^{\mathfrak p\nu}:~ \sum_{j=1}^{\mathfrak p}n_j=n}\sum_{\alpha\in\mathbb B^{(1)}}\prod_{j=1}^{\mathfrak p}|n_j|^{\alpha_j}e^{-\frac{\kappa}{2}|n_j|}.
\end{eqnarray*}
Let $k\geq2$. Assume that \eqref{cau} holds for $1\leq k^\prime\leq k-1$. For $k$, we have
\begin{eqnarray*}
|c_{k}(t,n)-c_{k-1}(t,n)|
&\leq&\frac{|n||\omega|}{\mathfrak p}\int_0^t\sum_{n_1,\cdots,n_{\mathfrak p}\in\mathbb Z^\nu:~ \sum_{j=1}^{\mathfrak p}|n_j|=n}\left|\prod_{j=1}^{\mathfrak p}c_{k-1}(\tau,n_j)-\prod_{j=1}^{\mathfrak p}c_{k-2}(\tau,n_j)\right|{\rm d}\tau\\
&\triangleq&\frac{1}{\mathfrak p}\sum_{j=1}^{\mathfrak p}\flat^j,
\end{eqnarray*}
where
\begin{eqnarray*}
\flat^1&=&|n||\omega|\int_0^t\sum_{n_1,\cdots,n_{\mathfrak p}\in\mathbb Z^\nu:~ \sum_{j=1}^{\mathfrak p}n_j=n}|c_{k-1}(\tau,n_1)-c_{k-2}(\tau,n_1)||c_{k-2}(\tau,n_2)|\cdots|c_{k-2}(\tau,n_{\mathfrak p})|{\rm d}\tau,\\
\flat^2&=&|n||\omega|\int_0^t\sum_{n_1,\cdots,n_{\mathfrak p}\in\mathbb Z^\nu:~ \sum_{j=1}^{\mathfrak p}n_j=n}|c_{k-2}(\tau,n_1)||c_{k-1}(\tau,n_2)-c_{k-2}(\tau,n_2)|\\
&&|c_{k-2}(\tau,n_3)|\cdots|c_{k-2}(\tau,n_{\mathfrak p})|{\rm d}\tau,\\
&\vdots&\\
\flat^{\mathfrak p}&=&|n||\omega|\int_0^t\sum_{n_1,\cdots,n_{\mathfrak p}\in\mathbb Z^\nu:~ \sum_{j=1}^{\mathfrak p}n_j=n}|c_{k-2}(\tau,n_1)|\cdots|c_{k-2}(\tau,n_{\mathfrak p-1})||c_{k-1}(\tau,n_{\mathfrak p})-c_{k-2}(\tau,n_{\mathfrak p})|{\rm d}\tau.
\end{eqnarray*}
For $\flat^1$, we have
\begin{eqnarray*}
\flat^1&\leq&|n||\omega|\int_0^t\sum_{\substack{n_1,\cdots,n_{\mathfrak p}\in\mathbb Z^\nu\\ \sum_{j=1}^{\mathfrak p}n_j=n}}\frac{\Box^{(\mathfrak p-1)(k-1)+1}(|\omega\tau|)^{k-1}}{(k-1)!}
\sum_{\substack{n^{\prime}\in(\mathbb Z^\nu)^{(\mathfrak p-1)(k-1)+1}\\\sum_{j=1}^{(\mathfrak p-1)(k-1)+1}n_j^{\prime}=n_1}}\sum_{\alpha\in\mathbb B^{(k-1)}}\\
&&\prod_{j=1}^{(\mathfrak p-1)(k-1)+1}|n_j^\prime|^{\alpha_j}e^{-\frac{\kappa}{2}|n_j^\prime|}\times\Box e^{-\frac{\kappa}{2}|n_2|}\times\cdots\times\Box e^{-\frac{\kappa}{2}|n_{\mathfrak p}|}{\rm d}\tau\\
&=&\frac{\Box^{(\mathfrak p-1)k+1}(|\omega|t)^k}{k!}|n|\sum_{\substack{n_1,\cdots,n_{\mathfrak p}\in\mathbb Z^\nu\\ \sum_{j=1}^{\mathfrak p}n_j=n}}\sum_{\substack{n^{\prime}\in(\mathbb Z^\nu)^{(\mathfrak p-1)(k-1)+1}\\\sum_{j=1}^{(\mathfrak p-1)(k-1)+1}n_j^{\prime}=n_1}}\sum_{\alpha\in\mathbb B^{(k-1)}}\prod_{j=1}^{(\mathfrak p-1)(k-1)+1}|n_j^\prime|^{\alpha_j}e^{-\frac{\kappa}{2}|n_j^\prime|}\\
&&\prod_{j=1,j\neq 1}^{\mathfrak p}e^{-\frac{\kappa}{2}|n_j|}\\
&=&\frac{\Box^{(\mathfrak p-1)k+1}(|\omega|t)^k}{k!}\sum_{\substack{(n^{\prime},n_2,\cdots,n_{\mathfrak p})\in(\mathbb Z^\nu)^{(\mathfrak p-1)k+1}\\ \sum_{j=1}^{(\mathfrak p-1)(k-1)+1}n_j^{\prime}+\sum_{j=2}^{\mathfrak p}n_j=n}}
\sum_{\alpha\in\mathfrak g^{(k)}}\prod_{j=1}^{(p-1)k+1}|n_j|^{\alpha_j}
\sum_{\alpha\in\mathbb B^{(k-1)}}\\
&&\prod_{j=1}^{(\mathfrak p-1)(k-1)+1}|n_j^\prime|^{\alpha_j}e^{-\frac{\kappa}{2}|n_j^\prime|}\sum_{\alpha\in\prod_{j=1,j\neq1}^{\mathfrak p}\{0\in\mathbb Z\}}\prod_{j=1,j\neq1}^{\mathfrak p}|n_j|^{\alpha_j}e^{-\frac{\kappa}{2}|n_j|}\\
&=&\frac{\Box^{(\mathfrak p-1)k+1}(|\omega|t)^k}{k!}\sum_{\substack{(n_1,n_2,\cdots,n_{(\mathfrak p-1)k+1})\in(\mathbb Z^\nu)^{(\mathfrak p-1)k+1}\\\sum_{j=1}^{(\mathfrak p-1)k+1}n_j=n}}\sum_{\alpha\in\mathbb B^{(k-1)}\times\prod_{j=1}^{\mathfrak p-1}\{0\in\mathbb Z\}+\mathfrak g^{(k)}}\prod_{j=1}^{(\mathfrak p-1)k+1}|n_j|^{\alpha_j}e^{-\frac{\kappa}{2}|n_j|}\\
&=&\frac{\Box^{(\mathfrak p-1)k+1}(|\omega|t)^k}{k!}\sum_{\substack{(n_j)_{1\leq j\leq(\mathfrak p-1)k+1}\in(\mathbb Z^\nu)^{(\mathfrak p-1)k+1}:~\sum_{j=1}^{\mathfrak p-1}n_j=n}}\sum_{\alpha\in\mathbb B^{(k)}}\prod_{j=1}^{(\mathfrak p-1)k+1}|n_j|^{\alpha_j}e^{-\frac{\kappa}{2}|n_j|}.
\end{eqnarray*}
Analogously,
\[\flat^j\leq\frac{\Box^{(\mathfrak p-1)k+1}(|\omega|t)^k}{k!}\sum_{\substack{(n_j)_{1\leq j\leq(\mathfrak p-1)k+1}\in(\mathbb Z^\nu)^{(\mathfrak p-1)k+1}\\\sum_{j=1}^{\mathfrak p-1}n_j=n}}\sum_{\alpha\in\mathbb B^{(k)}}\prod_{j=1}^{(\mathfrak p-1)k+1}|n_j|^{\alpha_j}e^{-\frac{\kappa}{2}|n_j|}, \quad \forall j=1,2,\cdots,\mathfrak p.\]
Hence
\begin{eqnarray*}
&&|c_{k}(t,n)-c_{k-1}(t,n)|\\
&\leq&\frac{\Box^{(\mathfrak p-1)k+1}(|\omega|t)^k}{k!}\sum_{\substack{(n_j)_{1\leq j\leq(\mathfrak p-1)k+1}\in(\mathbb Z^\nu)^{(\mathfrak p-1)k+1}\\\sum_{j=1}^{(\mathfrak p-1)k+1}n_j=n}}\sum_{\alpha\in\mathbb B^{(k)}}\prod_{j=1}^{(\mathfrak p-1)k+1}|n_j|^{\alpha_j}e^{-\frac{\kappa}{2}|n_j|}\\
&=&\frac{\Box^{(\mathfrak p-1)k+1}(|\omega|t)^k}{k!}\sum_{\alpha\in\mathbb B^{(k)}}\sum_{\substack{(n_j)_{1\leq j\leq(\mathfrak p-1)k+1}\in(\mathbb Z^\nu)^{(\mathfrak p-1)k+1}\\\sum_{j=1}^{(\mathfrak p-1)k+1}n_j=n}}\prod_{j=1}^{(\mathfrak p-1)k+1}|n_j|^{\alpha_j}e^{-\frac{\kappa}{2}|n_j|}\\
&\stackrel{(\scriptsize\text{Lemma \ref{4}})}{\leq}&e^{-\frac{\kappa}{4}|n|}\frac{\Box^{(\mathfrak p-1)k+1}(|\omega|t)^k}{k!}\sum_{\alpha\in\mathbb B^{(k)}}(12\kappa^{-1})^{|\alpha|+\left((\mathfrak p-1)k+1\right)\nu}\prod_{j}\alpha_j!\\
&=&\Theta e^{-\frac{\kappa}{4}|n|}\frac{\left(\Box^{\mathfrak p-1}(12\kappa^{-1})^{(\mathfrak p-1)\nu+1}|\omega|t\right)^{k}}{k!}\sum_{\alpha\in\mathbb B^{(k)}}\prod_{j}\alpha_j!\\
&\leq&\Theta e^{-\frac{\kappa}{4}|n|}\frac{\left(\Box^{\mathfrak p-1}(12\kappa^{-1})^{(\mathfrak p-1)\nu+1}|\omega|t\right)^{k}}{k!}\left\{2\left[(\mathfrak p-1)k+1\right]\right\}^{k}\\\
&\stackrel{(\scriptsize{\text{Stirling formula}})}{\leq}&\Theta e^{-\frac{\kappa}{4}|n|}\left(\Box^{\mathfrak p-1}(12\kappa^{-1})^{(\mathfrak p-1)\nu+1}|\omega|t\right)^{k}\left(\frac{e}{k}\right)^k\left\{2\left[(\mathfrak p-1)k+1\right]\right\}^{k}\\
&=&\Theta e^{-\frac{\kappa}{4}|n|}\left(\Box^{\mathfrak p-1}(12\kappa^{-1})^{(\mathfrak p-1)\nu+1}|\omega|t\right)^{k}(2(\mathfrak p-1)e)^k\left(1+\frac{1}{(\mathfrak p-1)k}\right)^k\\
&\leq&\Theta e^{-\frac{\kappa}{4}|n|+\frac{1}{\mathfrak p-1}}\left(2(\mathfrak p-1)e\Box^{\mathfrak p-1}(12\kappa^{-1})^{(\mathfrak p-1)\nu+1}|\omega|t\right)^{k}.
\end{eqnarray*}
This completes the proof Lemma \ref{149}.
\end{proof}

\begin{theo}\label{cthm}
For $0<t<\min\left\{\frac{\kappa^{(\mathfrak p-1)\nu+1}}{2^{\mathfrak p+1}6^{(\mathfrak p-1)\nu+1}\mathcal A|\omega|},\frac{\kappa^{(\mathfrak p-1)\nu+1}}{2(\mathfrak p-1)e\Box^{\mathfrak p-1}12^{(\mathfrak p-1)\nu+1}|\omega|}\right\}\triangleq t_0>0$, and all $k\geq1$, we have
\begin{align}\label{le}
|c_{k+\bigstar}(t,n)-c_{k}(t,n)|\leq\frac{\Theta e^{-\frac{\kappa}{4}|n|+\frac{1}{\mathfrak p-1}}\left\{2(\mathfrak p-1)e\Box^{\mathfrak p-1}(12\kappa^{-1})^{(\mathfrak p-1)\nu+1}|\omega|t\right\}^{k+1}}{1-2(\mathfrak p-1)e\Box^{\mathfrak p-1}(12\kappa^{-1})^{(\mathfrak p-1)\nu+1}|\omega|t}~~(\text{uniformly for}~\bigstar).
\end{align}
\end{theo}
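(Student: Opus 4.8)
The plan is to derive \eqref{le} directly from the per-step contraction estimate already contained in Lemma~\ref{149}, following verbatim the strategy used for $\mathfrak p=3$ in Lemma~\ref{gthm}. To streamline the bookkeeping I would abbreviate
$$
q := 2(\mathfrak p-1)e\Box^{\mathfrak p-1}(12\kappa^{-1})^{(\mathfrak p-1)\nu+1}|\omega|t,
$$
so that the last inequality of Lemma~\ref{149} reads $|c_m(t,n)-c_{m-1}(t,n)|\leq \Theta e^{-\frac{\kappa}{4}|n|+\frac{1}{\mathfrak p-1}}q^{m}$ for every $m\geq1$, where $\Theta=(12\kappa^{-1})^\nu\Box$. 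The entire combinatorial difficulty has by this point been absorbed into Lemma~\ref{149}, so what remains is purely the summation of a geometric series.

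Next, I would invoke the triangle inequality along the telescoping chain $c_{k+\bigstar}-c_k=\sum_{j=1}^{\bigstar}(c_{k+j}-c_{k+j-1})$, apply the per-step bound to each summand, and factor out the common exponential prefactor together with $q^{k}$:
\begin{align*}
|c_{k+\bigstar}(t,n)-c_k(t,n)|
&\leq\sum_{j=1}^{\bigstar}|c_{k+j}(t,n)-c_{k+j-1}(t,n)|\\
&\leq\Theta e^{-\frac{\kappa}{4}|n|+\frac{1}{\mathfrak p-1}}\,q^{k}\sum_{j=1}^{\bigstar}q^{j}.
\end{align*}
I would then dominate the finite sum $\sum_{j=1}^{\bigstar}q^{j}$ by the full convergent series $\sum_{j=1}^{\infty}q^{j}=q/(1-q)$, which produces precisely the right-hand side of \eqref{le} after collecting $q^{k}\cdot q=q^{k+1}$.

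The only point that genuinely needs checking --- and the reason the hypothesis $0<t<t_0$ is imposed --- is that $q<1$, for otherwise neither the geometric series converges nor does the bound decay in $k$. This is exactly guaranteed by the second entry in the minimum defining $t_0$: since $(12\kappa^{-1})^{(\mathfrak p-1)\nu+1}=12^{(\mathfrak p-1)\nu+1}\kappa^{-((\mathfrak p-1)\nu+1)}$, the threshold $t=\kappa^{(\mathfrak p-1)\nu+1}/\bigl(2(\mathfrak p-1)e\Box^{\mathfrak p-1}12^{(\mathfrak p-1)\nu+1}|\omega|\bigr)$ is precisely the value at which $q=1$, so $t<t_0$ forces $q<1$. (The first entry of the minimum is carried along only so that \eqref{le} holds on the same time interval as the exponential-decay bound of Theorem~\ref{expe}.) I would finally emphasize that the resulting estimate is independent of $\bigstar$, which is the whole point: uniformity in $\bigstar$ is exactly what upgrades the per-step decay into the Cauchy property of $\{c_k(t,n)\}$, and letting $k\to\infty$ shows the tail vanishes. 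Thus there is no serious obstacle here; the work is entirely front-loaded into Lemma~\ref{149}, and this theorem is its clean geometric-series corollary.
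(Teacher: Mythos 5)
Your proposal is correct and takes essentially the same route as the paper's proof: telescoping $c_{k+\bigstar}-c_k$ via the triangle inequality, applying the final per-step bound of Lemma~\ref{149} to each increment, factoring out the common prefactor, and dominating the finite geometric sum by the infinite series, with the hypothesis $t<t_0$ ensuring the ratio is strictly less than one. Your explicit observation that the second entry of the minimum defining $t_0$ is precisely the threshold at which the geometric ratio equals $1$ is a useful point the paper leaves implicit, but the two arguments are otherwise identical.
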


\begin{proof}
By Lemma \ref{149}, we have
\begin{eqnarray*}
&&|c_{k+\bigstar}(t,n)-c_{k}(t,n)|\\
&\leq&\sum_{j=1}^{\bigstar}|c_{k+j}(t,n)-c_{k+j-1}(t,n)|\\
&\leq&\Theta e^{-\frac{\kappa}{4}|n|+\frac{1}{\mathfrak p-1}}\sum_{j=1}^{\bigstar}\left(2(\mathfrak p-1)e\Box^{\mathfrak p-1}(12\kappa^{-1})^{(\mathfrak p-1)\nu+1}|\omega|t\right)^{k+j}\\
&\leq&\Theta e^{-\frac{\kappa}{4}|n|+\frac{1}{\mathfrak p-1}}\left(2(\mathfrak p-1)e\Box^{\mathfrak p-1}(12\kappa^{-1})^{(\mathfrak p-1)\nu+1}|\omega|t\right)^{k}\\
&&\sum_{j=1}^{\bigstar}
\left(2(\mathfrak p-1)e\Box^{\mathfrak p-1}(12\kappa^{-1})^{(\mathfrak p-1)\nu+1}|\omega|t\right)^{j}\\
&\leq&\Theta e^{-\frac{\kappa}{4}|n|+\frac{1}{\mathfrak p-1}}\left(2(\mathfrak p-1)e\Box^{\mathfrak p-1}(12\kappa^{-1})^{(\mathfrak p-1)\nu+1}|\omega|t\right)^{k+j}\\
&&\sum_{j=1}^{+\infty}
\left(2(\mathfrak p-1)e\Box^{\mathfrak p-1}(12\kappa^{-1})^{(\mathfrak p-1)\nu+1}|\omega|t\right)^{k+j}\\
&=&\frac{\Theta e^{-\frac{\kappa}{4}|n|+\frac{1}{\mathfrak p-1}}}{1-2(\mathfrak p-1)e\Box^{\mathfrak p-1}(12\kappa^{-1})^{(\mathfrak p-1)\nu+1}|\omega|t}\left(2(\mathfrak p-1)e\Box^{\mathfrak p-1}(12\kappa^{-1})^{(\mathfrak p-1)\nu+1}|\omega|t\right)^{k+1},
\end{eqnarray*}
provided that $0<t<t_0$. The proof of Theorem \ref{cthm} is completed.
\end{proof}

\section{Proof of Theorem \ref{eethm}}

\begin{proof}[Proof of Theorem \ref{eethm}.]
By Theorem \ref{cthm}, we know that the Picard sequence $\{c_k(t,n)\}$ is fundamental (Cauchy sequence). Hence there exists a limit function
$$
c^\dag(t,n):=\lim_{k\rightarrow\infty}c_k(t,n), \quad0\leq t<t_0,  n\in\mathbb Z^\nu.
$$
Letting $\bigstar\rightarrow\infty$ in \eqref{le}, we have
\[
|c^\dag(t,n)-c_k(t,n)|\leq\frac{\Theta e^{-\frac{\kappa}{4}|n|+\frac{1}{\mathfrak p-1}}\left(2(\mathfrak p-1)e\Box^{\mathfrak p-1}(12\kappa^{-1})^{(\mathfrak p-1)\nu+1}|\omega|t\right)^{k+1}}{1-2(\mathfrak p-1)e\Box^{\mathfrak p-1}(12\kappa^{-1})^{(\mathfrak p-1)\nu+1}|\omega|t}\rightarrow0\quad\text{as}\quad k\rightarrow+\infty.
\]
By Theorem \ref{expe}, we have
\begin{eqnarray*}
|c^\dag(t,n)|&\leq&|c^\dag(t,n)-c_k(t,n)|+|c_k(t,n)|\\
&\leq&\Box e^{-\frac{\kappa}{2}|n|}.
\end{eqnarray*}
Letting $k\rightarrow+\infty$ in \eqref{ppp}, one can obtain that
\[c^\dag(t,n)=c_0(t,n)-\frac{{\rm i}n\cdot\omega}{\mathfrak p}\int_{0}^{t}e^{{\rm i}(n\cdot\omega)^3(t-\tau)}(c^{\dag})^{\ast\mathfrak p}(\tau,n){\rm d}\tau.\]
Define the spatially $\omega$-quasi-periodic function
\[
u^\dag(t,x):=\sum_{n\in\mathbb Z^\nu}c^\dag(t,n)e^{{\rm i}(n\cdot\omega)x}, \quad 0\leq t<t_0, x\in\mathbb R.
\]
Obviously, $u^\dag$ satisfies $\mathfrak p$-gKdV \eqref{ppkdv} with spatially quasi-periodic initial data \eqref{iei}.
\end{proof}

\section{Proof of Theorem \ref{euthm}}

\begin{lemm}\label{k}
For all $k\geq1$, we have
\begin{eqnarray}
&&|c(t,n)-b(t,n)|\nonumber\\
&\leq&\frac{\Box^{(\mathfrak p-1)k+1}(|\omega|t)^k}{k!}\sum_{\substack{(n_j)_{1\leq j\leq(\mathfrak p-1)k+1})\in(\mathbb Z^\nu)^{(\mathfrak p-1)k+1}\\ \sum_jn_j=n}}\sum_{\alpha\in\mathbb B^{(k)}}
\prod_{j}|n_j|^{\alpha_j}e^{-\rho|n_j|}\label{h}\\
\nonumber&\leq&(12\rho^{-1})^\nu\Box e^{-\frac{\rho}{2}|n|+\frac{1}{\mathfrak p-1}}\left(2(\mathfrak p-1)e\Box^2(12\rho^{-1})^{(\mathfrak p-1)\nu+1}|\omega|\right)^k.
\end{eqnarray}
\end{lemm}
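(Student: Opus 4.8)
The plan is to mirror the two-step argument carried out for $\mathfrak p=3$ in Lemmas~\ref{ueee} and~\ref{dh}, now using the general-$\mathfrak p$ combinatorial machinery ($\mathbb B^{(k)}$, $\mathfrak g^{(k)}$ and the index-size relation \eqref{bb}) already assembled in Lemma~\ref{149}. Since $c$ and $b$ both satisfy the nonlinear integral equation in the hypotheses of Theorem~\ref{euthm} and share the same initial data $c(0,n)=b(0,n)$, the free-evolution term cancels and
\[
c(t,n)-b(t,n)=-\frac{{\rm i}n\cdot\omega}{\mathfrak p}\int_0^t e^{{\rm i}(n\cdot\omega)^3(t-\tau)}\sum_{\sum_{j=1}^{\mathfrak p}n_j=n}\Bigl(\prod_{j=1}^{\mathfrak p}c(\tau,n_j)-\prod_{j=1}^{\mathfrak p}b(\tau,n_j)\Bigr)\,{\rm d}\tau .
\]
First I would apply the telescoping identity $\prod_{j=1}^{\mathfrak p}c(\tau,n_j)-\prod_{j=1}^{\mathfrak p}b(\tau,n_j)=\sum_{i=1}^{\mathfrak p}\bigl(\prod_{j<i}b(\tau,n_j)\bigr)\bigl(c(\tau,n_i)-b(\tau,n_i)\bigr)\bigl(\prod_{j>i}c(\tau,n_j)\bigr)$, which splits the right-hand side into $\mathfrak p$ pieces $\psi_1,\dots,\psi_{\mathfrak p}$, in exact analogy with the pieces $\flat^1,\dots,\flat^{\mathfrak p}$ of Lemma~\ref{149}.

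The first inequality \eqref{h} is then proved by induction on $k$. For the base case $k=1$ one uses only the uniform bounds $|c(t,n)|,|b(t,n)|\le\Box e^{-\rho|n|}$, producing the factor $\Box^{\mathfrak p}|\omega|t$ and the index set $\mathbb B^{(1)}$. For the inductive step I would, in each $\psi_i$, feed the inductive hypothesis into the single difference factor $|c(\tau,n_i)-b(\tau,n_i)|$ and apply the uniform exponential bound to the remaining $\mathfrak p-1$ factors; integrating the resulting $\tau^{k-1}$ against the kernel yields $t^{k}/k!$ at the cost of one extra power of $|\omega|$ and an extra factor $|n|\le\sum_j|n_j|$. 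Reindexing the nested convolution variables $(n',n_2,\dots,n_{\mathfrak p})$ as a single tuple in $(\mathbb Z^\nu)^{(\mathfrak p-1)k+1}$ and absorbing the extra $\sum_j|n_j|$ through the recursion $\mathbb B^{(k)}=\mathbb B^{(k-1)}\times\prod_{j=1}^{\mathfrak p-1}\{0\}+\mathfrak g^{(k)}$ turns each $\psi_i$ into exactly one copy of the target right-hand side; the $\mathfrak p$ identical contributions cancel the prefactor $1/\mathfrak p$. This is the verbatim $\rho$-analogue of the $\flat^1$ computation, so the remaining $\psi_i$ follow by the same manipulation.

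For the second inequality I would reorder the summation so that $\sum_{\alpha\in\mathbb B^{(k)}}$ is outermost, split $e^{-\rho|n_j|}=e^{-\frac{\rho}{2}|n_j|}e^{-\frac{\rho}{2}|n_j|}$ and use $\sum_j|n_j|\ge|\sum_j n_j|=|n|$ to extract $e^{-\frac{\rho}{2}|n|}$. Applying Lemma~\ref{4} (with $\kappa$ replaced by $\rho$) to the remaining convolution sum bounds it by $(12\rho^{-1})^{|\alpha|+((\mathfrak p-1)k+1)\nu}\prod_j\alpha_j!$; invoking $|\alpha|=k$ and $\alpha\in\mathbb R^{(\mathfrak p-1)k+1}$ from \eqref{bb} fixes these exponents. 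The factorial sum is then controlled by Lemma~\ref{kk}: since $\mathbb B^{(k)}\subset\mathcal H((\mathfrak p-1)k+1;k)$, one has $\sum_{\alpha\in\mathbb B^{(k)}}\prod_j\alpha_j!\le(2N)^{k}$ with $N=(\mathfrak p-1)k+1$. Finally the Stirling estimate $k!\gtrsim(k/e)^k$ together with $\bigl(1+\tfrac{1}{(\mathfrak p-1)k}\bigr)^{k}\le e^{1/(\mathfrak p-1)}$ collapses everything into the stated geometric bound, with the clean constant $2(\mathfrak p-1)e$ arising from $2N/k\le 2(\mathfrak p-1)e\cdot(1+\tfrac{1}{(\mathfrak p-1)k})$. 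I would reconcile this with the case $\mathfrak p=3$ (Lemma~\ref{dh}): the growth constant should read $\Box^{\mathfrak p-1}$ rather than $\Box^{2}$ and should retain the factor $t$, consistent with $\Box^{(\mathfrak p-1)k+1}=\Box\cdot(\Box^{\mathfrak p-1})^k$.

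The main obstacle is not analytic but bookkeeping: verifying that the telescoping decomposition together with the reindexing of the convolution variables exactly reproduces the recursion defining $\mathbb B^{(k)}$, so that all $\mathfrak p$ telescoping terms yield the same bound uniformly and the $1/\mathfrak p$ prefactor is cancelled. Once this is confirmed, the passage from \eqref{h} to the final exponential estimate is a routine application of Lemma~\ref{4}, \eqref{bb}, Lemma~\ref{kk} and Stirling, exactly as in the $\mathfrak p=3$ case.
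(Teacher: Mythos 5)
Your proposal follows essentially the same route as the paper's own proof: the telescoping decomposition into $\psi_1,\dots,\psi_{\mathfrak p}$, induction on $k$ feeding the hypothesis into the single difference factor, reindexing via the recursion for $\mathbb B^{(k)}$, and then Lemma~\ref{4}, the relation \eqref{bb}, Lemma~\ref{kk} and Stirling for the second inequality. Your reconciliation remark is also correct and worth noting: the paper's displayed statement contains typos (the factor should be $\Box^{\mathfrak p-1}$ rather than $\Box^{2}$, and the factor $t$ is missing inside the $k$-th power), and the paper's own derivation indeed ends with $\left(2(\mathfrak p-1)e\Box^{\mathfrak p-1}(12\rho^{-1})^{(\mathfrak p-1)\nu+1}|\omega|t\right)^k$, exactly as you predict.
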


\begin{proof}
Since $c(0,n)=b(0,n)$, we have
\begin{eqnarray*}
|c(t,n)-b(t,n)|&\leq&\frac{|n||\omega|}{\mathfrak p}\int_0^t\sum_{n_1,\cdots,n_{\mathfrak p}\in\mathbb Z^\nu:~\sum_{j=1}^{\mathfrak p}n_j=n}|\prod_{j=1}^{\mathfrak p}c(\tau,n_j)-\prod_{j=1}^{n}b(\tau,n_j)|{\rm d}\tau\\
&\leq&\Box^{\mathfrak p}|n||\omega|t\sum_{n_1,\cdots,n_{\mathfrak p}\in\mathbb Z^\nu:~\sum_{j=1}^{\mathfrak p}n_j=n}\prod_{j=1}^{\mathfrak p} e^{-\rho|n_j|}\\
&\leq&\Box^{\mathfrak p}|\omega|t\sum_{(n_1,\cdots,n_{\mathfrak p})\in\mathbb Z^{\mathfrak p\nu}:~\sum_{j=1}^{\mathfrak p}n_j=n}\sum_{\alpha\in\mathbb B^{(1)}}\prod_{j=1}^{\mathfrak p}|n_j|^{\alpha_j}e^{-\rho|n_j|}.
\end{eqnarray*}
Let $k\geq2$. Assume that \eqref{h} holds for $1\leq k^\prime\leq k-1$. For $k$, we have
\begin{eqnarray*}
|c(t,n)-b(t,n)|&\leq&\frac{|n||\omega|}{\mathfrak p}\int_0^t\sum_{n_1,\cdots,n_{\mathfrak p}\in\mathbb Z^\nu: \sum_{j=1}^{\mathfrak p}n_j=n}|\prod_{j=1}^{\mathfrak p}c(\tau,n_j)-\prod_{j=1}^{\mathfrak p}b(\tau,n_j)|{\rm d}\tau\\
&\triangleq&\frac{1}{\mathfrak p}\sum_{j=1}^{\mathfrak p}\psi_j,
\end{eqnarray*}
where
\begin{eqnarray*}
\psi_1&=&|n||\omega|\int_0^t\sum_{n_1,\cdots,n_{\mathfrak p}\in\mathbb Z^\nu: \sum_{j=1}^{\mathfrak p}n_j=n}|c(\tau,n_1)-b(\tau,n_1)||b(\tau,n_2)|\cdots|b(\tau,n_{\mathfrak p})|{\rm d}\tau,\\
\psi_2&=&|n||\omega|\int_0^t\sum_{n_1,\cdots,n_{\mathfrak p}\in\mathbb Z^\nu: \sum_{j=1}^{\mathfrak p}n_j=n}|b(\tau,n_1)||c(\tau,n_2)-b(\tau,n_2)||b(\tau,n_3)|\cdots|b(\tau,n_{\mathfrak p})|{\rm d}\tau,\\
&\vdots&\\
\psi_{\mathfrak p}&=&|n||\omega|\int_0^t\sum_{n_1,\cdots,n_{\mathfrak p}\in\mathbb Z^\nu: \sum_{j=1}^{\mathfrak p}n_j=n}|b(\tau,n_1)|\cdots|b(\tau,n_{\mathfrak p-1})||c(\tau,n_{\mathfrak p})-b(\tau,n_{\mathfrak p})|{\rm d}\tau.
\end{eqnarray*}
For $\psi_1$, we have
\begin{eqnarray*}
\psi_1&\leq&|n||\omega|\int_0^t\sum_{\substack{n_1,\cdots,n_{\mathfrak p}\in\mathbb Z^\nu\\ \sum_{j=1}^{\mathfrak p}n_j=n}}\frac{\Box^{(\mathfrak p-1)(k-1)+1}(|\omega\tau|)^{k-1}}{(k-1)!}
\sum_{\substack{n^{\prime}\in(\mathbb Z^\nu)^{(\mathfrak p-1)(k-1)+1}\\\sum_{j=1}^{(\mathfrak p-1)(k-1)+1}n_j^{\prime}=n_1}}\sum_{\alpha\in\mathbb B^{(k-1)}}\\
&&\prod_{j=1}^{(\mathfrak p-1)(k-1)+1}|n_j^\prime|^{\alpha_j}e^{-\frac{\kappa}{2}|n_j^\prime|}\times\Box e^{-\rho|n_2|}\times\cdots\times\Box e^{-\rho|n_{\mathfrak p}|}{\rm d}\tau\\
&=&\frac{\Box^{(\mathfrak p-1)k+1}(|\omega|t)^k}{k!}|n|\sum_{\substack{n_1,\cdots,n_{\mathfrak p}\in\mathbb Z^\nu\\ \sum_{j=1}^{\mathfrak p}n_j=n}}\sum_{\substack{n^{\prime}\in(\mathbb Z^\nu)^{(\mathfrak p-1)(k-1)+1}\\\sum_{j=1}^{(\mathfrak p-1)(k-1)+1}n_j^{\prime}=n_1}}\sum_{\alpha\in\mathbb B^{(k-1)}}\prod_{j=1}^{(\mathfrak p-1)(k-1)+1}|n_j^\prime|^{\alpha_j}e^{-\rho|n_j^\prime|}\\
&&\prod_{j=1,j\neq 1}^{\mathfrak p}e^{-\rho|n_j|}\\
&=&\frac{\Box^{(\mathfrak p-1)k+1}(|\omega|t)^k}{k!}\sum_{\substack{(n^{\prime},n_2,\cdots,n_{\mathfrak p})\in(\mathbb Z^\nu)^{(\mathfrak p-1)k+1}\\ \sum_{j=1}^{(\mathfrak p-1)(k-1)+1}n_j^{\prime}+\sum_{j=2}^{\mathfrak p}n_j=n}}
\sum_{\alpha\in\mathfrak g^{(k)}}\prod_{j=1}^{(p-1)k+1}|n_j|^{\alpha_j}
\sum_{\alpha\in\mathbb B^{(k-1)}}\\
&&\prod_{j=1}^{(\mathfrak p-1)(k-1)+1}|n_j^\prime|^{\alpha_j}e^{-\rho|n_j^\prime|}\sum_{\alpha\in\prod_{j=1,j\neq1}^{\mathfrak p}\{0\in\mathbb Z\}}\prod_{j=1,j\neq1}^{\mathfrak p}|n_j|^{\alpha_j}e^{-\rho|n_j|}\\
&=&\frac{\Box^{(\mathfrak p-1)k+1}(|\omega|t)^k}{k!}\sum_{\substack{(n_1,n_2,\cdots,n_{(\mathfrak p-1)k+1})\in(\mathbb Z^\nu)^{(\mathfrak p-1)k+1}\\\sum_{j=1}^{(\mathfrak p-1)k+1}n_j=n}}\sum_{\alpha\in\mathbb B^{(k-1)}\times\prod_{j=1}^{\mathfrak p-1}\{0\in\mathbb Z\}+\mathfrak g^{(k)}}\prod_{j=1}^{(\mathfrak p-1)k+1}|n_j|^{\alpha_j}e^{-\rho|n_j|}\\
&=&\frac{\Box^{(\mathfrak p-1)k+1}(|\omega|t)^k}{k!}\sum_{\substack{(n_j)_{1\leq j\leq(\mathfrak p-1)k+1}\in(\mathbb Z^\nu)^{(\mathfrak p-1)k+1}:\sum_{j=1}^{(\mathfrak p-1)k+1}n_j=n}}\sum_{\alpha\in\mathbb B^{(k)}}\prod_{j=1}^{(\mathfrak p-1)k+1}|n_j|^{\alpha_j}e^{-\rho|n_j|}.
\end{eqnarray*}
Similarly,
\[\psi_j\leq\frac{\Box^{(\mathfrak p-1)k+1}(|\omega|t)^k}{k!}\sum_{\substack{(n_j)_{1\leq j\leq(\mathfrak p-1)k+1}\in(\mathbb Z^\nu)^{(\mathfrak p-1)k+1}\\ \sum_{j=1}^{(\mathfrak p-1)k+1}n_j=n}}\sum_{\alpha\in\mathbb B^{(k)}}\prod_{j=1}^{(\mathfrak p-1)k+1}|n_j|^{\alpha_j}e^{-\rho|n_j|},\quad\forall j=1,\cdots,\mathfrak p.\]
Hence
\begin{eqnarray*}
&&|c(t,n)-b(t,n)|\\
&\leq&\frac{\Box^{(\mathfrak p-1)k+1}(|\omega|t)^k}{k!}\sum_{\substack{(n_j)_{1\leq j\leq(\mathfrak p-1)k+1}\in(\mathbb Z^\nu)^{(\mathfrak p-1)k+1}\\ \sum_{j=1}^{(\mathfrak p-1)k+1}n_j=n}}\sum_{\alpha\in\mathbb B^{(k)}}\prod_{j=1}^{(\mathfrak p-1)k+1}|n_j|^{\alpha_j}e^{-\rho|n_j|}\\
&\leq&\frac{\Box^{(\mathfrak p-1)k+1}(|\omega|t)^k}{k!}e^{-\frac{\rho}{2}|n|}\sum_{\alpha\in\mathbb B^{(k)}}\sum_{\substack{(n_j)_{1\leq j\leq(\mathfrak p-1)k+1}\in(\mathbb Z^\nu)^{(\mathfrak p-1)k+1}\\ \sum_{j=1}^{(\mathfrak p-1)k+1}n_j=n}}\prod_{j=1}^{(\mathfrak p-1)k+1}|n_j|^{\alpha_j}e^{-\frac{\rho}{2}|n_j|}\\
&\leq&(12\rho^{-1})^\nu\Box e^{-\frac{\rho}{2}|n|}\frac{\left(\Box^{\mathfrak p-1}(12\rho^{-1})^{(\mathfrak p-1)\nu+1}|\omega|t\right)^k}{k!}\sum_{\alpha\in\mathbb B^{(k)}}\prod_j\alpha_j!\quad\text{\scriptsize by Lemma \ref{4}}\\
&\leq&(12\rho^{-1})^\nu\Box e^{-\frac{\rho}{2}|n|}\frac{\left(\Box^{\mathfrak p-1}(12\rho^{-1})^{(\mathfrak p-1)\nu+1}|\omega|t\right)^k}{k!}\left\{2\left[(\mathfrak p-1)k+1\right]\right\}^{k}\\
&\leq&(12\rho^{-1})^\nu\Box e^{-\frac{\rho}{2}|n|}\left(\Box^{\mathfrak p-1}(12\rho^{-1})^{(\mathfrak p-1)\nu+1}|\omega|t\right)^k\left(\frac{e}{k}\right)^k\left\{2\left[(\mathfrak p-1)k+1\right]\right\}^{k}\quad\scriptsize{\text{by the Stirling formula}}\\
&=&(12\rho^{-1})^\nu\Box e^{-\frac{\rho}{2}|n|}\left(\Box^{\mathfrak p-1}(12\rho^{-1})^{(\mathfrak p-1)\nu+1}|\omega|t\right)^k(2(\mathfrak p-1)e)^k\left(1+\frac{1}{(\mathfrak p-1)k}\right)^k\\
&\leq&(12\rho^{-1})^\nu\Box e^{-\frac{\rho}{2}|n|+\frac{1}{\mathfrak p-1}}\left(2(\mathfrak p-1)e\Box^{\mathfrak p-1}(12\rho^{-1})^{(\mathfrak p-1)\nu+1}|\omega|t\right)^k.
\end{eqnarray*}
This completes the proof of Lemma \ref{k}.
\end{proof}

\begin{proof}[Proof of Theorem \ref{euthm}.]
By Lemma \ref{k} and the arbitrariness of $k$, we know that if
$$
0 < t < \frac{1}{2(\mathfrak p-1)e\Box^2(12\rho^{-1})^{(\mathfrak p-1)\nu+1}|\omega|},
$$
then $|c(t,n)-b(t,n)|\rightarrow0$ as $k\rightarrow\infty.$
Hence for all $0\leq t<\min\left\{t_0,\frac{1}{2(\mathfrak p-1)e\Box^2(12\rho^{-1})^{(\mathfrak p-1)\nu+1}|\omega|}\right\}$ and $x\in\mathbb R$, one has $c(t,n)\equiv b(t,n)$ that is
\[u_1(t,x)\equiv u_2(t,x), \quad \forall0\leq t<\min\left\{t_0,\frac{1}{4e\Box^2(12\rho^{-1})^{2\nu+1}|\omega|}\right\}~\text{and}~x\in\mathbb R.\]
This completes the proof of Theorem \ref{euthm}.
\end{proof}

\begin{rema}
This paper studies the existence and uniqueness of spatially quasi-periodic solutions to the generalized KdV equation using a combinatorial method. It should be emphasized that it is not only technical but also can be developed into a systematic theory on dealing with the problem of higher dimensional discrete convolution operation, generated by the nonlinearity and quasi-periodic Fourier series.
\end{rema}

\part{Appendix}

In this appendix we discuss some combinatorial inequalities that are needed in the main body of the paper.

\section{Some Combinatorial Inequalities}

\begin{lemm}\label{1}
For $0 < \kappa \leq1$, we have
$$
\sum_{m\in\mathbb Z}e^{-\kappa|m|}\leq 3\kappa^{-1}.
$$
\end{lemm}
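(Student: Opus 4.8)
The plan is to prove the elementary geometric-series bound
$$
\sum_{m\in\mathbb Z}e^{-\kappa|m|}\leq 3\kappa^{-1}, \quad 0<\kappa\leq 1.
$$
First I would split the sum over $\mathbb Z$ into the term at $m=0$ and two symmetric tails, writing
$$
\sum_{m\in\mathbb Z}e^{-\kappa|m|}=1+2\sum_{m=1}^{\infty}e^{-\kappa m}.
$$
The remaining sum is a convergent geometric series with ratio $e^{-\kappa}<1$, so
$$
\sum_{m=1}^{\infty}e^{-\kappa m}=\frac{e^{-\kappa}}{1-e^{-\kappa}}=\frac{1}{e^{\kappa}-1},
$$
and therefore
$$
\sum_{m\in\mathbb Z}e^{-\kappa|m|}=1+\frac{2}{e^{\kappa}-1}.
$$

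The heart of the argument is then a clean lower bound on $e^{\kappa}-1$. The crude inequality $e^{\kappa}-1\geq\kappa$ only yields a bound of order $2\kappa^{-1}+1$, which is not quite in the stated form for all $\kappa\in(0,1]$, so I would instead use the sharper elementary estimate
$$
e^{\kappa}-1\geq \kappa\Bigl(1+\tfrac{\kappa}{2}\Bigr)^{-1}\cdot(\text{something}),
$$
or more simply exploit that on the finite interval $(0,1]$ one has $e^{\kappa}-1\geq\kappa$ together with the convexity bound $e^{\kappa}-1\leq \kappa e^{\kappa}\leq \kappa e$. Combining $1+\frac{2}{e^{\kappa}-1}\leq 1+\frac{2}{\kappa}$ and then absorbing the additive $1$ into the $\kappa^{-1}$ term using $\kappa\leq 1$ (so that $1\leq \kappa^{-1}$) gives
$$
1+\frac{2}{\kappa}\leq \frac{1}{\kappa}+\frac{2}{\kappa}=\frac{3}{\kappa}=3\kappa^{-1},
$$
which is exactly the claimed inequality.

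The only genuinely delicate point is the passage from $e^{\kappa}-1\geq\kappa$, which is the standard consequence of the Taylor expansion $e^{\kappa}=1+\kappa+\tfrac{\kappa^2}{2}+\cdots\geq 1+\kappa$ valid for all $\kappa\geq 0$. With that in hand everything is routine. I would record it as: for $\kappa>0$, $e^{\kappa}\geq 1+\kappa$, hence $\frac{2}{e^{\kappa}-1}\leq\frac{2}{\kappa}$, and then use $\kappa\leq 1$ to write $1\leq\kappa^{-1}$, so the total is at most $3\kappa^{-1}$. I do not expect any real obstacle here; the main thing to be careful about is precisely the use of the hypothesis $\kappa\leq 1$ to convert the stray additive constant $1$ into a multiple of $\kappa^{-1}$, since without that restriction the bound $3\kappa^{-1}$ would fail for large $\kappa$.
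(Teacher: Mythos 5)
Your proof is correct and follows essentially the same route as the paper: both evaluate the two-sided geometric series in closed form (the paper writes it as $\frac{1+e^{-\kappa}}{1-e^{-\kappa}}$, you as $1+\frac{2}{e^{\kappa}-1}$, which are equal) and then bound it by $3\kappa^{-1}$. Your final step, using $e^{\kappa}-1\geq\kappa$ together with $1\leq\kappa^{-1}$ for $\kappa\leq 1$, in fact supplies the elementary justification of the last inequality that the paper asserts without detail.
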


\begin{proof}
It is easy to see that
\begin{align*}
  \sum_{m\in\mathbb Z}e^{-\kappa|m|}&=2\sum_{m\in\mathbb N}e^{-\kappa m}-1\\
  &=2\times\frac{1-(e^{-\kappa})^{+\infty}}{1-e^{-\kappa}}-1\\
  &=\frac{1+e^{-\kappa}}{1-e^{-\kappa}}\\
&<3\kappa^{-1},
\end{align*}
provided that $0<\kappa\leq1$. The  proof of Lemma \ref{1} is completed.
\end{proof}

\begin{lemm}\label{2}
For $0 < \kappa \leq 1$ and $z \geq 0$, we have
$$
z^\alpha\leq \alpha!(2\kappa^{-1})^\alpha e^{\frac{\kappa}{2}z},\quad\forall \alpha\in\mathbb N.
$$
\end{lemm}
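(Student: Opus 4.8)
The plan is to extract the bound from a single term of the Taylor series of the exponential on the right-hand side. The key observation is that for $z\geq 0$ every term in the power series $e^{\frac{\kappa}{2}z}=\sum_{n=0}^{\infty}\frac{1}{n!}\left(\frac{\kappa}{2}z\right)^n$ is nonnegative, so the whole sum dominates any one of its summands. Selecting the $n=\alpha$ term will give exactly the factor $z^\alpha$ we want, up to the constants $\alpha!$ and $(\kappa/2)^\alpha$.

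Concretely, first I would fix $\alpha\in\mathbb N$ and $z\geq 0$ and write down the series expansion
\[
e^{\frac{\kappa}{2}z}=\sum_{n=0}^{\infty}\frac{1}{n!}\left(\frac{\kappa}{2}z\right)^n.
\]
Since $z\geq 0$ and $\kappa>0$, each summand is nonnegative, and hence discarding all terms except $n=\alpha$ yields the lower bound
\[
e^{\frac{\kappa}{2}z}\geq \frac{1}{\alpha!}\left(\frac{\kappa}{2}\right)^\alpha z^\alpha.
\]
Multiplying through by $\alpha!\left(\frac{2}{\kappa}\right)^\alpha=\alpha!\,(2\kappa^{-1})^\alpha$, which is a positive constant, rearranges this directly into
\[
z^\alpha\leq \alpha!\,(2\kappa^{-1})^\alpha e^{\frac{\kappa}{2}z},
\]
which is the claimed inequality. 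I would also note the trivial boundary checks: for $\alpha=0$ the inequality reads $1\leq e^{\frac{\kappa}{2}z}$, which holds because $z\geq 0$, and for $z=0$, $\alpha\geq 1$ the left side vanishes while the right side is positive.

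There is no real obstacle here; the statement is essentially a one-line consequence of term-by-term positivity of the exponential series, and the hypothesis $0<\kappa\leq 1$ is not even needed for this particular bound (the argument works for any $\kappa>0$, the restriction presumably being inherited from the surrounding estimates). The only point worth stating carefully is the nonnegativity of the summands, which is exactly what justifies passing from the full series to a single term; once that is in place the rest is purely algebraic rearrangement.
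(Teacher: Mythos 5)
Your proof is correct and is essentially the same argument as the paper's: both bound the single Taylor term $\frac{1}{\alpha!}\left(\frac{\kappa}{2}z\right)^{\alpha}$ by the full nonnegative series for $e^{\frac{\kappa}{2}z}$ and then rearrange the constants. Your observation that only $\kappa>0$ is needed is also accurate; the restriction $\kappa\leq 1$ plays no role in this lemma.
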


\begin{proof}
Obviously,
  \begin{align*}
  z^\alpha=\alpha!(2\kappa^{-1})^\alpha\frac{\left(\frac{\kappa}{2}z\right)^\alpha}{\alpha!}\leq \alpha!(2\kappa^{-1})^\alpha\sum_{\alpha=0}^{\infty}\frac{\left(\frac{\kappa}{2}z\right)^\alpha}{\alpha!}\leq \alpha!(2\kappa^{-1})^\alpha e^{\frac{\kappa}{2}z}.
  \end{align*}
  This completes the proof of Lemma \ref{2}.
\end{proof}

\begin{lemm}\label{3}
For $0 < \kappa \leq 1$, we have
$$
\sum_{(m_1,\cdots,m_r)\in(\mathbb Z^{\nu})^r}\prod_{j=1}^{r}|m_j|^{\alpha_j}e^{-\kappa|m_j|}\leq(6\kappa^{-1})^{|\alpha|+\nu r}\prod_{j=1}^{r}\alpha_j!.
$$
\end{lemm}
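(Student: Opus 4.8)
The plan is to exploit the complete factorization of both the domain and the summand. Since $(\mathbb Z^\nu)^r$ is a product and the summand $\prod_{j=1}^r|m_j|^{\alpha_j}e^{-\kappa|m_j|}$ splits as a product whose $j$-th factor depends only on $m_j$, the multiple sum factors as
\[
\sum_{(m_1,\cdots,m_r)\in(\mathbb Z^\nu)^r}\prod_{j=1}^r|m_j|^{\alpha_j}e^{-\kappa|m_j|}=\prod_{j=1}^r\Bigl(\sum_{m\in\mathbb Z^\nu}|m|^{\alpha_j}e^{-\kappa|m|}\Bigr).
\]
Thus it suffices to establish, for each single exponent $\alpha_j\in\mathbb N$, the one-variable bound
\[
\sum_{m\in\mathbb Z^\nu}|m|^{\alpha_j}e^{-\kappa|m|}\leq\alpha_j!\,(2\kappa^{-1})^{\alpha_j}(6\kappa^{-1})^\nu,
\]
and then to multiply the $r$ resulting estimates together.

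To prove the one-variable bound I would split the weight as $e^{-\kappa|m|}=e^{-\frac{\kappa}{2}|m|}\cdot e^{-\frac{\kappa}{2}|m|}$, using one half to absorb the polynomial factor and keeping the other half summable. Applying Lemma \ref{2} with $z=|m|\geq0$ gives $|m|^{\alpha_j}\leq\alpha_j!\,(2\kappa^{-1})^{\alpha_j}e^{\frac{\kappa}{2}|m|}$, whence
\[
\sum_{m\in\mathbb Z^\nu}|m|^{\alpha_j}e^{-\kappa|m|}\leq\alpha_j!\,(2\kappa^{-1})^{\alpha_j}\sum_{m\in\mathbb Z^\nu}e^{-\frac{\kappa}{2}|m|}.
\]
Since $|m|$ is the sum of the absolute values of the $\nu$ coordinates of $m$, the remaining exponential weight factors across coordinates, so that
\[
\sum_{m\in\mathbb Z^\nu}e^{-\frac{\kappa}{2}|m|}=\Bigl(\sum_{s\in\mathbb Z}e^{-\frac{\kappa}{2}|s|}\Bigr)^\nu\leq(6\kappa^{-1})^\nu,
\]
the last step being Lemma \ref{1} applied with the admissible parameter $\kappa/2\in(0,1]$ (here $0<\kappa\leq1$). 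This yields the displayed one-variable estimate.

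Finally I would assemble the pieces. Taking the product over $j=1,\dots,r$ of the one-variable bound and using $\sum_{j=1}^r\alpha_j=|\alpha|$ gives
\[
\prod_{j=1}^r\alpha_j!\,(2\kappa^{-1})^{\alpha_j}(6\kappa^{-1})^\nu=\Bigl(\prod_{j=1}^r\alpha_j!\Bigr)(2\kappa^{-1})^{|\alpha|}(6\kappa^{-1})^{\nu r}.
\]
Because $\kappa^{-1}>0$ we have $2\kappa^{-1}\leq6\kappa^{-1}$, hence $(2\kappa^{-1})^{|\alpha|}\leq(6\kappa^{-1})^{|\alpha|}$, and combining the two powers of $6\kappa^{-1}$ produces the claimed bound $(6\kappa^{-1})^{|\alpha|+\nu r}\prod_{j=1}^r\alpha_j!$. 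The argument is entirely routine; there is no genuine obstacle, since the factorization reduces everything to the two preceding lemmas. The only point that warrants care is the bookkeeping of constants, namely the deliberately lossy replacement $2\kappa^{-1}\leq6\kappa^{-1}$ that unifies the two distinct bases into the single base $6\kappa^{-1}$ recorded in the statement.
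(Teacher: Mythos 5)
Your proof is correct and follows essentially the same route as the paper: both arguments absorb each factor $|m_j|^{\alpha_j}$ via Lemma \ref{2} at the cost of $e^{\frac{\kappa}{2}|m_j|}$, factor the remaining sum coordinatewise, apply Lemma \ref{1} with parameter $\kappa/2$ to get the base $6\kappa^{-1}$, and finish with the lossy replacement $2\kappa^{-1}\leq 6\kappa^{-1}$. The only difference is cosmetic ordering (you factor the multiple sum into one-variable sums before estimating, the paper estimates inside the multiple sum and then factors), which changes nothing of substance.
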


\begin{proof}
By Lemma \ref{2} and Lemma \ref{1}, we find
\begin{eqnarray*}
  &&\sum_{(m_1,\cdots,m_r)\in(\mathbb Z^{\nu})^r}\prod_{j=1}^{r}|m_j|^{\alpha_j}e^{-\kappa|m_j|}\\
  &\stackrel{(\text{Lemma \ref{2})}}{\leq}&\sum_{(m_1,\cdots,m_r)\in(\mathbb Z^{\nu})^r}\prod_{j=1}^{r}\alpha_j!(2\kappa^{-1})^{\alpha_j}e^{\frac{\kappa}{2}|m_j|}\cdot e^{-\kappa|m_j|}\\
  &=&(2\kappa^{-1})^{|\alpha|}\prod_{j=1}^{r}\alpha_j!\sum_{(m_1,\cdots,m_r)\in(\mathbb Z^{\nu})^r}\prod_{j=1}^{r}e^{-\frac{\kappa}{2}|m_j|}\\
  &=&(2\kappa^{-1})^{|\alpha|}\prod_{j=1}^{r}\alpha_j!\left(\sum_{m\in\mathbb Z^\nu}e^{-\frac{\kappa}{2}|m|}\right)^r\\
  &=&(2\kappa^{-1})^{|\alpha|}\prod_{j=1}^{r}\alpha_j!\left(\sum_{m\in\mathbb Z}e^{-\frac{\kappa}{2}|m|}\right)^{\nu r}\\
  &\stackrel{(\text{Lemma \ref{1})}}{\leq}&(2\kappa^{-1})^{|\alpha|}\cdot(6\kappa^{-1})^{\nu r}\prod_{j=1}^{r}\alpha_j!\\
  &\leq&(6\kappa^{-1})^{|\alpha|+\nu r}\prod_{j=1}^{r}\alpha_j!.
\end{eqnarray*}
The proof of Lemma \ref{3} is completed.
\end{proof}

\begin{lemm}\label{4}
For $0 < \kappa \leq 1$, we have
$$
\sum_{\substack{m=(m_1,\cdots,m_r)\in(\mathbb Z^{\nu})^r: \mu(m)=n}}\prod_{j=1}^{r}|m_j|^{\alpha_j}e^{-\kappa|m_j|}\leq(6\kappa^{-1})^{|\alpha|+\nu r}\prod_{j=1}^{r}\alpha_j!e^{-\frac{\kappa}{2}|n|}.
$$
\end{lemm}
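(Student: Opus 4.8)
The plan is to deduce Lemma~\ref{4} directly from the unconstrained estimate in Lemma~\ref{3}, the only genuinely new ingredient being the constraint $\mu(m)=n$, which is exactly what manufactures the additional decay factor $e^{-\frac{\kappa}{2}|n|}$. The driving mechanism is the triangle inequality: since $\mu(m)=\sum_{j=1}^r m_j=n$, one has $|n|=\bigl|\sum_{j=1}^r m_j\bigr|\le\sum_{j=1}^r|m_j|$, and hence $e^{-\frac{\kappa}{2}\sum_j|m_j|}\le e^{-\frac{\kappa}{2}|n|}$ for every admissible $m$.

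First I would split the weight on each coordinate symmetrically, writing $e^{-\kappa|m_j|}=e^{-\frac{\kappa}{2}|m_j|}\cdot e^{-\frac{\kappa}{2}|m_j|}$, reserving the first half to produce the global factor and leaving the second half to carry out the summation. Collecting the reserved halves gives $\prod_{j=1}^r e^{-\frac{\kappa}{2}|m_j|}=e^{-\frac{\kappa}{2}\sum_j|m_j|}\le e^{-\frac{\kappa}{2}|n|}$ by the inequality above, a bound that is uniform over all $m$ with $\mu(m)=n$. Pulling this constant out of the sum leaves
\[
\sum_{m\in(\mathbb Z^\nu)^r:\ \mu(m)=n}\prod_{j=1}^r|m_j|^{\alpha_j}e^{-\kappa|m_j|}\le e^{-\frac{\kappa}{2}|n|}\sum_{m\in(\mathbb Z^\nu)^r:\ \mu(m)=n}\prod_{j=1}^r|m_j|^{\alpha_j}e^{-\frac{\kappa}{2}|m_j|}.
\]

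Next I would simply discard the constraint: every summand is nonnegative, so enlarging the index set from $\{m:\mu(m)=n\}$ to all of $(\mathbb Z^\nu)^r$ only increases the residual sum, which is therefore bounded by $\sum_{m\in(\mathbb Z^\nu)^r}\prod_j|m_j|^{\alpha_j}e^{-\frac{\kappa}{2}|m_j|}$. This is precisely the quantity governed by Lemma~\ref{3}, applied with the decay parameter $\kappa$ replaced by $\kappa/2$ (still admissible, since $0<\kappa/2\le 1$); invoking it produces the required power of $\kappa^{-1}$ together with $\prod_j\alpha_j!$, and combining with the extracted factor $e^{-\frac{\kappa}{2}|n|}$ finishes the estimate.

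There is no serious obstacle here: the whole content is the triangle inequality for $\mu$ combined with monotonicity of the sum under relaxation of the constraint. The one point deserving care is the constant bookkeeping — one must track how halving the decay rate in passing to Lemma~\ref{3} rescales the base of $(\,\cdot\,\kappa^{-1})^{|\alpha|+\nu r}$, since Lemma~\ref{3} is stated at a generic admissible rate and is reused here at rate $\kappa/2$. This rescaling is the step most prone to an off-by-a-factor-of-two slip, so I would verify the exponents explicitly against the applications in Lemmas~\ref{again} and~\ref{dd} to make sure the stated factor and the extracted decay $e^{-\frac{\kappa}{2}|n|}$ are matched consistently.
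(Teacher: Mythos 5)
Your proposal is correct and essentially identical to the paper's own proof: the same symmetric splitting $e^{-\kappa|m_j|}=e^{-\frac{\kappa}{2}|m_j|}\cdot e^{-\frac{\kappa}{2}|m_j|}$, the same use of the triangle inequality under the constraint $\mu(m)=n$ to extract $e^{-\frac{\kappa}{2}|n|}$, the same relaxation of the constraint to all of $(\mathbb Z^\nu)^r$, and the same application of Lemma~\ref{3} at rate $\kappa/2$. Your bookkeeping worry is well founded, and it is a defect of the paper rather than of your argument: running Lemma~\ref{3} at rate $\kappa/2$ gives $(12\kappa^{-1})^{|\alpha|+\nu r}$, which is exactly what the paper's own proof ends with, so the factor $(6\kappa^{-1})^{|\alpha|+\nu r}$ in the statement of Lemma~\ref{4} is a slip (indeed the paper invokes this lemma with base $6\kappa^{-1}$ in Lemma~\ref{again} but with base $12\kappa^{-1}$ in Lemma~\ref{dd}).
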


\begin{proof}
By Lemma \ref{3}, we have
  \begin{eqnarray*}
  &&\sum_{\substack{m=(m_1,\cdots,m_r)\in(\mathbb Z^{\nu})^r:~\mu(m)=n}}\prod_{j=1}^{r}|m_j|^{\alpha_j}e^{-\kappa|m_j|}\\
  &=&\sum_{\substack{m=(m_1,\cdots,m_r)\in(\mathbb Z^{\nu})^r:~\mu(m)=n}}\prod_{j=1}^{r}|m_j|^{\alpha_j}e^{-\frac{\kappa}{2}|m_j|}\cdot \prod_{j=1}^{r}e^{-\frac{\kappa}{2}|m_j|}\\
  &\leq&\sum_{\substack{m=(m_1,\cdots,m_r)\in(\mathbb Z^{\nu})^r:~\mu(m)=n}}\prod_{j=1}^{r}|m_j|^{\alpha_j}e^{-\frac{\kappa}{2}|m_j|}\cdot e^{-\frac{\kappa}{2}|n|}\\
  &\leq&\sum_{\substack{m=(m_1,\cdots,m_r)\in(\mathbb Z^{\nu})^r}}\prod_j|m_j|^{\alpha_j}e^{-\frac{\kappa}{2}|m_j|}\cdot e^{-\frac{\kappa}{2}|n|}\\
  &\stackrel{(\text{Lemma}~~\ref{3})}\leq&(12\kappa^{-1})^{|\alpha|+\nu r}\prod_j\alpha_j!e^{-\frac{\kappa}{2}|n|}.\\
  \end{eqnarray*}
This completes the proof of Lemma \ref{4}.
\end{proof}

\bibliographystyle{alpha}

\bibliography{gkdv}

\begin{thebibliography}{BDGL18}

\bibitem[BDGL18]{BDGL}
Ilia Binder, David Damanik, Michael Goldstein, and Milivoje Lukic.
\newblock Almost periodicity in time of solutions of the {K}d{V} equation.
\newblock {\em Duke Math. J.}, 167(14):2633--2678, 2018.

\bibitem[Chr07]{christ07}
Michael Christ.
\newblock \href{https://math.berkeley.edu/~mchrist/Papers/brute.pdf} {Power
  series solution of a nonlinear {S}chr\"{o}dinger equation}.
\newblock In {\em Mathematical aspects of nonlinear dispersive equations},
  volume 163 of {\em Ann. of Math. Stud.}, pages 131--155. Princeton Univ.
  Press, Princeton, NJ, 2007.

\bibitem[CK89]{CK89}
Amy Cohen and Thomas Kappeler.
\newblock Nonuniqueness for solutions of the {K}orteweg-de {V}ries equation.
\newblock {\em Trans. Amer. Math. Soc.}, 312(2):819--840, 1989.

\bibitem[Cra89]{C89}
Walter Craig.
\newblock The trace formula for {S}chr\"{o}dinger operators on the line.
\newblock {\em Comm. Math. Phys.}, 126(2):379--407, 1989.

\bibitem[Dei08]{D08}
Percy Deift.
\newblock Some open problems in random matrix theory and the theory of
  integrable systems.
\newblock In {\em Integrable systems and random matrices}, volume 458 of {\em
  Contemp. Math.}, pages 419--430. Amer. Math. Soc., Providence, RI, 2008.

\bibitem[Dei17]{D17}
Percy Deift.
\newblock Some open problems in random matrix theory and the theory of
  integrable systems. {II}.
\newblock {\em SIGMA Symmetry Integrability Geom. Methods Appl.}, 13:Paper No.
  016, 23, 2017.

\bibitem[DG14]{damanik14}
David Damanik and Michael Goldstein.
\newblock \href{https://arxiv.org/pdf/1209.4331.pdf}{On the inverse spectral
  problem for the quasi-periodic {S}chr\"{o}dinger equation}.
\newblock {\em Publ. Math. Inst. Hautes \'{E}tudes Sci.}, 119:217--401, 2014.

\bibitem[DG16]{damanik16}
David Damanik and Michael Goldstein.
\newblock
  \href{https://www.ams.org/journals/jams/2016-29-03/S0894-0347-2015-00837-2/S0894-0347-2015-00837-2.pdf}
  {On the existence and uniqueness of global solutions for the {K}d{V} equation
  with quasi-periodic initial data}.
\newblock {\em J. Amer. Math. Soc.}, 29(3):825--856, 2016.

\bibitem[EVY19]{EVY19}
Benjamin Eichinger, Tom VandenBoom, and Peter Yuditskii.
\newblock Kd{V} hierarchy via abelian coverings and operator identities.
\newblock {\em Trans. Amer. Math. Soc. Ser. B}, 6:1--44, 2019.

\bibitem[GY06]{GY06}
Fritz Gesztesy and Peter Yuditskii.
\newblock Spectral properties of a class of reflectionless {S}chr\"{o}dinger
  operators.
\newblock {\em J. Funct. Anal.}, 241(2):486--527, 2006.

\bibitem[Kot18]{K18}
Shinichi Kotani.
\newblock Construction of {K}d{V} flow {I}. {$\tau$}-function via {W}eyl
  function.
\newblock {\em Zh. Mat. Fiz. Anal. Geom.}, 14(3):297--335, 2018.

\bibitem[KPV91]{KPV91}
Carlos~E. Kenig, Gustavo Ponce, and Luis Vega.
\newblock Well-posedness of the initial value problem for the {K}orteweg-de
  {V}ries equation.
\newblock {\em J. Amer. Math. Soc.}, 4(2):323--347, 1991.

\bibitem[MGK68]{MGK68}
Robert~M. Miura, Clifford~S. Gardner, and Martin~D. Kruskal.
\newblock Korteweg-de {V}ries equation and generalizations. {II}. {E}xistence
  of conservation laws and constants of motion.
\newblock {\em J. Mathematical Phys.}, 9:1204--1209, 1968.

\bibitem[Miu68]{M68}
Robert~M. Miura.
\newblock Korteweg-de {V}ries equation and generalizations. {I}. {A} remarkable
  explicit nonlinear transformation.
\newblock {\em J. Mathematical Phys.}, 9:1202--1204, 1968.

\bibitem[SY95]{SY95}
Mikhail Sodin and Peter Yuditskii.
\newblock Almost periodic {S}turm-{L}iouville operators with {C}antor
  homogeneous spectrum.
\newblock {\em Comment. Math. Helv.}, 70(4):639--658, 1995.

\bibitem[SY97]{SY97}
Mikhail Sodin and Peter Yuditskii.
\newblock Almost periodic {J}acobi matrices with homogeneous spectrum,
  infinite-dimensional {J}acobi inversion, and {H}ardy spaces of
  character-automorphic functions.
\newblock {\em J. Geom. Anal.}, 7(3):387--435, 1997.

\end{thebibliography}

\end{document}